\numberwithin{equation}{section}
\theoremstyle{plain}
\newtheorem{theorem}{Theorem}[section]
\newtheorem{lemma}[theorem]{Lemma}
\newtheorem{proposition}[theorem]{Proposition}
\theoremstyle{definition}
\newtheorem{definition}[theorem]{Definition}
\newtheorem{problem}[theorem]{Problem}
\newtheorem{obs}{Observation}
\theoremstyle{remark}
\newtheorem{remark}[theorem]{Remark}
\newtheorem{notation}[theorem]{Notation}
\newtheorem{claim}[theorem]{Claim}
\newcommand{\vphi}{\varphi}
\newcommand*{\rom}[1]{\expandafter\@slowromancap\romannumeral #1@}
\newcommand{\f}{\widehat}
\title[\parbox{14cm}{\centering{}} \quad]{Five-Linear Singular Integral Estimates of Brascamp-Lieb Type}
\author{Camil Muscalu}
\address{Department of Mathematics, Cornell University, Ithaca, NY }
\email{camil@math.cornell.edu}
\author{Yujia Zhai}
\address{Laboratoire de Math\'ematiques, Universit\'e de Nantes, Nantes}
\email{yujia.zhai@univ-nantes.fr}
\begin{document}
\maketitle

\begin{abstract} 
We prove the full range of estimates for a five-linear singular integral of Brascamp-Lieb type. The study is methodology-oriented with the goal to develop a sufficiently general technique to estimate singular integral variants of Brascamp-Lieb inequalities that do not obey H\"older scaling. The invented methodology constructs localized analysis on the entire space from local information on its subspaces of lower dimensions and combines such tensor-type arguments with the generic localized analysis. A direct consequence of the boundedness of the five-linear singular integral is a Leibniz rule which captures nonlinear interactions of waves from transversal directions. 
\end{abstract}

\section{Introduction} \label{section_intro}
\subsection{Background and Motivation} \label{section_intro_background}
Brascamp-Lieb inequalities refer to inequalities of the form
\begin{align} \label{classical_bl}
\displaystyle \int_{\mathbb{R}^n} \big|\prod_{j=1}^{m}F_j(L_j(x))\big| dx  \leq \text{BL}(\textbf{L,p})\prod_{j=1}^{m}\left(\int_{\mathbb{R}^{k_j}}|F_j|^{p_j}\right)^{\frac{1}{p_j}},
\end{align}
where $\text{BL}(\textbf{L,p})$ represents the Brascamp-Lieb constant depending on $\textbf{L} := (L_j)_{j=1}^m$ and $\textbf{p} := (p_j)_{j=1}^m$. For each $1 \leq j \leq m$, $L_j: R^{n} \rightarrow R^{k_j}$ is a linear surjection and $p_j \geq 0$. One equivalent formulation of (\ref{classical_bl}) is 
\begin{align} \label{classical_bl_exp}
\displaystyle \bigg(\int_{\mathbb{R}^n} \big|\prod_{j=1}^{m}F_j(L_j(x))\big|^r dx\bigg)^{\frac{1}{r}} \leq \text{BL}(\textbf{L},r\textbf{p})\prod_{j=1}^{m}\left(\int_{\mathbb{R}^{k_j}}|F_j|^{rp_j}\right)^{\frac{1}{rp_j}},
\end{align}
for any $r > 0$. Brascamp-Lieb inequalities have been well-developed in \cite{bl}, \cite{bcct}, \cite{bbcf}, \cite{bbbf}, \cite{chv}. Examples of Brascamp-Lieb inequalities consist of H\"older's inequality and the Loomis-Whitney inequality.

We adopt the informal definition that an $n$-linear singular integral operator $T$ is \textit{of Brascamp-Lieb type} if $T(F_1,\cdots, F_n)$ is reduced to the integrand on the left hand side of a classical Brascamp-Lieb inequality when the kernels are replaced by Dirac distributions $\delta_0$. As a consequence of the definition, a singular integral operator of Brascamp-Lieb type obeys the same scaling property as the corresponding classical Brascamp-Lieb inequality, which will be called the basic inequality. We will also refer to the estimate for the singular integral operator of Brascamp-Lieb type as the \textit{singular integral estimate of Brascamp-Lieb type}. 
For the readers familiar with the recent expository work of Durcik and Thiele in \cite{dt2}, this is similar to the generic estimate for the singular Brascamp-Lieb form described in (2.3) from \cite{dt2}.

Singular integral estimates with H\"older scaling have been studied extensively, including boundedness of single-parameter paraproducts \cite{cm} and multi-parameter paraproducts \cite{cptt}, \cite{cptt_2}, single-parameter flag paraproducts \cite{c_flag}, bilinear Hilbert transform \cite{lt}, multilinear operators of arbitrary rank \cite{mtt2002}, etc. 
But it is of course natural to ask if there are similar singular integral estimates corresponding to Brascamp-Lieb inequalities that do not satisfy H\"older scaling. 
So far, to the best of our knowledge, the only research article in the literature where the term "singular Brascamp-Lieb" has been used is the recent work by Durcik and Thiele \cite{dt}. However, we would like to emphasize that the basic inequalities  corresponding to the "cubic singular expressions" considered in \cite{dt} are still H\"older's inequality, and the term "singular Brascamp-Lieb" was used to underline that the necessary and sufficient boundedness condition (1.6) of \cite{dt} is of the same flavor as the one for classical Brascamp-Lieb inequalities stated as (8) in \cite{bcct}.

Techniques to tackle multilinear singular integral operators with H\"older scaling \cite{cm}, \cite{cptt}, \cite{cptt_2}, \cite{c_flag}, \cite{lt}, \cite{mtt2002} usually involve localizations on phase space subsets of the full-dimension. In contrast, the understanding of singular integral operators of Brascamp-Lieb type with input functions defined on subspaces of the domain of the output function  
(and thus with non-H\"older scaling) is in its primitive stage. The ultimate goal would be to develop a general methodology to treat a large class of singular Brascamp-Lieb estimates with non-H\"older scaling. It is natural to believe that such an approach would need to extract and integrate local information on subspaces of lower dimensions. Also due to its multilinear structure, localizations on the entire space could be necessary as well and a hybrid of both localized analyses would be demanded.

The subject of our study in this present paper is one of the simplest multilinear operators, whose complete understanding cannot be reduced to earlier results\footnote{Many cases of arbitrary complexity follow from the mixed-norm estimates for vector-valued inequalities in the paper by Benea and the first author \cite{bm2}.} 
and which requires such a new type of analysis. More precisely, it is the five-linear operator defined by
\begin{align} \label{bi_flag_int}
&  T_{K_1K_2}(f_1, f_2, g_1, g_2, h)(x,y)  \nonumber \\
:= &  p.v. \displaystyle \int_{\mathbb{R}^{10}} K_1\big((t_1^1, t_1^2),(t_2^1,t_2^2))K_2((s_1^1,s_1^2), (s_2^1,s_2^2), (s_3^1,s_3^2)\big) \cdot \nonumber \\
&\quad \quad \quad f_1(x-t_1^1-s_1^1)f_2(x-t_2^1-s_2^1)g_1(y-t_1^2-s_1^2)g_2(y-t_2^2-s_2^2)h(x-s_3^1,y-s_3^2) d\vec{t_1} d\vec{t_2} d \vec{s_1} d\vec{s_2} d\vec{s_3},
\end{align}
where $\vec{t_i} = (t_i^1, t_i^2)$, $\vec{s_j} = (s_j^1,s_j^2)$ for $i = 1, 2$ and $j = 1,2,3$. In (\ref{bi_flag_int}), $K_1$ and $K_2$ are Calder\'on-Zygmund kernels that satisfy
\begin{align*}
& |\nabla K_1(\vec{t_1}, \vec{t_2})| \lesssim \frac{1}{|(t_1^1,t_2^1)|^{3}}\frac{1}{|(t_1^2,t_2^2)|^{3}}, \nonumber \\
&  |\nabla K_2(\vec{s_1}, \vec{s_2}, \vec{s_3})| \lesssim \frac{1}{|(s_1^1,s_2^1,s_3^1)|^{4}}\frac{1}{|(s_1^2,s_2^2, s_3^2)|^{4}} .
\end{align*}
As one can see, the operator $T_{K_1K_2}$ takes two functions depending on the $x$ variable ($f_1$ and $f_2$), two functions depending on the $y$ variable ($g_1$ and $g_2$) and one depending on both $x$ and $y$ (namely $h$) into another function of $x$ and $y$. 
Our goal is to prove that $T_{K_1K_2}$ satisfies the mapping property
$$
L^{p_1}(\mathbb{R}) \times L^{q_1}(\mathbb{R}) \times L^{p_2}(\mathbb{R}) \times L^{q_2}(\mathbb{R}) \times L^{s}(\mathbb{R}^2) \rightarrow L^{r}(\mathbb{R}^2)
 $$
for $1 < p_1, p_2, q_1, q_2, s \leq \infty$, $r >0$, $(p_1,q_1), (p_2,q_2) \neq (\infty, \infty)$ with
\begin{equation} \label{bl_exp}
\frac{1}{p_1} + \frac{1}{q_1} + \frac{1}{s} = \frac{1}{p_2} + \frac{1}{q_2} + \frac{1}{s} = \frac{1}{r}.
\end{equation}
To verify that the boundedness of $T_{K_1K_2}$ qualifies to be a singular integral estimate of Brascamp-Lieb type, one can remove the singularities by setting
\begin{align*}
& K_1(\vec{t_1}, \vec{t_2}) = \delta_{\textbf{0}}(\vec{t_1}, \vec{t_2}), \nonumber \\
& K_2(\vec{s_1}, \vec{s_2}, \vec{s_3}) = \delta_{\textbf{0}}(\vec{s_1}, \vec{s_2}, \vec{s_3}),
\end{align*}
and express its boundedness explicitly as 
\begin{align} \label{flag_bl}
\|f_1(x) f_2(x) g_1(y) g_2(y) h(x,y)\|_{r} \lesssim \|f_1\|_{L^{p_1}(\mathbb{R})}\|f_2\|_{L^{q_1}(\mathbb{R})}\|g_1\|_{L^{p_2}(\mathbb{R})} \|g_4\|_{L^{q_2}(\mathbb{R})}\|h\|_{L^{s}(\mathbb{R}^2)}.
\end{align}
The above inequality follows from H\"older's inequality and the Loomis-Whitney inequality, which, in this simple two dimensional case, is the same as Fubini's theorem. Clearly, it is an inequality of the same type as (\ref{classical_bl_exp}), with a different homogeneity than H\"older. Moreover, this reduction shows that (\ref{bl_exp}) is indeed a necessary condition for the boundedness exponents of (\ref{flag_bl}) and thus of  (\ref{bi_flag_int}). 


\subsection{Connection with Other Multilinear Objects} \label{section_intro_connection}
The connection with other well-established multilinear operators that we will describe next justifies that $T_{K_1K_2}$ defined in (\ref{bi_flag_int}) is a reasonably simple and interesting operator to study, with the hope of inventing a general method that can handle a large class of singular integral estimates of Brascamp-Lieb type with non-H\"older scaling. 

Let $\mathcal{M}(\mathbb{R}^d)$ denote the set of all bounded symbols $m \in L^{\infty}(\mathbb{R}^d)$ smooth away from the origin and satisfying the Marcinkiewicz-H\"ormander-Mihlin condition
\begin{equation*}
\left|\partial^{\alpha} m(\xi) \right| \lesssim \frac{1}{|\xi|^{|\alpha|}}
\end{equation*}
for any $\xi \in \mathbb{R}^d \setminus \{0\}$ and sufficiently many multi-indices $\alpha$.
The simplest singular integral operator which corresponds to the two-dimensional Loomis-Whitney inequality would be
\begin{equation} \label{tensor_ht}
T_{m_1m_2}(f, g)(x,y) :=  \int_{\mathbb{R}^2} m_1(\xi)m_2(\eta) \f{f}(\xi) \f{g}(\eta) e^{2 \pi i x \xi} e^{2\pi i y\eta}d\xi d\eta,
\end{equation}
where $m_1, m_2 \in \mathcal{M}(\mathbb{R})$. (\ref{tensor_ht}) is a tensor product of Hilbert transforms whose boundedness are well-known. The bilinear variant of (\ref{tensor_ht}) can be expressed as

\begin{align} \label{tensor_para}
&T_{m_1m_2}(f_1,f_2, g_1, g_2)(x,y) \nonumber \\
:=  & \int_{\mathbb{R}^4} m_1(\xi_1,\xi_2) m_2(\eta_1,\eta_2) \f{f_1}(\xi_1) \f{f_2}(\xi_2)\f{g_1}(\eta_1)  \f{g_2}(\eta_2)e^{2 \pi i x(\xi_1+\xi_2)}e^{2 \pi i y(\eta_1+\eta_2)} d\xi_1 d\xi_2 d\eta_1 d\eta_2,
\end{align}
where $m_1, m_2 \in \mathcal{M}(\mathbb{R}^2)$. It can be separated as a tensor product of single-parameter paraproducts whose boundedness are proved by Coifman-Meyer's theorem \cite{cm}. To avoid trivial tensor products of single-parameter operators, one then completes (\ref{tensor_para}) by adding a generic function of two variables thus obtaining
\begin{align} \label{bi_pp}
&T_{b}(f_1, f_2, g_1, g_2, h)(x,y)  \nonumber \\
:=& \int_{\mathbb{R}^6} b((\xi_1,\eta_1),(\xi_2,\eta_2),(\xi_3,\eta_3))  \f{f_1 \otimes g_1}(\xi_1, \eta_1) \f{f_2 \otimes g_2}(\xi_2, \eta_2) \f{h}(\xi_3,\eta_3) \nonumber \\
& \quad \quad \cdot e^{2 \pi i x(\xi_1+\xi_2+ \xi_3)}e^{2 \pi i y(\eta_1+\eta_2+ \eta_3)} d\xi_1 d\xi_2 d\xi_3 d\eta_1 d\eta_2 d\eta_3,
\end{align}
where 
\begin{align*}
& \left|\partial^{\alpha}_{(\xi_1,\xi_2,\xi_3)} \partial^{\beta}_{(\eta_1,\eta_2, \eta_3)} b \right| \lesssim \frac{1}{|(\xi_1,\xi_2,\xi_3)|^{|\alpha|}|(\eta_1,\eta_2,\eta_3)|^{|\beta|}}
\end{align*}
for sufficiently many multi-indices $\alpha$ and $\beta$. Such a multilinear operator is indeed a bi-parameter paraproduct whose theory has been developed by Muscalu, Pipher, Tao and Thiele \cite{cptt}. It also appeared naturally in nonlinear PDEs, such as Kadomtsev-Petviashvili equations studied by Kenig \cite{k}. To reach beyond bi-parameter paraproducts, one then replaces the singularity in each subspace by a flag singularity. In one dimension, the corresponding trilinear operator takes the form
\begin{equation} \label{flag}
T_{m_1m_2}(f_1,f_2,f_3)(x) := \int_{\mathbb{R}^3} m_1(\xi_1,\xi_2)m_2(\xi_1,\xi_2,\xi_3) \f{f_1}(\xi_1) \f{f_2}(\xi_2) \f{f_3}(\xi_3) e^{2 \pi i x(\xi_1+\xi_2+\xi_3)} d\xi_1 d\xi_2 d\xi_3,
\end{equation}
where $m_1 \in \mathcal{M}(\mathbb{R}^2)$ and $m_2 \in \mathcal{M}(\mathbb{R}^3)$. The operator (\ref{flag}) was studied by Muscalu \cite{c_flag} using time-frequency analysis which applies not only to the operator itself, but also to all of its adjoints. Miyachi and Tomita \cite{mt} extended the $L^p$-boundedness for $p>1$ established in \cite{c_flag} to all Hardy spaces $H^p$ with $p > 0$. The single-parameter flag paraproduct and its adjoints are closely related to various nonlinear partial differential equations, including nonlinear Schr\"odinger equations and water wave equations as discovered by Germain, Masmoudi and Shatah \cite{gms}. Its bi-parameter variant is indeed related to the subject of our study and is equivalent to (\ref{bi_flag_int}):
\begin{align} \label{bi_flag_mult}
T_{ab}(f_1, f_2, g_1, g_2, h)(x,y) := \int_{\mathbb{R}^6} & a((\xi_1,\eta_1),(\xi_2,\eta_2)) b((\xi_1,\eta_1),(\xi_2,\eta_2),(\xi_3,\eta_3)) \f{f_1 \otimes g_1}(\xi_1, \eta_1) \f{f_2 \otimes g_2}(\xi_2, \eta_2) \nonumber \\
& \cdot \f{h}(\xi_3,\eta_3) e^{2\pi i x(\xi_1+\xi_2+\xi_3)}e^{2\pi i y(\eta_1+\eta_2+\eta_3)} d \xi_1 d\xi_2 d\xi_3 d\eta_1 d\eta_2 d\eta_3,
\end{align}
where 
\begin{align*}
& \left|\partial^{\alpha_1}_{(\xi_1,\xi_2)} \partial^{\beta_1}_{(\eta_1,\eta_2)} a\right| \lesssim \frac{1}{|(\xi_1,\xi_2)|^{|\alpha_1|}|(\eta_1,\eta_2)|^{|\beta_1|}}, \nonumber \\
& \left|\partial^{\alpha_2}_{(\xi_1,\xi_2,\xi_3)} \partial^{\beta_2}_{(\eta_1,\eta_2, \eta_3)} b \right| \lesssim \frac{1}{|(\xi_1,\xi_2,\xi_3)|^{|\alpha_2|}|(\eta_1,\eta_2,\eta_3)|^{|\beta_2|}},
\end{align*}
for sufficiently many multi-indices $\alpha_1, \beta_1, \alpha_2$ and $\beta_2$. The equivalence can be derived with 
\begin{align*}
& a = \f{K_1}, \nonumber \\
& b = \f{K_2}.
\end{align*}

The general bi-parameter trilinear flag paraproduct is defined on larger function spaces where the tensor products are replaced by general functions in the plane.\footnote{Its boundedness is at present an open question, raised by the first author of the article on several occasions.}
From this perspective, $T_{ab}$ or equivalently $T_{K_1K_2}$ defined in (\ref{bi_flag_mult}) and (\ref{bi_flag_int}) respectively can be viewed as a trilinear operator with the desired mapping property 
$$
T_{ab}: L^{p_1}_{x}(L^{p_2}_y) \times L^{q_1}_{x}(L^{q_2}_y) \times L^{s}(\mathbb{R}^2) \rightarrow L^{r}(\mathbb{R}^2)
$$ 
for $ 1 < p_1, p_2, q_1, q_2, s \leq \infty$, $r > 0$, $(p_1, q_1), (p_2,q_2) \neq (\infty, \infty)$ and $\frac{1}{p_1} + \frac{1}{q_1} + \frac{1}{s} = \frac{1}{p_2} + \frac{1}{q_2} + \frac{1}{s} = \frac{1}{r}$, where the first two function spaces are restricted to be tensor product spaces. The condition that $(p_1, q_1), (p_1,q_2) \neq (\infty, \infty)$ is inherited from single-parameter flag paraproducts and can be verified by the unboundedness of the operator when $f_1, f_2 \in L^{\infty}(\mathbb{R})$ are constant functions. Lu, Pipher and Zhang \cite{lpz} showed that the general bi-parameter flag paraproduct can be reduced to an operator given by a symbol with better singularity using an argument inspired by Miyachi and Tomita \cite{mt}. The boundedness of the reduced multiplier operator still remains open. The reduction allows an alternative proof of $L^p$-boundedness for (\ref{bi_flag_mult}) as long as $p \neq \infty$. However, we emphasize again that we will not take this point of view now, and instead, we treat our operator $T_{ab}$ as a five-linear operator.

\subsection{Methodology} \label{section_intro_method}
As one may notice from the last section, the five-linear operator $T_{ab}$ ( or $T_{K_1K_2}$) contains the features of the bi-parameter paraproduct defined in (\ref{bi_pp}) and the single-parameter flag paraproduct defined in (\ref{flag}), which hints that the methodology would embrace localized analyses of both operators. Nonetheless, it is by no means a simple concatenation of two existing arguments. The methodology includes
\begin{enumerate}
\item
\textbf{tensor-type stopping-time decomposition} which refers to an algorithm that first implements a one-dimensional stopping-time decomposition for each variable and then combines information for different variables to obtain estimates for operators involving several variables;

\item
\textbf{general two-dimensional level sets stopping-time decomposition} which refers to an algorithm that partitions the collection of dyadic rectangles such that the dyadic rectangles in each sub-collection intersect with a certain level set non-trivially;
\end{enumerate}
and the main novelty lies in 
\begin{enumerate}[(i)]
\item 
the construction of two-dimensional stopping-time decompositions from stopping-time decompositions on one-dimensional subspaces;
\item
the hybrid of tensor-type and general two-dimensional level sets stopping-time decompositions in a meaningful fashion.
\end{enumerate}

The methodology outlined above is considered to be robust in the sense that it captures all local behaviors of the operator. The robustness may also be verified by the entire range of estimates obtained. After closer inspection of the technique, it would not be surprising that the technique gives estimates involving $L^{\infty}$ norms. 
In particular, the tensor-type stopping-time decompositions process information on each subspaces independently. As a consequence, when some function defined on some subspace lies in $L^{\infty}$, one simply ``forgets" about that function and glues the information from subspaces in an intelligent way specified later.

\subsection{Structure} \label{section_intro_structure}
The paper is organized as follows: main theorems are stated in Section \ref{section_result} followed by preliminary definitions and theorems introduced in Section \ref{section_prelim}. Section \ref{section_discrete_model} describes the reduced discrete model operators and estimates one needs to obtain for the model operators while the reduction procedure is postponed to Appendix II. Section \ref{section_size_energy} gives the definition and estimates for the building blocks in the argument - sizes and energies. Sections \ref{section_thm_haar_fixed}, \ref{section_thm_haar}, \ref{section_thm_inf_fixed_haar} and \ref{section_thm_inf_haar} focus on estimates for the model operators in the Haar case. All four sections start with a specification of the stopping-time decompositions used. 
Section \ref{section_fourier} extends all the estimates in the Haar setting to the general Fourier case.

It is also important to notice that Section \ref{section_thm_haar_fixed} develops an argument for one of the simpler model operators with emphasis on the key geometric feature implied by a stopping-time decomposition, that is the sparsity condition. Section \ref{section_thm_haar} focuses on a more complicated model which requires not only the sparsity condition, but also a Fubini-type argument which is discussed in details.  Sections \ref{section_thm_inf_fixed_haar} and \ref{section_thm_inf_haar} are devoted to estimates involving $L^{\infty}$ norms and the arguments for those cases are similar to the ones in Section \ref{section_thm_haar_fixed}, in the sense that the sparsity condition is sufficient to obtain the results.

\subsection{Acknowledgements.}
We thank Jonathan Bennett for the inspiring conversation we had in Matsumoto, Japan, in
February 2016, that triggered our interest in considering and understanding singular integral generalizations of Brascamp-Lieb inequalities, and, in particular, the study of the present paper. We also thank Guozhen Lu, Jill Pipher and Lu Zhang for discussions about their recent work in \cite{lpz}. Finally, we thank Polona Durcik and Christoph Thiele for the recent conversation which clarified the similarities and differences between the results in \cite{dt} and those in our paper and \cite{bm2}.

The first author was partially supported by a Grant from the Simons Foundation. The second author was partially supported by the ERC Project FAnFArE no. 637510.

\section{Main Results} \label{section_result}
We state the main results in Theorem \ref{main_theorem} and \ref{main_thm_inf}. Theorem \ref{main_theorem} proves the boundedness when $p_i, q_i$ are strictly between $1$ and infinity whereas Theorem \ref{main_thm_inf} deals with the case when $p_i = \infty$ or $q_j = \infty$ for some $i\neq j$.  

\begin{theorem} \label{main_theorem}
Suppose $a \in L^{\infty}(\mathbb{R}^4)$, $b\in L^{\infty}(\mathbb{R}^6)$, where $a$ and $b$ are smooth away from $\{(\xi_1,\xi_2) = 0 \} \cup \{(\eta_1,\eta_2) = 0 \}$ and $\{(\xi_1, \xi_2,\xi_3) = 0 \} \cup \{(\eta_1,\eta_2,\eta_3) = 0\}$ respectively and satisfy the following Marcinkiewicz conditions:
\begin{align*}
& |\partial^{\alpha_1}_{\xi_1} \partial^{\alpha_2}_{\eta_1} \partial^{\beta_1}_{\xi_2} \partial^{\beta_2}_{\eta_2} a(\xi_1,\eta_1, \xi_2,\eta_2)| \lesssim  \frac{1}{|(\xi_1,\xi_2)|^{\alpha_1 + \beta_1}} \frac{1}{|(\eta_1,\eta_2)|^{\alpha_2+\beta_2}}, \nonumber \\
& |\partial^{\bar{\alpha_1}}_{\xi_1} \partial^{\bar{\alpha_2}}_{\eta_1} \partial^{\bar{\beta_1}}_{\xi_2} \partial^{\bar{\beta_2}}_{\eta_2}\partial^{\bar{\gamma_1}}_{\xi_3} \partial^{\bar{\gamma_2}}_{\eta_3}b(\xi_1,\eta_1, \xi_2,\eta_2, \xi_3, \eta_3)| \lesssim  \frac{1}{|(\xi_1,\xi_2, \xi_3)|^{\bar{\alpha_1} + \bar{\beta_1}+\bar{\gamma_1}}} \frac{1}{|(\eta_1,\eta_2, \eta_3)|^{\bar{\alpha_2}+\bar{\beta_2}+ \bar{\gamma_2}}}
\end{align*}
for sufficiently many multi-indices $\alpha_1,\alpha_2,\beta_1,\beta_2, \bar{\alpha_1}, \bar{\alpha_2},\bar{\beta_1},\bar{\beta_2}, \bar{\gamma_1}, \bar{\gamma_2} \geq 0$. For $f_1, f_2, g_1,g_2 \in \mathcal{S}(\mathbb{R})$ and $h \in \mathcal{S}(\mathbb{R}^2)$ where $\mathcal{S}(\mathbb{R})$ and $\mathcal{S}(\mathbb{R}^2)$ denote the Schwartz spaces, define
\begin{align}  \label{bi_flag}
\displaystyle T_{ab}(f_1, f_2, g_1 ,g_2,h)(x,y) := \int_{\mathbb{R}^6} & a(\xi_1,\eta_1,\xi_2,\eta_2) b(\xi_1,\eta_1,\xi_2,\eta_2,\xi_3,\eta_3) \nonumber \\
& \hat{f_1}(\xi_1)\hat{f_2}(\xi_2)\hat{g_1}(\eta_1)\hat{g_2}(\eta_2)\hat{h}(\xi_3,\eta_3) \nonumber \\
& e^{2\pi i x(\xi_1+\xi_2+\xi_3)}e^{2\pi i y(\eta_1+\eta_2+\eta_3)} d \xi_1 d\xi_2 d\xi_3 d\eta_1 d\eta_2 d\eta_3.
\end{align}
Then for $1< p_1, p_2,  q_1, q_2 < \infty, 1 < s \leq \infty$, $r > 0$, $\frac{1}{p_1} + \frac{1}{q_1} + \frac{1}{s} =\frac{1}{p_2} + \frac{1}{q_2} + \frac{1}{s} =   \frac{1}{r} $, $T_{ab}$ satisfies the following mapping property
$$
T_{ab}: L^{p_1}(\mathbb{R}) \times L^{q_1}(\mathbb{R}) \times L^{p_2}(\mathbb{R}) \times L^{q_2}(\mathbb{R}) \times L^{s}(\mathbb{R}^2) \rightarrow L^{r}(\mathbb{R}^2).
$$
\end{theorem}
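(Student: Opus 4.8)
The plan is to reduce the multiplier operator $T_{ab}$ in \eqref{bi_flag} to a finite sum of discrete model operators built from wave packets, prove uniform bounds for those model operators, and then run a scaling/dilation-invariance argument to obtain the full range of exponents, including $r\le 1$. First I would perform the standard reduction: decompose each of the six frequency variables into Littlewood--Paley pieces, exploit the two flag symbols $a(\xi_1,\eta_1,\xi_2,\eta_2)$ and $b(\xi_1,\eta_1,\xi_2,\eta_2,\xi_3,\eta_3)$ to organize the resulting sum according to which frequency is dominant in each of the two ``parameters'' (the $\xi$-group $\{\xi_1,\xi_2,\xi_3\}$ and the $\eta$-group $\{\eta_1,\eta_2,\eta_3\}$, treated independently because the symbols are genuinely bi-parameter), and rewrite everything on the spatial side as a sum over dyadic rectangles $R=I\times J$ of trilinear forms with wave packets adapted to $I$ (in $x$) and $J$ (in $y$). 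Because $a$ and $b$ share the variables $(\xi_1,\eta_1)$ and $(\xi_2,\eta_2)$, the composite symbol has a genuine \emph{flag} structure: this produces model operators in which $f_1\otimes g_1$ and $f_2\otimes g_2$ are each first paired against a paraproduct-type piece coming from $a$, and the output of that is then fed, together with $h$, into a bi-parameter paraproduct coming from $b$. The precise form of these model operators and the reduction are, per the excerpt, carried out in Section~\ref{section_discrete_model} and Appendix~II; for the purposes of the theorem I would cite that reduction and concentrate on the model-operator estimates.

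Next I would dualize: it suffices to bound a six-linear form $\Lambda_{ab}(f_1,f_2,g_1,g_2,h,\phi)$, where $\phi\in L^{r'}(\mathbb{R}^2)$ when $r>1$, and when $r\le 1$ one instead tests against $L^\infty$ functions on the output and uses the standard restricted-type/interpolation scheme (selecting a major subset of a set of finite measure on the output side), so that the genuine content is a single scale-invariant estimate that can be interpolated. The core of the argument is then the \textbf{size--energy} machinery of Section~\ref{section_size_energy}: one bounds the discrete form by a product of ``sizes'' and ``energies'' attached to each function, where for the one-variable functions $f_1,f_2,g_1,g_2$ these are the classical one-dimensional quantities and for the two-variable function $h$ (and for the output $\phi$) they are genuinely two-dimensional. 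The combinatorial heart — and the main obstacle — is that the relevant family of dyadic rectangles does \emph{not} enjoy Hölder scaling, so one cannot simply invoke the bi-parameter paraproduct theory of \cite{cptt} as a black box; instead one must run the \emph{tensor-type stopping-time decomposition}, building a two-dimensional stopping-time structure by first performing one-dimensional stopping times in the $x$-variable (for the $f$'s) and in the $y$-variable (for the $g$'s) and then gluing, and combine it with a \emph{general two-dimensional level-set stopping-time decomposition} adapted to $h$ and $\phi$. On each resulting sub-collection the rectangles satisfy a sparsity (Carleson-type) condition, and one sums the local estimates geometrically against the stopping-time parameters; the flag structure forces one extra, nested, stopping time relative to the plain bi-parameter case, which is exactly the Fubini-type argument flagged in Section~\ref{section_thm_haar}.

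Having proved the model-operator bound for one choice of exponents in the open range $1<p_1,p_2,q_1,q_2<\infty$, $1<s\le\infty$ with $\tfrac1{p_1}+\tfrac1{q_1}+\tfrac1s=\tfrac1{p_2}+\tfrac1{q_2}+\tfrac1s=\tfrac1r$, I would obtain the full stated range by multilinear interpolation (in the spirit of the interpolation of analytic families / the multilinear Marcinkiewicz interpolation used throughout the Coifman--Meyer and time-frequency literature), together with the observation that the $L^\infty$ endpoints in $s$ are handled by the same argument with the $h$-energy replaced by an $L^\infty$ bound (this is the ``forgetting a function'' remark in Section~\ref{section_intro_method}, and it is the bridge to Theorem~\ref{main_thm_inf}). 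Finally, the passage from the Haar (dyadic) model to the general smooth symbols $a,b$ is done as in Section~\ref{section_fourier}: one averages over translated/dilated dyadic grids and uses the rapid decay of the wave packets to absorb the non-compact, non-lacunary tails, a step that is by now routine but must be checked to be uniform in all the stopping-time parameters. The one genuinely delicate point throughout is that, because the scaling is not Hölder, every estimate must be tracked as a true inequality (not merely ``up to scaling''), and the two one-dimensional stopping-time towers must be shown to be compatible when superimposed — that compatibility, and the resulting geometric summation, is where essentially all the work lies.
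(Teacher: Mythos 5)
Your proposal follows essentially the same route as the paper: reduction to restricted weak-type estimates for discrete model operators, the size--energy machinery, the tensor-type stopping-time decomposition glued from one-dimensional stopping times in $x$ and $y$, the superposition with a general two-dimensional level-set stopping time, the sparsity/Carleson condition together with the Fubini-type argument for the nested flag sum, and finally the Haar-to-Fourier generalization. One small inaccuracy: the paper's passage from the Haar model to general bump functions is not done by averaging over translated dyadic grids but by decomposing each bump function into compactly supported pieces with rapid decay (Lemma~\ref{decomp_compact}) and, for the biest trick, by exploiting compact Fourier supports rather than spatial supports; this is cosmetic, not a different method.
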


\begin{theorem} \label{main_thm_inf}
Let $T_{ab}$ be defined as (\ref{bi_flag}). 
Then for $1< p < \infty$, $1 < s \leq \infty$, $r >0$, $\frac{1}{p} + \frac{1}{s} = \frac{1}{r}$, $T_{ab}$ satisfies the following mapping property
\begin{align*}
T_{ab}: & L^{p}(\mathbb{R})  \times L^{\infty}(\mathbb{R}) \times L^{p}(\mathbb{R}) \times L^{\infty}(\mathbb{R}) \times L^{s}(\mathbb{R}^2) \rightarrow L^{r}(\mathbb{R}^2) \nonumber 
\end{align*}
where $p_1 = p_2 = p$ as imposed by (\ref{bl_exp}).
\end{theorem}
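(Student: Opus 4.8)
The plan is to run the same machine as for Theorem~\ref{main_theorem}. First I would invoke the reduction of Section~\ref{section_discrete_model} (carried out in Appendix~II) to replace $T_{ab}$ by a finite list of discrete Haar model operators, reduce matters to a uniform bound for each model operator, and --- once the Haar estimates are in hand --- transfer them to the Fourier setting through Section~\ref{section_fourier}. Since $1/r = 1/p + 1/s$ may exceed $1$, I would not dualize; instead I would prove the corresponding restricted-type estimates for every admissible exponent tuple (with the usual convention allowing the reciprocal exponent of the dummy function to be negative) and conclude by the multilinear interpolation recalled in Section~\ref{section_prelim}. As usual the flag symbol $a\cdot b$ yields, after localizing in frequency, finitely many sub-model operators organized by which frequency is smallest; this is where the two model operators of Sections~\ref{section_thm_inf_fixed_haar} and \ref{section_thm_inf_haar} come from, and both are of the shape treated in Section~\ref{section_thm_haar_fixed}.

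The one new ingredient relative to Theorem~\ref{main_theorem} is that $f_2$ and $g_2$ only belong to $L^{\infty}(\mathbb{R})$. The point is that, for an arbitrary collection of dyadic intervals, the $x$-size attached to $f_2$ is dominated by $\|f_2\|_{L^{\infty}}$ and the $y$-size attached to $g_2$ by $\|g_2\|_{L^{\infty}}$, both uniformly; consequently one never stops with respect to $f_2$ or $g_2$. These two functions are simply ``forgotten'', contributing the overall constant $\|f_2\|_{L^{\infty}}\|g_2\|_{L^{\infty}}$, and the inner $a$-paraproduct collapses to multiplication by bounded factors. What remains is a paraproduct-type estimate in $f_1\in L^{p}(\mathbb{R})$, $g_1\in L^{p}(\mathbb{R})$ and the genuinely two-dimensional $h\in L^{s}(\mathbb{R}^{2})$, where $1/p+1/s=1/r$ is the constraint forced by (\ref{bl_exp}). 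Here I would carry out the tensor-type stopping-time decomposition: a one-dimensional stopping time in the $x$-variable sorting the intervals $I$ by the size of $f_1$, an analogous one-dimensional stopping time in the $y$-variable sorting the intervals $J$ by the size of $g_1$, and then the combination of the two into a decomposition of the dyadic rectangles $R=I\times J$ that inherits a sparsity (Carleson-type packing) condition from both one-dimensional ones. The function $h$ is then controlled by the general two-dimensional level-set stopping-time decomposition, but --- exactly as in Section~\ref{section_thm_haar_fixed} and in contrast to Section~\ref{section_thm_haar} --- the sparsity condition by itself is enough, and no Fubini-type refinement is needed.

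It then remains to sum. On each sub-collection the relevant portion of the linearized form is bounded by the product of the stopped sizes of $f_1$ and $g_1$, the energies of $h$ and of the dummy function, and the measure of the associated stopping set; the sparsity condition together with the size and energy estimates of Section~\ref{section_size_energy} give, after summing over scales and sub-collections, a bound $\lesssim \|f_1\|_{L^{p}}\|g_1\|_{L^{p}}\|h\|_{L^{s}}$ (when $s=\infty$ the $h$-energy is just $\|h\|_{L^{\infty}}$). Restoring the factor $\|f_2\|_{L^{\infty}}\|g_2\|_{L^{\infty}}$ and interpolating over all admissible tuples yields the full range, including $r<1$. I expect the principal difficulty to be bookkeeping rather than conceptual: one must check that the tensor-type two-dimensional stopping time built from the two one-dimensional ones is compatible with the two-dimensional level-set decomposition used for $h$ --- the hybrid step (ii) of Section~\ref{section_intro_method}. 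The simplification that makes the $L^{\infty}$ case tractable is precisely that, once $f_2$ and $g_2$ are discarded, this hybrid collapses to the already-settled pattern of Section~\ref{section_thm_haar_fixed}, so essentially all of the genuine work has been front-loaded into the general machinery.
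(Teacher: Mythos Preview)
Your overall architecture is right and matches the paper: reduce to discrete models, prove Haar estimates, lift to the Fourier case, and interpolate. For the model $\Pi_{\text{flag}^{\#_1}\otimes\text{flag}^{\#_2}}$ your description is exactly what Section~\ref{section_thm_inf_fixed_haar} does: a tensor-type level-set stopping time on $f_1$ and $g_1$ only, the trivial $L^\infty$ size bound for $f_2$ and $g_2$, the general two-dimensional level-set decomposition for $h$ and $\chi_{E'}$, and then sparsity.

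There is, however, a concrete gap in your treatment of the model $\Pi_{\text{flag}^0\otimes\text{flag}^0}$. The phrase ``the inner $a$-paraproduct collapses to multiplication by bounded factors'' is not correct: $B_I^H(f_1,f_2)$ is still an infinite sum over $K\supseteq I$, and a stopping time that sorts $I$ by the size of $f_1$ alone does \emph{not} control $\frac{|\langle B_I^H(f_1,f_2),\varphi_I\rangle|}{|I|^{1/2}}$. Knowing $I\cap\{Mf_1\le C2^{n_1}\|f_1\|_p\}\neq\emptyset$ and $f_2\in L^\infty$ gives no uniform bound on this average, because the paraproduct sum does not telescope. This is precisely the obstacle that forced the Fubini argument in Section~\ref{section_thm_haar}, and ``forgetting'' $f_2$ does not remove it.

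The paper resolves this differently and more simply (Section~\ref{section_thm_inf_haar}): since $\frac{1}{p}+\frac{1}{\infty}=\frac{1}{p}\le 1$, one lets $t=p\ge 1$ and works with the \emph{global} paraproducts $B(f_1,f_2)$ and $\tilde B(g_1,g_2)$, which lie in $L^t$ by Lemma~\ref{B_global_norm}. One then builds the exceptional set out of level sets of $MB$ and $M\tilde B$ (not of $Mf_1$, $Mg_1$), so that on surviving rectangles one has directly
\[
\frac{|\langle B_I,\varphi_I\rangle|}{|I|^{1/2}}\cdot\frac{|\langle \tilde B_J,\varphi_J\rangle|}{|J|^{1/2}}\lesssim \|B\|_t\,\|\tilde B\|_t\lesssim \|f_1\|_p\|f_2\|_\infty\|g_1\|_p\|g_2\|_\infty.
\]
With this macro-control in hand, no tensor-type stopping time is needed at all for this model; only the two-dimensional level-set decomposition for $h$ and $\chi_{E'}$ remains. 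So your plan should be amended: keep your Section~\ref{section_thm_haar_fixed}-style argument for the fixed-scale models, but for $\Pi_{\text{flag}^0\otimes\text{flag}^0}$ switch to the global-$\|B\|_t$ route of Section~\ref{section_thm_inf_haar}.
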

\begin{remark}
The cases $(i)  q_1 = q_2 < \infty$ and $p_1= p_2= \infty$ $(ii) p_1 = q_2 < \infty$ and $p_2 = q_1 = \infty$ $(iii) q_1 = p_2 < \infty$ and $p_1 = q_2 = \infty$ follows from the same argument by symmetry.
\end{remark}

\subsection{Restricted Weak-Type Estimates} \label{section_result_rw}
For the Banach estimates when $r > 1$, H\"older's inequality involving maximal function operator, square function operator and hybrid operators (Definition \ref{def_hybrid}) is sufficient. The argument resembles the Banach estimates for the single-parameter flag paraproduct. The quasi-Banach estimates when $r < 1$ is trickier and requires a careful treatment. In this case, we use multilinear interpolations and reduce the desired estimates specified in Theorem \ref{main_theorem} and Theorem \ref{main_thm_inf}  to the following restricted weak-type estimates for the associated multilinear form\footnote{Multilinear form, denoted by $\Lambda$, associated to an n-linear operator $T(f_1, \ldots, f_n)$ is defined as $\Lambda(f_1, \ldots, f_n, f_{n+1}) := \langle T(f_1,\ldots, f_n), f_{n+1}\rangle $.}.
\begin{theorem}\label{thm_weak}
Let $T_{ab}$ denote the operator defined in (\ref{bi_flag}). Suppose that $1< p_1, p_2,  q_1, q_2 < \infty, 1 < s <2$, $0 < r <1$, $\frac{1}{p_1} + \frac{1}{q_1} + \frac{1}{s} =\frac{1}{p_2} + \frac{1}{q_2} + \frac{1}{s} =   \frac{1}{r}$. Then for every measurable set $F_1, F_2, G_1, G_2  \subseteq \mathbb{R}, E \subset \mathbb{R}^2$ of positive and finite measure and every measurable function $|f_1(x)| \leq \chi_{F_1}(x)$, $|f_2(x)| \leq \chi_{F_2}(x)$, $|g_1(y)| \leq \chi_{G_1}(y)$, $|g_2(y)| \leq \chi_{G_2}(y)$, $h \in L^{s}(\mathbb{R}^2)$, there exists $E' \subseteq E$ with $|E'| > |E|/2$ such that the multilinear form associated to $T_{ab}$ satisfies
\begin{equation} \label{thm_weak_explicit}
|\Lambda(f_1, f_2, g_1, g_2, h,\chi_{E'}) | \lesssim |F_1|^{\frac{1}{p_1}} |G_1|^{\frac{1}{p_2}} |F_2|^{\frac{1}{q_1}} |G_2|^{\frac{1}{q_2}} \|h\|_{L^{s}(\mathbb{R}^2)}|E|^{\frac{1}{r'}},
\end{equation}
where $r'$ represents the conjugate exponent of $r$.
\end{theorem}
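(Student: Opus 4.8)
The plan is to discretize the multilinear form $\Lambda(f_1,f_2,g_1,g_2,h,\chi_{E'})$ into a finite sum of model forms. Each model form is a sum over pairs of dyadic intervals $(I,J)$ (one for the $x$-variable, one for the $y$-variable), weighted by wave packet coefficients associated to the two symbols $a$ and $b$; the flag structure of $b$ sitting on top of $a$ means the model operator carries a paraproduct-type singularity inside a paraproduct-type singularity in each of the two parameters. The reduction to model forms — discretization of the symbols, splitting into finitely many $2^{10}$-type sub-sums, passing to the Haar (perfect dyadic) setting first and then upgrading to the general Fourier case — is exactly the content of Sections \ref{section_discrete_model}--\ref{section_fourier} as announced, so I would cite that machinery and work directly with a model form. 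The first concrete step is therefore: fix the model form $\Lambda_{\vec{\mathcal{D}}}$ and record that it suffices to prove the restricted-weak-type bound \eqref{thm_weak_explicit} for it, uniformly in the discretization parameters.

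The second step is to set up the exceptional set $E'$ and the stopping-time (size/energy) apparatus. I would define $E'$ by excising the region where the relevant maximal functions of $\chi_{F_1},\chi_{F_2}$ (in $x$), $\chi_{G_1},\chi_{G_2}$ (in $y$), the maximal/square functions of $\chi_E$ (in both variables), and the hybrid operators from Definition \ref{def_hybrid} are large relative to the normalized densities $|F_1|,\dots,|E|$; a Hardy–Littlewood/Fefferman–Stein argument then gives $|E\setminus E'|<|E|/2$. On $E'$ one has pointwise control of all these auxiliary quantities, which translates into the key \emph{sparsity} condition on the dyadic rectangles appearing in $\Lambda_{\vec{\mathcal{D}}}$. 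The main structural move — and the novelty flagged in Section \ref{section_intro_method} — is to build a \emph{two-dimensional} stopping-time decomposition of the collection of rectangles $I\times J$ out of the one-dimensional stopping-time decompositions in $x$ and in $y$ separately (the "tensor-type" algorithm), and then to refine it by a genuinely two-dimensional level-set stopping time adapted to $h$ and to $\chi_{E'}$ (the "general two-dimensional level sets" algorithm). Concretely: run the one-dimensional flag-paraproduct stopping time for the $(f_1,f_2)$ pair along $x$ producing a sparse family of $x$-intervals indexed by sizes, do the same for $(g_1,g_2)$ along $y$, take the product family, and within each product cell run the bi-parameter level-set decomposition governed by $\|h\|_{L^s(L^s)}$ and by the density of $E'$.

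The third step is the summation. On each stopping-time cell the operator is essentially localized to a rectangle $R_0$ of controlled size, and one estimates the local piece of $\Lambda_{\vec{\mathcal{D}}}$ by a product of $L^\infty$-type bounds on the $x$-subspace (from the $f$'s), $L^\infty$-type bounds on the $y$-subspace (from the $g$'s), and an $L^s$ bound for $h$ on $R_0$, each with an exponential gain in the stopping-time index coming from sparsity. Because the $x$- and $y$-stopping times were chosen independently, the gains tensorize, and one sums a geometric series in each parameter; the two-dimensional level-set index is summed using the Fubini-type argument (the same one highlighted in Section \ref{section_thm_haar} as the point of genuine bi-parameter interaction). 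Interpolating the resulting off-diagonal estimates — here one pays the usual price of losing a small power and compensating by the factor $|E|^{1/r'}$ rather than $|E'|^{1/r'}$, using $|E'|\sim|E|$ — yields \eqref{thm_weak_explicit} with the stated exponents, where the constraint $1<s<2$ enters exactly so that the square-function/energy estimates for the two-variable function $h$ are available. Finally, Section \ref{section_fourier}'s transference removes the Haar restriction.

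The main obstacle is precisely step two: reconciling the two stopping-time philosophies. The one-dimensional tensor-type decomposition is blind to the joint distribution of $h$ and $E'$ over rectangles, while the two-dimensional level-set decomposition does not respect the product structure that makes the $f$- and $g$-contributions separable; making them compatible — so that a rectangle's membership in a tensor cell and its membership in a level-set cell can be used simultaneously without double-counting and without destroying either the sparsity gains or the Fubini step — is the technical heart of the paper and is where I expect essentially all of the work to go. Everything downstream (the per-cell $L^\infty\times L^\infty\times L^s$ estimate, the geometric summation, the interpolation) is, by comparison, routine given the single-parameter flag paraproduct and bi-parameter paraproduct precedents.
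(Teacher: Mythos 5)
Your proposal captures the overall architecture of the paper's argument: reduce to discrete model operators, build $E'$ by excising a two-dimensional exceptional set, run a tensor-type stopping-time decomposition (built from two one-dimensional decompositions) together with a general two-dimensional level-set decomposition, exploit a sparsity property of the tensor-type decomposition, sum and interpolate, then transfer from the Haar model to the Fourier case. You also correctly identify the compatibility of the two stopping-time philosophies as the technical heart. So at the level of plan, this is essentially the paper's route.

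There is, however, a genuine gap in your Step~3, which you flag as ``routine.'' For the hardest model operator $\Pi_{\text{flag}^0\otimes\text{flag}^0}$ in the range $\frac{1}{p_1}+\frac{1}{q_1}=\frac{1}{p_2}+\frac{1}{q_2}>1$, the per-cell ``$L^\infty\times L^\infty\times L^s$'' estimate you describe --- essentially a size estimate for $\langle B_I(f_1,f_2),\varphi_I\rangle/|I|^{1/2}$ localized to a level set of $Mf_1,Mf_2$, as in Proposition~\ref{size_cor} --- is \emph{not} sufficient. The size of the localized $B_I^H$ can only be compared to $\|B^H(f_1,f_2)\|_1$ (a global quantity), and that comparison loses exactly the localization information needed to get the sharp exponents of $|F_1|,|F_2|$. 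What closes the argument is the \emph{local energy} estimate (Proposition~\ref{B_en}), which requires a second tensor-type stopping-time decomposition by maximal intervals (trees $T\in\mathbb{T}_{l_1}$, $S\in\mathbb{S}_{l_2}$ for the sequences $(\langle B_I^H,\varphi_I^H\rangle/|I|^{1/2})_I$, $(\langle \tilde B_J^H,\varphi_J^H\rangle/|J|^{1/2})_J$), and which in turn hinges on the biest trick --- the observation that in the Haar (or frequency-compactly-supported) setting, $\langle B_{I_T}^H(f_1,f_2),\varphi_{I_T}^H\rangle = \langle B^H(f_1,f_2),\varphi_{I_T}^H\rangle$, so the tree-top controls $MB^H$ pointwise and the summation over tree-tops becomes an honest energy. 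The Fubini-type argument of Section~\ref{section_thm_haar_ns_fubini} is then applied to these tensor-type indices $l_2,n_2,m_2$ (estimating $|\bigcup_{T\times S} I_T\times J_S|$ by the product of the one-dimensional projections and converting each factor into a local energy), not to the two-dimensional level-set indices $k_1,k_2$ as you state; the latter are handled by the sparsity condition and the boundedness of $MM$ and $SS$, $(SS)^H$. Without this extra layer --- second stopping time, biest trick, local energies, Fubini over the \emph{tensor} indices --- the summation you sketch would not close in the flag-over-flag range.
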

\vskip.15in
\begin{theorem} \label{thm_weak_inf}
Let $T_{ab}$ denote the operator defined in (\ref{bi_flag}). Suppose that $1< p < \infty$, $1 < s < 2$, $0 < r < 1$, $\frac{1}{p} + \frac{1}{s} = \frac{1}{r}$. Then for every measurable set 
$E \subset \mathbb{R}^2$ of positive and finite measure and every measurable function 
$f_1,g_1 \in L^p(\mathbb{R})$, $f_2, g_2 \in L^{\infty}(\mathbb{R})$, $h \in L^{s}(\mathbb{R}^2)$, there exists $E' \subseteq E$ with $|E'| > |E|/2$ such that the multilinear form associated to $T_{ab}$ satisfies
\begin{equation} \label{thm_weak_inf_explicit}
|\Lambda(f_1, f_2, g_1, g_2, h,\chi_{E'}) | \lesssim \|f_1\|_{L^p(\mathbb{R})} \|f_2\|_{L^{\infty}(\mathbb{R})}  \|g_1\|_{L^p(\mathbb{R})} \|g_2\|_{L^{\infty}(\mathbb{R})} \|h\|_{L^s(\mathbb{R}^2)}|E|^{\frac{1}{r'}},
\end{equation}
where $r'$ represents the conjugate exponent of $r$.
\end{theorem}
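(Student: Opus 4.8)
The plan is to follow the two-stage scheme of the paper. First I would reduce Theorem~\ref{thm_weak_inf} to a uniform bound for the discrete Haar model forms, and then establish that bound by a stopping-time decomposition in which the two $L^{\infty}$ inputs are ``forgotten''. By the multilinearity of $\Lambda$ we may normalize so that $\|f_1\|_{L^p}=\|g_1\|_{L^p}=\|h\|_{L^s}=1$ and $\|f_2\|_{L^\infty}=\|g_2\|_{L^\infty}=1$ (so $|f_2|,|g_2|\le 1$), and by density assume all functions are Schwartz. Since $1/r'=1-1/p-1/s$, the goal becomes: for every $E\subset\mathbb{R}^2$ of finite positive measure produce $E'\subseteq E$ with $|E'|>|E|/2$ and $|\Lambda(f_1,f_2,g_1,g_2,h,\chi_{E'})|\lesssim |E|^{1/r'}$. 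Using the reduction to discrete model operators carried out in Appendix~II and recalled in Section~\ref{section_discrete_model}, together with the passage from the Haar to the general Fourier setting in Section~\ref{section_fourier}, it suffices to prove this for the finitely many Haar model forms, each a sum over dyadic rectangles $R=I\times J$ of the schematic shape
\[
\Lambda_{\mathcal{R}}=\sum_{R=I\times J\in\mathcal{R}}\frac{c_{R}}{|I|^{1/2}|J|^{1/2}}\,\langle f_1,\phi^1_I\rangle\,\langle f_2,\phi^2_I\rangle\,\langle g_1,\psi^1_J\rangle\,\langle g_2,\psi^2_J\rangle\,\langle h,\phi_I\otimes\psi_J\rangle\,\langle \chi_{E'},\widetilde\phi_I\otimes\widetilde\psi_J\rangle ,
\]
where the $\phi,\psi$ are $L^2$-normalized Haar wave packets adapted to $I$ or $J$ and $|c_{R}|\lesssim 1$ encodes the flag structure carried by $a$ and $b$; the ``fixed-scale'' variant of Section~\ref{section_thm_inf_fixed_haar} and the general variant of Section~\ref{section_thm_inf_haar} are handled by the same mechanism.

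Second, I would construct the major subset in the standard way: define an exceptional set $\Omega\subseteq\mathbb{R}^2$ out of Hardy--Littlewood maximal functions of the data --- the one-variable maximal functions of $|f_1|^p$ and $|g_1|^p$ in the $x$- and $y$-directions, and the strong (bi-parameter) maximal functions of $|h|^s$ and of $\chi_E$ --- with thresholds calibrated to $|E|$ so that $|\Omega\cap E|<|E|/2$; the negative power of $|E|$ in the target, which is precisely the room afforded by $r<1$, is what makes such a calibration possible. Set $E':=E\setminus\Omega$. The geometric consequence, exactly as in the single-parameter flag paraproduct and bi-parameter paraproduct arguments, is that any rectangle $R=I\times J$ contributing to $\Lambda_{\mathcal{R}}$ must meet $E'$ and so is not buried in $\Omega$; this controls the relative density $|R\cap E'|/|R|$ and, more importantly, yields the sparsity condition that drives the summation below.

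Third, and this is the substance, I would run the tensor-type stopping-time decomposition. Because $|f_2|,|g_2|\le 1$, one has $|\langle f_2,\phi^2_I\rangle|\lesssim|I|^{1/2}$ and $|\langle g_2,\psi^2_J\rangle|\lesssim|J|^{1/2}$ uniformly, so $f_2$ and $g_2$ contribute only the factors that cancel the normalizing powers $|I|^{-1/2}|J|^{-1/2}$: one simply ``forgets'' them and they enter no stopping time. Then I would organize the $x$-intervals $\{I\}$ into level collections $\mathbf{I}_n$ according to the size of $\langle f_1,\phi^1_I\rangle$, the $y$-intervals $\{J\}$ into $\mathbf{J}_m$ according to $\langle g_1,\psi^1_J\rangle$, reassemble the two-dimensional collection by pairing $\mathbf{I}_n$ with $\mathbf{J}_m$, and interleave this with the general two-dimensional level-set stopping time governed by $h$ and by $\chi_{E'}$. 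On each resulting piece, the size/energy estimates of Section~\ref{section_size_energy} applied in each variable, together with H\"older in the wave-packet coefficients and the sparsity of Step~2, give a bound summable as a geometric series in $n$, $m$ and in the $h$- and $\chi_{E'}$-stopping indices; carrying out the summation and recalling that the five input norms equal $1$ produces $|E|^{1/r'}$. Only the sparsity condition is needed here --- no Fubini-type refinement as in Section~\ref{section_thm_haar} --- which is the precise sense in which this case parallels Section~\ref{section_thm_haar_fixed}. The hypotheses $1<p<\infty$ and $1<s<2$ enter exactly where the energy estimate for $h$ (a square-function/BMO bound) and the interpolation between sizes and energies require them.

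The hard part will be Step~3: building a genuine two-dimensional stopping-time structure out of the one-dimensional stopping times in $x$ and in $y$, and combining it coherently with the two-dimensional level-set decomposition attached to $h$ and $\chi_{E'}$, so that sparsity survives in \emph{both} parameters simultaneously and the multiply-indexed sum remains summable with the correct homogeneity --- this is the novelty (i)--(ii) of the introduction. The $L^\infty$ hypotheses are what make the present case the lighter one: ``forgetting'' $f_2$ and $g_2$ deactivates the inner paraproduct of the flag and leaves an object of essentially bi-parameter paraproduct type in $f_1,g_1,h$, so that, unlike in the general estimate, the sparsity condition alone closes the argument.
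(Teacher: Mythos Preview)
Your plan is correct for the fixed-scale models $\Pi_{\text{flag}^{\#_1}\otimes\text{flag}^{\#_2}}$ (and $\Pi_{\text{flag}^{\#_1}\otimes\text{paraproduct}}$): there your schematic form is accurate, the bound $|\langle f_2,\phi^2_I\rangle|\lesssim|I|^{1/2}$ lets you ``forget'' $f_2,g_2$, and a tensor stopping time on the averages of $f_1,g_1$ together with the two-dimensional level-sets decomposition for $h,\chi_{E'}$ and the sparsity condition closes the estimate---this is exactly the paper's Section~\ref{section_thm_inf_fixed_haar}.

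The gap is in your claim that ``the general variant of Section~\ref{section_thm_inf_haar} [is] handled by the same mechanism''. For $\Pi_{\text{flag}^0\otimes\text{flag}^0}$ the $f_1,f_2$ contribution is not $\langle f_1,\phi^1_I\rangle\langle f_2,\phi^2_I\rangle$ but $\langle B_I(f_1,f_2),\varphi_I\rangle$ with $B_I(f_1,f_2)=\sum_{K:|K|\ge|I|}|K|^{-1/2}\langle f_1,\phi_K^1\rangle\langle f_2,\phi_K^2\rangle\phi_K^3$; even after $|\langle f_2,\phi_K^2\rangle|\le|K|^{1/2}$ the sum over all scales $K\supseteq I$ remains and there is no $\langle f_1,\phi^1_I\rangle$ to run a stopping time on. The paper's Section~\ref{section_thm_inf_haar} therefore does \emph{not} use a tensor stopping time on $f_1,g_1$ at all. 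Instead the exceptional set is built from $MB(f_1,f_2)$ and $M\tilde B(g_1,g_2)$ (with $\|B\|_t,\|\tilde B\|_t$, $t=p$); the biest trick gives $|\langle B_I,\varphi_I\rangle|/|I|^{1/2}\le MB(f_1,f_2)(x)$ for $x\in I$, so $R\cap Enl(\Omega)^c\neq\emptyset$ already yields the \emph{global} bound $\sup_R\frac{|\langle B_I,\varphi_I\rangle|}{|I|^{1/2}}\frac{|\langle\tilde B_J,\varphi_J\rangle|}{|J|^{1/2}}\lesssim\|B\|_p\|\tilde B\|_p\lesssim\|f_1\|_p\|f_2\|_\infty\|g_1\|_p\|g_2\|_\infty$ via Coifman--Meyer. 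After this, only the general two-dimensional level-sets stopping time for $h$ and $\chi_{E'}$ is needed; no tensor decomposition and no sparsity condition enter. So the inner paraproduct is not ``deactivated'' termwise as you suggest---it is treated as a single $L^p$ function.
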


\begin{remark}
Theorem \ref{thm_weak} and \ref{thm_weak_inf} hint the necessity of localization and the major subset $E'$ of $E$ is constructed based on the philosophy to localize the operator where it is well-behaved.
\end{remark}
The reduction of Theorem \ref{main_theorem} and \ref{main_thm_inf} to Theorem \ref{thm_weak} and \ref{thm_weak_inf} respectively will be postponed to Appendix I. In brief, it depends on the interpolation of multilinear forms described in Lemma 9.6 of \cite{cw} and a tensor product version of Marcinkiewicz interpolation theorem.

\vskip .15in

\subsection{Application - Leibniz Rule} \label{section_result_lr}
A direct corollary of Theorem \ref{main_theorem} is a Leibniz rule which captures the nonlinear interaction of waves coming from transversal directions. In general, Leibniz rules refer to inequalities involving norms of derivatives. The derivatives are defined in terms of Fourier transforms. More precisely, for $\alpha \geq 0$ and $f \in \mathcal{S}(\mathbb{R}^d)$ a Schwartz function in $\mathbb{R}^d$, define the homogeneous derivative of $f$ as
\begin{equation*}
D^{\alpha}f := \mathcal{F}^{-1}\left(|\xi|^{\alpha}\f{f}(\xi)\right).
\end{equation*}
Leibniz rules are closely related to boundedness of multilinear operators discussed in Section \ref{section_intro_connection}. For example, the boundedness of one-parameter paraproducts give rise to a Leibniz rule by Kato and Ponce \cite{kp}. For $f, g \in \mathcal{S}(\mathbb{R}^d)$ and $\alpha > 0$ sufficiently large,
\begin{equation} \label{lb_para}
 \| D^{\alpha} (fg)\|_r \lesssim \|D^{\alpha} f \|_{p_1} \|g \|_{q_1} +  \| f \|_{p_2} \|D^{\alpha}g \|_{q_2}
\end{equation}
with $1 < p_i, q_i < \infty, \frac{1}{p_i}+ \frac{1}{q_i} = \frac{1}{r}, i= 1,2.$ 
The inequality in (\ref{lb_para}) generalizes the trivial and well-known Leibniz rule when $\alpha = 1$ and states that the derivative for a product of two functions can be dominated by the terms which involve the highest order derivative hitting on one of the functions. The reduction of (\ref{lb_para}) to the boundedness of one-parameter paraproducts is routine (see Section 2 in \cite{cw} for details) and can be applied to other Leibniz rules with their corresponding multilinear operators, including the boundedness of our operator $T_{ab}$ and its Leibniz rule stated in Theorem \ref{lb_main} below.  The Leibniz rule stated in Theorem \ref{lb_main} deals with partial derivatives, where the partial derivative of $f \in \mathcal{S}(\mathbb{R}^d)$ is defined, for $(\alpha_1,\ldots, \alpha_d)$ with $\alpha_1, \ldots, \alpha_d \geq 0$, as
\begin{equation*}
D_1^{\alpha_1}\cdots D_d^{\alpha_d}f := \mathcal{F}^{-1}\left(|\xi_1|^{\alpha_1} \cdots |\xi_d|^{\alpha_d}\f{f}(\xi_1,\ldots, \xi_d)\right).
\end{equation*}
For the statement of the Leibniz rule, we will adopt the notation $f^x$ for the function $f$ depending on the variable $x$.
\begin{theorem} \label{lb_main}
Suppose $f_1, f_2 \in \mathcal{S}(\mathbb{R})$, $ g_1, g_2 \in \mathcal{S}(\mathbb{R})$ and $h \in \mathcal{S}(\mathbb{R}^2).$ Then for $\beta_1, \beta_2, \alpha_1, \alpha_2 > 0$ sufficiently large and $1 < p^j_1, p^j_2, q^j_1, q^j_2, s^j \leq \infty$, $r >0$, $(p^j_1, q^j_1), (p^j_2, q^j_2) \neq (\infty, \infty)$, $\frac{1}{p^j_1} + \frac{1}{q^j_1} + \frac{1}{s^j}= \frac{1}{p^j_2} + \frac{1}{q^j_2} + \frac{1}{s^j}= \frac{1}{r} $ for each $j = 1, \ldots, 16 $,
\begin{align*}
& \|D_1^{\beta_1} D_2^{\beta_2}(D_1^{\alpha_1}D_2^{\alpha_2}(f_1^x f_2^x g_1^y  g_2^y) h^{x,y})\|_{L^r(\mathbb{R}^2)}  \nonumber \\
 \lesssim & \ \ \text{sum of \ \ }16 \text{\ \ terms of the forms: \ \ }
\nonumber \\ &  \|D_1^{\alpha_1+\beta_1}f_1\|_{L^{p^1_1}(\mathbb{R})} \|f_2\|_{L^{q^1_1}(\mathbb{R})}  \|D_2^{\alpha_2 + \beta_2}g_1\|_{L^{p^1_2}(\mathbb{R})} \|g_2\|_{L^{q^1_2}(\mathbb{R})} \|h\|_{L^{s^1}(\mathbb{R}^2)} + 
\nonumber \\ & \|f_1\|_{L^{p^2_1}(\mathbb{R})} \|D_1^{\alpha+\beta_1}f_2\|_{L^{q^2_1}(\mathbb{R})} \|D_2^{\alpha_2 + \beta_2}g_1\|_{L^{p^2_2}(\mathbb{R})} \|g_2\|_{L^{q^2_2}(\mathbb{R})} \|h\|_{L^{s^2}(\mathbb{R}^2)} + 
\nonumber \\ & \|D_1^{\alpha+\beta_1}f_1\|_{L^{p^3_1}(\mathbb{R})} \|f_2\|_{L^{q^3_1}(\mathbb{R})} \|D_2^{\alpha_2}g_1\|_{L^{p^3_2}(\mathbb{R})} \|g_2\|_{L^{q^3_2}(\mathbb{R})} \|D_2^{\beta_2}h\|_{L^{s^3}(\mathbb{R}^2)} + \ldots
\end{align*}
\end{theorem}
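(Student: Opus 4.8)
The plan is to derive Theorem \ref{lb_main} from Theorem \ref{main_theorem} by the standard reduction of Leibniz rules to boundedness of the associated multilinear paraproduct-type operator, following the scheme sketched in Section 2 of \cite{cw} for \eqref{lb_para} but carried out in the bi-parameter, five-linear setting. First I would insert Littlewood--Paley decompositions: write $1 = \sum_{k_1} \widehat{\psi_{k_1}}(\xi)$ in the first frequency variable and similarly in the second, applied to each of the five inputs $f_1, f_2, g_1, g_2, h$. The product $f_1^x f_2^x g_1^y g_2^y$ has $x$-frequency support contained in $\xi_1 + \xi_2$ and $y$-frequency support in $\eta_1 + \eta_2$; hitting this with $D_1^{\alpha_1} D_2^{\alpha_2}$ introduces the factor $|\xi_1+\xi_2|^{\alpha_1}|\eta_1+\eta_2|^{\alpha_2}$, and then multiplying by $h^{x,y}$ and applying $D_1^{\beta_1} D_2^{\beta_2}$ introduces $|\xi_1+\xi_2+\xi_3|^{\beta_1}|\eta_1+\eta_2+\eta_3|^{\beta_2}$. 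The whole expression is therefore $T_{\widetilde a \widetilde b}(f_1,f_2,g_1,g_2,h)$ for
\[
\widetilde a = |\xi_1+\xi_2|^{\alpha_1}|\eta_1+\eta_2|^{\alpha_2}, \qquad \widetilde b = |\xi_1+\xi_2+\xi_3|^{\beta_1}|\eta_1+\eta_2+\eta_3|^{\beta_2},
\]
which are of course not bounded symbols. The device, exactly as in the classical Kato--Ponce argument, is to split into finitely many regions according to which of the frequency magnitudes is dominant, and on each region factor out the largest one as a ``derivative landing on a single input'' and absorb the rest into a genuine Marcinkiewicz symbol.

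Concretely, in the first frequency parameter one compares $|\xi_1|, |\xi_2|, |\xi_3|$: on the region where $|\xi_1|$ is the largest (so $|\xi_1| \sim |\xi_1+\xi_2| \sim |\xi_1+\xi_2+\xi_3|$ up to the usual care with cancellation handled by the Littlewood--Paley pieces), one writes $|\xi_1+\xi_2|^{\alpha_1}|\xi_1+\xi_2+\xi_3|^{\beta_1} = |\xi_1|^{\alpha_1+\beta_1} \cdot m(\xi_1,\xi_2,\xi_3)$ where $m$ now satisfies the required symbol bounds; the factor $|\xi_1|^{\alpha_1+\beta_1}\widehat{f_1}(\xi_1)$ is recognized as $\widehat{D_1^{\alpha_1+\beta_1} f_1}$. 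One does the analogous trichotomy independently in the $\eta$ (second) parameter, comparing $|\eta_1|,|\eta_2|,|\eta_3|$, which reassigns the full $\beta_2+\alpha_2$ worth of $y$-derivatives onto one of $g_1$, $g_2$, or $h$. The product of a ``which $\xi$ is biggest'' choice (3 options, but the cases where $\xi_3$ dominates and $\eta_3$ dominates need the two-parameter care — actually there are $3\times 3 = 9$ raw cases, which after grouping the ones producing the same type of term collapse to the $16$ listed terms, matching the count in the single-parameter flag Leibniz rule \eqref{lb2} squared divided appropriately; I would simply enumerate them and observe that the list of $16$ exhausts all placements). On each resulting piece one applies Theorem \ref{main_theorem} (or Theorem \ref{main_thm_inf} when an $L^\infty$ index occurs, which is why the hypotheses allow $p^j_i, q^j_i = \infty$ with the non-$(\infty,\infty)$ restriction and $s^j \le \infty$) to the reduced operator with the genuine Marcinkiewicz symbols, yielding precisely the corresponding term on the right-hand side.

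The one structural subtlety — and the step I expect to be the main obstacle to state cleanly rather than technically hard — is that $T_{ab}$ in \eqref{bi_flag} carries the \emph{flag} structure $a \cdot b$ with $a$ depending only on $(\xi_1,\eta_1,\xi_2,\eta_2)$ and $b$ on all six variables, so when I factor out $|\xi_1|^{\alpha_1+\beta_1}$ etc.\ I must make sure the leftover symbol still splits as (a bounded Marcinkiewicz symbol in $4$ variables)$\times$(one in $6$ variables) respecting this flag pattern, i.e.\ that the $\alpha$-derivatives, which ``see'' only $\xi_1+\xi_2$, get absorbed into the $a$-factor and the $\beta$-derivatives into the $b$-factor, with the dominant-frequency cutoffs likewise respecting the split. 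Since $|\xi_1+\xi_2|^{\alpha_1}$ is already a function of $(\xi_1,\xi_2)$ alone, on the region $|\xi_1| \gtrsim |\xi_2|$ we get $|\xi_1+\xi_2|^{\alpha_1} = |\xi_1|^{\alpha_1} m_a(\xi_1,\xi_2)$ with $m_a \in \mathcal{M}(\mathbb{R}^2)$, and $|\xi_1+\xi_2+\xi_3|^{\beta_1}$ is handled inside $b$ by the trichotomy on $|\xi_1+\xi_2|$ versus $|\xi_3|$; combining the two layers of comparison is the only place requiring attention. Finally I would note that the smooth Littlewood--Paley cutoffs can be incorporated into $a$ and $b$ without destroying the Marcinkiewicz bounds, that finitely many boundary terms where two frequencies are comparable are handled by the same estimates, and that summing the (at most $16$) pieces gives the claimed inequality; the passage from the a priori estimate on Schwartz functions to the stated form is routine density/limiting argument.
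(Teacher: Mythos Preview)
Your approach is essentially the one the paper has in mind: the paper does not give a separate proof of Theorem \ref{lb_main}, instead pointing to the routine reduction to $T_{ab}$-boundedness as in Section 2 of \cite{cw}, which is exactly the Littlewood--Paley/dominant-frequency scheme you describe, followed by an application of Theorems \ref{main_theorem} and \ref{main_thm_inf}.

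One point worth tightening is your case count. The single trichotomy on $|\xi_1|,|\xi_2|,|\xi_3|$ gives $3\times 3 = 9$ regions, and nine cases do not ``collapse to $16$''. The correct bookkeeping, which you in fact arrive at in your last paragraph, is the two-layer flag comparison: first $|\xi_1|$ versus $|\xi_2|$ (two cases, determining where $\alpha_1$ lands and producing the $a$-symbol), then $|\xi_1+\xi_2|$ versus $|\xi_3|$ (two cases, determining where $\beta_1$ lands and producing the $b$-symbol), and independently the same $2\times 2$ in the $\eta$-variable; this gives $4\times 4 = 16$, matching the remark after Theorem \ref{lb_main}. With that adjustment your outline is complete.
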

\begin{remark}
The reasoning for the number ``16'' is that 
\begin{enumerate} [(i)]
\item
for $\alpha_1$, there are $2$ possible distributions of highest order derivatives thus yielding 2 terms;
\item
for $\alpha_2$, there are $2$ terms for the same reason in (i);
\item
for $\beta_1$, it can hit $h$ or some function which comes from the dominant terms of $D^{\alpha_1}(f_1 f_2)$ and which have two choices as illustrated in (i), thus generating $2 \times 2 = 4$ terms; 
\item 
for $\beta_2$, there would be $4$ terms for the same reason in (iii).
\end{enumerate}
By summarizing (i)-(iv), one has the count $4 \times 4 = 16$.
\end{remark}
\begin{remark}
As commented in the beginning of this section, $f_1$ and $f_2$ in Theorem \ref{lb_main} can be viewed as waves coming from one direction while $g_1$ and $g_2$ are waves from the orthogonal direction.  The presence of $h$, as a generic wave in the plane, makes the interaction nontrivial.  
\end{remark}

\section{Preliminaries} \label{section_prelim}
\subsection{Terminology} \label{section_prelim_term}
We will first introduce the notation which will be useful throughout the paper. 
\begin{definition} \label{bump}
Suppose $I \subseteq \mathbb{R}$ is an interval. Then we say a smooth function $\phi$ is a \textit{bump function adapted to $I$} if 
$$
|\phi^{(l)}(x)| \leq C_l C_M \frac{1}{|I|^l} \frac{1}{\big(1+\frac{|x-x_I|}{|I|}\big)^M}
$$
for sufficiently many derivatives $l$, where $x_I$ denotes the center of the interval $I$ and $M$ is a large positive number.
Moreover, suppose $\mathcal{I}$ is a collection of dyadic intervals and if for any $I \in \mathcal{I}$, $\phi_I$ is an $L^2$-normalized bump function adapted to $I$, then $(\phi_I)_{I \in \mathcal{I}}$ is denoted by \textit{a family of $L^2$-normalized adapted bump functions}. 
\end{definition}

\begin{definition} \label{def_lacunary}
The family of $L^2$-normalized adapted bump functions $(\phi_I)_{I \in \mathcal{I}}$ is \textit{lacunary} if and only if for every $ I \in \mathcal{I}$, 
$$
\text{supp}\ \ \f{\phi_I} \subseteq [-4|I|^{-1}, -\frac{1}{4}|I|^{-1}] \cup [\frac{1}{4}|I|^{-1}, 4|I|^{-1}].
$$
A family of $L^2$-normalized bump functions $(\phi_I)_{I \in \mathcal{I}}$ is \textit{non-lacunary} if and only if for every $ I \in \mathcal{I}$, 
$$
\text{supp}\ \ \f{\phi_I} \subseteq [-\frac{1}{4}|I|^{-1}, \frac{1}{4}|I|^{-1}].
$$
We usually denote bump functions in lacunary family by $(\psi_I)_I$  and those in non-lacunary family by $(\vphi_I)_I$.

\end{definition}

We will now define cutoff functions correspond to adapted bump functions given in Definition \ref{bump}, which can be viewed as simplified variants of bump functions.  

\begin{definition} \label{bump_walsh}
Define 
$$
\psi^H(x) :=  
\begin{cases}
1 \ \ \text{for}\ \  x \in [0,\frac{1}{2})\\
-1 \ \ \text{for}\ \  x \in [\frac{1}{2},1).\\
\end{cases}
$$
Let $I := [n2^{k},(n+1)\cdot2^k)$ denote a dyadic interval. Then the Haar wavelet on $I$ is defined as
$$
\psi^H_I(x) := 2^{-\frac{k}{2}}\psi^H(2^{-k}x-n).
$$
The $L^2$-normalized indicator function on $I$ is expressed as
$$
\vphi^H_I(x) := |I|^{-\frac{1}{2}}\chi_{I}(x).
$$
We adopt the notation $\phi^{H}_I$ for both $\psi^H_I$ and $\vphi_I^H$ and differentiate them by referring $\psi_I^H$ as the \textit{lacunary} case of the cutoff function $\phi_I^H$ and $\vphi_I^H$ as the \textit{non-lacunary} case. We thus have an analogous and discontinuous variant of adapted bump functions. In particular, Haar wavelets correspond to lacunary (or equivalently having Fourier support away from the origin) family of bump functions and $L^2$-normalized indicator functions correspond to non-lacunary family of bump functions.  
\end{definition} 

We shall remark that the boundedness of the multilinear form described in Theorem \ref{thm_weak} and \ref{thm_weak_inf} can be reduced to the estimates of discrete model operators which are defined in terms of bump functions of the form specified in Definition \ref{bump}. The precise statements are included in Theorem \ref{thm_weak_mod} and \ref{thm_weak_inf_mod} and the proof is discussed in Appendix II. However, we will first study the simplified model operators with the general bump functions replaced by Haar wavelets and indicator functions defined in Definition \ref{bump_walsh}. The arguments for the simplified models capture the main challenges while avoiding some technical aspects. We will leave the generalization and the treatment of the technical details to Section \ref{section_fourier}. The simplified models would be denoted as Haar models and we will highlight the occasions when the Haar models are considered. 

\

\subsection{Useful Operators - Definitions and Theorems} \label{section_prelim_useful_op}
We also give explicit definitions for the Hardy-Littlewood maximal function, the discretized Littlewood-Paley square function and the hybrid square-and-maximal functions that will appear naturally in the argument.
\begin{definition}
The \textit{Hardy-Littlewood maximal operator} $M$ is defined as
$$
Mf(\vec{x}) = \sup_{\vec{x} \in B} \frac{1}{|B|} \int_{B}|f(\vec{u})|d\vec{u}
$$
where the supremum is taken over all open balls $B \subseteq \mathbb{R}^d$ containing $\vec{x}$.
\end{definition}


\begin{definition}
Suppose $\mathcal{I}$ is a finite family of dyadic intervals and $(\psi_I)_I$ a lacunary family of $L^2$-normalized bump functions. The \textit{discretized Littlewood-Paley square function operator} $S$ is defined as
 $$
 Sf(x) = \bigg(\sum_{I \in \mathcal{I}}\frac{|\langle f, \psi_I\rangle|^2 }{|I|}\chi_{I}(x)\bigg)^{\frac{1}{2}}.
 $$

\end{definition}

\begin{definition} \label{def_hybrid}
Suppose $\mathcal{R}$ is a finite collection of dyadic rectangles. Let $(\phi_R)_{R \in \mathcal{R}}$ denote the family of $L^2$-normalized bump functions with $\phi_R = \phi_I \otimes \phi_J$ where $R= I \times J$. Let $(\phi^H_R)_{R \in \mathcal{R}}$ denote the family of cutoff functions with $\phi_R^H = \phi^H_I \otimes \phi^H_J$ where $R= I \times J$.
\begin{enumerate}
\item
the \textit{double square function operator} $SS$ is defined as
$$
\displaystyle SSh(x,y) = \bigg(\sum_{I \times J } \frac{|\langle h, \psi_{I} \otimes \psi_J \rangle|^2 }{|I||J|}
\chi_{I \times J} (x,y)\bigg)^{\frac{1}{2}}
$$
and the \textit{Haar double square function operator} $(SS)^H$ is defined as
$$
\displaystyle (SS)^Hh(x,y) = \bigg(\sum_{I \times J } \frac{|\langle h, \psi^H_{I} \otimes \psi^H_J \rangle|^2 }{|I||J|}
\chi_{I \times J} (x,y)\bigg)^{\frac{1}{2}};
$$

\item 
the \textit{hybrid maximal-square operator} $MS$ is defined as
$$
MSh(x,y) = \sup_{I}\frac{1}{|I|^{\frac{1}{2}}} \bigg(\sum_{J} \frac{|\langle h, \vphi_I \otimes \psi_J \rangle|^2}{|J|} \chi_{J}(y)\bigg)^{\frac{1}{2}}\chi_I(x)
$$
and the \textit{Haar hybrid maximal-square operator} $(MS)^H$ is defined as
$$
(MS)^Hh(x,y) = \sup_{I}\frac{1}{|I|^{\frac{1}{2}}} \bigg(\sum_{J} \frac{|\langle h, \vphi^H_I \otimes \psi^H_J \rangle|^2}{|J|} \chi_{J}(y)\bigg)^{\frac{1}{2}}\chi_I(x);
$$

\item 
the \textit{hybrid square-maximal operator} $SM$ is defined as
$$ \displaystyle
SMh(x,y) = \bigg(\sum_{I} \frac{\big(\sup_{J}\frac{|\langle h,\psi_I \otimes \vphi_J \rangle|}{|J|}\chi_J(y) \big)}{|I|}\chi_{I}(x)\bigg)^{\frac{1}{2}}
$$
and the \textit{Haar hybrid square-maximal operator} $(SM)^H$ is defined as
$$ \displaystyle
(SM)^Hh(x,y) = \bigg(\sum_{I} \frac{\big(\sup_{J}\frac{|\langle h,\psi^H_I \otimes \vphi^H_J \rangle|}{|J|}\chi_J(y) \big)}{|I|}\chi_{I}(x)\bigg)^{\frac{1}{2}};
$$

\item
the \textit{double maximal function} $MM$ is defined as
$$
MM h(x,y) = \sup_{(x,y) \in R} \frac{1}{|R|}\int_{R}|h(s,t)| ds dt,
$$
where the supremum is taken over all dyadic rectangles in $\mathcal{R}$ containing $(x,y)$.
\end{enumerate}
\end{definition}

The following theorem about the operators defined above is used frequently in the arguments. The proof of the theorem and other contexts where the hybrid operators appear can be found in \cite{cw}, \cite{cf} and \cite{fs}.
\begin{theorem} \label{maximal-square}
\noindent
\begin{enumerate} 
\item
$M$ is bounded in $L^{p}(\mathbb{R}^{d})$ for $1< p \leq \infty$ and $M: L^{1} \longrightarrow L^{1,\infty}$.
\item
$S$ is bounded in $L^{p}(\mathbb{R})$ for $1< p < \infty$.
\item
The operators $SS, (SS)^H, MS, (MS)^H, SM, (SM)^H, MM$ are bounded in $L^{p}(\mathbb{R}^2)$ for $1 < p < \infty$.
\end{enumerate}
\end{theorem}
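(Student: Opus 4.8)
The plan is to reduce everything to three classical facts---the Hardy--Littlewood maximal theorem, the $L^p$ theory of the discretized Littlewood--Paley square function, and their vector-valued upgrades---and then to build the two-parameter and hybrid operators from these by Fubini.

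Part (1) is the Hardy--Littlewood maximal inequality: the weak-type $(1,1)$ bound comes from a Vitali covering argument, the $L^\infty$ bound is immediate, and Marcinkiewicz interpolation gives $L^p$ for $1<p<\infty$; I will also record the Fefferman--Stein vector-valued extension $M\colon L^p(\mathbb{R}^d;\ell^2)\to L^p(\mathbb{R}^d;\ell^2)$ of \cite{fs}. For part (2), I would first prove the $L^2$ bound for $S$ by almost orthogonality: since $(\psi_I)_I$ is a lacunary $L^2$-normalized family, $\int\sum_I\frac{|\langle f,\psi_I\rangle|^2}{|I|}\chi_I=\sum_I|\langle f,\psi_I\rangle|^2\lesssim\|f\|_2^2$ by a Cotlar--Stein/Bessel estimate. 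Viewing $f\mapsto\big(|I|^{-1/2}\langle f,\psi_I\rangle\,\chi_I\big)_I$ as a Calder\'on--Zygmund operator with values in $\ell^2$ whose kernel satisfies the H\"ormander condition, one obtains the weak-type $(1,1)$ bound, hence $L^p$ for $1<p\le2$ by interpolation and $L^p$ for $2<p<\infty$ by duality; the same scheme upgrades $S$ to an $\ell^2$-valued bound $L^p(\mathbb{R};\ell^2)\to L^p(\mathbb{R};\ell^2)$, which is exactly what lets us iterate in two variables.

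With these in hand, part (3) splits into an easy half and a hybrid half. For the double square functions $SS$ and $(SS)^H$ one reads the sum over $J$ as a one-variable square function in $y$ with $\ell^2(I)$-valued coefficients and the sum over $I$ as a one-variable square function in $x$; two applications of the vector-valued form of part (2) (or, for $(SS)^H$, the elementary martingale square function estimate) give the bound. The double maximal function $MM$ is pointwise dominated by the composition $M_1\circ M_2$ of the one-dimensional Hardy--Littlewood maximal operators in the two variables, so its $L^p(\mathbb{R}^2)$ boundedness for $1<p<\infty$ follows by iterating part (1) (the Jessen--Marcinkiewicz--Zygmund argument). The hybrid operators $MS,(MS)^H,SM,(SM)^H$ are the heart of the matter. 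For $MS$, since $\varphi_I$ is a \emph{non-lacunary} $L^2$-normalized bump (literally $|I|^{-1/2}\chi_I$ in the Haar case), one has, uniformly in $I\ni x$,
\[
\frac{1}{|I|^{1/2}}\,\big|\langle h,\varphi_I\otimes\psi_J\rangle\big|
=\frac{1}{|I|^{1/2}}\,\Big|\int\varphi_I(u)\,\langle h(u,\cdot),\psi_J\rangle\,du\Big|
\ \lesssim\ M_1\big(\langle h(\cdot,\cdot),\psi_J\rangle\big)(x),
\]
the Schwartz tail of $\varphi_I$ being absorbed into the maximal function; pulling $\sup_{I\ni x}$ inside the $\ell^2(J)$-sum (which only helps, since $\sup_I\big(\sum_J a_{I,J}\big)^{1/2}\le\big(\sum_J\sup_I a_{I,J}\big)^{1/2}$) yields the pointwise majorization
\[
MSh(x,y)\ \lesssim\ \Big(\sum_J\frac{\big(M_1 g_J\big)(x)^2}{|J|}\,\chi_J(y)\Big)^{1/2},
\qquad g_J(x):=\langle h(x,\cdot),\psi_J\rangle .
\]
Integrating in $x$ by the Fefferman--Stein $\ell^2$-valued maximal inequality and then in $y$ by part (2) gives $\|MSh\|_{L^p(\mathbb{R}^2)}\lesssim\|h\|_{L^p(\mathbb{R}^2)}$; the operators $SM$ and the Haar variants are handled identically with the two variables---and, for $SM$, the maximal and square functions---interchanged, the inner supremum being controlled by the same bump-to-maximal reduction.

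The step I expect to be the main obstacle is precisely this hybrid family: one has to correctly route a supremum over one dyadic lattice that sits \emph{inside} an $\ell^2$-sum over the other lattice, and recognize that interchanging $\sup$ and $\sum$ here costs nothing and reduces matters to the Fefferman--Stein vector-valued maximal inequality together with the vector-valued Littlewood--Paley estimate rather than to anything genuinely new. The remaining friction is the passage from the clean Haar operators to the general bump operators, where the rapidly decaying tails of $\varphi_I,\psi_I$ must be dominated by maximal functions uniformly over the dyadic grid; this is a standard, if slightly tedious, argument, and following \cite{cw}, \cite{cf}, \cite{fs} I will only sketch it.
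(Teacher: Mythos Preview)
The paper does not supply its own proof of this theorem: immediately before the statement it says that ``the proof of the theorem and other contexts where the hybrid operators appear can be found in \cite{cw}, \cite{cf} and \cite{fs},'' and no argument is given in the text. Your outline---Hardy--Littlewood plus Fefferman--Stein for part (1), vector-valued Calder\'on--Zygmund theory for part (2), and then iterating these via Fubini together with the $\sup$/$\ell^2$ interchange for the hybrid operators in part (3)---is precisely the standard proof carried out in those references, so your approach coincides with what the paper is citing.
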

\vskip.25in

\section{Discrete Model Operators} \label{section_discrete_model}

In this section, we will introduce the discrete model operators whose boundedness implies the estimates specified in Theorem \ref{thm_weak} and Theorem \ref{thm_weak_inf}. The reduction procedure follows from a routine treatment which has been discussed in \cite{cw}. The details will be enclosed in Appendix II for the sake of completeness. The model operators are usually more desirable because they are more ``localizable''. We will first introduce the definition of some ``pararproduct''-type operators - all of which will be useful in the proof of the main theorems.

\begin{definition} \label{B_definition}
Let $\mathcal{Q}$ denote a finite collection of dyadic intervals. Suppose that  $(\phi^i_Q)_{Q \in \mathcal{Q}}$ for $i = 1, 2, 3$ are families of $L^2$-normalized adapted bump functions (Definition \ref{bump}) such that at least two families are lacunary (Definition \ref{def_lacunary}). We define the bilinear operator $B_{\mathcal{Q}}$ by
\begin{align} \label{B_global_definition}
& B_{\mathcal{Q}}(v_1,v_2) := \sum_{Q \in \mathcal{Q}}\frac{1}{|Q|^{\frac{1}{2}}}\langle v_1, \phi_Q^1 \rangle \langle v_2, \phi_Q^2 \rangle \phi_Q^3.
\end{align}
Fix a dyadic interval $P$ and a non-negative number $\#$. We define localized bilinear operators $B_{\mathcal{Q}, P}$ and $B^{\#}_{\mathcal{Q}, P}$ by
\begin{align}
B_{\mathcal{Q}, P}(v_1,v_2) :=&  \sum_{\substack{Q \in \mathcal{Q} \\ |Q| \geq |P|}}\frac{1}{|Q|^{\frac{1}{2}}}\langle v_1, \phi_Q^1 \rangle \langle v_2, \phi_Q^2 \rangle \phi_Q^3, \label{B_local0_haar} \\
B_{\mathcal{Q}, P}^{\#}(v_1,v_2) :=&  \sum_{\substack{Q \in \mathcal{Q} \\ |Q| \sim 2^{\#}|P|}}\frac{1}{|Q|^{\frac{1}{2}}}\langle v_1, \phi_Q^1 \rangle \langle v_2, \phi_Q^2 \rangle \phi_Q^3. \label{B_fixed_fourier}
\end{align}
\end{definition}
We define the analogous localized bilinear operators in the Haar model as follows.
\begin{definition} \label{B_definition_haar}
Let $\mathcal{Q}$ denote a finite collection of dyadic intervals. Suppose that  $(\phi^i_Q)_{Q \in \mathcal{Q}}$ for $i = 1, 2$ are families of $L^2$-normalized adapted bump functions and $(\phi^{3,H}_Q)_{Q \in \mathcal{Q}}$ is a family of $L^2$-normalized cutoff functions (Definition \ref{bump_walsh}) such that at least two of the three families are lacunary. Fix a dyadic interval $P$ and a non-negative number $\#$. We define the bilinear operators 
\begin{align}
B_{\mathcal{Q}}^H(v_1,v_2) :=&  \sum_{\substack{Q \in \mathcal{Q} \\}}\frac{1}{|Q|^{\frac{1}{2}}}\langle v_1, \phi_Q^1 \rangle \langle v_2, \phi_Q^2 \rangle \phi_Q^{3,H}, \label{B_global_haar}\\
B_{\mathcal{Q}, P}^H(v_1,v_2) :=&  \sum_{\substack{Q \in \mathcal{Q} \\ |Q| \geq |P|}}\frac{1}{|Q|^{\frac{1}{2}}}\langle v_1, \phi_Q^1 \rangle \langle v_2, \phi_Q^2 \rangle \phi_Q^{3,H}, \label{B_local_definition_haar}  \\
B_{\mathcal{Q}, P}^{\#,H}(v_1,v_2) :=&  \sum_{\substack{Q \in \mathcal{Q} \\ |Q| \sim 2^{\#}|P|}}\frac{1}{|Q|^{\frac{1}{2}}}\langle v_1, \phi_Q^1 \rangle \langle v_2, \phi_Q^2 \rangle \phi_Q^{3,H}. \label{B_local_definition_haar_fix_scale} 
\end{align}

\end{definition}

The discrete model operators are defined as follows.

\begin{definition} \label{discrete_model_op}
Suppose $\mathcal{I}, \mathcal{J}, \mathcal{K}$, $\mathcal{L}$ are finite collections of dyadic intervals.  Suppose $\displaystyle(\phi^i_I)_{I\in \mathcal{I}}$, $ (\phi^j_J)_{J \in \mathcal{J}}$, 
$i, j, 
= 1, 2, 3$ are families of $L^2$-normalized adapted bump functions. 
We further assume that at least two families of $(\phi^i_I)_{I\in \mathcal{I}}$ for  $i = 1, 2, 3$ are lacunary. Same conditions are assumed for families $ (\phi^j_J)_{J \in \mathcal{J}}$ for $j = 1, 2, 3$.
In some models, we specify the lacunary and non-lacunary families by explicitly denoting the functions in the lacunary family as $\psi$ and those in the non-lacunary family as $\vphi$. Let $\#_1, \#_2$ denote some positive integers. Define \newline
\begin{enumerate}
\item
$$ \Pi_{\text{flag}^0 \otimes \text{paraproduct}}(f_1, f_2, g_1, g_2, h)(x,y) := \displaystyle \sum_{I \times J \in \mathcal{I} \times \mathcal{J}} \frac{1}{|I|^{\frac{1}{2}} |J|} \langle B_{\mathcal{K},I}(f_1,f_2),\vphi_I^1 \rangle \langle g_1,\phi^1_J \rangle \langle g_2, \phi^2_J \rangle \langle h, \psi_I^{2} \otimes \phi_{J}^2 \rangle \psi_I^{3} \otimes \phi_{J}^3(x,y);$$ 
\item
$$ \Pi_{\text{flag}^{\#_1} \otimes \text{paraproduct}}(f_1,  f_2, g_1, g_2, h)(x,y) := \displaystyle \sum_{I \times J \in \mathcal{I} \times \mathcal{J}} \frac{1}{|I|^{\frac{1}{2}} |J|} \langle B^{\#_1}_{\mathcal{K},I}(f_1,f_2),\vphi_I^1 \rangle \langle g_1,\phi^1_J \rangle \langle g_2, \phi^2_J \rangle \langle h, \psi_I^{2} \otimes \phi_{J}^2 \rangle \psi_I^{3}\otimes \phi_{J}^3(x,y) ;$$ 

\item
$$ \Pi_{\text{flag}^0 \otimes \text{flag}^0}(f_1,  f_2, g_1, g_2, h)(x,y) := \displaystyle \sum_{I \times J \in \mathcal{I} \times \mathcal{J}} \frac{1}{|I|^{\frac{1}{2}} |J|^{\frac{1}{2}}} \langle B_{\mathcal{K},I}(f_1,f_2),\vphi_I^1 \rangle \langle B_{\mathcal{L},J}(g_1, g_2), \vphi_J^1 \rangle  \langle h, \psi_I^{2} \otimes \psi_J^{2} \rangle \psi_I^{3} \otimes \psi_J^{3}(x,y);$$ 

\item
$$ \Pi_{\text{flag}^0 \otimes \text{flag}^{\#_2}}(f_1, f_2, g_1, g_2, h)(x,y) := \displaystyle \sum_{I \times J \in \mathcal{I} \times \mathcal{J}} \frac{1}{|I|^{\frac{1}{2}} |J|^{\frac{1}{2}}} \langle B_{\mathcal{K},I}(f_1,f_2),\vphi_I^1 \rangle \langle B_{\mathcal{L},J}^{\#_2}(g_1, g_2), \vphi_J^1 \rangle  \langle h, \psi_I^{2} \otimes \psi_J^{2} \rangle \psi_I^{3} \otimes \psi_J^{3}(x,y);$$ 

\item
$$\Pi_{\text{flag}^{\#_1}\otimes \text{flag}^{\#_2}}(f_1,  f_2, g_1, g_2, h)(x,y) := \displaystyle \sum_{I \times J \in \mathcal{I} \times \mathcal{J}} \frac{1}{|I|^{\frac{1}{2}} |J|^{\frac{1}{2}}} \langle B^{\#_1}_{\mathcal{K},I}(f_1,f_2),\vphi_I^1 \rangle \langle B^{\#_2}_{\mathcal{L},J}(g_1, g_2), \vphi_J^1 \rangle  \langle h, \psi_I^{2} \otimes \psi_J^{2} \rangle \psi_I^{3} \otimes \psi_J^{3} (x,y).$$

\end{enumerate}

\end{definition}
The mapping properties of the discrete model operators are stated as follows.
\begin{theorem} \label{thm_weak_mod}
Let  $\Pi_{\text{flag}^0 \otimes \text{paraproduct}}$, $ \Pi_{\text{flag}^{\#_1} \otimes \text{paraproduct}}$, $\Pi_{\text{flag}^0 \otimes \text{flag}^0}$, $\Pi_{\text{flag}^0 \otimes \text{flag}^{\#_2}}$ and $\Pi_{\text{flag}^{\#_1}\otimes \text{flag}^{\#_2}}$ be multilinear operators specified in Definition \ref{discrete_model_op}. Then all of them satisfy the mapping property stated in Theorem \ref{thm_weak}, where the constants are independent of $\#_1,\#_2$ and the cardinalities of the collections $\mathcal{I}, \mathcal{J}, \mathcal{K}$ and $\mathcal{L}$. 

\end{theorem}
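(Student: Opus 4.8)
The plan is to establish the restricted weak-type bound \eqref{thm_weak_explicit} for each of the five discrete model operators by running a stopping-time decomposition adapted to the bi-parameter flag structure, then summing the resulting local pieces using a sparsity/Carleson packing argument. First I would dualize: fix the sets $F_1,F_2,G_1,G_2,E$ and Schwartz data bounded by their indicators, and bound $|\Lambda(f_1,f_2,g_1,g_2,h,\chi_{E'})|$ by expanding the multilinear form over the dyadic rectangles $I\times J$. The inner bilinear operators $B_{\mathcal{K},I}$ and $B_{\mathcal{L},J}$ (and their single-scale and fixed-scale variants $B^{\#}$) are, by Definition \ref{B_definition}, one-dimensional paraproduct-type operators in the $x$- and $y$-variables respectively; since at least two of the three bump families involved are lacunary, these obey the usual Coifman--Meyer-type $L^p$ bounds, which I would record as an auxiliary lemma (or invoke from \cite{cw}, \cite{c_flag}). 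The key reduction is that the whole $T_{ab}$-form, after this expansion, is controlled pointwise/testing-wise by quantities of the form $\langle B_{\mathcal{K},I}(f_1,f_2),\vphi_I^1\rangle$ times $\langle g_1,\phi_J^1\rangle\langle g_2,\phi_J^2\rangle$ (or the analogous $B_{\mathcal{L},J}$ factor) times the coefficient $\langle h,\psi_I^2\otimes\phi_J^2\rangle$ times a factor involving $\chi_{E'}$ through $\psi_I^3\otimes\phi_J^3$.

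The core of the argument is the stopping-time decomposition announced in Section \ref{section_intro_method}: carry out a one-dimensional stopping-time decomposition in the $x$-variable (indexing $I\in\mathcal{I}$ by levels where the relevant $x$-sizes and $x$-energies — built from $f_1,f_2,h,\chi_{E'}$ — are comparable to $2^{-n_1}$) and, independently, one in the $y$-variable (indexing $J\in\mathcal{J}$ by levels $2^{-n_2}$ for the $y$-sizes and $y$-energies coming from $g_1,g_2,h$). Then form the tensor product of the two decompositions, splitting $\mathcal{I}\times\mathcal{J}$ into families $\mathcal{R}_{n_1,n_2}$. On each family the localized operators $B_{\mathcal{K},I}$, $B_{\mathcal{L},J}$ can be estimated using the sparsity (Carleson-packing) condition that the stopping intervals at level $n_i$ satisfy, exactly as in the single-parameter flag-paraproduct estimate; I would first do this for $\Pi_{\text{flag}^0\otimes\text{paraproduct}}$ (Section \ref{section_thm_haar_fixed}'s model), where only the sparsity condition is needed, and then for $\Pi_{\text{flag}^0\otimes\text{flag}^0}$, which additionally requires a Fubini-type argument to decouple the two flag factors before summing. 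The fixed-scale variants $\Pi_{\text{flag}^{\#_1}\otimes\cdots}$ are handled by the same scheme with the extra observation that the single-scale restriction $|Q|\sim 2^{\#}|P|$ only improves the packing and contributes constants independent of $\#_1,\#_2$ — this uniformity is what must be tracked carefully but introduces no new idea. Summing the geometric series in $n_1,n_2$ (using that each decomposition loses only powers of $2^{-n_i}$ against a gain of the form $2^{n_i}|\text{stopping set}|$, and choosing $E'$ to be the major subset on which the maximal/square operators controlling the exceptional set are small) yields the claimed bound with the correct product of $|F_i|^{1/p_i}$, $|G_i|^{1/q_i}$, $\|h\|_s$, $|E|^{1/r'}$ — here the non-H\"older scaling \eqref{bl_exp} enters precisely because $h$ lives on $\mathbb{R}^2$ and its size/energy feeds both the $x$- and $y$-stopping times.

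The main obstacle I expect is step (ii) from the methodology: making the tensor product of two one-dimensional stopping-time decompositions genuinely compatible with the bi-parameter geometry, i.e. showing that the hybrid square-and-maximal operators of Definition \ref{def_hybrid} (which is what naturally appears when one variable's stopping time is a ``maximal'' type and the other's is a ``square-function'' type) are the right objects to bound the cross terms, and that their $L^p$-boundedness from Theorem \ref{maximal-square} suffices to close the estimate after the tensor decomposition. A secondary difficulty is the Fubini-type decoupling needed for the $\text{flag}^0\otimes\text{flag}^0$ models, where the inner operator $B_{\mathcal{L},J}(g_1,g_2)$ is itself summed over $J$ at a scale tied to $I$ through $h$; one must interchange the order of the $I$- and $J$-summations carefully so that the $h$-coefficient $\langle h,\psi_I^2\otimes\psi_J^2\rangle$ can be estimated by a genuine two-dimensional size/energy rather than by an iterated one. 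Everything else — the Coifman--Meyer bounds for the $B$'s, the interpolation reducing Theorem \ref{thm_weak_mod} to the packing estimates, the passage from Haar to general bump functions — is routine and deferred to the later sections and appendices as the paper indicates.
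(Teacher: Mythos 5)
Your proposal captures the architecture of the paper's argument accurately at the top level: dualize, restrict to Schwartz data bounded by indicators, expand the form over $I\times J\in\mathcal{R}$, run a one-dimensional stopping-time decomposition in each variable separately, combine them tensorially, exploit a Carleson-type sparsity condition to sum the resulting nested series, and close with a Fubini-type decoupling for the $\text{flag}^0\otimes\text{flag}^0$ models. You also correctly identify (ii) as the main novelty and the $J$-summation tied to $I$ through $h$ as the secondary one, and you are right that the $\#$-scale constraint only improves packing, giving constants independent of $\#_1,\#_2$. This is the same route the paper takes (Sections \ref{section_size_energy}--\ref{section_fourier}).

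There is, however, a genuine gap in your plan for the $\text{flag}^0$ models. You propose to control the localized size $\sup_{I}\,|\langle B_{\mathcal{K},I}(f_1,f_2),\vphi_I\rangle|/|I|^{1/2}$ ``using the sparsity condition that the stopping intervals at level $n_i$ satisfy, exactly as in the single-parameter flag-paraproduct estimate.'' This works for the fixed-scale operators $B^{\#}_{\mathcal{K},I}$, where the scale constraint $|K|\sim 2^{\#}|I|$ lets one read off the size directly from the local maximal-function level sets of $f_1,f_2$ (Lemma \ref{B_size} and Proposition \ref{size_cor}). It does not work for $B_{\mathcal{K},I}$ with the unrestricted scale range $|K|\geq|I|$, because the size of $B_I$ then genuinely sees contributions from all $K\supseteq I$ and cannot be bounded by the level-set information attached to $\mathcal{I}_{n_1,m_1}$ alone. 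The paper resolves this by introducing a \emph{second} tensor-type stopping time on the sequence $(\langle B_I^H, \vphi_I^H\rangle)_I$ itself (maximal-interval trees $\mathbb{T}_{l_1}$, $\mathbb{S}_{l_2}$), which crudely controls the size at the cost of losing track of $f_1,f_2$-localization, and then recovers that lost localization through \emph{local energy estimates} (Proposition \ref{B_en}) built on the biest trick (Remark \ref{biest_trick_rmk}): the key observation that $\langle B^H_{\mathcal{Q},P},\vphi^H_P\rangle = \langle B^{\tilde n,\tilde m,0}_{\mathcal{Q},\text{lac}},\vphi^H_P\rangle$ when the $P$-dependence can be traded for the level-set restriction $Q\cap\mathcal{U}_{\tilde n,\tilde m}\neq\emptyset$. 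Without this size/energy decoupling, the geometric series in the sparsity argument diverges for exponents $\frac1{p_1}+\frac1{q_1}>1$, which is exactly the hard range.

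Two smaller inaccuracies: the exceptional set in the paper is not a generic ``set where maximal/square operators are small'' but specifically a union over all integers $\mathfrak n$ of Cartesian products $\{Mf_i>C2^{\mathfrak n}|F_i|\}\times\{Mg_j>C2^{-\mathfrak n}|G_j|\}$ (plus the two-dimensional $SSh$ level set), and this complementary-threshold structure is what forces the observation $n_1+n_2<0$ that drives the geometric summation; and Section \ref{section_thm_haar_fixed}, which you cite as the ``$\Pi_{\text{flag}^0\otimes\text{paraproduct}}$'' model, is in fact devoted to $\Pi_{\text{flag}^{\#_1}\otimes\text{flag}^{\#_2}}$ — the paper starts from the $\#$-scale models precisely because the flag$^0$ models require the additional local energy machinery you have not accounted for.
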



\begin{theorem} \label{thm_weak_inf_mod}
Let  $\Pi_{\text{flag}^0 \otimes \text{paraproduct}}$, $ \Pi_{\text{flag}^{\#_1} \otimes \text{paraproduct}}$, $\Pi_{\text{flag}^0 \otimes \text{flag}^0}$, $\Pi_{\text{flag}^0 \otimes \text{flag}^{\#_2}}$ and $\Pi_{\text{flag}^{\#_1}\otimes \text{flag}^{\#_2}}$ be multilinear operators specified in Definition \ref{discrete_model_op}. Then all of them satisfy the mapping property stated in Theorem \ref{thm_weak_inf}, where the constants are independent of $\#_1,\#_2$ and the cardinalities of the collections $\mathcal{I}, \mathcal{J}, \mathcal{K}$ and $\mathcal{L}$. 
\end{theorem}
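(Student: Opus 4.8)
The plan is to reduce, exactly as for the other model operators, to the Haar models of Definition~\ref{B_definition_haar} (replacing the general adapted bump functions by Haar wavelets and $L^2$-normalized indicator functions, and $B_{\mathcal{Q},P}$, $B^{\#}_{\mathcal{Q},P}$ by $B^H_{\mathcal{Q},P}$, $B^{\#,H}_{\mathcal{Q},P}$), the transition to the full Fourier models being the routine content of Section~\ref{section_fourier}; and then to run the argument of Section~\ref{section_thm_haar_fixed}, for which the sparsity condition alone suffices, after first ``forgetting'' the two $L^{\infty}$ inputs $f_2$ and $g_2$. By homogeneity we may normalize $\|f_1\|_{L^p(\mathbb{R})}=\|g_1\|_{L^p(\mathbb{R})}=\|h\|_{L^s(\mathbb{R}^2)}=1$ and keep track of the factor $|E|^{\frac{1}{r'}}$. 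We emphasize that $p<\infty$ is retained throughout; this is what makes the stopping-time geometric series converge, and it is consistent with the condition $(p^j_1,q^j_1),(p^j_2,q^j_2)\neq(\infty,\infty)$ since here $q^j_1=q^j_2=\infty$ while $p^j_1=p^j_2=p<\infty$.

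The first reduction forgets $g_2$: since $\phi^2_J$ is $L^2$-normalized one has $\|\phi^2_J\|_{L^1(\mathbb{R})}\lesssim|J|^{\frac12}$, hence $|\langle g_2,\phi^2_J\rangle|\lesssim\|g_2\|_{L^{\infty}(\mathbb{R})}|J|^{\frac12}$, and inserting this into the sums of Definition~\ref{discrete_model_op} replaces the factor $\langle g_2,\phi^2_J\rangle$ by the constant $\|g_2\|_{L^{\infty}(\mathbb{R})}|J|^{\frac12}$ and leaves a sum of absolute values. The second reduction forgets $f_2$ inside $B_{\mathcal{K},I}(f_1,f_2)$ and $B^{\#_1}_{\mathcal{K},I}(f_1,f_2)$: pairing against $\vphi_I^1$ and using that the defining sum runs only over $|Q|\ge|I|$ (resp. $|Q|\sim 2^{\#_1}|I|$), one estimates the $f_2$-factor trivially by $|\langle f_2,\phi^2_Q\rangle|\lesssim\|f_2\|_{L^{\infty}(\mathbb{R})}|Q|^{\frac12}$ when $\phi^2$ is non-lacunary, and by a discretized square-function / Carleson estimate when $\phi^1,\phi^2$ are both lacunary; in either case, because at least two of the three families in $B^H_{\mathcal K}$ are lacunary there is no loss over the scales $\{|Q|\ge|I|\}$, and one obtains a bound of the form $|\langle B_{\mathcal{K},I}(f_1,f_2),\vphi_I^1\rangle|\lesssim\|f_2\|_{L^{\infty}(\mathbb{R})}|I|^{\frac12}\sigma_I$, where $\sigma_I$ is a purely one-dimensional size quantity for $f_1$ (an average of $|f_1|$, or a discretized square function of $f_1$, over a slight dilate of $I$), uniformly in $I$ and in $\#_1$. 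The same maneuver forgets $g_2$ inside $B_{\mathcal{L},J}(g_1,g_2)$ and $B^{\#_2}_{\mathcal{L},J}(g_1,g_2)$ for the models $\Pi_{\text{flag}^0\otimes\text{flag}^0}$, $\Pi_{\text{flag}^0\otimes\text{flag}^{\#_2}}$ and $\Pi_{\text{flag}^{\#_1}\otimes\text{flag}^{\#_2}}$, replacing $\langle B_{\mathcal{L},J}(g_1,g_2),\vphi_J^1\rangle$ by $\|g_2\|_{L^{\infty}(\mathbb{R})}|J|^{\frac12}\tau_J$ with $\tau_J$ a one-dimensional size quantity for $g_1$.

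After these reductions, $|\Lambda(f_1,f_2,g_1,g_2,h,\chi_{E'})|$ is bounded by $\|f_2\|_{L^{\infty}(\mathbb{R})}\|g_2\|_{L^{\infty}(\mathbb{R})}$ times a sum over $I\times J$ whose structure is precisely that of the one-parameter-in-$x$, one-parameter-in-$y$ paraproduct-type form treated in Section~\ref{section_thm_haar_fixed}, now carried by the three functions $f_1$, $g_1$ (one-dimensional) and $h$ (two-dimensional), tested against $\chi_{E'}$. Following that section, we build $E'=E\setminus\Omega$, where $\Omega$ collects the points of $E$ at which the Hardy--Littlewood maximal operator applied to $f_1$ and to $g_1$ (lifted to $\mathbb{R}^2$) and the hybrid operators $(SS)^H$, $(MS)^H$, $(SM)^H$ and $MM$ applied to $h$ exceed suitably large thresholds depending on $|E|$; by Theorem~\ref{maximal-square} and the normalization, $|\Omega|<|E|/2$, so $|E'|>|E|/2$. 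Then one runs the tensor-type stopping-time decomposition of Section~\ref{section_intro_method}: a one-dimensional stopping time on $\mathcal I$ organizing the intervals $I$ by the size $\sigma_I$ of $f_1$ and by the relevant $x$-sizes of $h$, a one-dimensional stopping time on $\mathcal J$ organizing the $J$ by $\tau_J$ and the $y$-sizes of $h$, glued into a two-dimensional stopping-time decomposition of the rectangles $I\times J$. On each resulting sub-collection the sparsity (Carleson packing) condition controls the spatial overlap of the $\{I\times J\}$, while the fact that each $I\times J$ meets $E'$ keeps the sizes read off against $\chi_{E'}$ bounded by a controlled power of $|E|$; summing the resulting geometric series over the stopping levels yields the bound $\|f_1\|_{L^p(\mathbb{R})}\|f_2\|_{L^{\infty}(\mathbb{R})}\|g_1\|_{L^p(\mathbb{R})}\|g_2\|_{L^{\infty}(\mathbb{R})}\|h\|_{L^s(\mathbb{R}^2)}|E|^{\frac{1}{r'}}$ of $(\ref{thm_weak_inf_explicit})$. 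Every estimate used --- the two ``forgetting'' steps, the maximal and hybrid bounds of Theorem~\ref{maximal-square}, and the stopping-time sums --- is uniform in $\#_1,\#_2$ and in the cardinalities of $\mathcal I,\mathcal J,\mathcal K,\mathcal L$, which gives the asserted uniformity of the constant.

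The main obstacle I anticipate is the second ``forgetting'' step: one must verify that the inner localized paraproducts $B_{\mathcal{K},I}(f_1,f_2)$ and $B^{\#_1}_{\mathcal{K},I}(f_1,f_2)$, tested against $\vphi_I^1$, genuinely collapse --- uniformly in $I$ and in the scale shift $\#_1$ --- to $\|f_2\|_{L^{\infty}(\mathbb{R})}$ times a single one-dimensional size quantity $\sigma_I$ compatible with the $x$-stopping time, with no loss over the scales $\{|Q|\ge|I|\}$; this is exactly the place where the hypothesis that at least two of the three families in $B_{\mathcal K}$ be lacunary is essential. Once this is in hand, the remainder is a routine instance of the Section~\ref{section_thm_haar_fixed} argument, which is consistent with the observation in Section~\ref{section_intro_structure} that the $L^{\infty}$ cases require only the sparsity condition and not the Fubini-type refinement of Section~\ref{section_thm_haar}.
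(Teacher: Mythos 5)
Your argument for the two fixed-scale models $\Pi_{\text{flag}^{\#_1}\otimes\text{flag}^{\#_2}}$ and $\Pi_{\text{flag}^{\#_1}\otimes\text{paraproduct}}$ matches the paper's Section \ref{section_thm_inf_fixed_haar}: there the inner operator $B^{\#_1,H}_{\mathcal{K},I}$ sums over a single scale of disjoint $K$, so Lemma \ref{B_size} together with the trivial bound $\sup_K |\langle f_2,\phi_K^2\rangle|/|K|^{1/2}\lesssim\|f_2\|_\infty$ does exactly ``forget $f_2$'' and the sparsity condition finishes. This part is fine.

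The gap is in the second ``forgetting'' step for the non-fixed-scale models $\Pi_{\text{flag}^0\otimes\text{flag}^0}$, $\Pi_{\text{flag}^0\otimes\text{paraproduct}}$, $\Pi_{\text{flag}^0\otimes\text{flag}^{\#_2}}$. You claim $|\langle B^H_{\mathcal{K},I}(f_1,f_2),\vphi_I^H\rangle|\lesssim\|f_2\|_\infty |I|^{1/2}\sigma_I$ with $\sigma_I$ a one-dimensional size of $f_1$, ``with no loss over the scales $\{|K|\ge|I|\}$ because at least two of the three families are lacunary.'' This is where the argument fails. In the Haar model with $\phi^3=\vphi$ non-lacunary and $\phi^1,\phi^2$ lacunary, one has
\begin{align}
\frac{|\langle B^H_{\mathcal{K},I}(f_1,f_2),\vphi_I^H\rangle|}{|I|^{1/2}}
= \bigg|\sum_{K\supseteq I}\frac{1}{|K|}\langle f_1,\psi_K^1\rangle\langle f_2,\psi_K^2\rangle\bigg|, \nonumber
\end{align}
and after replacing $|\langle f_2,\psi_K^2\rangle|$ by $\|f_2\|_\infty|K|^{1/2}$ the remaining sum $\sum_{K\supseteq I}|\langle f_1,\psi_K^1\rangle|/|K|^{1/2}$ runs over all dyadic ancestors of $I$ and has no geometric decay: each term is comparable to a local average of $|f_1|$ at scale $|K|$, which for $f_1=\chi_{F_1}$ tends to zero as $|K|\to\infty$ but not summably. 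The same divergence appears in the other lacunary configurations once absolute values are taken. Lacunarity of two of the three families guarantees cancellation that makes the series $\sum_K$ converge \emph{with the bilinear coefficients intact}; throwing away $f_2$ destroys precisely that cancellation, and no Carleson or square-function estimate on $f_1$ alone restores a uniform-in-$I$ bound.

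The paper circumvents this in Section \ref{section_thm_inf_haar} by \emph{not} localizing the inner paraproduct at all. It sets $t$ by $\frac1t=\frac1p+\frac1\infty=\frac1p\ge 1$, defines the exceptional set $\Omega^1$ in terms of $MB(f_1,f_2)$ and $M\tilde B(g_1,g_2)$ (not $Mf_1$ and $Mg_1$), uses the biest trick / Observation \ref{obs_st_B} to get the uniform control $\frac{|\langle B_I,\vphi_I\rangle|}{|I|^{1/2}}\frac{|\langle\tilde B_J,\vphi_J\rangle|}{|J|^{1/2}}\lesssim\|B(f_1,f_2)\|_t\|\tilde B(g_1,g_2)\|_t$ on rectangles meeting $E'$, then applies Coifman--Meyer (Lemma \ref{B_global_norm}) in the form $\|B(f_1,f_2)\|_t\lesssim\|f_1\|_p\|f_2\|_\infty$. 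Only the general two-dimensional level-set decomposition on $h$ is needed; there is no tensor-type stopping time in the $x$- or $y$-variable because only the macroscopic bound $\|B\|_t$ enters. Your proposal needs to be replaced by this global-$\|B\|_t$ argument for all three ``$0$'' models.
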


The following sections are devoted to the proofs of Theorem \ref{thm_weak_mod} and \ref{thm_weak_inf_mod} which would imply Theorem \ref{thm_weak} and \ref{thm_weak_inf}. We will mainly focus on discrete model operators defined in $(3)$ (Section \ref{section_thm_haar}) and $(5)$ (Section \ref{section_thm_haar_fixed}), whose arguments consist of all the essential tools that are needed for other discrete models. 

\section{Sizes and Energies} \label{section_size_energy}
The notion of sizes and energies appear first in \cite{mtt} and \cite{mtt2}. Since they will play important roles in the main arguments, the explicit definitions of sizes and energies are introduced and some useful properties are highlighted in this section.
\begin{definition} \label{def_size_energy}
Let $\mathcal{I}$ be a finite collection of dyadic intervals. Let $(\psi_I)_{I \in \mathcal{I}}$ denote a lacunary family of $L^2$-normalized bump functions and $(\vphi_I)_{I \in \mathcal{I}}$ a non-lacunary family of $L^2$-normalized bump functions. Define
\begin{enumerate}[(1)]
\item
$$\text{size}_{\mathcal{I}}((\langle f, \vphi_I \rangle)_{I \in \mathcal{I}}) := \sup_{I \in \mathcal{I}} \frac{|\langle f, \vphi_I\rangle|}{|I|^{\frac{1}{2}}};$$
\item
$$\text{size}_{\mathcal{I}}((\langle f, \psi_I \rangle)_{I \in \mathcal{I}}) := \sup_{I_0 \in \mathcal{I}} \frac{1}{|I_0|}\left\Vert \bigg(\sum_{\substack{I \subseteq I_0 \\ I \in \mathcal{I}}} \frac{|\langle f, \psi_I \rangle|^2}{|I|} \chi_{I}\bigg)^{\frac{1}{2}}\right\Vert_{1,\infty};$$
\item
$$\text{energy}^{1,\infty} _{\mathcal{I}}((\langle f, \vphi_I \rangle)_{I \in \mathcal{I}}) := \sup_{n \in \mathbb{Z}} 2^{n} \sup_{\mathbb{D}_n} \sum_{I \in \mathbb{D}_n} |I|$$
where $\mathbb{D}_n$ ranges over all collections of disjoint dyadic intervals in $\mathcal{I}$ satisfying 
$$
\frac{|\langle f,\vphi_I \rangle|}{|I|^{\frac{1}{2}}} > 2^n;
$$
\item
$$\text{energy}^{1,\infty} _{\mathcal{I}}((\langle f, \psi_I \rangle)_{I \in \mathcal{I}}) := \sup_{n \in \mathbb{Z}} 2^{n} \sup_{\mathbb{D}_n} \sum_{I \in \mathbb{D}_n} |I|$$
where $\mathbb{D}_n$ ranges over all collections of disjoint dyadic intervals in $\mathcal{I}$ satisfying 
$$
 \frac{1}{|I|}\left\Vert \bigg(\sum_{\substack{\tilde{I} \subseteq I \\ \tilde{I} \in \mathcal{I}}} \frac{|\langle f, \psi_{\tilde{I}} \rangle|^2}{|\tilde{I}|} \chi_{\tilde{I}}\bigg)^{\frac{1}{2}}\right\Vert_{1,\infty} > 2^{n};
$$
\item
For $t>1$, define
$$\text{energy}^{t}_{\mathcal{I}}((\langle f, \vphi_I \rangle)_{I \in \mathcal{I}}) := \left(\sum_{n \in \mathbb{Z}}2^{tn}\sup_{\mathbb{D}_n}\sum_{I \in \mathbb{D}_n}|I| \right)^{\frac{1}{t}}$$
where
$\mathbb{D}_n$ ranges over all collections of disjoint dyadic intervals in $\mathcal{I}$ satisfying 
$$
\frac{|\langle f,\vphi_I \rangle|}{|I|^{\frac{1}{2}}} > 2^n.
$$
\end{enumerate}
\end{definition}
The sizes and energies involving cutoff functions in the Haar model can be defined analogously. 
\begin{definition} \label{def_size_energy_haar}
Let $\mathcal{I}$ be a finite collection of dyadic intervals. Let $(\psi^H_I)_{I \in \mathcal{I}}$ denote a family of Haar wavelets and $(\vphi^H_I)_{I \in \mathcal{I}}$ a family of $L^2$-normalized indicator functions.
Define
$$
\text{size}_{\mathcal{I}}((\langle f, \vphi^H_I \rangle)_{I \in \mathcal{I}}), \ \ \text{size}_{\mathcal{I}}((\langle f, \psi^H_I \rangle)_{I \in \mathcal{I}})
$$
and 
$$
\text{energy}^{1,\infty} _{\mathcal{I}}((\langle f, \psi^H_I \rangle)_{I \in \mathcal{I}}), \ \  \text{energy}^{1,\infty} _{\mathcal{I}}((\langle f, \vphi^H_I \rangle)_{I \in \mathcal{I}}), \ \ \text{energy}^{t}_{\mathcal{I}}((\langle f, \vphi^H_I \rangle)_{I \in \mathcal{I}})\ \ \text{for} \ \ t > 1 
$$
by substituting $\vphi_I$ with $\vphi^H_I$ and  $\psi_I$ with $\psi^H_I$ in Definition \ref{def_size_energy} respectively.
\end{definition}

\subsection{Useful Facts about Sizes and Energies} \label{section_size_energy_fact}
The following propositions describe facts about sizes and energies which will be heavily employed later on. Propositions \ref{JN} and \ref{size} are routine and the proofs can be found in Section 2 of \cite{cw}. Proposition \ref{energy_classical} consists of two parts - the first part is discussed in \cite{cw} while the second part is less standard and the proof will be included in Section \ref{Proof_prop_energy_classical}.

Proposition \ref{B_en_global} describes a ``global'' energy estimate for the bilinear operators $B_{\mathcal{Q}}$ defined in (\ref{B_global_definition}) of Definition \ref{B_definition}  and the proof follows from the boundedness of paraproducts (\cite{cm}, \cite{cw}). Propositions \ref{size_cor} and \ref{B_en} highlight the ``local'' size and energy estimates for the analogous operators defined under the Haar model assumption, namely $B_{\mathcal{Q},P}^H$ 
and $B_{\mathcal{Q},P}^{\#, H}$
 defined in (\ref{B_local_definition_haar}) and  (\ref{B_local_definition_haar_fix_scale}) respectively. More precisely, Propositions \ref{size_cor} and \ref{B_en} take into the consideration that the operators $B_{\mathcal{Q},P}^H$ 
and $B_{\mathcal{Q},P}^{\#, H}$ 
are localized to intersect certain level sets which carry crucial information for the size and energy estimates. The emphasis on the Haar model assumption keeps track of the arguments we need to modify for the general Fourier case. 


\begin{proposition}[John-Nirenberg] \label{JN}
Let $\mathcal{I}$ be a finite collection of dyadic intervals. For any sequence $(a_I)_{I \in \mathcal{I}}$ and $r > 0$, define the BMO-norm for the sequence as
$$
\|(a_I)_I\|_{\text{BMO}(r)} := \sup_{I_0 \in \mathcal{I}}\frac{1}{|I_0|^{\frac{1}{r}}} \left\Vert \left(\sum_{I \subseteq I_0} \frac{|a_I|^2}{|I|}\chi_{I}(x)\right)^{\frac{1}{2}}\right\Vert_r.
$$
Then for any $0 < p < q < \infty$, 
$$
\|(a_I)_I\|_{\text{BMO}(p)} \simeq \|(a_I)_I \|_{\text{BMO}(q)}.
$$
\end{proposition}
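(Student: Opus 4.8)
The plan is to prove the two inequalities in $\|(a_I)_I\|_{\mathrm{BMO}(p)} \simeq \|(a_I)_I\|_{\mathrm{BMO}(q)}$ separately. The comparison $\|(a_I)_I\|_{\mathrm{BMO}(p)} \lesssim \|(a_I)_I\|_{\mathrm{BMO}(q)}$ for $p<q$ is immediate: fixing $I_0 \in \mathcal{I}$, the square function $F_{I_0} := \big(\sum_{I \subseteq I_0} \frac{|a_I|^2}{|I|}\chi_I\big)^{1/2}$ is supported on $I_0$, so Hölder's inequality on the finite-measure set $I_0$ gives $\|F_{I_0}\|_{L^p(I_0)} \le |I_0|^{\frac1p-\frac1q}\|F_{I_0}\|_{L^q(I_0)}$; dividing by $|I_0|^{1/p}$ and taking the supremum over $I_0$ yields the claim. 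The reverse inequality is a John--Nirenberg self-improvement and carries all the content.

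For the reverse direction I would normalize $\|(a_I)_I\|_{\mathrm{BMO}(p)} = 1$, so that $\|F_J\|_{L^p(J)} \le |J|^{1/p}$ for every dyadic $J \subseteq I_0$, hence by Chebyshev $|\{x \in J : F_J(x)^2 > t\}| \le t^{-p/2}|J|$ at every scale. Fixing $I_0$, the goal becomes $\|F_{I_0}\|_{L^q(I_0)} \lesssim_{p,q} |I_0|^{1/q}$, which follows once the distribution function of $F_{I_0}$ on $I_0$ is shown to decay faster than every power, in fact sub-Gaussianly in the level of $F_{I_0}^2$. The mechanism is a stopping-time iteration. Since $\mathcal{I}$ is finite, $F_{I_0}^2$ is a step function constant on the minimal-scale dyadic intervals, so each of its superlevel sets is a union of dyadic intervals. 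Choose $\tau$ large with $\rho := 2\tau^{-p/2} < 1$ and select the maximal dyadic $J$ on which $\{F_{I_0}^2 > \tau\}$ has density larger than $\tfrac12$; by the scale-$I_0$ Chebyshev bound these $J$ are proper subintervals, they cover $\{F_{I_0}^2 > \tau\} \cap I_0$, and disjointness gives $\sum_J |J| \le 2|\{F_{I_0}^2 > \tau\} \cap I_0| \le \rho |I_0|$. On each selected $J$ one writes $F_{I_0}^2 = P_J^2 + R_J$, where $P_J^2$ is the contribution of the intervals between $J$ and $I_0$ (constant on $J$) and $R_J$ that of the intervals strictly inside $J$; maximality forces the parent of $J$ to be non-selected, which together with the normalized bound $|a_J|^2/|J| \le 1$ gives $P_J^2 \lesssim \tau$, while $R_J$ is pointwise dominated by $F_J^2$. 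Hence $\{x \in J : F_{I_0}(x)^2 > 3\tau\} \subseteq \{x \in J : F_J(x)^2 > \tau\}$, and one repeats the construction inside each $J$ with $F_J$ replacing $F_{I_0}$. After $m$ generations the selected intervals have total length at most $\rho^m|I_0|$ while the accumulated ancestor part has grown only by $O(\tau)$ per step, so $|\{x \in I_0 : F_{I_0}(x)^2 > (2m+1)\tau\}| \le \rho^{m+1}|I_0|$; integrating this against $\tfrac{q}{2}\,t^{q/2-1}\,dt$ gives $\|F_{I_0}\|_{L^q(I_0)} \lesssim_{p,q} |I_0|^{1/q}$, and taking the supremum over $I_0$ proves $\|(a_I)_I\|_{\mathrm{BMO}(q)} \lesssim_{p,q} \|(a_I)_I\|_{\mathrm{BMO}(p)}$.

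The step I expect to be the main obstacle is the bookkeeping of the ancestor parts $P_J^2$ through the iteration: one must verify that they grow only additively, by $O(\tau)$ per generation rather than geometrically, so that passing from $F_{I_0}$ restricted to a selected interval to the genuinely localized square function on that interval costs only an additive constant and the recursion closes; one also needs the selected intervals to be strictly smaller than the interval being decomposed, which is precisely why $\tau$ must be taken so large that $\tau^{-p/2} < \tfrac12$. The remaining ingredients (Chebyshev, disjointness, the layer-cake integration) are routine. An alternative, if the iteration is packaged differently, is to establish only the endpoint comparisons $\|\cdot\|_{\mathrm{BMO}(2)} \lesssim \|\cdot\|_{\mathrm{BMO}(p)}$ for $p<2$ and $\|\cdot\|_{\mathrm{BMO}(q)} \lesssim \|\cdot\|_{\mathrm{BMO}(2)}$ for $q>2$ and then patch with Hölder, but the single stopping-time argument above handles all exponents at once.
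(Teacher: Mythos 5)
Your argument is correct, and it is the standard proof of the square-function John--Nirenberg inequality; the paper itself does not prove the proposition but simply calls it ``routine'' and refers the reader to Section 2 of \cite{cw}, where essentially this stopping-time iteration is carried out. The easy direction via H\"older on the finite-measure set $I_0$ is right, and the hard direction is handled correctly: normalizing $\|(a_I)_I\|_{\mathrm{BMO}(p)}=1$, the Chebyshev bound $|\{F_J^2>t\}|\le t^{-p/2}|J|$ holds at every scale $J$; the choice $\rho=2\tau^{-p/2}<1$ guarantees both that the Calder\'on--Zygmund selection produces proper subintervals and that total length contracts by $\rho$ per generation; the split $F_{I_0}^2=P_J^2+R_J$ on a selected $J$, with the constant ancestor part controlled via a point in the non-selected parent $\hat J$ where $F_{I_0}^2\le\tau$ together with the single-interval bound $|a_J|^2/|J|\le 1$, gives $P_J^2\lesssim\tau$, and $R_J\le F_J^2$ closes the recursion. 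One small bookkeeping note worth making explicit when you write this up: the ancestor parts are measured relative to the interval currently being decomposed (i.e.\ at generation $m+1$ you bound $\sum_{J_{m+1}\subseteq I\subseteq J_m}|a_I|^2/|I|$, not the full tail up to $I_0$), so the ``additive $O(\tau)$ per generation'' really is an inductive statement, which you do state but which is the hinge of the argument. With that in place, the estimate $|\{x\in I_0: F_{I_0}(x)^2>(2m+1)\tau\}|\le\rho^{m+1}|I_0|$ follows by induction, the layer-cake integral $\int_0^\infty \tfrac q2 t^{q/2-1}|\{F_{I_0}^2>t\}|\,dt$ converges geometrically, and the desired inequality follows. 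No gap.
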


\begin{proposition} \label{size}
Suppose $f \in L^1(\mathbb{R})$. Then
$$\text{size}_{\mathcal{I}}\big((\langle f,\phi_I \rangle)_{I \in \mathcal{I}}\big), \text{size}_{\mathcal{I}}\big((\langle f,\phi^H_I \rangle)_{I \in \mathcal{I}} \big) \lesssim \sup_{I \in \mathcal{I}}\int_{\mathbb{R}}|f|\tilde{\chi}_I^M dx$$
for $M > 0$ and the implicit constant depends on $M$.  $\tilde{\chi}_I$ is an $L^{\infty}$-normalized bump function adapted to $I$.
\end{proposition}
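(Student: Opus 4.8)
The plan is to reduce the size estimate to a pointwise control of the wavelet (or Haar) coefficients by the Hardy--Littlewood maximal average of $f$, and then invoke the John--Nirenberg equivalence of Proposition \ref{JN} to pass between the two exponents appearing in the two definitions of size. Concretely, for the non-lacunary case (\,$\langle f, \vphi_I\rangle$ or $\langle f, \vphi_I^H\rangle$\,) one estimates directly: since $\vphi_I$ is an $L^2$-normalized bump adapted to $I$, $|I|^{-1/2}\vphi_I$ is $L^\infty$-normalized with the adaptedness weight $\tilde\chi_I^M$, so
\[
\frac{|\langle f, \vphi_I\rangle|}{|I|^{1/2}} \lesssim \int_{\mathbb{R}} |f|\,\frac{1}{|I|}\tilde\chi_I^M\,dx \lesssim \int_{\mathbb{R}}|f|\,\tilde\chi_I^M\,dx
\]
after normalizing $\tilde\chi_I$ to be $L^\infty$-normalized (absorbing the $|I|^{-1}$ into the weight's definition), and taking the supremum over $I\in\mathcal{I}$ gives the claim. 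For the Haar indicator $\vphi_I^H = |I|^{-1/2}\chi_I$ this is literally H\"older's inequality: $|I|^{-1/2}|\langle f,\vphi_I^H\rangle| = |I|^{-1}\int_I |f| \le \int |f|\tilde\chi_I^M$ for any $M$ when $\tilde\chi_I \ge \chi_I$.

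For the lacunary case the definition of size is a BMO-type quantity: $\mathrm{size}_{\mathcal I}((\langle f,\psi_I\rangle)_I) = \sup_{I_0}\frac{1}{|I_0|}\|(\sum_{I\subseteq I_0}|\langle f,\psi_I\rangle|^2|I|^{-1}\chi_I)^{1/2}\|_{1,\infty}$. First I would observe that this is exactly $\|(\langle f,\psi_I\rangle)_I\|_{\mathrm{BMO}(1)}$ up to the weak-$L^1$ versus strong-$L^1$ distinction, and by Proposition \ref{JN} (John--Nirenberg) it is comparable to $\|(\langle f,\psi_I\rangle)_I\|_{\mathrm{BMO}(2)} = \sup_{I_0}|I_0|^{-1/2}(\sum_{I\subseteq I_0}|\langle f,\psi_I\rangle|^2)^{1/2}$. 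Now fix $I_0$ and a tail parameter: write $f = f\cdot\chi_{\widetilde{I_0}} + f\cdot\chi_{\widetilde{I_0}^c}$ where $\widetilde{I_0}$ is a fixed dilate of $I_0$. For the local piece, Bessel's inequality for the almost-orthogonal lacunary family $(\psi_I)_{I\subseteq I_0}$ gives $\sum_{I\subseteq I_0}|\langle f\chi_{\widetilde{I_0}},\psi_I\rangle|^2 \lesssim \|f\chi_{\widetilde{I_0}}\|_2^2$, which after dividing by $|I_0|$ and using that a single $L^\infty$ bound on the average controls the $L^2$ average... — but this only gives an $L^2$-average of $f$, not an $L^1$-average. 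To land on $\sup_I \int |f|\tilde\chi_I^M$ one instead argues pointwise: for each $I\subseteq I_0$, $|\langle f,\psi_I\rangle|\lesssim |I|^{1/2}\inf_{x\in I} M f_{I_0}(x)$ where $Mf_{I_0}$ is a localized maximal function, so $(\sum_{I\subseteq I_0}|\langle f,\psi_I\rangle|^2|I|^{-1}\chi_I)^{1/2} \lesssim $ a square function bounded pointwise on $I_0$ by (a localized, weighted) maximal average of $f$; the weak-$(1,1)$ bound for this square function (it is a discretized Littlewood--Paley operator, bounded on $L^1\to L^{1,\infty}$ since the $\psi_I$ have mean zero and suitable decay) then yields $\|\cdots\|_{1,\infty}\lesssim |I_0|\sup_{I}\int|f|\tilde\chi_I^M$, and dividing by $|I_0|$ and taking the sup over $I_0$ completes the bound. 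The tail piece is handled by the rapid decay of $\tilde\chi$ and the mean-zero cancellation of $\psi_I$ in the standard Calder\'on--Zygmund fashion.

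The main obstacle is the lacunary estimate: one must be careful that the weak-$L^1$ norm in the definition of $\mathrm{size}$, together with the $\sup$ over all sub-intervals $I_0$, genuinely reduces to the clean maximal-average bound $\sup_I\int|f|\tilde\chi_I^M$, and this requires combining the weak-$(1,1)$ boundedness of the (localized) square function with a careful splitting into local and tail contributions and the John--Nirenberg self-improvement of Proposition \ref{JN}. All of these ingredients are standard (indeed the statement asserts the proof is in Section 2 of \cite{cw}), so in a write-up I would present the non-lacunary case in full and then indicate that the lacunary case follows from John--Nirenberg plus the localized square-function weak bound, pointing to \cite{cw} for the routine details.
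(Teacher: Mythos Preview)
The paper does not actually prove this proposition; it simply declares it routine and defers to Section~2 of \cite{cw}. Your outline is correct and is essentially the standard argument found there: the non-lacunary size is a single pointwise estimate using the decay of $|I|^{-1/2}\vphi_I$, while the lacunary size is handled by splitting $f = f\chi_{CI_0} + f\chi_{(CI_0)^c}$, applying the weak-$(1,1)$ bound for the discretized square function to the local piece, and using the rapid decay of $\psi_I$ on the tail, with John--Nirenberg (Proposition~\ref{JN}) mediating between the various BMO exponents.

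Two small presentational points. First, your non-lacunary paragraph is slightly muddled about the $|I|^{-1}$ normalization: the clean statement is that $|I|^{-1/2}|\vphi_I(x)| \lesssim |I|^{-1}\tilde\chi_I^M(x)$, so $|I|^{-1/2}|\langle f,\vphi_I\rangle| \lesssim \frac{1}{|I|}\int |f|\tilde\chi_I^M$, and the right-hand side of the proposition should be read with an implicit $|I|^{-1}$ (as the paper itself does when it later invokes the bound). Second, your lacunary paragraph takes a detour through an $L^2$/Bessel argument that you then abandon; in a write-up, go directly to the weak-$(1,1)$ square-function bound on the local piece, which immediately gives $\|S_{I_0}(f\chi_{CI_0})\|_{1,\infty} \lesssim \|f\chi_{CI_0}\|_1 \lesssim |I_0|\cdot\frac{1}{|I_0|}\int|f|\tilde\chi_{I_0}^M$, and then divide by $|I_0|$.
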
 

\begin{proposition} \label{energy_classical}
\noindent 
\begin{enumerate}
\item Suppose $f \in L^1(\mathbb{R})$. Then
$$
\text{energy}^{1,\infty}_{\mathcal{I}}((\langle f, \phi_I \rangle))_{I \in \mathcal{I}}, \text{energy}^{1,\infty}_{\mathcal{I}}((\langle f, \phi^H_I \rangle))_{I \in \mathcal{I}} \lesssim \|f\|_1.
$$
\item Suppose $f \in L^t(\mathbb{R})$ for $t >1$. Then
$$
\text{energy}^t_{\mathcal{I}}((\langle f, \vphi_I \rangle))_{I\in \mathcal{I}}, \text{energy}^t_{\mathcal{I}}((\langle f, \vphi^H_I \rangle))_{I \in \mathcal{I}}\lesssim \|f\|_t.
$$
\end{enumerate}
\end{proposition}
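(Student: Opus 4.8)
The first part, the weak-type bound $\text{energy}^{1,\infty}_{\mathcal I}\big((\langle f,\phi_I\rangle)_I\big)\lesssim\|f\|_1$, is classical and is carried out in Section 2 of \cite{cw}, so in Section \ref{Proof_prop_energy_classical} I would only recall its mechanism: in the non-lacunary case one combines the pointwise estimate recorded below with the weak-$(1,1)$ inequality for the Hardy--Littlewood maximal operator from Theorem \ref{maximal-square}(1), while in the lacunary case one first passes from $\text{size}$ to its $L^2$-averaged (BMO$(2)$) form via a John--Nirenberg argument (Proposition \ref{JN}), localizes the discretized square function to each stopping interval, and invokes the weak-$(1,1)$ bound for that square function (as in \cite{cw}). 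The content to be proved is part (2), the $L^t$ energy estimate for $t>1$; the plan is to run the very same scheme, but with the \emph{strong} $L^t$ bound for $M$ replacing its weak-$(1,1)$ bound — which is precisely where the restriction $t>1$ enters.

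The first step is to establish the pointwise comparison
$$
\frac{|\langle f,\vphi_I\rangle|}{|I|^{1/2}}\ \lesssim\ \inf_{x\in I}Mf(x),\qquad I\in\mathcal I .
$$
In the Haar model this is trivial: $\vphi_I^H=|I|^{-1/2}\chi_I$ gives $|I|^{-1/2}|\langle f,\vphi_I^H\rangle|=|I|^{-1}\big|\int_I f\big|\le|I|^{-1}\int_I|f|\le Mf(x)$ for every $x\in I$. In the general case I would bound $|\langle f,\vphi_I\rangle|\le\int|f|\,|\vphi_I|$, insert the adapted-bump decay $|\vphi_I(u)|\lesssim|I|^{-1/2}\big(1+|u-x_I|/|I|\big)^{-M}$, split the integral over the dyadic annuli $|u-x_I|\sim 2^k|I|$, and sum the resulting geometric series (with $M>1$) against $Mf(x)$ for $x\in I$; this is routine. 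The upshot is: whenever $\mathbb D_n$ is a collection of pairwise disjoint dyadic intervals in $\mathcal I$ with $|I|^{-1/2}|\langle f,\vphi_I\rangle|>2^n$ for all $I\in\mathbb D_n$, one has $\bigcup_{I\in\mathbb D_n}I\subseteq\{Mf>c\,2^n\}$ for an absolute constant $c>0$, and therefore, by disjointness,
$$
\sum_{I\in\mathbb D_n}|I|\ \le\ \big|\{Mf>c\,2^n\}\big| ,
$$
uniformly in $\mathbb D_n$.

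Taking the supremum over $\mathbb D_n$ and then summing the geometric weights, Definition \ref{def_size_energy}(5) would give
$$
\text{energy}^t_{\mathcal I}\big((\langle f,\vphi_I\rangle)_I\big)\ \le\ \Big(\sum_{n\in\mathbb Z}2^{tn}\,\big|\{Mf>c\,2^n\}\big|\Big)^{1/t}\ \lesssim\ \|Mf\|_{L^t(\mathbb R)}\ \lesssim\ \|f\|_{L^t(\mathbb R)} ,
$$
where the middle step is the usual comparison of the dyadic layer sum with $\int_0^\infty\lambda^{t-1}|\{Mf>\lambda\}|\,d\lambda\simeq\|Mf\|_t^t$, and the last step is the $L^t$-boundedness of $M$ for $t>1$ (Theorem \ref{maximal-square}(1)). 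Repeating the argument verbatim with $\vphi_I$ replaced by $\vphi_I^H$ (using the trivial Haar form of the pointwise bound) would give the statement in the Haar model, completing part (2).

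I do not expect a serious obstacle here: the whole point is that for $t>1$ one may trade the weak-$(1,1)$ input needed in part (1) for strong $L^t$ control of the maximal function, and the only things requiring care are the adapted-bump tail estimate in the pointwise comparison and the observation that the supremum over stopping families $\mathbb D_n$ costs nothing — handled respectively by the rapid decay of $\vphi_I$ and the disjointness of the intervals in $\mathbb D_n$. The genuinely delicate half is the lacunary case of part (1), where the relevant maximal-type object is a square function and one cannot afford to detour through $L^2$; that subtlety is exactly what is dealt with in \cite{cw} via John--Nirenberg and the weak-$(1,1)$ theory of square functions.
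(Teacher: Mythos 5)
Your proof of part~(2) follows the same route as the paper's Section~\ref{Proof_prop_energy_classical}: bound $|I|^{-1/2}|\langle f,\vphi_I\rangle|$ pointwise by $Mf$ on $I$, exploit the disjointness of the stopping family $\mathbb D_n$ to compare $\sum_{I\in\mathbb D_n}|I|$ with $|\{Mf>c\,2^n\}|$, recognize the resulting layer-cake sum as $\|Mf\|_t^t$, and finish with $L^t$-boundedness of $M$ for $t>1$. The only difference is that you spell out the annulus decomposition behind the pointwise comparison for a general adapted bump, which the paper leaves implicit; this is a clarification, not a different argument.
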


\begin{proposition}[Global Energy] \label{B_en_global}
Suppose that $V_1, V_2 \subseteq \mathbb{R}$ are sets of finite measure and $|v_i| \leq \chi_{V_i}$ for $i = 1,2$. 
Let $\mathcal{P}$ 
denote a finite collection of dyadic intervals. 
Let $B_{\mathcal{Q}}$ and $B^H_{\mathcal{Q}}$ denote the bilinear operators defined in (\ref{B_global_definition}) of Definition \ref{B_definition} and (\ref{B_global_haar}) of Definition \ref{B_definition_haar} respectively.
\begin{enumerate}
\item 
Then for any $0 < \rho <1$, one has
\begin{align*}
& \text{energy}^{1,\infty}_{\mathcal{P}}((\langle B_{\mathcal{Q}}(v_1,v_2), \vphi_P \rangle)_{P \in \mathcal{P}}), \text{energy}^{1,\infty}_{\mathcal{P}}((\langle B^H_{\mathcal{Q}}(v_1,v_2), \vphi_P \rangle)_{P \in \mathcal{P}}) \lesssim |V_1|^{\rho}|V_2|^{1-\rho}. 
\end{align*}
\item Suppose that $t >1$. Then for any $0 \leq \theta_1,  \theta_2 <1$, with $\theta_1 + \theta_2 = \frac{1}{t}$, one has
\begin{align*}
& \text{energy}^t_{\mathcal{P}}((\langle B_{\mathcal{Q}}(v_1,v_2), \vphi_P \rangle)_{P \in \mathcal{P}}), \text{energy}^t_{\mathcal{P}}((\langle B^H_{\mathcal{Q}}(v_1,v_2), \vphi_P \rangle)_{P \in \mathcal{P}}) \lesssim |V_1|^{\theta_1}|V_2|^{\theta_2}.
\end{align*}
\end{enumerate}
\end{proposition}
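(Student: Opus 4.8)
The plan is to deduce both parts from the $L^{p}$-boundedness of the bilinear paraproducts $B_{\mathcal{Q}}$ and $B^{H}_{\mathcal{Q}}$, fed into Proposition \ref{energy_classical}. First I would recall that, because at least two of the three families defining $B_{\mathcal{Q}}$ (resp.\ $B^{H}_{\mathcal{Q}}$) are lacunary, the Coifman--Meyer theorem (\cite{cm}; see also \cite{cw}) gives, uniformly in the finite collection $\mathcal{Q}$,
\[
\|B_{\mathcal{Q}}(v_1,v_2)\|_{p}+\|B^{H}_{\mathcal{Q}}(v_1,v_2)\|_{p}\lesssim \|v_1\|_{p_1}\|v_2\|_{p_2}
\]
whenever $1<p_1,p_2\le\infty$, $1\le p<\infty$ and $\tfrac1p=\tfrac1{p_1}+\tfrac1{p_2}$. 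Since $(\vphi_P)_{P\in\mathcal{P}}$ is non-lacunary, Proposition \ref{energy_classical} then lets me replace $\mathrm{energy}^{1,\infty}_{\mathcal{P}}\big((\langle B_{\mathcal{Q}}(v_1,v_2),\vphi_P\rangle)_P\big)$ by $\|B_{\mathcal{Q}}(v_1,v_2)\|_{1}$ and $\mathrm{energy}^{t}_{\mathcal{P}}\big((\langle B_{\mathcal{Q}}(v_1,v_2),\vphi_P\rangle)_P\big)$ by $\|B_{\mathcal{Q}}(v_1,v_2)\|_{t}$ (and similarly with $B^{H}_{\mathcal{Q}}$), so everything reduces to bounding those two norms.

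For part (1), given $0<\rho<1$, I would apply the paraproduct estimate with the Hölder triple $(p_1,p_2,p)=(1/\rho,\,1/(1-\rho),\,1)$ — admissible since both $1/\rho$ and $1/(1-\rho)$ lie strictly between $1$ and $\infty$ — and use $|v_i|\le\chi_{V_i}$ to get $\|B_{\mathcal{Q}}(v_1,v_2)\|_{1}\lesssim \|v_1\|_{1/\rho}\|v_2\|_{1/(1-\rho)}\le|V_1|^{\rho}|V_2|^{1-\rho}$. For part (2), given $t>1$ and $0\le\theta_1,\theta_2<1$ with $\theta_1+\theta_2=1/t$ (so at most one $\theta_i$ vanishes, since $1/t<1$), I would instead use the triple $(1/\theta_1,\,1/\theta_2,\,t)$ with the convention $1/0=\infty$; this is admissible because $\tfrac1{p_1}+\tfrac1{p_2}=\theta_1+\theta_2=\tfrac1t$ and each exponent lies in $(1,\infty]$, and it gives $\|B_{\mathcal{Q}}(v_1,v_2)\|_{t}\lesssim \|v_1\|_{1/\theta_1}\|v_2\|_{1/\theta_2}\le|V_1|^{\theta_1}|V_2|^{\theta_2}$.

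The hard part is the endpoint case of (2) in which one $\theta_i=0$, say $\theta_1=0$, where one needs $B_{\mathcal{Q}}\colon L^{\infty}\times L^{t}\to L^{t}$; this is precisely where the ``two lacunary families'' hypothesis is essential. If the family paired with $v_1$ is itself lacunary, this is the classical Coifman--Meyer bound with an $L^{\infty}$ entry. If instead $(\phi^{1}_{Q})_Q$ is the unique non-lacunary family, then $(\phi^{2}_{Q})_Q$ and $(\phi^{3}_{Q})_Q$ are both lacunary; setting $c_Q:=\langle v_1,\phi^{1}_{Q}\rangle/|Q|^{1/2}$ one has $|c_Q|\lesssim\|v_1\|_\infty$ since $\phi^{1}_Q$ is $L^2$-normalized and adapted to $Q$, so $B_{\mathcal{Q}}(v_1,v_2)=\sum_{Q}c_Q\,|Q|^{-1/2}\langle v_2,\psi^{2}_{Q}\rangle\,\psi^{3}_{Q}$, and the dual Littlewood--Paley inequality together with the square-function bound of Theorem \ref{maximal-square} yields $\|B_{\mathcal{Q}}(v_1,v_2)\|_t\lesssim\|v_1\|_\infty\|Sv_2\|_t\lesssim\|v_1\|_\infty\|v_2\|_t$, which is the required bound with $(\theta_1,\theta_2)=(0,1/t)$. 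The operator $B^{H}_{\mathcal{Q}}$ is treated identically, its mixed Haar model being subsumed under the dyadic Coifman--Meyer theory, and all implicit constants depend only on the bump-function parameters and the exponents — in particular they are independent of $\mathcal{P}$ and $\mathcal{Q}$, which is the uniformity needed in the later model-operator estimates.
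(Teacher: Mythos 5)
Your proof is correct and follows essentially the same route as the paper: the paper reduces Proposition~\ref{B_en_global} to Proposition~\ref{energy_classical} combined with Lemma~\ref{B_global_norm} (the Coifman--Meyer bound $\|B_{\mathcal Q}(v_1,v_2)\|_t \lesssim \|v_1\|_{p}\|v_2\|_{q}$), which is precisely the chain energy $\rightarrow$ Lebesgue norm $\rightarrow$ Coifman--Meyer $\rightarrow$ $|v_i|\le\chi_{V_i}$ that you carry out. Your explicit treatment of the $L^\infty$ endpoint is extra detail that the paper absorbs into the statement of Lemma~\ref{B_global_norm} (which allows $p,q\le\infty$).
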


It is not difficult to observe that Proposition \ref{B_en_global} follows immediately from Proposition \ref{energy_classical} and the following lemma.

\begin{lemma}\label{B_global_norm}
 Let $B_{\mathcal{Q}}$ and $B^H_{\mathcal{Q}}$ denote the bilinear operators defined in (\ref{B_global_definition}) and (\ref{B_global_haar}) respectively.
Then for any $v_1 \in L^{p}$, $v_2 \in L^{q}$ with $1 < p,q \leq \infty$, $0 < t < \infty$ and $\frac{1}{t} = \frac{1}{p}+ \frac{1}{q} $,
\begin{align*}
& \|B_{\mathcal{Q}}(v_1,v_2)\|_{t},  \|B^H_{\mathcal{Q}}(v_1,v_2)\|_{t} \lesssim \|v_1\|_{L^{p}} \|v_2\|_{L^{q}}. \nonumber 
\end{align*}
\end{lemma}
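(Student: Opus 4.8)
The bilinear operator $B_{\mathcal{Q}}$ is, after routine cosmetic normalizations, a discretized Coifman--Meyer paraproduct: the hypothesis that at least two of the three families $(\phi^1_Q)_Q,(\phi^2_Q)_Q,(\phi^3_Q)_Q$ be lacunary is precisely the cancellation condition that makes it one. The plan is therefore to reduce the estimate to the classical boundedness of paraproducts in \cite{cm} and its discrete/time--frequency incarnations in \cite{cw}, the only points needing attention being (a) uniformity in the finite collection $\mathcal{Q}$, (b) the endpoints $p=\infty$ or $q=\infty$, and (c) the quasi-Banach exponents $t<1$.

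The heart of the argument is a pointwise domination of $B_{\mathcal{Q}}(v_1,v_2)$ by products of square and maximal functions. If the output family $(\phi^3_Q)_Q$ is non-lacunary --- so that $(\phi^1_Q)_Q$ and $(\phi^2_Q)_Q$ are both lacunary --- Cauchy--Schwarz in the summation variable $Q$ gives at once $|B_{\mathcal{Q}}(v_1,v_2)(x)|\lesssim Sv_1(x)\,Sv_2(x)$. If $(\phi^3_Q)_Q$ is lacunary, then $B_{\mathcal{Q}}(v_1,v_2)$ has a frequency-localized expansion; by Littlewood--Paley theory and a reproducing formula one reduces to its smooth square function, which a reproducing-formula computation dominates pointwise by $Sv_1\cdot Mv_2$ or $Mv_1\cdot Sv_2$ according to which of $(\phi^1_Q)_Q,(\phi^2_Q)_Q$ is the non-lacunary one. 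In each case Hölder's inequality (Banach or quasi-Banach) together with the $L^p$-boundedness of $S$ and $M$ in Theorem~\ref{maximal-square} yields $\|B_{\mathcal{Q}}(v_1,v_2)\|_t\lesssim\|v_1\|_p\|v_2\|_q$ whenever $1<p,q<\infty$ and $\frac1t=\frac1p+\frac1q$, uniformly in $\mathcal{Q}$ since every quantity invoked is $\mathcal{Q}$-independent; as $t<1$ forces $p,q<\infty$, this already covers the quasi-Banach range. For the endpoints: when an $L^\infty$ function occupies the non-lacunary slot, its contribution is pulled out by $|\langle v_i,\vphi_Q\rangle|\le\|v_i\|_\infty|Q|^{1/2}$ and one is reduced to the cases above; when it occupies a lacunary slot, $B_{\mathcal{Q}}$ is --- for a fixed $L^\infty$ (indeed $\text{BMO}$) function placed there --- a Calderón--Zygmund paraproduct operator in the remaining variable, hence bounded on $L^q$ for $1<q<\infty$ uniformly in $\mathcal{Q}$ (\cite{cm},\cite{cw}). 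In the Banach range $t\ge1$ one may alternatively dualize and invoke the standard size--energy bound for the trilinear form $\sum_Q|Q|^{-1/2}\langle v_1,\phi^1_Q\rangle\langle v_2,\phi^2_Q\rangle\langle v_3,\phi^3_Q\rangle$, using Propositions~\ref{size} and \ref{energy_classical}.

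The Haar operator $B^H_{\mathcal{Q}}$ from (\ref{B_global_haar}) is handled identically, with Haar wavelets and $L^2$-normalized indicators playing the roles of the lacunary and non-lacunary families and $S,M,SS$ replaced by their Haar analogues from Definition~\ref{def_hybrid} and Theorem~\ref{maximal-square}. I do not expect a genuine obstacle here: this is the classical Coifman--Meyer paraproduct estimate, and the only parts requiring (routine) care are the $L^\infty$ endpoint through its Calderón--Zygmund interpretation and keeping every implied constant independent of the collection $\mathcal{Q}$.
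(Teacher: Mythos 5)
Your proposal is correct and takes essentially the same approach as the paper: the paper's entire proof is the one-line observation that $B_{\mathcal{Q}}$ and $B^H_{\mathcal{Q}}$ are one-parameter paraproducts and that the lemma is therefore a restatement of the Coifman--Meyer theorem \cite{cm}. You reach the same reduction, merely unpacking the standard square-function/maximal-function and size--energy machinery that the paper leaves implicit in the citation.
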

By identifying that $B_{\mathcal{Q}}$ (\ref{B_global_definition}) and $B^H_{\mathcal{Q}}$ (\ref{B_global_haar}) are one-parameter paraproducts, Lemma \ref{B_global_norm} is a restatement of Coifman-Meyer's theorem on the boundedness of paraproducts \cite{cm}. 

We will now turn our attention to the local size estimate for $(\langle B_{\mathcal{Q}, P
}^{\#,H}(v_1, v_2), \vphi_{P} \rangle)_{P \in \mathcal{P}}$ 
and the local energy estimate for $(\langle B_{\mathcal{Q}, P}^H(v_1, v_2), \vphi_P \rangle )_{P \in \mathcal{P}}$. 
In the Haar model, for any fixed dyadic intervals $Q$ and $P$ with $|Q|\geq |P|$, the only non-degenerate case $\langle \phi_Q^{3,H}, \vphi_{P}^H \rangle \neq 0$ is that $Q \supseteq P$. Such observation provides natural localizations for the sequence $(\langle B_{\mathcal{Q}, P}^{\#,H}(v_1, v_2), \vphi^H_P \rangle)_{P \in \mathcal{P}}$ and thus for the sequences $(\langle v_1, \phi_Q^1 \rangle)_{Q \in \mathcal{Q}}$ and $(\langle v_2, \phi_Q^2 \rangle)_{Q \in \mathcal{Q}}$ as explicitly stated in the following lemma. 

\begin{lemma}[Localization of Sizes in the Haar Model] \label{B_size}
Let $S$ denote a measurable subset of $\mathbb{R}$ and $\mathcal{P}'$ a finite collection of dyadic intervals 
such that for any $P \in \mathcal{P}'$,  $P \cap S \neq \emptyset$.
For each $P \in \mathcal{P}'$, let $B_{\mathcal{Q},P}^{\#, H}$ denote the bilinear operator defined in (\ref{B_local_definition_haar_fix_scale}) of Definition \ref{B_definition}.  
Then for any measurable functions $v_1$ and $v_2$,
$$
\text{size}_{\mathcal{P'}}((\langle B_{\mathcal{Q},P}^{\#,H}(v_1, v_2), \vphi^H_P \rangle)_{P \in \mathcal{P}'}) \lesssim \sup_{\substack{Q \in \mathcal{Q} \\ Q \cap S \neq \emptyset}}\frac{|\langle v_1, \phi_Q^1 \rangle|}{|Q|^{\frac{1}{2}}} \sup_{\substack {Q \in \mathcal{Q} \\ Q \cap S \neq \emptyset}}\frac{|\langle v_2, \phi_Q^2 \rangle|}{|Q|^{\frac{1}{2}}}.
$$
\end{lemma}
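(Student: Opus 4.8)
The plan is to unwind all the definitions and reduce the estimate to the elementary geometry of Haar-type functions; no deep input is required. Since $\vphi_P^H$ is a non-lacunary cutoff, Definition~\ref{def_size_energy_haar} identifies the left-hand side with
\[
\sup_{P \in \mathcal{P}'} \frac{\bigl|\langle B_{\mathcal{Q},P}^{\#,H}(v_1,v_2),\, \vphi_P^H\rangle\bigr|}{|P|^{\frac12}},
\]
so it suffices to bound each term of this supremum by the right-hand side. Expanding $B_{\mathcal{Q},P}^{\#,H}$ according to (\ref{B_local_definition_haar_fix_scale}) and pairing against $\vphi_P^H$ gives
\[
\langle B_{\mathcal{Q},P}^{\#,H}(v_1,v_2),\, \vphi_P^H\rangle = \sum_{\substack{Q \in \mathcal{Q}\\ |Q| \sim 2^{\#}|P|}} \frac{\langle v_1,\phi_Q^1\rangle\,\langle v_2,\phi_Q^2\rangle}{|Q|^{\frac12}}\,\langle \phi_Q^{3,H},\, \vphi_P^H\rangle,
\]
and the whole matter comes down to the pairings $\langle \phi_Q^{3,H}, \vphi_P^H\rangle$.

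Three elementary observations do the job. First, every $Q$ occurring in the sum satisfies $|Q| \sim 2^{\#}|P| \geq |P|$, so since $Q$ and $P$ are dyadic we have either $P \subseteq Q$ or $P \cap Q = \emptyset$; in the second case $\langle \phi_Q^{3,H}, \vphi_P^H\rangle = 0$ since $\phi_Q^{3,H}$ is supported in $Q$. Thus only $Q \supseteq P$ contributes, which is exactly the localization remark stated just before the lemma. Second, when $P \subseteq Q$ a direct computation gives $|\langle \phi_Q^{3,H}, \vphi_P^H\rangle| \leq |Q|^{-\frac12}|P|^{\frac12}$: in the non-lacunary case $\phi_Q^{3,H} = |Q|^{-\frac12}\chi_Q$ so the pairing equals $|Q|^{-\frac12}|P|^{-\frac12}|Q \cap P| = |Q|^{-\frac12}|P|^{\frac12}$ (as $Q \cap P = P$), and in the lacunary case $\phi_Q^{3,H} = \psi_Q^H$, which is constant equal to $\pm|Q|^{-\frac12}$ on the dyadic child of $Q$ that contains $P$ (and the pairing is $0$ when $P = Q$), so $|\langle \psi_Q^H, \vphi_P^H\rangle| = |P|^{-\frac12}\bigl|\int_P \psi_Q^H\bigr| \leq |Q|^{-\frac12}|P|^{\frac12}$. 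Third, for a fixed $P$ the constraints $|Q| \sim 2^{\#}|P|$ and $P \subseteq Q$ admit only $O(1)$ intervals $Q$ — exactly one for each of the boundedly many admissible dyadic scales.

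Putting these together, for each $P \in \mathcal{P}'$,
\[
\frac{\bigl|\langle B_{\mathcal{Q},P}^{\#,H}(v_1,v_2),\, \vphi_P^H\rangle\bigr|}{|P|^{\frac12}} \leq \sum_{\substack{Q \in \mathcal{Q},\ |Q| \sim 2^{\#}|P|\\ P \subseteq Q}} \frac{|\langle v_1,\phi_Q^1\rangle|}{|Q|^{\frac12}}\,\frac{|\langle v_2,\phi_Q^2\rangle|}{|Q|^{\frac12}}.
\]
For every $Q$ in this sum we have $P \subseteq Q$, and since $P \cap S \neq \emptyset$ by hypothesis on $\mathcal{P}'$, also $Q \cap S \supseteq P \cap S \neq \emptyset$. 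Hence each factor is at most the corresponding supremum over $\{Q \in \mathcal{Q} : Q \cap S \neq \emptyset\}$, and since the sum has only $O(1)$ terms,
\[
\frac{\bigl|\langle B_{\mathcal{Q},P}^{\#,H}(v_1,v_2),\, \vphi_P^H\rangle\bigr|}{|P|^{\frac12}} \lesssim \sup_{\substack{Q \in \mathcal{Q}\\ Q \cap S \neq \emptyset}} \frac{|\langle v_1,\phi_Q^1\rangle|}{|Q|^{\frac12}}\ \sup_{\substack{Q \in \mathcal{Q}\\ Q \cap S \neq \emptyset}} \frac{|\langle v_2,\phi_Q^2\rangle|}{|Q|^{\frac12}}.
\]
Taking the supremum over $P \in \mathcal{P}'$ gives the claim (and if either supremum on the right is infinite there is nothing to prove, so the inner products may be assumed convergent). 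I do not expect any real obstacle here: the only points needing care are the vanishing $\langle \phi_Q^{3,H}, \vphi_P^H\rangle = 0$ for $Q \not\supseteq P$ and the $O(1)$ overlap count, both immediate from the dyadic structure. It is worth noting, however, that this clean finite cancellation is precisely what must be replaced by rapid off-diagonal decay estimates in the general Fourier setting of Section~\ref{section_fourier}, which is the reason the lemma is isolated here in the Haar model.
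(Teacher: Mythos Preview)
Your proof is correct and follows essentially the same approach as the paper's own argument in Section~\ref{section_proof_B_size}: expand the definition, use the dyadic nesting to see that only $Q \supseteq P$ contributes, deduce $Q \cap S \neq \emptyset$ from $P \cap S \neq \emptyset$, and conclude via the trivial $O(1)$ bound on the number of surviving $Q$'s together with $|\langle \phi_Q^{3,H}, \vphi_P^H\rangle| \leq |Q|^{-1/2}|P|^{1/2}$. Your treatment of the pairing bound is slightly more explicit (separating the lacunary and non-lacunary cases for $\phi_Q^{3,H}$), but this is the same computation.
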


The localization generates more quantitative and useful estimates for the sizes involving $B_{\mathcal{Q}, P}^{\#,H}$ when $S$ is a level set of the Hardy-Littlewood maximal functions $Mv_1$ and $Mv_2$ as elaborated in the following proposition. 

\begin{remark}
One notational comment is that $C$, $C_1, C_2$ and $C_3$ used throughout the paper denote some sufficiently large constants greater than 1.
\end{remark}

\begin{proposition} [Local Size Estimates in the Haar Model]\label{size_cor}
Suppose that $V_1, V_2 \subseteq \mathbb{R}$ are sets of finite measure and $|v_i| \leq \chi_{V_i}$ for $i = 1,2$. Let $\tilde{n}, \tilde{m}$ denote some integers and $\ \mathcal{U}_{\tilde{n},\tilde{m}}:=\{z: Mv_1(z) \leq C2^{\tilde{n}} |V_1|\} \cap \{ z: Mv_2(z) \leq C 2^{\tilde{m}} |V_2|\}$. Further assume that $ \mathcal{P}' $ is a finite collection of dyadic intervals such that $P \cap \mathcal{U}_{\tilde{n},\tilde{m}} \neq \emptyset$ for any $P \in \mathcal{P}'$. For each $P \in \mathcal{P}'$, $B_{\mathcal{Q},P}^{\#, H}$ denote the bilinear operator defined in (\ref{B_local_definition_haar_fix_scale}).
Then
$$
\text{size}_{\mathcal{P}'}((\langle B_{\mathcal{Q}, P}^{\#,H}(v_1, v_2), \vphi^H_P \rangle)_{P \in \mathcal{P}'}) \lesssim (C_1 2^{\tilde{n}}|V_1|)^{\alpha} (C_1 2^{\tilde{m}}|V_2|)^{\beta},$$
for any $ 0 \leq \alpha, \beta \leq 1$.
\end{proposition}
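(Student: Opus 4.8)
The plan is to combine the qualitative localization already provided by Lemma \ref{B_size} with quantitative control of the coefficients $\langle v_i, \phi_Q^i\rangle$ coming from the level set $\mathcal{U}_{\tilde n,\tilde m}$, and then interpolate. First I would apply Lemma \ref{B_size} with $S = \mathcal{U}_{\tilde n,\tilde m}$: since every $P\in\mathcal{P}'$ meets $\mathcal{U}_{\tilde n,\tilde m}$, we get
$$
\text{size}_{\mathcal{P}'}\big((\langle B_{\mathcal{Q},P}^{\#,H}(v_1,v_2),\vphi^H_P\rangle)_{P\in\mathcal{P}'}\big)
\lesssim \sup_{\substack{Q\in\mathcal{Q}\\ Q\cap\mathcal{U}_{\tilde n,\tilde m}\neq\emptyset}}\frac{|\langle v_1,\phi_Q^1\rangle|}{|Q|^{1/2}}\;
\sup_{\substack{Q\in\mathcal{Q}\\ Q\cap\mathcal{U}_{\tilde n,\tilde m}\neq\emptyset}}\frac{|\langle v_2,\phi_Q^2\rangle|}{|Q|^{1/2}}.
$$
So the task reduces to showing, for each $i=1,2$, that whenever a dyadic interval $Q$ meets $\mathcal{U}_{\tilde n,\tilde m}$ one has the two bounds $|\langle v_i,\phi_Q^i\rangle|/|Q|^{1/2}\lesssim C_12^{\tilde n}|V_1|$ (resp.\ $\lesssim C_12^{\tilde m}|V_2|$) on the one hand, and the trivial bound $|\langle v_i,\phi_Q^i\rangle|/|Q|^{1/2}\lesssim 1$ on the other (using $|v_i|\le\chi_{V_i}$ and $L^2$-normalization, or more cleanly Proposition \ref{size} together with $\|v_i\|_\infty\le 1$ — actually the cleanest is to note $\frac{|\langle v_i,\phi_Q^i\rangle|}{|Q|^{1/2}} \le \frac{1}{|Q|}\int|v_i|\tilde\chi_Q^M \le \frac{1}{|Q|}\int_{V_i}\tilde\chi_Q^M$, which is both $\lesssim 1$ and, by the maximal-function argument below, $\lesssim$ the level-set bound).

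For the level-set bound: if $Q\cap\mathcal{U}_{\tilde n,\tilde m}\neq\emptyset$, pick $z_0\in Q\cap\mathcal{U}_{\tilde n,\tilde m}$. By Proposition \ref{size} applied to the single interval $Q$, $\frac{|\langle v_1,\phi_Q^1\rangle|}{|Q|^{1/2}}\lesssim \frac{1}{|Q|}\int_{\mathbb{R}}|v_1|\tilde\chi_Q^M$. The weight $\tilde\chi_Q$ is an $L^\infty$-normalized bump adapted to $Q$, so $\frac{1}{|Q|}\int|v_1|\tilde\chi_Q^M$ is pointwise dominated (up to the implicit $M$-dependent constant) by $Mv_1(z)$ for every $z\in Q$, in particular by $Mv_1(z_0)\le C2^{\tilde n}|V_1|$ since $z_0\in\mathcal{U}_{\tilde n,\tilde m}$. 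Hence $\frac{|\langle v_1,\phi_Q^1\rangle|}{|Q|^{1/2}}\lesssim C_12^{\tilde n}|V_1|$, and symmetrically $\frac{|\langle v_2,\phi_Q^2\rangle|}{|Q|^{1/2}}\lesssim C_12^{\tilde m}|V_2|$ (enlarging the absolute constant $C$ to $C_1$ to absorb the implicit constant from Proposition \ref{size}). Combined with the trivial bounds $\le 1$, geometric interpolation (i.e.\ $\min(1,X)\le X^\alpha$ for $0\le\alpha\le 1$, $X\ge 0$, applied to each factor separately) gives
$$
\sup_{Q\cap\mathcal{U}\neq\emptyset}\frac{|\langle v_1,\phi_Q^1\rangle|}{|Q|^{1/2}}\lesssim (C_12^{\tilde n}|V_1|)^{\alpha},\qquad
\sup_{Q\cap\mathcal{U}\neq\emptyset}\frac{|\langle v_2,\phi_Q^2\rangle|}{|Q|^{1/2}}\lesssim (C_12^{\tilde m}|V_2|)^{\beta},
$$
for any $0\le\alpha,\beta\le 1$, and multiplying these two estimates together with the displayed consequence of Lemma \ref{B_size} yields the claim.

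The only genuinely substantive input is Lemma \ref{B_size}, which is already granted; everything after that is the maximal-function comparison plus the elementary interpolation trick, so I do not anticipate a serious obstacle. The one point requiring a little care is making sure the Haar localization ($Q\supseteq P$ is the only nondegenerate case for $\langle\phi_Q^{3,H},\vphi_P^H\rangle$, as noted before Lemma \ref{B_size}) is genuinely what lets us restrict the suprema to intervals $Q$ meeting $S=\mathcal{U}_{\tilde n,\tilde m}$ — but this is exactly the content of Lemma \ref{B_size}, so it can simply be quoted. I would also remark that the flexibility in choosing $\alpha,\beta$ independently is precisely what will later allow the two-dimensional level-set stopping time to be tuned, which is why the proposition is stated in this form rather than just with $\alpha=\beta$.
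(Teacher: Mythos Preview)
Your proposal is correct and follows essentially the same approach as the paper: apply Lemma \ref{B_size} with $S=\mathcal{U}_{\tilde n,\tilde m}$, then bound each supremum by $\min(C2^{\tilde n}|V_1|,1)$ and $\min(C2^{\tilde m}|V_2|,1)$ via the maximal-function comparison, and finally use $\min(1,X)\le X^\alpha$. The paper states the ``trivial estimates'' without elaboration, whereas you spell out the maximal-function step explicitly, but the argument is the same.
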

The proof of the proposition follows directly from Lemma \ref{B_size} and the trivial estimates
$$
\sup_{Q \cap \mathcal{U}_{\tilde{n},\tilde{m}} \neq \emptyset}\frac{|\langle v_1, \phi_Q^1 \rangle|}{|Q|^{\frac{1}{2}}} \lesssim \min(C 2^{\tilde{n}}|V_1|,1) \leq (C 2^{\tilde{n}}|V_1|)^{\alpha}
$$
and
$$
\sup_{Q \cap \mathcal{U}_{\tilde{n},\tilde{m}} \neq \emptyset}\frac{|\langle v_2, \phi_Q^2 \rangle|}{|Q|^{\frac{1}{2}}} \lesssim \min(C 2^{\tilde{m}}|V_2|,1) \leq (C 2^{\tilde{m}}|V_2|)^{\beta}
$$
for any $0 \leq \alpha,\beta \leq 1$.

We will also explore the local energy estimates which are ``stronger'' than the global energy estimates. Heuristically, in the case when $v_1 \in L^{p}$ and $ v_2 \in L^{q}$ with $|v_1| \leq \chi_{V_1}$ and $|v_2| \leq \chi_{V_2}$ for $p,q>1$ and close to $1$, the global energy estimates would not yield the desired boundedness exponents for $|V_1|$ and $|V_2| $ whereas one could take advantages of the local energy estimates to obtain the result. 
In the Haar model, a perfect localization can be achieved for energy estimates involving bilinear operators $B^H_{\mathcal{Q},P}$ specified in (\ref{B_local_definition_haar}) of Definition \ref{B_definition}. In particular,  the corresponding energy estimates can be compared to the energy estimates for 
localized operators defined as follows.

\begin{definition}\label{B^0_def}
Let $\mathcal{U}_{\tilde{n},\tilde{m}}$ be defined as the level set described in Proposition \ref{size_cor}. Let $\mathcal{Q}$ denote a finite collection of dyadic intervals.
\begin{enumerate}
\item
Suppose that  $(\phi^i_Q)_{Q \in \mathcal{Q}}$ for $i = 1, 2$ are families of $L^2$-normalized adapted bump functions and $(\phi^{3,H}_Q)_{Q \in \mathcal{Q}}$ is a family of $L^2$-normalized cutoff functions 
such that the family $(\phi_Q^{3,H})_{Q \in \mathcal{Q}}$ is lacunary while at least one of the families $(\phi^1_Q)_{Q \in \mathcal{Q}}$ and $(\phi^2_Q)_{Q \in \mathcal{Q}}$ is lacunary.
Define the bilinear operator
 \begin{align}
 B^{\tilde{n},\tilde{m},0}_{\mathcal{Q},\text{lac}}(v_1,v_2):= &\displaystyle \sum_{\substack{Q \in \mathcal{Q} \\ Q \cap \mathcal{U}_{\tilde{n},\tilde{m}} \neq \emptyset}} \frac{1}{|Q|^{\frac{1}{2}}}\langle v_1, \phi_Q^1\rangle \langle v_2, \phi_Q^2 \rangle \psi_Q^{3,H}. \label{B^0_lac}  
 \end{align}
 \item
Suppose that  $(\phi^i_Q)_{Q \in \mathcal{Q}}$ for $i = 1, 2$ are families of $L^2$-normalized adapted bump functions and $(\phi^{3,H}_Q)_{Q \in \mathcal{Q}}$ is a family of $L^2$-normalized cutoff functions such that the family $(\phi^{3,H}_Q)_{Q \in \mathcal{Q}}$ is nonlacunary while the families $(\phi^i_Q)_{Q \in \mathcal{Q}}$ for $i \neq 3$ are both lacunary. Define the bilinear operator
 \begin{align}
B^{\tilde{n},\tilde{m},0}_{\mathcal{Q},\text{nonlac}}(v_1,v_2) := &
 \sum_{\substack{ Q \in \mathcal{Q}  \\Q: Q \cap \mathcal{U}_{\tilde{n},\tilde{m}} \neq \emptyset}} \frac{1}{|Q|^{\frac{1}{2}}}|\langle v_1, \psi_Q^1\rangle| |\langle v_2, \psi_Q^2 \rangle| |\vphi_Q^{3,H} |. \label{B^0_nonlac}
\end{align}
\end{enumerate}
\end{definition}

\begin{remark}
We would like to emphasize that $B^{\tilde{n},\tilde{m},0}_{\mathcal{Q},\text{lac}}$ and $B^{\tilde{n},\tilde{m},0}_{\mathcal{Q},\text{nonac}}$ are localized to intersect level sets $\mathcal{U}_{\tilde{n},\tilde{m}}$ nontrivially. It is not difficult to imagine that the energy estimates for $(\langle B^{\tilde{n},\tilde{m},0}_{\mathcal{Q},\text{lac}}(v_1,v_2), \vphi_P \rangle )_{P \in \mathcal{P}'}$ and $(\langle B^{\tilde{n},\tilde{m},0}_{\mathcal{Q},\text{nonlac}}(v_1,v_2), \vphi_P \rangle )_{P \in \mathcal{P}'}$ would be better than the ``global'' energy estimates
stated in Proposition \ref{B_en_global} since one can now employ the information about intersections with level sets to control
$
\frac{|\langle v_1, \phi_Q^1 \rangle|}{|Q|^{\frac{1}{2}}} \text{ and } \frac{|\langle v_2, \phi_Q^2 \rangle|}{|Q|^{\frac{1}{2}}}. 
$
The energy estimates for $(\langle B^{H}_{\mathcal{Q},P}(v_1,v_2), \vphi_P \rangle )_{P \in \mathcal{P}'}$ can indeed be reduced to the energy estimates for $(\langle B^{\tilde{n},\tilde{m},0}_{\mathcal{Q}, \text{lac}}, \vphi_P \rangle)_{P \in \mathcal{P}'}$ or  $(\langle B^{\tilde{n},\tilde{m},0}_{\mathcal{Q}, \text{nonlac}}, \vphi_P \rangle)_{P \in \mathcal{P}'}$  
as stated in Lemma \ref{localization_haar}.
\end{remark}
 
\begin{lemma}[Localization of Energies in the Haar Model] \label{localization_haar}
Suppose that $\mathcal{P}'$ is a finite collection of dyadic intervals such that $P \cap \mathcal{U}_{\tilde{n},\tilde{m}} \neq \emptyset$ for any $P \in \mathcal{P}'$. For each $P \in \mathcal{P}'$, let $B^{H}_{\mathcal{Q},P}$ denote the bilinear operators defined in (\ref{B_local_definition_haar}).
Let $B^{\tilde{n},\tilde{m},0}_{\mathcal{Q}, \text{lac}}$ and $B^{\tilde{n},\tilde{m},0}_{\mathcal{Q}, \text{nonlac}}$ denote the bilinear operators defined in Definition \ref{B^0_def} where the bump functions in $ B^{\tilde{n},\tilde{m},0}_{\mathcal{Q}, \text{lac}}$ or $B^{\tilde{n},\tilde{m},0}_{\mathcal{Q}, \text{nonlac}}$ are the same as the ones in $B^H_{\mathcal{Q},P}$.
Then for $t > 1$ and any measurable functions $v_1$ and $v_2$,
\begin{align}
& \text{energy}^t_{\mathcal{P}'}((\langle B^H_{\mathcal{Q},P}(v_1,v_2), \vphi^H_P \rangle)_{P \in \mathcal{P}'})\nonumber \\
\leq &
\begin{cases}
\text{energy}^t_{\mathcal{P}'}((\langle B^{\tilde{n},\tilde{m},0}_{\mathcal{Q}, \text{lac}}(v_1,v_2), \vphi^H_P \rangle)_{P \in \mathcal{P}'}) \ \ \ \ \ \text{if} \ \ (\phi^{3,H}_Q)_{Q \in \mathcal{Q}} \text{ is a lacunary family}  \\
\\
\text{energy}^t_{\mathcal{P}'}((\langle B^{\tilde{n},\tilde{m},0}_{\mathcal{Q}, \text{nonlac}}(v_1,v_2), \vphi^H_P \rangle)_{P \in \mathcal{P}'}) \ \ \text{if} \ \ (\phi^{3,H}_Q)_{Q \in \mathcal{Q}} \text{ is a non-lacunary family},
\end{cases}
\end{align}
and
\begin{align}
& \text{energy}^{1,\infty}_{\mathcal{P}'}((\langle B^H_{\mathcal{Q},P}(v_1,v_2), \vphi^H_P \rangle)_{P \in \mathcal{P}'}) \nonumber \\
\leq &
\begin{cases}
\text{energy}^{1,\infty}_{\mathcal{P}'}((\langle B^{\tilde{n},\tilde{m},0}_{\mathcal{Q}, \text{lac}}(v_1,v_2), \vphi^H_P \rangle)_{P \in \mathcal{P}'}) \ \ \ \ \ \text{if} \ \ (\phi^{3,H}_Q)_{Q \in \mathcal{Q}} \text{ is a lacunary family}  \\
\\
\text{energy}^{1,\infty}_{\mathcal{P}'}((\langle B^{\tilde{n},\tilde{m},0}_{\mathcal{Q}, \text{nonlac}}(v_1,v_2), \vphi^H_P \rangle)_{P \in \mathcal{P}'}) \ \text{if} \ \ (\phi^{3,H}_Q)_{Q \in \mathcal{Q}} \text{ is a non-lacunary family}.
\end{cases}
\end{align}
\end{lemma}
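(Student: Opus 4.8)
The plan is to exploit the orthogonality structure of the Haar system, which makes the pairings against $\vphi^H_P$ collapse onto a very restricted set of intervals $Q$. Concretely, I would first record two elementary facts: (a) if $(\phi^{3,H}_Q)_{Q\in\mathcal Q}$ is lacunary, i.e. $\phi^{3,H}_Q=\psi^H_Q$ is a Haar wavelet, then $\langle \psi^H_Q,\vphi^H_P\rangle=0$ unless $P\subsetneq Q$ (if $|Q|\le|P|$ the wavelet has mean zero on $P$ or $Q$, $P$ are disjoint; if $|Q|>|P|$ but $Q\not\supseteq P$ they are disjoint); (b) if $(\phi^{3,H}_Q)_{Q\in\mathcal Q}$ is non-lacunary, i.e. $\phi^{3,H}_Q=\vphi^H_Q\ge 0$, then $\langle \vphi^H_Q,\vphi^H_P\rangle=|Q|^{-1/2}|P|^{-1/2}|Q\cap P|\ge 0$, and it is nonzero precisely when $Q$ and $P$ are nested. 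The second ingredient is the trivial but crucial observation that, since $P\cap\mathcal U_{\tilde n,\tilde m}\neq\emptyset$ for every $P\in\mathcal P'$, any $Q\supseteq P$ also satisfies $Q\cap\mathcal U_{\tilde n,\tilde m}\neq\emptyset$. This is the exact mechanism that lets one trade the scale cutoff $|Q|\ge|P|$ appearing in $B^H_{\mathcal Q,P}$ for the level-set restriction $Q\cap\mathcal U_{\tilde n,\tilde m}\neq\emptyset$ appearing in $B^{\tilde n,\tilde m,0}_{\mathcal Q,\cdot}$; note that, by hypothesis, the bump functions in $B^{\tilde n,\tilde m,0}_{\mathcal Q,\cdot}$ are literally the same as those in $B^H_{\mathcal Q,P}$.

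In the lacunary case I would prove the stronger statement that the two sequences coincide: for each $P\in\mathcal P'$,
\[
\langle B^H_{\mathcal Q,P}(v_1,v_2),\vphi^H_P\rangle=\langle B^{\tilde n,\tilde m,0}_{\mathcal Q,\text{lac}}(v_1,v_2),\vphi^H_P\rangle .
\]
Expanding both pairings, every summand carries the factor $\langle \psi^H_Q,\vphi^H_P\rangle$, which by fact (a) kills all $Q$ not strictly containing $P$. Hence on the left the surviving indices are exactly $\{Q\in\mathcal Q: P\subsetneq Q\}$ (the constraint $|Q|\ge|P|$ being automatic), and for each such $Q$ one has $Q\cap\mathcal U_{\tilde n,\tilde m}\neq\emptyset$, so the same set of indices survives on the right; the remaining factors $\langle v_1,\phi^1_Q\rangle\langle v_2,\phi^2_Q\rangle$ are identical on both sides, so the two sequences agree and therefore so do their $\text{energy}^t$ and $\text{energy}^{1,\infty}$ (with equality, in fact).

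In the non-lacunary case one only obtains termwise domination, and this is where the absolute values built into the definition of $B^{\tilde n,\tilde m,0}_{\mathcal Q,\text{nonlac}}$ are essential — the one genuinely delicate point. Since now $\phi^1_Q,\phi^2_Q$ are both lacunary and $\vphi^H_Q\ge 0$, the triangle inequality together with fact (b) gives
\[
\bigl|\langle B^H_{\mathcal Q,P}(v_1,v_2),\vphi^H_P\rangle\bigr|\ \le\ \sum_{\substack{Q\in\mathcal Q:\ |Q|\ge|P|\\ P\subseteq Q}}\frac{|\langle v_1,\psi^1_Q\rangle|\,|\langle v_2,\psi^2_Q\rangle|}{|Q|^{1/2}}\,\langle \vphi^H_Q,\vphi^H_P\rangle .
\]
Every $Q$ in this sum meets $\mathcal U_{\tilde n,\tilde m}$, so enlarging the index set to all $Q\in\mathcal Q$ with $Q\cap\mathcal U_{\tilde n,\tilde m}\neq\emptyset$ only adds nonnegative terms (those with $Q\subsetneq P$, for which $\langle \vphi^H_Q,\vphi^H_P\rangle>0$); the enlarged right-hand side is precisely $\langle B^{\tilde n,\tilde m,0}_{\mathcal Q,\text{nonlac}}(v_1,v_2),\vphi^H_P\rangle$, which is itself nonnegative by construction. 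Thus $\bigl|\langle B^H_{\mathcal Q,P}(v_1,v_2),\vphi^H_P\rangle\bigr|\le \langle B^{\tilde n,\tilde m,0}_{\mathcal Q,\text{nonlac}}(v_1,v_2),\vphi^H_P\rangle$ for every $P\in\mathcal P'$.

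Finally I would invoke the monotonicity of the energies under termwise domination of their defining sequences: if $|a_P|\le|b_P|$ for all $P\in\mathcal P'$, then $\{P:|a_P|/|P|^{1/2}>2^n\}\subseteq\{P:|b_P|/|P|^{1/2}>2^n\}$ for every $n$, so each admissible family $\mathbb D_n$ for $(a_P)$ is admissible for $(b_P)$; taking the supremum over $n$ (respectively the $\ell^t$-sum over $n$) in Definition~\ref{def_size_energy} yields $\text{energy}^{1,\infty}_{\mathcal P'}((a_P))\le\text{energy}^{1,\infty}_{\mathcal P'}((b_P))$ and $\text{energy}^{t}_{\mathcal P'}((a_P))\le\text{energy}^{t}_{\mathcal P'}((b_P))$. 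Applying this with $a_P=\langle B^H_{\mathcal Q,P}(v_1,v_2),\vphi^H_P\rangle$ and $b_P$ the corresponding $B^{\tilde n,\tilde m,0}$-pairing (equality of the sequences in the lacunary case, domination in the non-lacunary case) completes the proof of both displayed estimates.
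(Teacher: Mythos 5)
Your argument is correct and follows essentially the same route as the paper's proof: in the lacunary case you identify $\langle\psi^H_Q,\vphi^H_P\rangle\neq 0\Rightarrow P\subsetneq Q$, so the scale restriction $|Q|\ge|P|$ is redundant and every surviving $Q$ meets $\mathcal U_{\tilde n,\tilde m}$ (the biest trick, yielding equality of the sequences); in the non-lacunary case you use the triangle inequality and the nonnegativity of the indicator pairings to pass to $B^{\tilde n,\tilde m,0}_{\mathcal Q,\text{nonlac}}$, exactly as in the paper. Your final remark that termwise domination of the sequences forces domination of both $\text{energy}^t$ and $\text{energy}^{1,\infty}$ is the same monotonicity the paper invokes implicitly when it reduces to the pointwise inequality, just made explicit.
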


The following local energy estimates will play a crucial role in the proof of our main theorem. 
\begin{proposition}[Local Energy Estimates in the Haar Model] \label{B_en}
Suppose that $V_1, V_2 \subseteq \mathbb{R}$ are sets of finite measure and $|v_i| \leq \chi_{V_i}$ for $i= 1,2$. 
Let $\mathcal{P}'$ denote the collection of dyadic intervals satisfying the condition described in Lemma \ref{localization_haar} and $B_{\mathcal{Q},P}^{ H}$ the bilinear operator defined in (\ref{B_local_definition_haar}) for each $P \in \mathcal{P}'$.
Further assume that $\frac{1}{p} + \frac{1}{q} > 1$. 
\begin{enumerate}[(i)]
\item
Suppose that $t >1$. Then for any $0 \leq \theta_1,\theta_2 <1$ with $\theta_1 + \theta_2 = \frac{1}{t}$, one has 
\begin{align} \label{B_en_t}
& \text{energy}^{t} _{\mathcal{P}'}((\langle B^H_{\mathcal{Q},P}(v_1,v_2), \vphi_P^H \rangle)_{P \in \mathcal{P}'}) \lesssim  C_1^{\frac{1}{p}+ \frac{1}{q} - \theta_1 - \theta_2}2^{\tilde{n}(\frac{1}{p} - \theta_1)}2^{\tilde{m}(\frac{1}{q} - \theta_2)}|V_1|^{\frac{1}{p}}|V_2|^{\frac{1}{q}}.
\end{align}

\item
For any $0 < \theta_1,\theta_2 <1$ with $\theta_1 + \theta_2 = 1$,
\begin{align} \label{B_en_1}
&\text{energy}^{1,\infty}_{\mathcal{P}'}((\langle B^H_P(v_1,v_2), \vphi_P^H \rangle)_{P \in \mathcal{P}'}) \lesssim  C_1^{\frac{1}{p}+ \frac{1}{q} - \theta_1 - \theta_2} 2^{\tilde{n}(\frac{1}{p} - \theta_1)} 2^{\tilde{m}(\frac{1}{q} - \theta_2)} |V_1|^{\frac{1}{p}} |V_2|^{\frac{1}{q}}.
\end{align}


\end{enumerate}
\end{proposition}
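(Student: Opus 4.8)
The plan is to funnel the statement through the two reductions already prepared above and then to estimate a genuinely truncated paraproduct.

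\emph{Reduction.} By Lemma~\ref{localization_haar} it suffices to bound $\text{energy}^{t}_{\mathcal{P}'}$ and $\text{energy}^{1,\infty}_{\mathcal{P}'}$ of the sequences $(\langle B^{\tilde n,\tilde m,0}_{\mathcal{Q},\star}(v_1,v_2),\vphi^{H}_{P}\rangle)_{P\in\mathcal{P}'}$, with $\star=\mathrm{lac}$ or $\mathrm{nonlac}$ according to the lacunarity of $(\phi^{3,H}_{Q})_{Q}$. Since $(\vphi^{H}_{P})_{P}$ is a non-lacunary family, Proposition~\ref{energy_classical} gives $\text{energy}^{1,\infty}_{\mathcal{P}'}((\langle g,\vphi^{H}_{P}\rangle)_{P})\lesssim\|g\|_{1}$ and $\text{energy}^{t}_{\mathcal{P}'}((\langle g,\vphi^{H}_{P}\rangle)_{P})\lesssim\|g\|_{t}$ for $t>1$, so it is enough to prove
\begin{equation*}
\|B^{\tilde n,\tilde m,0}_{\mathcal{Q},\star}(v_1,v_2)\|_{\tau}\lesssim C_{1}^{\frac1p+\frac1q-\theta_{1}-\theta_{2}}\,2^{\tilde n(\frac1p-\theta_{1})}\,2^{\tilde m(\frac1q-\theta_{2})}\,|V_{1}|^{\frac1p}|V_{2}|^{\frac1q}
\end{equation*}
for $\tau=t$ when $\theta_{1}+\theta_{2}=\tfrac1t$, and, for part (ii), working with $\text{energy}^{1,\infty}_{\mathcal{P}'}$ directly, the analogous statement with $\tau=1$ and $\theta_{1}+\theta_{2}=1$ (replacing the $L^{1}$ norm by a weak-type bound on a suitable maximal function of $B^{\tilde n,\tilde m,0}_{\mathcal{Q},\star}(v_1,v_2)$, so as not to lose at the $L^{1}$ endpoint).

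\emph{Endpoint estimates.} The operator $B^{\tilde n,\tilde m,0}_{\mathcal{Q},\star}$ is exactly the global operator $B^{H}_{\mathcal{Q}'}$ of Definition~\ref{B_definition_haar} for the subcollection $\mathcal{Q}':=\{Q\in\mathcal{Q}:Q\cap\mathcal{U}_{\tilde n,\tilde m}\neq\emptyset\}$, so Lemma~\ref{B_global_norm} (Coifman--Meyer) immediately yields the ``global'' endpoint $\|B^{\tilde n,\tilde m,0}_{\mathcal{Q},\star}(v_1,v_2)\|_{\tau}\lesssim|V_{1}|^{1/\tilde p}|V_{2}|^{1/\tilde q}$ for $\tfrac1{\tilde p}+\tfrac1{\tilde q}=\tfrac1\tau$, $\tilde p,\tilde q>1$ — i.e. the claimed bound with the $2^{\tilde n},2^{\tilde m}$-exponents set to $0$. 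The real work is the \emph{localized} endpoints, which exploit the defining property of $\mathcal{U}_{\tilde n,\tilde m}$: for $Q\cap\mathcal{U}_{\tilde n,\tilde m}\neq\emptyset$ one has, for every $x\in Q$,
\begin{equation*}
\frac{|\langle v_{1},\phi^{1}_{Q}\rangle|}{|Q|^{1/2}}\le\min\!\big(C 2^{\tilde n}|V_{1}|,\,Mv_{1}(x)\big),\qquad
\frac{|\langle v_{2},\phi^{2}_{Q}\rangle|}{|Q|^{1/2}}\le\min\!\big(C 2^{\tilde m}|V_{2}|,\,Mv_{2}(x)\big).
\end{equation*}
In the lacunary case $(\phi^{3,H}_{Q})_{Q}$ is lacunary and, say, $(\phi^{1}_{Q})_{Q}$ is lacunary: using the Haar square-function bound $\|\sum_{Q}c_{Q}\psi^{3,H}_{Q}\|_{\tau}\lesssim\|(\sum_{Q}|c_{Q}|^{2}\chi_{Q}/|Q|)^{1/2}\|_{\tau}$ for $1<\tau<\infty$ with $c_{Q}=|Q|^{-1/2}\langle v_{1},\phi^{1}_{Q}\rangle\langle v_{2},\phi^{2}_{Q}\rangle$, bounding the $v_{2}$-factor by $\min(C2^{\tilde m}|V_2|,Mv_2(x))$ (constant over $Q\ni x$, hence leaving the $v_1$-square function $Sv_1$), then writing $\min(C2^{\tilde m}|V_2|,Mv_2)\le(C_{1}2^{\tilde m}|V_{2}|)^{1-\mu}(Mv_{2})^{\mu}$, applying H\"older and the boundedness of $M$ and $S$ (Theorem~\ref{maximal-square}) together with $\|v_{i}\|_{p_{i}}\le|V_{i}|^{1/p_{i}}$, produces one localized endpoint; the symmetric choice of which variable to place in the square function (available in particular when $\phi^{1}_Q,\phi^{2}_Q$ are both lacunary) gives a second. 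In the non-lacunary case $(\phi^{3,H}_{Q})_{Q}=(\vphi^{3,H}_{Q})_{Q}$ forces $\phi^{1}_Q,\phi^{2}_Q$ both lacunary and $B^{\tilde n,\tilde m,0}_{\mathcal{Q},\mathrm{nonlac}}(v_1,v_2)=\sum_{Q\cap\mathcal{U}_{\tilde n,\tilde m}\neq\emptyset}|Q|^{-1}|\langle v_{1},\psi^{1}_{Q}\rangle|\,|\langle v_{2},\psi^{2}_{Q}\rangle|\,\chi_{Q}$, which a Cauchy--Schwarz in $Q$ dominates pointwise by $Sv_{1}\cdot Sv_{2}$; splitting the level-set bounds inside one or both square functions gives the analogous endpoints.

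\emph{Interpolation and main obstacle.} Interpolating the global endpoint with the two localized endpoints (and their $\tau=1$ counterparts) yields the full two-parameter family $\theta_{1}+\theta_{2}=\tfrac1t$ (resp. $=1$) with the asserted powers of $|V_{1}|,|V_{2}|,2^{\tilde n},2^{\tilde m}$ and $C_{1}$. The hypothesis $\tfrac1p+\tfrac1q>1$ is precisely what forces the exponent $1-\mu$ on the ``size'' factors $(C_{1}2^{\tilde n_i}|V_{i}|)^{1-\mu}$ to be strictly $<1$, so that the level-set bounds genuinely improve on the global Coifman--Meyer estimate; it also accounts for the positive power $\tfrac1p+\tfrac1q-\theta_{1}-\theta_{2}$ of $C_{1}$, a loss one can afford. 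I expect the main difficulty to be not any single inequality but the bookkeeping of this interpolation — arranging a finite list of endpoint estimates whose convex hull, in the four exponents of $|V_{1}|,|V_{2}|,2^{\tilde n},2^{\tilde m}$, contains the entire prescribed family while keeping the $C_{1}$-dependence sharp — and, at the $L^{1}$ endpoint, avoiding a loss by running the argument through $\text{energy}^{1,\infty}$ and weak-type maximal bounds rather than through an $L^{1}$ norm. A further point, deferred to Section~\ref{section_fourier}, is that the clean Haar localization ``$\langle\psi^{3,H}_{Q},\vphi^{H}_{P}\rangle\neq0\Rightarrow Q\supseteq P$'' underlying Lemma~\ref{localization_haar} must there be replaced by a Schwartz-tail argument.
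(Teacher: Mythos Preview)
Your reduction step --- passing through Lemma~\ref{localization_haar} to replace $B^{H}_{\mathcal{Q},P}$ by the $P$-independent operators $B^{\tilde n,\tilde m,0}_{\mathcal{Q},\star}$, and then invoking Proposition~\ref{energy_classical} to reduce to an $L^{t}$ (resp.\ $L^{1}$) norm estimate --- is exactly the paper's route; this is Remark~\ref{loc_easy_haar} and the reduction to Lemma~\ref{B_loc_norm}.

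Where you diverge is in how you prove Lemma~\ref{B_loc_norm}. Instead of your square-function/maximal-function endpoints plus multilinear interpolation, the paper simply \emph{dualizes}: write $\|B^{\tilde n,\tilde m,0}_{\mathcal{Q},\star}(v_{1},v_{2})\|_{t}$ via restricted weak-type as $\sup\,|\langle B^{\tilde n,\tilde m,0}_{\mathcal{Q},\star}(v_{1},v_{2}),\chi_{S}\rangle|\,|S|^{-1/t'}$ (and for $t=1$, pair with a generic $\eta\in L^{\infty}$), and then apply the trilinear size--energy Lemma~\ref{s-e} directly to the form $\sum_{Q\in\mathcal{Q}'}|Q|^{-1/2}\langle v_{1},\phi^{1}_{Q}\rangle\langle v_{2},\phi^{2}_{Q}\rangle\langle\chi_{S},\phi^{3,H}_{Q}\rangle$. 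The localization $Q\cap\mathcal{U}_{\tilde n,\tilde m}\neq\emptyset$ feeds straight into the size bounds $(C_{1}2^{\tilde n}|V_{1}|)^{\alpha}$, $(C_{1}2^{\tilde m}|V_{2}|)^{\beta}$ via Proposition~\ref{size}; the energies are just $|V_{1}|$, $|V_{2}|$, $|S|$. Choosing $\theta_{3}=1/t'$ (or $\theta_{3}=0$ when $t=1$) and then $\alpha,\beta$ so that $\alpha(1-\theta_{1})+\theta_{1}=1/p$, $\beta(1-\theta_{2})+\theta_{2}=1/q$ --- which is exactly where the hypothesis $1/p+1/q>1$ enters --- gives the claimed bound in one stroke, uniformly in the lacunarity configuration of $(\phi^{1}_{Q}),(\phi^{2}_{Q}),(\phi^{3,H}_{Q})$.

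This buys two things over your approach. First, the interpolation bookkeeping you flag as the main obstacle simply disappears: Lemma~\ref{s-e} already encodes that interpolation. Second, your worry about the $L^{1}$ endpoint is unnecessary --- dualizing against $L^{\infty}$ and setting $\theta_{3}=0$ in Lemma~\ref{s-e} gives a genuine strong $L^{1}$ bound, so there is no need to detour through weak-type. Your square-function route is not wrong for $t>1$, but it is more fragile (the Littlewood--Paley inequality $\|\sum c_{Q}\psi^{3,H}_{Q}\|_{\tau}\lesssim\|(\sum|c_{Q}|^{2}\chi_{Q}/|Q|)^{1/2}\|_{\tau}$ fails at $\tau=1$, which is why you felt forced toward a weak-type workaround) and it needs separate arguments depending on which of $\phi^{1}_{Q},\phi^{2}_{Q}$ is lacunary.
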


\begin{remark}
The condition that 
\begin{equation} \label{diff_exp}
\frac{1}{p} + \frac{1}{q} > 1
 \end{equation}
is required in the proof the proposition. Moreover, the energy estimates in Proposition \ref{B_en} are useful for the proof of the main theorems in the range of exponents specified as (\ref{diff_exp}). A simpler argument without the use of Proposition \ref{B_en} can be applied for the other case
$$
\frac{1}{p} + \frac{1}{q}  \leq 1.
$$
\end{remark}

\begin{remark}\label{loc_easy_haar}
Thanks to the localization specified in Lemma \ref{localization_haar}, it suffices to prove that 
$$\text{energy}^t_{\mathcal{P}'}((\langle B^{\tilde{n},\tilde{m},0}_{\mathcal{Q},\text{lac}}(v_1,v_2), \vphi_P^H \rangle)_{P \in \mathcal{P}'}), \ \ \ \text{energy}^t_{\mathcal{P}'}((\langle B^{\tilde{n},\tilde{m},0}_{\mathcal{Q},\text{nonlac}}(v_1,v_2), \vphi_P^H \rangle)_{P \in \mathcal{P}'}),$$ 
and
$$\text{energy}^{1,\infty}_{\mathcal{P}'}((\langle B^{\tilde{n},\tilde{m},0}_{\mathcal{Q},\text{lac}}(v_1,v_2), \vphi_P^H \rangle)_{P \in \mathcal{P}'}), \ \ \ \text{energy}^{1,\infty}_{\mathcal{P}'}((\langle B^{\tilde{n},\tilde{m},0}_{\mathcal{Q},\text{nonlac}}(v_1,v_2), \vphi_P^H \rangle)_{P \in \mathcal{P}'}),$$ 
satisfy the same estimates on the right hand side of the inequalities in Proposition \ref{B_en}. Equivalently,
\begin{enumerate}[(i')]
\item
for any $0 \leq \theta_1,\theta_2 <1$ with $\theta_1 + \theta_2 = \frac{1}{t}$,
\begin{align} \label{prop_en_equiv}
& \text{energy}^{t} _{\mathcal{P}'}((\langle B^{\tilde{n},\tilde{m},0}_{\mathcal{Q},\text{lac}}(v_1,v_2), \vphi_P^H \rangle)_{P \in \mathcal{P}'}),  \text{energy}^{t} _{\mathcal{P}'}((\langle B^{\tilde{n},\tilde{m},0}_{\mathcal{Q},\text{nonlac}}(v_1,v_2), \vphi_P^H \rangle)_{P \in \mathcal{P}'}) \nonumber\\
\lesssim &  C^{\frac{1}{p}+ \frac{1}{q} - \theta_1 - \theta_2}2^{\tilde{n}(\frac{1}{p} - \theta_1)}2^{\tilde{m}(\frac{1}{q} - \theta_2)}|V_1|^{\frac{1}{p}}|V_2|^{\frac{1}{q}};
\end{align}
\item
for any $0 < \theta_1,\theta_2 <1$ with $\theta_1 + \theta_2 = 1$,
\begin{align}\label{prop_en_1_equiv}
& \text{energy}^{1,\infty} _{\mathcal{P}'}((\langle B^{\tilde{n},\tilde{m},0}_{\mathcal{Q},\text{lac}}(v_1,v_2), \vphi_P^H \rangle)_{P \in \mathcal{P}'}),  \text{energy}^{1,\infty} _{\mathcal{P}'}((\langle B^{\tilde{n},\tilde{m},0}_{\mathcal{Q},\text{nonlac}}(v_1,v_2), \vphi_P^H \rangle)_{P \in \mathcal{P}'}) \nonumber \\
\lesssim  & C^{\frac{1}{p}+ \frac{1}{q} - \theta_1 - \theta_2}2^{\tilde{n}(\frac{1}{p} - \theta_1)}2^{\tilde{m}(\frac{1}{q} - \theta_2)}|V_1|^{\frac{1}{p}}|V_2|^{\frac{1}{q}}.
\end{align}
\end{enumerate}
\end{remark}

Due to Proposition \ref{energy_classical}, the proof of (\ref{prop_en_equiv}) and (\ref{prop_en_1_equiv}) and thus of Proposition \ref{B_en}  can be reduced to verifying Lemma \ref{B_loc_norm}.

\begin{lemma} 
\label{B_loc_norm}
Suppose that $V_1, V_2 \subseteq \mathbb{R}$ are sets of finite measure and $|v_i| \leq \chi_{V_i}$ for $i= 1,2$. 
Let $B^{\tilde{n},\tilde{m},0}_{\mathcal{Q},\text{lac}}$ and $B^{\tilde{n},\tilde{m},0}_{\mathcal{Q},\text{nonlac}}$ denote the bilinear operators defined in (\ref{B^0_lac}) and (\ref{B^0_nonlac}) of Definition \ref{B^0_def}. Then for $t \geq 1$, 
\begin{align} \label{B^0_norm}
& \|B^{\tilde{n},\tilde{m},0}_{\mathcal{Q},\text{lac}}(v_1,v_2)\|_t, \|B^{\tilde{n},\tilde{m},0}_{\mathcal{Q},\text{nonlac}}(v_1,v_2)\|_t  \lesssim C_1^{\frac{1}{p}+ \frac{1}{q} - \theta_1 - \theta_2} 2^{\tilde{n}(\frac{1}{p} - \theta_1)} 2^{\tilde{m}(\frac{1}{q} - \theta_2)} |V_1|^{\frac{1}{p}} |V_2|^{\frac{1}{q}}. 
\end{align}
where $0 \leq \theta_1,\theta_2 <1$ and $\theta_1 + \theta_2 = \frac{1}{t}$.
\end{lemma}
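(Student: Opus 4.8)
The plan is to regard $B^{\tilde n,\tilde m,0}_{\mathcal Q,\mathrm{lac}}$ and $B^{\tilde n,\tilde m,0}_{\mathcal Q,\mathrm{nonlac}}$ as discrete bilinear paraproducts in $(v_1,v_2)$ whose only feature beyond the Coifman--Meyer bound of Lemma \ref{B_global_norm} is that the dyadic scales $Q$ entering the sum are restricted to those meeting the level set $\mathcal U_{\tilde n,\tilde m}$, and to exploit this restriction through the \emph{sizes} of the coefficient sequences. The conceptual core is that for every $Q$ with $Q\cap\mathcal U_{\tilde n,\tilde m}\neq\emptyset$ and any $z\in Q\cap\mathcal U_{\tilde n,\tilde m}$ one has $\frac1{|Q|}\int_Q|v_1|\le Mv_1(z)\le C2^{\tilde n}|V_1|$, while trivially $\frac1{|Q|}\int_Q|v_1|\le1$ since $|v_1|\le\chi_{V_1}$; by Proposition \ref{size} this gives
\[
\mathrm{size}_{\{Q:\,Q\cap\mathcal U_{\tilde n,\tilde m}\neq\emptyset\}}\big((\langle v_1,\phi_Q^1\rangle)_Q\big)\ \lesssim\ \min\big(1,\,C2^{\tilde n}|V_1|\big),
\]
and symmetrically $\mathrm{size}\lesssim\min(1,C2^{\tilde m}|V_2|)$ for $v_2$. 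The remaining inputs are classical: $\mathrm{energy}^{1,\infty}\big((\langle v_i,\phi_Q^i\rangle)_Q\big)\lesssim\|v_i\|_1\le|V_i|$ by Proposition \ref{energy_classical}(1), and the Littlewood--Paley bounds of Theorem \ref{maximal-square}.

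Next I would reduce $\|B^{\tilde n,\tilde m,0}(v_1,v_2)\|_t$ to the bilinear size--energy estimate
\[
\|B^{\tilde n,\tilde m,0}_{\mathcal Q,\cdot}(v_1,v_2)\|_t\ \lesssim\ \mathrm{size}_1^{1-\theta_1}\big(\mathrm{energy}^{1,\infty}_1\big)^{\theta_1}\,\mathrm{size}_2^{1-\theta_2}\big(\mathrm{energy}^{1,\infty}_2\big)^{\theta_2},\qquad\theta_1+\theta_2=\tfrac1t,
\]
valid because in both the $\mathrm{lac}$ and $\mathrm{nonlac}$ cases at least two of the three families are lacunary: for $t=1$ this is the generic size--energy estimate applied directly, and for $t>1$ it follows by dualizing against $L^{t'}$ and invoking the generic trilinear size--energy estimate (cf.\ \cite{mtt}, \cite{mtt2}, and Section~2 of \cite{cw}) with the $L^{t'}$ factor absorbed. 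In the $\mathrm{nonlac}$ case, where $\phi_Q^{3,H}=|Q|^{-1/2}\chi_Q$, one can bypass duality entirely: a term-by-term Cauchy--Schwarz in $Q$ gives the pointwise bound $B^{\tilde n,\tilde m,0}_{\mathcal Q,\mathrm{nonlac}}(v_1,v_2)\le\widetilde S_1(v_1)\,\widetilde S_2(v_2)$ with $\widetilde S_i$ the Littlewood--Paley square function of $v_i$ over $\{Q:\,Q\cap\mathcal U_{\tilde n,\tilde m}\neq\emptyset\}$, whence $\|B\|_t\le\|\widetilde S_1(v_1)\|_{1/\theta_1}\|\widetilde S_2(v_2)\|_{1/\theta_2}$ together with the interpolation $\|\widetilde S_i(v_i)\|_{1/\theta_i}\lesssim\mathrm{size}_i^{1-\theta_i}\big(\mathrm{energy}^{1,\infty}_i\big)^{\theta_i}$ (John--Nirenberg, Proposition \ref{JN}, plus the $L^2$ almost-orthogonality bound $\|\widetilde S_i(v_i)\|_2^2\lesssim\|v_i\|_2^2$).

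Plugging the size estimates of the first paragraph and $\mathrm{energy}^{1,\infty}_i\lesssim|V_i|$ into this bound, and writing $\min(1,C2^{\tilde n}|V_1|)^{1-\theta_1}\le(C2^{\tilde n}|V_1|)^{\gamma_1(1-\theta_1)}$ for a free $\gamma_1\in[0,1]$ (using $\min(1,\cdot)^{(1-\gamma_1)(1-\theta_1)}\le1$), and similarly with $\gamma_2$, the choices $\gamma_1=\frac{1/p-\theta_1}{1-\theta_1}$, $\gamma_2=\frac{1/q-\theta_2}{1-\theta_2}$ make the powers of $C2^{\tilde n}$ and $C2^{\tilde m}$ equal to $\tfrac1p-\theta_1$ and $\tfrac1q-\theta_2$, collapse the powers of $|V_1|,|V_2|$ to $\tfrac1p,\tfrac1q$, and the power of $C_1$ to $\tfrac1p+\tfrac1q-\theta_1-\theta_2$ --- which is exactly (\ref{B^0_norm}). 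These choices are admissible ($\gamma_i\in[0,1]$) precisely when $\theta_1\le\tfrac1p$ and $\theta_2\le\tfrac1q$, which is compatible with $\theta_1+\theta_2=\tfrac1t$ exactly because the standing hypothesis (\ref{diff_exp}) gives $\tfrac1p+\tfrac1q>1\ge\tfrac1t$ (hence $\tfrac1t-\tfrac1q<\tfrac1p$); in the complementary regime one does not need the localization at all, the plain Coifman--Meyer bound of Lemma \ref{B_global_norm} already supplying the estimate, as noted after Proposition \ref{B_en}.

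The step I expect to be the main obstacle is not conceptual but exactly this bookkeeping: one must pick the interpolation parameters so that three requirements hold simultaneously --- the $|V_1|,|V_2|$-powers land on $1/p,1/q$, the $C2^{\tilde n},C2^{\tilde m}$-powers on $1/p-\theta_1,1/q-\theta_2$, and every parameter stays in its legal range --- and pin down the precise admissible form of the bilinear size--energy inequality used (which energies, which $\theta_i$). Once (\ref{B^0_norm}) is established, Proposition \ref{energy_classical} upgrades it to the energy estimates (\ref{prop_en_equiv})--(\ref{prop_en_1_equiv}) of Remark \ref{loc_easy_haar}, uniformly in the (finite) cardinality of $\mathcal Q$, and hence, through Lemma \ref{localization_haar}, to Proposition \ref{B_en}.
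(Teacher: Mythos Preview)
Your proposal is correct and follows essentially the same approach as the paper: dualize, apply the trilinear size--energy estimate (Lemma~\ref{s-e}) with the third parameter taken to absorb the dual function, feed in the localized size bounds $\mathrm{size}_{\mathcal Q'}\lesssim(C2^{\tilde n}|V_1|)^\alpha$, $(C2^{\tilde m}|V_2|)^\beta$ together with $\mathrm{energy}^{1,\infty}_i\lesssim|V_i|$, and then choose $\alpha=\frac{1/p-\theta_1}{1-\theta_1}$, $\beta=\frac{1/q-\theta_2}{1-\theta_2}$ (your $\gamma_1,\gamma_2$) so that the exponents collapse to $1/p,1/q$. The paper treats $t>1$ by first proving restricted weak-type via pairing with $\chi_S$ and setting $\theta_3=1/\tilde t'$, then interpolating to strong type --- a detail your sketch compresses but correctly anticipates. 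Your alternative pointwise Cauchy--Schwarz route for the nonlac case is not in the paper (which treats both variants uniformly through Lemma~\ref{s-e}), but it is a valid and slightly more hands-on substitute.
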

\vskip .15in
\subsection{Proof of Proposition \ref{energy_classical} $(2)$} \label{Proof_prop_energy_classical}
One observes that for each $n$, there exists a disjoint collection of intervals, denoted by $\mathbb{D}^{0}_n$ such that \begin{equation} 
\text{energy}^{t} _{\mathcal{I}'}((\langle f, \vphi_I \rangle)_{I \in \mathcal{I}'}) = \bigg(\sum_{n}2^{tn} \sum_{\substack{I \in \mathbb{D}_n^{0}\\ I \in \mathcal{I'}}}|I|\bigg)^{\frac{1}{t}}\label{energy_p}
\end{equation}\label{B_energy}
where for any $I \in \mathbb{D}^0_n$,
\begin{equation} \label{energy_interval}
\frac{|\langle f, \vphi_I \rangle|}{|I|^{\frac{1}{2}}} > 2^{n}.
\end{equation}
Meanwhile for any $x \in I$,
\begin{equation*}
Mf(x) \geq \frac{|\langle f, \vphi_I \rangle|}{|I|^{\frac{1}{2}}},
\end{equation*}
which implies that 
$$I \subseteq \{Mf(x) > 2^{n}\}.$$
for any $I \in \mathcal{I}'$ satisfying (\ref{energy_interval}). Then by the disjointness of $\mathbb{D}^0_n$, one can estimate the energy as follows:
$$ 
\text{energy}^{t} _{\mathcal{I}'}((\langle f, \vphi_I \rangle)_{I \in \mathcal{I}'} \leq \big(\sum_{n}2^{tn }  |\{Mf(x) > 2^{n}\}|\big)^{\frac{1}{t}} \lesssim \|Mf\|_{t}.
$$
One can then apply the fact that the mapping property of maximal operator $M: L^{t} \rightarrow L^{t}$ for $t >1$ and derive
$$
\|Mf\|_{t} \lesssim \|f\|_{t}.
$$
\begin{remark}
To prove the same estimate for 
$$
\text{energy}^{t} _{\mathcal{I}'}((\langle f, \vphi^H_I \rangle)_{I \in \mathcal{I}'}), 
$$
one can replace $\vphi_I$ by $\vphi^H_I$ and the proof above still goes through. 
\end{remark}

\subsection{Proof of Lemma \ref{B_size}} \label{section_proof_B_size}

One recalls the definition of 
$$
\text{size}_{\mathcal{P'}}((\langle B_{\mathcal{Q}, P}^{\#,H}(v_1,v_2), \vphi^H_P \rangle)_{P \in \mathcal{P}'} = \frac{|\langle B^{\#,H}_{\mathcal{Q}, P_0}(v_1,v_2),\vphi_{P_0}^H \rangle|}{|P_0|^{\frac{1}{2}}}
$$
for some $P_0 \in \mathcal{P}'$ with the property that $P_0 \cap S \neq \emptyset$ by the assumption. Then
\begin{align} \label{B^no_local_size}
\frac{|\langle B^{\#,H}_{\mathcal{Q},P_0}(v_1,v_2),\vphi_{P_0}^H \rangle|}{|P_0|^{\frac{1}{2}}} \leq & \frac{1}{|P_0|}\sum_{Q:|Q|\sim 2^{\#}|P_0|}\frac{1}{|Q|^{\frac{1}{2}}}|\langle v_1, \phi_Q^1 \rangle| |\langle v_2, \phi_Q^2 \rangle| |\langle |P_0|^{\frac{1}{2}}\vphi^H_{P_0},\phi_Q^{3,H} \rangle| \nonumber \\
= & \frac{1}{|P_0|}\sum_{Q:|Q|\sim 2^{\#}|P_0|}\frac{|\langle v_1, \phi_Q^1 \rangle|}{|Q|^{\frac{1}{2}}} \frac{|\langle v_2, \phi_Q^2 \rangle|}{|Q|^{\frac{1}{2}}} |\langle |P_0|^{\frac{1}{2}}\vphi_{P_0}^H, |Q|^{\frac{1}{2}}\phi_Q^{3,H}  \rangle|.
\end{align}
Since $ \vphi_{P_0}^H$ and $\phi_Q^{3,H}$ are compactly supported on $P_0$ and $Q$ respectively with $|P_0| \leq |Q|$, one has 
$$
\langle |P_0|^{\frac{1}{2}}\vphi_{P_0}^H, |Q|^{\frac{1}{2}}\phi_Q^{3,H}  \rangle \neq 0 
$$
if and only if 
$$
P_0 \subseteq Q.
$$
By the hypothesis that $P_0 \cap S \neq \emptyset$, one derives that $Q \cap S\neq \emptyset$ and
\begin{align*}
(\ref{B^no_local_size}) \leq &\frac{1}{|P_0|} \sup_{Q \cap S \neq \emptyset}\frac{|\langle v_1, \phi_Q^1 \rangle|}{|Q|^{\frac{1}{2}}} \sup_{Q \cap S \neq \emptyset}\frac{|\langle v_2, \phi_Q^2 \rangle|}{|Q|^{\frac{1}{2}}}\sum_{Q:|Q|\sim 2^{\#_1}|P_0|}|\langle |P_0|^{\frac{1}{2}}\vphi^H_{P_0}, |Q|^{\frac{1}{2}}\phi_Q^{3,H}  \rangle| \nonumber \\
\lesssim & \frac{1}{|P_0|} \sup_{Q \cap S \neq \emptyset}\frac{|\langle v_1, \phi_Q^1 \rangle|}{|Q|^{\frac{1}{2}}} \sup_{Q \cap S\neq \emptyset}\frac{|\langle v_2, \phi_Q^2 \rangle|}{|Q|^{\frac{1}{2}}} \cdot |P_0|,
\end{align*}
where the last inequality holds trivially given that $|P_0|^{\frac{1}{2}}\vphi^H_{P_0}$ is indeed an indicator function of $P_0$ and $|Q|^{\frac{1}{2}}\phi_Q^{3,H}$ is majorized by the indicator function of $Q$. 
This completes the proof of the proposition.
\subsection{Proof of Lemma \ref{localization_haar}} \label{section_proof_localization_haar}
According to the definition of energy (Definition \ref{def_size_energy_haar}), it suffices to prove that for any $P \in \mathcal{P}'$, 
\begin{equation} \label{goal_localization_en}
|\langle B^H_{\mathcal{Q},P}(v_1,v_2), \vphi^H_P \rangle| \leq
\begin{cases}
 |\langle B^{\tilde{n},\tilde{m},0}_{\mathcal{Q},\text{lac}}(v_1,v_2), \vphi^H_{P} \rangle|  \ \ \ \ \ \ \ \text{ if } (\phi_Q^{3,H})_{Q \in \mathcal{Q}} \text{ is a lacunary family}\\
 \\
 |\langle B^{\tilde{n},\tilde{m},0}_{\mathcal{Q},\text{nonac}}(v_1,v_2), \vphi^H_{P} \rangle|  \ \ \ \ \text{ if } (\phi_Q^{3,H})_{Q \in \mathcal{Q}} \text{ is a nonlacunary family}, 
 \end{cases}
\end{equation}
where the bump functions in $ B^{\tilde{n},\tilde{m},0}_{\mathcal{Q}, \text{lac}}$ or $B^{\tilde{n},\tilde{m},0}_{\mathcal{Q}, \text{nonlac}}$ are the same as the ones in $B^H_{\mathcal{Q},P}$.
\vskip .15in
\noindent
\textbf{Case I. $(\phi^3_K)_K$ is lacunary. }
One recalls that in the Haar model,
$$
\langle B^H_{\mathcal{Q},P}(v_1,v_2), \vphi_P^H \rangle := \frac{1}{|P|^{\frac{1}{2}}} \sum_{\substack{Q \in \mathcal{Q} \\ |Q| \geq |P|}} \frac{1}{|Q|^{\frac{1}{2}}} \langle v_1, \phi_Q^1\rangle \langle v_2, \phi_Q^2 \rangle \langle \vphi^H_P,\psi_Q^{3,H} \rangle
$$
where $\vphi^H_P$ is an $L^2$-normalized indicator function of $P$ and $\psi_Q^{3,H}$ is a Haar wavelet on $Q$ with $P$ and $Q$ being dyadic intervals. It is not difficult to observe that 
\begin{equation} \label{haar_biest_cond}
\langle \vphi^H_P,\psi_Q^{3,H} \rangle \neq 0  \iff
Q \supseteq P.
\end{equation}
Given $P \cap \mathcal{U}_{\tilde{n},\tilde{m}} \neq \emptyset$, one can deduce that $Q \cap \mathcal{U}_{\tilde{n},\tilde{m}} \neq \emptyset$. As a consequence,
\begin{align} \label{haar_biest}
\langle B^H_{\mathcal{Q}, P}(v_1,v_2), \vphi^H_P \rangle =& \sum_{\substack{Q \in \mathcal{Q} \\ Q \cap \mathcal{U}_{\tilde{n},\tilde{m}} \neq \emptyset \\ |Q| \geq |P|}} \frac{1}{|Q|^{\frac{1}{2}}} \langle v_1, \phi_Q^1\rangle \langle v_2, \phi_Q^2 \rangle \langle \vphi^H_P,\psi_Q^{3,H} \rangle \nonumber \\
= &\sum_{\substack{Q \in \mathcal{Q} \\ Q \cap \mathcal{U}_{\tilde{n},\tilde{m}} \neq \emptyset}} \frac{1}{|Q|^{\frac{1}{2}}} \langle v_1, \phi_Q^1\rangle \langle v_2, \phi_Q^2 \rangle \langle \vphi^H_P,\psi_Q^{3,H} \rangle.
\end{align}
By the definition of $B^{\tilde{n},\tilde{m},0}_{\mathcal{Q},\text{lac}}$ (\ref{B^0_lac}) with the choice the bump functions to be the same as the ones in $B^H_{\mathcal{Q},P}$, one can conclude that
\begin{equation*} 
\langle B^H_P(v_1,v_2), \vphi^H_P \rangle = \langle B^{\tilde{n},\tilde{m},0}_{\mathcal{Q},\text{lac}}(v_1,v_2), \vphi^H_P \rangle,
\end{equation*}
which is the desired estimate in Case I highlighted in (\ref{goal_localization_en}).
\begin{remark}[Biest trick]\label{biest_trick_rmk}
In the Haar model, equation (\ref{haar_biest}) trivially holds due to (\ref{haar_biest_cond}). Such technique of replacing the operator defined in terms of $P$ (namely $B_P^H$) by another operator independent of $P$ (namely $B^{\tilde{n},\tilde{m}}_0$) is called \textbf{biest trick} which allows neat energy estimates for
\begin{align*}
& \text{energy}_{\mathcal{P}'}^t((\langle B^{\tilde{n},\tilde{m},0}_{\mathcal{Q},\text{lac}}(v_1,v_2), \vphi_P^H \rangle)_{P \in \mathcal{P}'}) \text{ and } \text{energy}^{1,\infty}_{\mathcal{P}'}((\langle B^{\tilde{n},\tilde{m},0}_{\mathcal{Q},\text{lac}}(v_1,v_2), \vphi_P^H \rangle)_{P \in \mathcal{P}'})
\end{align*} 
and yields local energy estimates (\ref{B_en_t}) and (\ref{B_en_1}) described in Proposition \ref{B_en}.
\end{remark}
\noindent
\textbf{Case II:  $(\phi^3_Q)_Q$ is non-lacunary. }
Since $\vphi^{3,H}_Q$ and $\vphi_P^H$ are $L^2$-normalized indicator functions of $Q$ and $P$ respectively, $|Q| \geq |P|$ implies that $Q \supseteq P$. As a result, $Q \cap \mathcal{U}_{\tilde{n},\tilde{m}} \neq \emptyset$ given $P \cap \mathcal{U}_{\tilde{n},\tilde{m}} \neq \emptyset$. Then
\begin{align*}
\frac{|\langle B_{\mathcal{Q},P}^H(v_1,v_2), \vphi_P^H \rangle|}{|P|^{\frac{1}{2}}} = & \frac{1}{|P|^{\frac{1}{2}}} \bigg|\sum_{\substack{Q \in \mathcal{Q} \\ Q \supseteq P \\ Q \cap \mathcal{U}_{\tilde{n},\tilde{m}} \neq \emptyset}} \frac{1}{|Q|^{\frac{1}{2}}} \langle v_1, \psi_Q^1\rangle \langle v_2, \psi_Q^2 \rangle \langle \vphi^{H}_P,\vphi_Q^{3,H} \rangle \bigg| \nonumber \\
\leq & \frac{1}{|P|^{\frac{1}{2}}} \sum_{\substack{Q \in \mathcal{Q} \\ Q \supseteq P \\ Q \cap \mathcal{U}_{\tilde{n},\tilde{m}} \neq \emptyset}} \frac{1}{|Q|^{\frac{1}{2}}} |\langle v_1, \psi_Q^1\rangle| |\langle v_2, \psi_Q^2 \rangle| \langle \vphi^{H}_P,|\vphi_Q^{3,H}| \rangle,
\end{align*}
where the last inequality follows from the fact that $\vphi_P^H$ is an indicator function and thus non-negative. One can drop the condition $Q \supseteq P$ in the sum and bound the above expression by
$$
\frac{|\langle B_{\mathcal{Q}, P}^H(v_1,v_2), \vphi_P^H \rangle|}{|P|^{\frac{1}{2}}} \leq \frac{1}{|P|} \sum_{\substack{Q \in \mathcal{Q}\\ Q \cap \mathcal{U}_{\tilde{n},\tilde{m}} \neq \emptyset}} \frac{1}{|Q|^{\frac{1}{2}}} |\langle v_1, \psi_Q^1\rangle| |\langle v_2, \psi_Q^2 \rangle| \langle  \vphi_P^H,|\vphi_Q^{3,H}| \rangle.
$$
One recalls the definition of $B_{\mathcal{Q},\text{nonlac}}^{\tilde{n},\tilde{m},0}$ (\ref{B^0_nonlac}) with the choice the bump functions to be the same as the ones in $B^H_{\mathcal{Q},P}$ and deduces 
$$
\frac{|\langle B_{\mathcal{Q}, P}^H(v_1,v_2), \vphi_P^H \rangle|}{|P|^{\frac{1}{2}}} \leq \frac{|\langle B_{\mathcal{Q},\text{nonlac}}^{\tilde{n},\tilde{m},0}(v_1,v_2), \vphi_P^H \rangle|}{|P|^{\frac{1}{2}}}
$$
which agrees with the estimate described in (\ref{goal_localization_en}). 
This completes the proof of the lemma.
\begin{remark}
$B^{\tilde{n},\tilde{m},0}_{\mathcal{Q},\text{lac}}$ and $B^{\tilde{n},\tilde{m},0}_{\mathcal{Q},\text{nonlac}}$ are perfectly localized in the sense that the dyadic intervals (that matter) intersect with $\mathcal{U}_{\tilde{n},\tilde{m}}$ nontrivially given that $P \cap \mathcal{U}_{\tilde{n},\tilde{m}} \neq \emptyset$. As will be seen from the proof of Lemma \ref{B_loc_norm}, such localization is essential in deriving desired estimates. In the general Fourier case, more efforts are needed to create similar localizations as will be discussed in Section \ref{section_fourier}.
\end{remark}

\subsection{Proof of Lemma \ref{B_loc_norm}} \label{section_proof_B_loc_norm}
The estimates described in Lemma \ref{B_loc_norm} can be obtained by a very similar argument for proving the boundedness of one-parameter paraproducts discussed in Section 2 of \cite{cw}. We would include the customized proof here since the argument depends on a one-dimensional stopping-time decomposition which is also an important ingredient for our tensor-type stopping-time decompositions that will be introduced in later sections.
\subsubsection{One-dimensional stopping-time decomposition - maximal intervals} \label{section_size_energy_one_dim_st_maximal}
For a sequence $$(a_Q)_{Q \in \mathcal{Q}} := (v, \vphi_Q)_{Q \in \mathcal{Q}} \ \ \text{or} \ \ (v,\vphi^H_Q)_{Q \in \mathcal{Q}},$$
we perform the the following stopping-time decomposition. 
Given finiteness of the collection of dyadic intervals $\mathcal{Q}$, there exists some $K_1 \in \mathbb{Z}$ such that 
$$\frac{|a_Q|}{|Q|^{\frac{1}{2}}} \leq C_1 2^{K_1} \text{energy}_{\mathcal{Q}}((a_Q)_{Q \in \mathcal{Q}}).$$ We can pick the largest interval $Q_{\text{max}}$ such that 
$$\frac{|a_{Q_{\text{max}}} |}{|Q_{\text{max}}|^{\frac{1}{2}}} > C_1 2^{K_1-1}\text{energy}_{\mathcal{Q}}((a_Q)_{Q \in \mathcal{Q}}).$$
Then we define a tree
$$U:= \{Q \in \mathcal{Q}: Q \subseteq Q_{\text{max}}\},$$
and let 
$$Q_U := Q_{\text{max}},$$ 
usually called \textit{tree-top}. 
Now we look at $\mathcal{Q} \setminus U$ and repeat the above step to choose maximal intervals and collect their subintervals in their corresponding sets. Since $\mathcal{Q}$ is finite, the process will eventually end. We then collect all $U$'s in a set $\mathbb{U}_{K_1-1}$. Next we repeat the above algorithm to $\displaystyle \mathcal{Q} \setminus \bigcup_{U \in \mathbb{U}_{K_1-1}} U$. We thus obtain a decomposition $$\displaystyle \mathcal{Q} = \bigcup_{k}\bigcup_{U \in \mathbb{U}_{k}}U.$$ If, otherwise, the sequence is formed in terms of bump or cutoff functions in a lacunary family, namely
$$
(a_Q)_{Q \in \mathcal{Q}} :=  (v, \psi_Q)_{Q \in \mathcal{Q}} \ \ \text{or} \ \ (v,\psi^H_Q)_{Q \in \mathcal{Q}},
$$
then the same procedure can be performed to 
$$
\frac{1}{|Q|} \left\Vert \bigg(\sum_{\substack{Q' \in \mathcal{Q} \\ Q' \subseteq Q}}\frac{|a_{Q'}|^2 }{|Q'|}\chi_{Q'}\bigg)^{\frac{1}{2}}\right\Vert_{1,\infty}.
$$
\vskip .15in
The next proposition summarizes the information from the stopping-time decomposition and the details of the proof are included in Section 2 of \cite{cw}.
\begin{proposition}\label{st_prop}
Suppose $\displaystyle \mathcal{Q} = \bigcup_{k}\bigcup_{U \in \mathbb{U}_{k}}U$ is a decomposition obtained from the stopping-time algorithm specified above, then for any $k \in \mathbb{Z}$, one has
\begin{equation*}
\displaystyle 2^{k-1}\text{energy}_{\mathcal{Q}}((a_Q)_{Q \in \mathcal{Q}}) \leq \text{size}_{\bigcup_{U \in \mathbb{U}_k}U}((a_Q)_{Q \in \mathcal{Q}}) \leq \min(2^{k}\text{energy}_{\mathcal{Q}}((a_Q)_{Q \in \mathcal{Q}}),\text{size}_{\mathcal{Q}}((a_Q)_{Q \in \mathcal{Q}})).
\end{equation*}
In addition,
$$
\sum_{U \in \mathbb{U}_k} |Q_{U}| \lesssim 2^{-k}.
$$
\end{proposition}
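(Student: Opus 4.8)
The plan is to read all three assertions directly off the selection rule of the stopping-time algorithm, following Section~2 of \cite{cw}. Abbreviate $E := \text{energy}_{\mathcal{Q}}((a_Q)_{Q \in \mathcal{Q}})$, and for a dyadic interval $Q$ and a subcollection $\mathcal{Q}' \subseteq \mathcal{Q}$ write $\sigma_{\mathcal{Q}'}(Q)$ for the stopping quantity of $Q$ relative to $\mathcal{Q}'$, namely $\sigma_{\mathcal{Q}'}(Q) = |a_Q|/|Q|^{1/2}$ in the non-lacunary case and $\sigma_{\mathcal{Q}'}(Q) = \frac{1}{|Q|}\big\|\big(\sum_{Q' \subseteq Q,\, Q' \in \mathcal{Q}'}\frac{|a_{Q'}|^2}{|Q'|}\chi_{Q'}\big)^{1/2}\big\|_{1,\infty}$ in the lacunary case. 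The round of the algorithm that produces $\mathbb{U}_k$ runs at a threshold comparable to $2^k E$: from the surviving subcollection $\mathcal{Q}'$ it repeatedly removes the largest interval $Q$ with $\sigma_{\mathcal{Q}'}(Q)$ above the threshold together with all surviving intervals contained in it, and halts once no surviving interval lies above the threshold. Two elementary structural observations will be used throughout: the tree tops $(Q_U)_{U \in \mathbb{U}_k}$ of a single round are pairwise disjoint (each newly chosen top is neither contained in nor contains a previously chosen one, and dyadic intervals are nested or disjoint); and the intervals assigned to a tree $U$ are exactly the intervals contained in $Q_U$ still available when $Q_U$ is selected, so that $\bigcup_{U \in \mathbb{U}_k}U$ is precisely the collection present at the start of round $k$ with the intervals belonging to the subsequent rounds producing $\mathbb{U}_{k-1}, \mathbb{U}_{k-2}, \ldots$ removed.

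For the left inequality of the sandwich, fix $U \in \mathbb{U}_k$ and let $\mathcal{Q}'$ be the collection present at the start of round $k$. By the selection rule $\sigma_{\mathcal{Q}'}(Q_U) \gtrsim 2^k E$, and by the second structural observation the intervals contained in $Q_U$ belonging to $\bigcup_{U' \in \mathbb{U}_k}U'$ are exactly those appearing in the sum defining $\sigma_{\mathcal{Q}'}(Q_U)$; taking $I_0 = Q_U$ in the definition of $\text{size}_{\bigcup_{U' \in \mathbb{U}_k}U'}$ (Definition \ref{def_size_energy}) then gives $\text{size}_{\bigcup_{U' \in \mathbb{U}_k}U'}((a_Q)_Q) \geq \sigma_{\mathcal{Q}'}(Q_U) \gtrsim 2^{k-1}E$. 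The bound $\text{size}_{\bigcup_{U' \in \mathbb{U}_k}U'} \leq \text{size}_{\mathcal{Q}}$ is immediate from monotonicity of $\text{size}$ under enlarging the index collection. For the bound $\text{size}_{\bigcup_{U' \in \mathbb{U}_k}U'} \lesssim 2^k E$, note that the round producing $\mathbb{U}_{k+1}$ ran at a threshold comparable to $2^{k+1}E$ and, upon termination, left every interval of the then-surviving collection with stopping quantity at most that threshold; since $\bigcup_{U' \in \mathbb{U}_k}U'$ is contained in that surviving collection and passing to a subcollection only decreases the pointwise quantity inside $\|\cdot\|_{1,\infty}$ (or the pointwise value, in the non-lacunary case), one obtains $\text{size}_{\bigcup_{U' \in \mathbb{U}_k}U'}((a_Q)_Q) \lesssim 2^k E$. (For the largest relevant value of $k$ this upper bound follows instead from the choice of the starting index $K_1$; the range of $k$ is finite since $\mathcal{Q}$ is finite.)

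For the measure bound, observe that $\{Q_U : U \in \mathbb{U}_k\}$ is a collection of pairwise disjoint dyadic intervals, each satisfying $\sigma_{\mathcal{Q}'}(Q_U) \gtrsim 2^k E$; hence it is an admissible collection $\mathbb{D}_n$ in the definition of $\text{energy}^{1,\infty}_{\mathcal{Q}}$ (in the non-lacunary case because $|a_{Q_U}|/|Q_U|^{1/2} > 2^n$, and in the lacunary case for the lacunary version of $\text{energy}^{1,\infty}$) for every $n$ with $2^n \lesssim 2^k E$. Choosing the largest such $n$, so that $2^n \gtrsim 2^{k-1}E$, yields
$$
E \;\geq\; 2^{n}\sum_{U \in \mathbb{U}_k}|Q_U| \;\gtrsim\; 2^{k-1}E\sum_{U \in \mathbb{U}_k}|Q_U|,
$$
whence the factor $E$ cancels and $\sum_{U \in \mathbb{U}_k}|Q_U| \lesssim 2^{-k}$; the case of $\text{energy}^t$ with $t>1$ is entirely analogous (and in fact gives a stronger bound).

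There is no genuine obstacle here — this is the standard bookkeeping of a stopping-time tree — but the step deserving the most care is the upper sandwich bound, where one must invoke the threshold of the round producing $\mathbb{U}_{k+1}$ (not of round $k$ itself) and verify that passing from $\mathcal{Q}'$ to the subcollection $\bigcup_{U' \in \mathbb{U}_k}U'$ does not increase the $\|\cdot\|_{1,\infty}$ quantity appearing in the lacunary size. The cancellation of $E$ in the measure estimate is likewise the key point of that part, and reflects that the definition of $\text{energy}$ is exactly calibrated for it. The precise numerical constants are as recorded in Section~2 of \cite{cw}.
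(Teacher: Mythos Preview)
Your proof is correct and is exactly the standard bookkeeping argument the paper defers to in Section~2 of \cite{cw}; the paper does not give its own proof of this proposition. Your handling of the upper sandwich bound (invoking the threshold of round $k+1$ and monotonicity of the lacunary stopping quantity under passing to a subcollection) and the cancellation of $E$ in the measure estimate are the correct points of care, and both are carried out properly.
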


The next lemma follows from the stopping-time decomposition, Proposition \ref{st_prop} and Proposition \ref{JN}, whose proof is discussed carefully in Section 2.9 of \cite{cw}. It plays an important role in proving Lemma \ref{B_loc_norm} as can be seen in Section \ref{section_pf_B_loc_norm}.
Suppose that $\mathcal{Q}$ is a finite collection of dyadic intervals and we would like to estimate
\begin{equation} \label{1-parameter-paraproduct}
\sum_{Q \in \mathcal{Q}}\frac{1}{|Q|^{\frac{1}{2}}} a_Q^1 a_Q^2 a_Q^3
\end{equation}
where for $1 \leq i \leq 3$,
\begin{align*}
a_Q^i := \langle v_i,\phi_Q^i \rangle \ \ \text{or} \ \   \langle v_i,\phi_Q^{i,H} \rangle 
\end{align*}
and at least two of the three families of $L^2$-normalized bump or cutoff functions are lacunary. 
\begin{lemma} \label{s-e}
 The trilinear form (\ref{1-parameter-paraproduct}) can be estimated by
$$
\bigg|\sum_{Q \in \mathcal{Q}}\frac{1}{|Q|^{\frac{1}{2}}}a_Q^1 a_Q^2 a_Q^3 \bigg|  \lesssim \prod_{i=1}^3 \text{size}_{\mathcal{Q}} \big((a_Q^i)_{Q \in \mathcal{Q}} \big)^{1-\theta_i}\text{energy}^{1,\infty}_{\mathcal{Q}}\big((a_Q^i)_{Q \in \mathcal{Q}} \big)^{\theta_i},
$$
for any $0 \leq \theta_1, \theta_2, \theta_3 < 1$ and $\theta_1 + \theta_2 + \theta_3 = 1$. The implicit constant depends on $\theta_1,\theta_2$ and $\theta_3$. 
\end{lemma}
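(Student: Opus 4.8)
The plan is to run the one-dimensional stopping-time decomposition of Section \ref{section_size_energy_one_dim_st_maximal} simultaneously for each of the three sequences $(a_Q^i)_{Q\in\mathcal{Q}}$, $i=1,2,3$, and then sum the resulting local contributions with a judicious choice of H\"older exponents. Concretely, for each $i$ I would apply Proposition \ref{st_prop} to obtain a decomposition $\mathcal{Q}=\bigcup_{k_i}\bigcup_{U\in\mathbb{U}_{k_i}^i}U$, where $\mathbb{U}_{k_i}^i$ is a family of trees with tops $Q_U$ satisfying $\operatorname{size}_{\bigcup_{U\in\mathbb{U}^i_{k_i}}U}((a^i_Q)_Q)\sim 2^{k_i}\operatorname{energy}^{1,\infty}_{\mathcal{Q}}((a^i_Q)_Q)$ and $\sum_{U\in\mathbb{U}^i_{k_i}}|Q_U|\lesssim 2^{-k_i}$. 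Intersecting the three decompositions partitions $\mathcal{Q}$ into pieces indexed by triples $(k_1,k_2,k_3)$, on each of which every interval $Q$ lies in a tree $U^i$ from the $k_i$-th level for each $i$.

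Next I would estimate the trilinear sum restricted to a fixed triple $(k_1,k_2,k_3)$. On such a piece, after grouping $Q$ according to (say) the smallest of the three tree-tops containing it, one reduces to a sum over trees, and for a single tree one uses the standard tree estimate: the $Q$'s inside a tree with top $Q_U$ contribute at most $|Q_U|$ times the product of the three sizes on that tree. Summing over the trees in the relevant level, and using that $\sum|Q_U|\lesssim 2^{-\min_i k_i}$ (choosing to pay the decay in whichever index is cheapest — here is where one spends a small power $\varepsilon_i$ of each $2^{k_i}$), the contribution of the $(k_1,k_2,k_3)$-piece is bounded by
\[
2^{-k_1\theta_1}2^{-k_2\theta_2}2^{-k_3\theta_3}\prod_{i=1}^3\Big(2^{k_i}\operatorname{energy}^{1,\infty}_{\mathcal{Q}}((a^i_Q)_Q)\Big)\cdot\frac{\text{(size)}_i}{2^{k_i}\text{energy}_i},
\]
where the last factor is $\lesssim 1$ by Proposition \ref{st_prop} and, more usefully, is also $\lesssim \operatorname{size}_{\mathcal{Q}}((a^i_Q)_Q)/(2^{k_i}\operatorname{energy}^{1,\infty}_{\mathcal{Q}}((a^i_Q)_Q))$. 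Interpolating between "$\lesssim 1$" and this second bound with weights $1-\theta_i$ and $\theta_i$ respectively produces a summable geometric factor $2^{k_i(\theta_i-\varepsilon_i)}$ in each index together with the desired $\operatorname{size}^{1-\theta_i}\operatorname{energy}^{\theta_i}$; since $\theta_i<1$ one has room to pick $\varepsilon_i>0$ so each geometric series in $k_i$ converges, and summing over $(k_1,k_2,k_3)$ yields the claimed bound.

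The main obstacle is the bookkeeping in the single-tree estimate when the three stopping-time decompositions do not share a common tree structure: one must decide, interval by interval, which of the three tree-tops to use as the "localizing" interval, and then verify that the resulting double (or triple) sum over trees telescopes correctly and that the sparsity bound $\sum_{U}|Q_U|\lesssim 2^{-k_i}$ can genuinely be invoked for the chosen index. This is exactly the argument carried out in Section 2.9 of \cite{cw}; the role of the hypothesis that at least two of the three families are lacunary is to guarantee that the relevant local pieces are genuine paraproduct-type sums (so that the single-tree estimate, which relies on an $L^2\times L^2\times L^\infty$-type orthogonality between two lacunary factors and a maximal bound on the third, applies), and the constraint $\theta_1+\theta_2+\theta_3=1$ with $\theta_i<1$ is what makes the three geometric series simultaneously summable after distributing the sparsity decay.
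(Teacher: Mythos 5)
Your proposal is essentially correct and follows the same route the paper delegates to: the paper itself gives no proof of Lemma \ref{s-e}, merely pointing to Section~2.9 of \cite{cw}, and your argument is a faithful summary of that argument — run the one-dimensional stopping time of Section~\ref{section_size_energy_one_dim_st_maximal} for each sequence $(a^i_Q)_Q$, intersect the three tree decompositions, apply the single-tree estimate (which uses Proposition \ref{JN} on the two lacunary factors and a supremum bound on the third), control $\sum_V |Q_V|$ by $\min_i 2^{-k_i} \le \prod_i 2^{-k_i\theta_i}$, and sum the resulting geometric series. Two small slips worth flagging: your displayed bound contains a spurious cancellation (the factor $\prod_i (2^{k_i}E_i) \cdot \tfrac{\text{size}_i}{2^{k_i}E_i}$ collapses to $\prod_i \text{size}_i$, which is not what you intend — you mean $\bigl(\prod_i 2^{-k_i\theta_i}\bigr)\prod_i \min(2^{k_i}E_i,S_i)$), and the sparsity bound should read $\sum_V|Q_V|\lesssim \min_i 2^{-k_i}$ rather than $2^{-\min_i k_i}$. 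Also, the extra parameters $\varepsilon_i$ are not actually needed: by Proposition \ref{st_prop} the size at level $k_i$ is comparable to $2^{k_i}E_i$ and is bounded by $S_i$, so $k_i$ ranges only up to $\log_2(S_i/E_i)+O(1)$; the $k_i$-sum therefore converges as soon as $\theta_i<1$, with no lower bound on $\theta_i$ required, which is exactly what the statement asks for.
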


\subsubsection{Proof of Lemma \ref{B_loc_norm}} \label{section_pf_B_loc_norm}
We will focus on proving (\ref{B^0_norm}) for $\|B_{\mathcal{Q}, \text{lac}}^{\tilde{n},\tilde{m},0}(v_1,v_2)\|_t$ for $t \geq 1$ and the estimate for $\|B_{\mathcal{Q}, \text{nonlac}}^{\tilde{n},\tilde{m},0}(v_1,v_2)\|_t$ follows from the exactly same argument.
\begin{enumerate} 
\item
\textbf{Estimate of $ \|B_{\mathcal{Q}, \text{lac}}^{\tilde{n},\tilde{m},0}(v_1,v_2)\|_1$.} 
For any $\eta \in L^{\infty}$ one has
\begin{align*}
|\langle B_{\mathcal{Q}.\text{lac}}^{\tilde{n},\tilde{m},0}(v_1,v_2),\eta \rangle |\leq & \sum_{\substack{Q \in \mathcal{Q} \\ Q \cap \mathcal{U}_{\tilde{n},\tilde{m}} \neq \emptyset}} \frac{1}{|Q|^{\frac{1}{2}}} |\langle v_1, \phi_Q^1\rangle| |\langle v_2, \phi_Q^2 \rangle| |\langle \eta, \psi_Q^{3,H} \rangle |.
\end{align*}
Let $\mathcal{Q}'$ denote the sub-collection
$$\mathcal{Q'}:= \{Q \in \mathcal{Q}: Q \cap \mathcal{U}_{\tilde{n},\tilde{m}} \neq \emptyset \}.$$
Then, one can apply Lemma \ref{s-e} to obtain
\begin{align*}
& |\langle B_{\mathcal{Q},\text{lac}}^{\tilde{n},\tilde{m},0}(v_1.v_2), \eta \rangle | \nonumber \\
\lesssim & \text{\ \ size}_{\mathcal{Q}'} ((\langle v_1, \phi^1_Q \rangle)_{Q \in \mathcal{Q}'})^{1-\theta_1}\text{size}_{\mathcal{Q}'}((\langle v_2, \phi^2_Q \rangle)_{Q \in \mathcal{Q}'})^{1-\theta_2} \text{size}_{\mathcal{Q}'}((\langle \eta, \psi^{3,H}_Q \rangle)_{Q \in \mathcal{Q}'})^{1-\theta_3} \nonumber \\
& \text{\ \ energy} _{\mathcal{Q}'}((\langle v_1, \phi^1_Q\rangle)_{Q \in \mathcal{Q}'})^{\theta_1}\text{energy} _{\mathcal{Q}'}((\langle v_2, \phi^2_Q\rangle)_{Q \in \mathcal{Q}'})^{\theta_2} \text{energy} _{\mathcal{Q}'}((\langle \eta, \psi^{3,H}_Q\rangle)_{Q \in \mathcal{Q}'})^{\theta_3},
\end{align*}
for any $0 \leq \theta_1,\theta_2, \theta_3 <1$ with $\theta_1 + \theta_2 + \theta_3 = 1$. By Proposition \ref{size} and the fact that $Q \cap \mathcal{U}_{\tilde{n},\tilde{m}} \neq \emptyset$ for any $Q \in \mathcal{Q}'$, one deduces that
\begin{align} 
& \text{size}_{\mathcal{Q}'}((\langle v_1, \phi^1_Q \rangle)_{Q \in \mathcal{Q}'}) \lesssim \min(2^{\tilde{n}}|V_1|,1) \leq  (2^{\tilde{n}}|V_1|)^{\alpha}, \label{f_size1} \\
& \text{size}_{\mathcal{Q}'}((\langle v_2, \phi^2_Q \rangle)_{Q \in \mathcal{Q}'}) \lesssim \min( 2^{\tilde{m}}|V_2|,1) \leq ( 2^{\tilde{m}}|V_2|)^{\beta}. \label{f_size}
\end{align}
for any $0 \leq \alpha, \beta \leq 1$.
One also recalls that $\eta \in L^{\infty}$, which gives 
\begin{equation} \label{inf_size}
\text{size}_{Q \in \mathcal{Q}}((\langle \eta, \psi^{3,H}_Q \rangle)_{Q \in \mathcal{Q}'}) \lesssim 1.
\end{equation}
By choosing $\theta_3 = 0$ and combining the estimates (\ref{f_size1}), (\ref{f_size}) and (\ref{inf_size}) with the energy estimates described in Proposition \ref{energy_classical}, one obtains 
\begin{align}
|\langle B_{\mathcal{Q},\text{lac}}^{\tilde{n},\tilde{m},0}(v_1,v_2), \eta \rangle |\lesssim & (C_12^{\tilde{n}}|V_1|)^{\alpha(1-\theta_1)} (C_1 2^{\tilde{m}}|V_2|)^{\beta(1-\theta_2)}|V_1|^{\theta_1}|V_2|^{\theta_2} \|\eta\|_{L^{\infty}} \nonumber \\
= & C_1^{\alpha(1-\theta_1)+ \beta(1-\theta_2)}2^{\tilde{n}\alpha(1-\theta_1)}2^{\tilde{m}\beta(1-\theta_2)}|V_1|^{\alpha(1-\theta_1)+\theta_1}|V_2|^{\beta(1-\theta_2)+\theta_2} \|\eta\|_{L^{\infty}},
\end{align}
where $0 < \theta_1,\theta_2 < 1$ with $\theta_1 + \theta_2 = 1$ and $ 0 \leq \alpha, \beta \leq 1$.
Therefore, one can conclude that
\begin{align*}
& \|B_{\mathcal{Q},\text{lac}}^{\tilde{n},\tilde{m},0}(v_1,v_2)\|_1 \lesssim C_1^{\alpha(1-\theta_1)+ \beta(1-\theta_2)}2^{\tilde{n}\alpha(1-\theta_1)}2^{\tilde{m}\beta(1-\theta_2)}|V_1|^{\alpha(1-\theta_1)+\theta_1}|V_2|^{\beta(1-\theta_2)+\theta_2}.
\end{align*}
By choosing $\alpha(1-\theta_1)+\theta_1 = \frac{1}{p}$ and $\beta(1-\theta_2)+\theta_2 = \frac{1}{q}$ which is possible given $\frac{1}{p} + \frac{1}{q} > 1$, one obtains the desired result.
\vskip 0.25 in
\item 

\noindent 
\textbf{Estimate of $\| B_{\mathcal{Q}, \text{lac}}^{\tilde{n},\tilde{m},0}(v_1,v_2)\|_{t}$ for $t >1$.}
We will first prove restricted weak-type estimates for $B_{\mathcal{Q},\text{lac}}^{\tilde{n},\tilde{m},0}$ specified in Claim \ref{en_weak_p} and then the strong-type estimates in Claim \ref{en_strong_p} follow from the standard interpolation technique.
\begin{claim} \label{en_weak_p}
$\| B_{\mathcal{Q}, \text{lac}}^{\tilde{n},\tilde{m},0}(v_1,v_2)\|_{\tilde{t},\infty} \lesssim C_1^{\frac{1}{p} + \frac{1}{q}-\theta_1 -\theta_2}2^{\tilde{n}(\frac{1}{p}-\theta_1)}2^{\tilde{m}(\frac{1}{q}-\theta_2)}|V_1|^{\frac{1}{p}}|V_2|^{\frac{1}{q}},$
\newline
where $0 \leq \theta_1,\theta_2 < 1$ with $\theta_1 + \theta_2 = \frac{1}{\tilde{t}}$ and $\tilde{t} \in (t-\delta, t+ \delta)$ for some $\delta > 0 $ sufficiently small.
\end{claim}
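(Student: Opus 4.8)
The plan is to deduce Claim \ref{en_weak_p} from the size--energy Lemma \ref{s-e}, by dualizing the weak $L^{\tilde t}$ norm, in exactly the same spirit as the $L^1$ bound just established; the one feature to exploit is that the restriction $Q\cap\mathcal U_{\tilde n,\tilde m}\neq\emptyset$ in the definition (\ref{B^0_lac}) of $B^{\tilde n,\tilde m,0}_{\mathcal Q,\mathrm{lac}}$ turns the pointwise ceilings $Mv_1\le C2^{\tilde n}|V_1|$, $Mv_2\le C2^{\tilde m}|V_2|$ on $\mathcal U_{\tilde n,\tilde m}$ into favorable powers of $2^{\tilde n}$ and $2^{\tilde m}$ in the sizes. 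First I would fix $\delta>0$ small enough that $(t-\delta,t+\delta)\subset(1,\infty)$ and that $1/\tilde t<1/p+1/q$ for every $\tilde t$ in this interval; this is possible since $t>1$ and $1/p+1/q>1$ by (\ref{diff_exp}). For any $\tilde t>1$ one has the duality characterization $\|G\|_{\tilde t,\infty}\simeq\sup_E|E|^{-1/\tilde t'}\,\sup_{|h|\le\chi_E}|\langle G,h\rangle|$, the supremum over sets $E\subset\mathbb R$ of positive finite measure, so it is enough to bound $|\langle B^{\tilde n,\tilde m,0}_{\mathcal Q,\mathrm{lac}}(v_1,v_2),h\rangle|$ for $|h|\le\chi_E$ by the right-hand side of Claim \ref{en_weak_p} times $|E|^{1/\tilde t'}$.

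Expanding the form and writing $\mathcal Q':=\{Q\in\mathcal Q:\ Q\cap\mathcal U_{\tilde n,\tilde m}\neq\emptyset\}$, this form equals $\sum_{Q\in\mathcal Q'}|Q|^{-1/2}\langle v_1,\phi^1_Q\rangle\langle v_2,\phi^2_Q\rangle\langle \psi^{3,H}_Q,h\rangle$, a trilinear expression with at least two lacunary families (the Haar family $\psi^{3,H}_Q$ and, in the $\mathrm{lac}$ case, at least one of $\phi^1_Q,\phi^2_Q$), so Lemma \ref{s-e} bounds it by $\prod_{i=1}^3\mathrm{size}_{\mathcal Q'}^{1-\theta_i}\,\mathrm{energy}^{1,\infty}_{\mathcal Q'}{}^{\theta_i}$ over the three slots, for any $0\le\theta_1,\theta_2,\theta_3<1$ with $\theta_1+\theta_2+\theta_3=1$. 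Now I would insert: since every $Q\in\mathcal Q'$ meets $\mathcal U_{\tilde n,\tilde m}$, Proposition \ref{size} gives $\mathrm{size}_{\mathcal Q'}((\langle v_1,\phi^1_Q\rangle))\lesssim\min(C2^{\tilde n}|V_1|,1)\le(C2^{\tilde n}|V_1|)^{\alpha}$ and $\mathrm{size}_{\mathcal Q'}((\langle v_2,\phi^2_Q\rangle))\lesssim(C2^{\tilde m}|V_2|)^{\beta}$ for any $0\le\alpha,\beta\le1$, while $\mathrm{size}((\langle h,\psi^{3,H}_Q\rangle))\lesssim\|h\|_\infty\le1$; and Proposition \ref{energy_classical}$(1)$ gives $\mathrm{energy}^{1,\infty}((\langle v_i,\phi^i_Q\rangle))\lesssim\|v_i\|_1\le|V_i|$ and $\mathrm{energy}^{1,\infty}((\langle h,\psi^{3,H}_Q\rangle))\lesssim\|h\|_1\le|E|$. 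Collecting the factors yields the bound
\[
C_1^{\alpha(1-\theta_1)+\beta(1-\theta_2)}\,2^{\tilde n\alpha(1-\theta_1)}\,2^{\tilde m\beta(1-\theta_2)}\,|V_1|^{\alpha(1-\theta_1)+\theta_1}\,|V_2|^{\beta(1-\theta_2)+\theta_2}\,|E|^{\theta_3}.
\]

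It remains to choose the parameters. Take $\theta_3=1/\tilde t'$, so $\theta_1+\theta_2=1/\tilde t$; since $1/\tilde t<1/p+1/q$ one can pick $0\le\theta_1\le1/p$, $0\le\theta_2\le1/q$ with this sum, and then set $\alpha=(1/p-\theta_1)/(1-\theta_1)\in[0,1]$, $\beta=(1/q-\theta_2)/(1-\theta_2)\in[0,1]$. With these choices $\alpha(1-\theta_1)+\theta_1=1/p$, $\beta(1-\theta_2)+\theta_2=1/q$, $\alpha(1-\theta_1)=1/p-\theta_1$, $\beta(1-\theta_2)=1/q-\theta_2$, so the displayed quantity becomes exactly $C_1^{1/p+1/q-\theta_1-\theta_2}2^{\tilde n(1/p-\theta_1)}2^{\tilde m(1/q-\theta_2)}|V_1|^{1/p}|V_2|^{1/q}|E|^{1/\tilde t'}$; combined with the duality characterization this proves the claim, and since all the constraints on $\tilde t$ are open it holds uniformly for $\tilde t\in(t-\delta,t+\delta)$, as required for the interpolation in Claim \ref{en_strong_p}. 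The $B^{\tilde n,\tilde m,0}_{\mathcal Q,\mathrm{nonlac}}$ case is verbatim the same, pairing against $\chi_E$ and using that $|\vphi^{3,H}_Q|$ is a nonnegative $L^2$-normalized indicator so that Propositions \ref{size} and \ref{energy_classical} still apply. There is no real \textbf{obstacle} here beyond exponent bookkeeping; the single point to be careful about is that the gain $2^{\tilde n(1/p-\theta_1)}2^{\tilde m(1/q-\theta_2)}$ over the global estimate of Proposition \ref{B_en_global} comes entirely from restricting to $\mathcal Q'$ and invoking the maximal-function ceiling on $\mathcal U_{\tilde n,\tilde m}$ — without that localization one only has $\mathrm{size}\lesssim1$ and loses the quantitative improvement.
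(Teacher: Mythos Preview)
Your proof is correct and follows essentially the same route as the paper: dualize the weak $L^{\tilde t}$ norm against a characteristic-type function, apply Lemma \ref{s-e} on the restricted collection $\mathcal Q'$, feed in the size bounds (\ref{f_size1})--(\ref{f_size}) coming from the level-set constraint together with the $L^1$ energies from Proposition \ref{energy_classical}, and then solve for $\alpha,\beta$ so that the exponents match $1/p,1/q$. Your handling of the parameter choices (ensuring $\theta_1\le 1/p$, $\theta_2\le 1/q$ so that $\alpha,\beta\in[0,1]$, and noting the need for $1/\tilde t<1/p+1/q$) is in fact slightly more explicit than the paper's.
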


\begin{claim} \label{en_strong_p}
$\| B_{\mathcal{Q},\text{lac}}^{\tilde{n},\tilde{m},0}(v_1,v_2)\|_{\tilde{t}} \lesssim C_1^{\frac{1}{p} + \frac{1}{q}-\theta_1-\theta_2} 2^{\tilde{n}(\frac{1}{p}-\theta_1)}2^{\tilde{m}(\frac{1}{q}-\theta_2)}|V_1|^{\frac{1}{p}}|V_2|^{\frac{1}{q}},$
\newline
where $0 \leq \theta_1,\theta_2 < 1$ with $\theta_1 + \theta_2 = \frac{1}{t}$.
\end{claim}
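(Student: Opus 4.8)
The plan is to deduce Claim~\ref{en_strong_p} from the restricted weak-type bounds of Claim~\ref{en_weak_p} by the standard interpolation of Lorentz quasi-norms, so that no further decomposition is needed. Since $\mathcal{Q}$ is finite, $B^{\tilde{n},\tilde{m},0}_{\mathcal{Q},\text{lac}}(v_1,v_2)$ is a finite linear combination of bump functions and hence belongs to every $L^{\tilde{t}}$, $0<\tilde{t}<\infty$; thus all the quasi-norms below are a priori finite and only the size of the constants is at stake. Abbreviate
$$
A_{\tilde{t}}(\theta_1,\theta_2):=C_1^{\frac{1}{p}+\frac{1}{q}-\theta_1-\theta_2}\,2^{\tilde{n}(\frac{1}{p}-\theta_1)}\,2^{\tilde{m}(\frac{1}{q}-\theta_2)},
$$
so that Claim~\ref{en_weak_p} reads $\|B^{\tilde{n},\tilde{m},0}_{\mathcal{Q},\text{lac}}(v_1,v_2)\|_{\tilde{t},\infty}\lesssim A_{\tilde{t}}(\theta_1,\theta_2)\,|V_1|^{1/p}|V_2|^{1/q}$ for every $\tilde{t}$ in a neighborhood of $t$ and every $0\le\theta_1,\theta_2<1$ with $\theta_1+\theta_2=1/\tilde{t}$.

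Now fix the target exponents $0\le\theta_1,\theta_2<1$ with $\theta_1+\theta_2=1/t$. Choose $\tilde{t}_0:=t-\delta$ and $\tilde{t}_1:=t+\delta$ with $\delta>0$ small enough that the points $\theta_i^{(j)}:=\theta_i\,\frac{t}{\tilde{t}_j}$ (for $i\in\{1,2\}$ and $j\in\{0,1\}$ indexing $\tilde{t}_0,\tilde{t}_1$) still satisfy $0\le\theta_i^{(j)}<1$, which is possible precisely because each $\theta_i$ is strictly below $1$. Then $\theta_1^{(j)}+\theta_2^{(j)}=\frac{1}{t}\cdot\frac{t}{\tilde{t}_j}=\frac{1}{\tilde{t}_j}$, so Claim~\ref{en_weak_p} applies at both endpoints and gives $\|B^{\tilde{n},\tilde{m},0}_{\mathcal{Q},\text{lac}}(v_1,v_2)\|_{\tilde{t}_j,\infty}\lesssim A_{\tilde{t}_j}(\theta_1^{(j)},\theta_2^{(j)})|V_1|^{1/p}|V_2|^{1/q}$ for $j=0,1$. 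Let $\mu\in(0,1)$ be determined by $\frac{1}{t}=\frac{1-\mu}{\tilde{t}_0}+\frac{\mu}{\tilde{t}_1}$ and invoke the elementary bound $\|f\|_t\lesssim\|f\|_{\tilde{t}_0,\infty}^{1-\mu}\|f\|_{\tilde{t}_1,\infty}^{\mu}$, valid for $0<\tilde{t}_0<t<\tilde{t}_1<\infty$ by the distribution-function splitting. This yields $\|B^{\tilde{n},\tilde{m},0}_{\mathcal{Q},\text{lac}}(v_1,v_2)\|_t\lesssim A_{\tilde{t}_0}(\theta^{(0)})^{1-\mu}A_{\tilde{t}_1}(\theta^{(1)})^{\mu}|V_1|^{1/p}|V_2|^{1/q}$. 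Since $(1-\mu)(\theta_1^{(0)}+\theta_2^{(0)})+\mu(\theta_1^{(1)}+\theta_2^{(1)})=\frac{1-\mu}{\tilde{t}_0}+\frac{\mu}{\tilde{t}_1}=\frac{1}{t}=\theta_1+\theta_2$ and $(1-\mu)\theta_i^{(0)}+\mu\theta_i^{(1)}=\theta_i\bigl((1-\mu)\frac{t}{\tilde{t}_0}+\mu\frac{t}{\tilde{t}_1}\bigr)=\theta_i$, the interpolated constant $A_{\tilde{t}_0}(\theta^{(0)})^{1-\mu}A_{\tilde{t}_1}(\theta^{(1)})^{\mu}$ equals exactly $C_1^{\frac{1}{p}+\frac{1}{q}-\theta_1-\theta_2}2^{\tilde{n}(\frac{1}{p}-\theta_1)}2^{\tilde{m}(\frac{1}{q}-\theta_2)}$, which is the constant asserted in Claim~\ref{en_strong_p}.

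The input Claim~\ref{en_weak_p} is itself obtained by repeating, almost verbatim, the $\|\cdot\|_1$ argument carried out just above for $B^{\tilde{n},\tilde{m},0}_{\mathcal{Q},\text{lac}}$: one tests against $\chi_S$ for $S$ of finite measure, expands the resulting trilinear form over $\mathcal{Q}'=\{Q\in\mathcal{Q}:Q\cap\mathcal{U}_{\tilde{n},\tilde{m}}\ne\emptyset\}$, applies Lemma~\ref{s-e}, and inserts the size bounds $\text{size}_{\mathcal{Q}'}((\langle v_1,\phi^1_Q\rangle)_Q)\lesssim\min(C2^{\tilde{n}}|V_1|,1)^{\alpha}$ and $\text{size}_{\mathcal{Q}'}((\langle v_2,\phi^2_Q\rangle)_Q)\lesssim\min(C2^{\tilde{m}}|V_2|,1)^{\beta}$ from Proposition~\ref{size}, the trivial bound $\text{size}_{\mathcal{Q}}((\langle\chi_S,\psi_Q^{3,H}\rangle)_Q)\lesssim1$, and the energy bounds $\text{energy}^{1,\infty}\lesssim|V_i|$, $\lesssim|S|$ from Proposition~\ref{energy_classical}; a choice of the Lemma~\ref{s-e} exponents making the $|V_i|$-powers sum to $1/p$, $1/q$ — allowed precisely because $\frac{1}{p}+\frac{1}{q}>1$ — and the $|S|$-power equal to $1/\tilde{t}'$ produces the restricted weak-type inequality. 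The only genuine point of care in the whole argument is to keep all the auxiliary exponents inside $[0,1)$; this is exactly why Claim~\ref{en_weak_p} is stated on an open window around $t$ rather than only at $t$, and why the hypotheses $\theta_i<1$ are required to be strict. Everything else is bookkeeping.
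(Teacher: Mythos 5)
Your proof is correct and follows essentially the same route the paper intends: the paper disposes of Claim~\ref{en_strong_p} with the single sentence that it ``follows from the standard interpolation technique'' applied to Claim~\ref{en_weak_p}, and you have spelled that out — using the elementary $\|f\|_t\lesssim\|f\|_{\tilde{t}_0,\infty}^{1-\mu}\|f\|_{\tilde{t}_1,\infty}^{\mu}$ bound and the affine choice $\theta_i^{(j)}=\theta_i\,t/\tilde{t}_j$ so that the interpolated constants and the constraints $\theta_1^{(j)}+\theta_2^{(j)}=1/\tilde{t}_j$ reproduce exactly the claimed exponents. The one detail worth noting is that you correctly identified why the weak-type claim must hold on an open window around $t$ and why the strict inequalities $\theta_i<1$ matter; this is precisely the design of the paper's Claim~\ref{en_weak_p}, and your check that the $\delta$-perturbation keeps $\theta_i^{(j)}$ in $[0,1)$ closes what would otherwise be a gap.
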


\begin{proof}[Proof of Claim \ref{en_weak_p}]
It suffices to apply the dualization and prove that for any $\chi_S \in L^{\tilde{t}'}$, 
$$
|\langle B_{\mathcal{Q},\text{lac}}^{\tilde{n},\tilde{m},0}(v_1,v_2), \chi_S \rangle| \lesssim 2^{\tilde{n}(\frac{1}{p}-\theta_1)}2^{\tilde{m}(\frac{1}{q}-\theta_2)}|V_1|^{\frac{1}{p}}|V_2|^{\frac{1}{q}}|S|^{\frac{1}{\tilde{t}'}}
$$
where $0 \leq \theta_1,\theta_2 < 1$ with $\theta_1 + \theta_2 = \frac{1}{\tilde{t}}$.

The multilinear form can be estimated using a similar argument developed for $\| B_{\mathcal{Q}, \text{lac}}^{\tilde{n},\tilde{m},0}(v_1,v_2)\|_1$. In particular, 
\begin{align} \label{linear_form_p}
& |\langle B_{\mathcal{Q},\text{lac}}^{\tilde{n},\tilde{m},0}(v_1,v_2), \chi_S \rangle| \nonumber \\
 \lesssim & \text{\ \ size}_{\mathcal{Q}'}((\langle v_1, \phi^1_Q \rangle)_{Q\in \mathcal{Q}'})^{1-\theta_1}\text{size}_{\mathcal{Q}'}((\langle v_2, \phi^2_Q \rangle)_{Q \in \mathcal{Q}'})^{1-\theta_2} \text{size}_{\mathcal{Q}'}((\langle \chi_S, \psi^{3,H}_Q \rangle)_{Q \in \mathcal{Q}'})^{1-\theta_3} \nonumber \\
& \text{\ \ energy} _{\mathcal{Q}'}((\langle v_1, \phi^1_Q\rangle)_{Q \in \mathcal{Q}'})^{\theta_1}\text{energy} _{\mathcal{Q}'}((\langle v_2, \phi^2_Q\rangle)_{Q \in \mathcal{Q}'})^{\theta_2} \text{energy} _{\mathcal{Q}'}((\langle \chi_S , \psi^{3,H}_Q\rangle)_{Q \in \mathcal{Q}'})^{\theta_3},
\end{align}
for any $0 \leq \theta_1,\theta_2, \theta_3 <1$ with $\theta_1 + \theta_2 + \theta_3 = 1$. The size and energy estimates involving $v_1, v_2$, namely (\ref{f_size}) and Proposition \ref{energy_classical}, are still valid. 
One also applies the same straightforward estimates that
\begin{align}
& \text{size}_{\mathcal{Q}'}((\langle \chi_S , \psi^{3,H}_Q \rangle)_{Q \in \mathcal{Q}'}) \lesssim 1, \label{set_size_en1}\\
& \text{energy} _{\mathcal{Q}'}((\langle \chi_S , \psi^{3,H}_Q\rangle)_{Q \in \mathcal{Q}'}) \lesssim |S|. \label{set_size_en}
\end{align}
By plugging in the above estimates (\ref{set_size_en1}), (\ref{set_size_en}), (\ref{f_size1}) and (\ref{f_size}) into (\ref{linear_form_p}), one has
\begin{align*}
|\langle B_{\mathcal{Q},\text{lac}}^{\tilde{n},\tilde{m},0}(v_1,v_2), \chi_S \rangle| \lesssim & (C_1 2^{\tilde{n}}|V_1|)^{\alpha(1-\theta_1)} (C_1 2^{\tilde{m}}|V_2|)^{\beta(1-\theta_2)}|V_1|^{\theta_1}|V_2|^{\theta_2}|S|^{\theta_3} \nonumber \\
= & C_1^{\alpha(1-\theta_1) + \beta(1-\theta_2)}2^{\tilde{n}\alpha(1-\theta_1)}2^{\tilde{m}\beta(1-\theta_2)}|V_1|^{\alpha(1-\theta_1) + \theta_1} |V_2|^{\beta(1-\theta_2)+ \theta_2} |S|^{\theta_3},
\end{align*}
for any $0 \leq \alpha, \beta \leq 1$. Let $\theta_3  = \frac{1}{\tilde{t}'}$, then $\theta_1 + \theta_2 = \frac{1}{\tilde{t}}$. One can then conclude 
\begin{align*}
\|B_{\mathcal{Q},\text{lac}}^{\tilde{n},\tilde{m},0}(v_1,v_2)\|_{\tilde{t},\infty} \lesssim & C_1^{\alpha(1-\theta_1) + \beta(1-\theta_2)}2^{\tilde{n}\alpha(1-\theta_1)}2^{\tilde{m}\beta(1-\theta_2)}|V_1|^{\alpha(1-\theta_1) + \theta_1} |V_2|^{\beta(1-\theta_2)+ \theta_2}.
\end{align*} 
Since $\frac{1}{p} + \frac{1}{q} >1$, one can choose $0 \leq \alpha, \beta \leq 1$ and $0 \leq \theta_1,\theta_2 <1$ with $\theta_1 + \theta_2 = \frac{1}{\tilde{t}} \sim \frac{1}{t}$ such that
$$
\alpha(1-\theta_1) + \theta_1 = \frac{1}{p},
$$
$$
 \beta(1-\theta_2)+ \theta_2 = \frac{1}{q},
$$
the claim then follows.
\end{proof}
\end{enumerate}

\section{Proof of  Theorem \ref{thm_weak_mod} for $\Pi_{\text{flag}^{\#1} \otimes \text{flag}^{\#2}}$ - Haar Model} \label{section_thm_haar_fixed}

In this section, we will study the model operator $\Pi_{\text{flag}^{\#1} \otimes \text{flag}^{\#2}}$ in the Haar case. In particular, we will focus on
\begin{align} \label{Pi_fixed_haar}
\Pi^H_{\text{flag}^{\#1} \otimes \text{flag}^{\#2}}:= &  \displaystyle \sum_{I \times J \in \mathcal{R}} \frac{1}{|I|^{\frac{1}{2}} |J|^{\frac{1}{2}}} \langle B^{\#_1,H}_{\mathcal{K},I}(f_1,f_2),\vphi_I^{1,H} \rangle \langle  B^{\#_2,H}_{\mathcal{L},J}(g_1, g_2), \vphi_J^{1,H} \rangle  \langle h, \psi_I^{2} \otimes \psi_J^{2} \rangle \psi_I^{3,H} \otimes \psi_J^{3,H}.
\end{align}
We will first specify the localization for the operator $\Pi^H_{\text{flag}^{\#1} \otimes \text{flag}^{\#2}}$ (\ref{Pi_fixed_haar}), 
which can be viewed as a starting point for the stopping-time decompositions. Then we will introduce different stopping-time decompositions used in the estimates. 
Finally, we will discuss how to apply information from the multiple stopping-time decompositions to obtain estimates. The organizations of Sections \ref{section_thm_haar}, \ref{section_thm_inf_fixed_haar} and \ref{section_thm_inf_haar} will follow the similar scheme.

\begin{notation}
Recall that $B_{\mathcal{K},I}$, $B_{\mathcal{L},J}$, $B_{\mathcal{K},I}^{\#_1}$ and $B^{\#_2}_{\mathcal{L},J}$ are bilinear operators (Definition \ref{B_definition}) that appear in the discrete model stated in Definition \ref{discrete_model_op}. To avoid being overloaded with heavy notation in further discussions, we introduce  simplified notation so that the dependence on the collections of dyadic intervals $\mathcal{K}$ and $\mathcal{L}$ are implicit. In particular, let
\begin{align}
&B_I(f_1,f_2):=  B_{\mathcal{K},I}(f_1,f_2), \ \ \tilde{B}_J(g_1,g_2):= B_{\mathcal{L},J}(g_1,g_2), \label{B_local_fourier_simple} \\
&B^{\#_1}_I(f_1,f_2):=  B_{\mathcal{K},I}^{\#_1}(f_1,f_2), \ \ \tilde{B}^{\#_2}_J(g_1,g_2):= B^{\#_2}_{\mathcal{L},J}(g_1,g_2).\label{B_fixed_local_fourier_simple} 
\end{align}
Similarly, for the other bilinear operators in Definition \ref{B_definition} and Definition \ref{B_definition_haar}, we adopt the following notation:
\begin{align}
& B(f_1,f_2):=  B_{\mathcal{K}}(f_1,f_2), \ \ \tilde{B}(g_1,g_2):= B_{\mathcal{L}}(g_1,g_2), \label{B_global_proof}\\
& B^H(f_1,f_2):=  B^H_{\mathcal{K}}(f_1,f_2), \ \ \tilde{B}^H(g_1,g_2):= B^H_{\mathcal{L}}(g_1,g_2), \label{B_global_haar_simplified} \\
& B_I^H(f_1,f_2):=  B^H_{\mathcal{K},I}(f_1,f_2), \ \ \tilde{B}^H_J(g_1,g_2):= B^H_{\mathcal{L},J}(g_1,g_2), \label{B_0_local_haar_simplified} \\
& B^{\#_1,H}_I(f_1,f_2):=  B_{\mathcal{K},I}^{\#_1,H}(f_1,f_2), \ \ \tilde{B}^{\#_2,H}_J(g_1,g_2):= B^{\#_2,H}_{\mathcal{L},J}(g_1,g_2). \label{B_fixed_local_haar_simplified} 
\end{align}
\end{notation}

\subsection{Localization} \label{section_thm_haar_fixed_localization}
The definition of the exceptional set (the set that would be taken away) settles the starting point for the stopping-time decompositions and thus is expected to be compatible with the stopping-time algorithms involved. There would be two types of stopping-time decompositions undertaken for the estimates of $\Pi^H_{\text{flag}^{\#1} \otimes \text{flag}^{\#2}}$ - one is the \textit{tensor-type stopping-time decomposition} and the other one the \textit{general two-dimensional level sets stopping-time decomposition}. While the second algorithm is related to a generic exceptional set (denoted by $\Omega^2$), the first algorithm aims to integrate information from two one-dimensional decompositions, which corresponds to the creation of a two-dimensional exceptional set (denoted by $\Omega^1$) as a Cartesian product of two one-dimensional exceptional sets.

One defines the exceptional set as follows. Let
$$\Omega := \Omega^1 \cup \Omega^2,$$ 
where
\begin{align*}
\displaystyle \Omega^1 := &\bigcup_{\mathfrak{n}_1 \in \mathbb{Z}}\{x: Mf_1(x) > C_1 2^{\mathfrak{n}_1}|F_1|\} \times \{y: Mg_1(y) > C_2 2^{-\mathfrak{n}_1}|G_1|\}\cup \nonumber \\
& \bigcup_{\mathfrak{n}_2 \in \mathbb{Z}}\{x:Mf_2(x) > C_1 2^{\mathfrak{n}_2}|F_2|\} \times \{y:Mg_2(y) > C_2 2^{-\mathfrak{n}_2}|G_2|\}\cup \nonumber \\
 &\bigcup_{\mathfrak{n}_3 \in \mathbb{Z}}\{x:Mf_1(x) > C_1 2^{\mathfrak{n}_3}|F_1|\} \times \{y:Mg_2(y) > C_2 2^{-\mathfrak{n}_3}|G_2|\}\cup \nonumber \\
& \bigcup_{\mathfrak{n}_4 \in \mathbb{Z}}\{x:Mf_2 (x)> C_1 2^{\mathfrak{n}_4 }|F_2|\} \times \{y:Mg_1(y) > C_2 2^{-\mathfrak{n}_4 }|G_1|\}, \nonumber \\
\Omega^2 := & \{(x,y) \in \mathbb{R}^2: SSh(x,y) > C_3 \|h\|_{L^s(\mathbb{R}^2)}\}.
\end{align*}
For technical reason that would become clear later on, we would need to get rid of the enlargement of $\Omega$ defined by
$$
Enl(\Omega) := \{(x,y)\in \mathbb{R}^2: MM\chi_{\Omega}(x,y) > \frac{1}{100}\}.
$$

\begin{remark} \label{subset}
Given by the boundedness of the Hardy-Littlewood maximal operator and the double square function operator, it is not difficult to check that if $C_1, C_2, C_3 \gg 1$,
then $
|Enl(\Omega)| \ll 1. 
$
For different model operators, we will define different exceptional sets based on different stopping-time decompositions to employ. Nevertheless, their measures can be controlled similarly using the boundedness of the operators enclosed in Theorem \ref{maximal-square}. 
\end{remark}
By scaling invariance, we will assume without loss of generality that $|E| = 1$ throughout the paper.
Let 
\begin{equation} \label{set_E'}
E' := E \setminus Enl(\Omega),
\end{equation}
then 
$
|E'| \sim |E|
$ and thus $|E'| \sim 1$. 
Our goal is to show that (\ref{thm_weak_explicit}) holds with the corresponding subset (which will be different for each discrete model operator) $E' \subseteq E$. 
In the current setting, this is equivalent to proving that the multilinear form
\begin{equation} \label{form_haar_larger}
\Lambda^H_{\text{flag}^{\#1} \otimes \text{flag}^{\#2}}(f_1, f_2, g_1, g_2, h, \chi_{E'}) := \langle \Pi^H_{\text{flag}^{\#1} \otimes \text{flag}^{\#2}}(f_1, f_2, g_1, g_2, h), \chi_{E'} \rangle
\end{equation}
satisfies the following restricted weak-type estimate
\begin{equation} \label{form_haar_larger_goal}
|\Lambda^H_{\text{flag}^{\#1} \otimes \text{flag}^{\#2}}(f_1, f_2, g_1, g_2, h, \chi_{E'})| \lesssim |F_1|^{\frac{1}{p_1}} |G_1|^{\frac{1}{p_2}} |F_2|^{\frac{1}{q_1}} |G_2|^{\frac{1}{q_2}} \|h\|_{L^{s}(\mathbb{R}^2)}.
\end{equation}

\begin{remark}\label{localization_haar_fixed}
It is noteworthy that the discrete model operators are perfectly localized to $E'$ in the Haar model. More precisely,
\begin{align}\label{haar_local}
& \Lambda^H_{\text{flag}^{\#1} \otimes \text{flag}^{\#2}}(f_1, f_2, g_1, g_2, h, \chi_{E'})  \nonumber\\
= &  \displaystyle \sum_{I \times J \in \mathcal{R}} \frac{1}{|I|^{\frac{1}{2}} |J|^{\frac{1}{2}}} \langle B^{\#_1,H}_I(f_1,f_2),\vphi_I^{1,H} \rangle \langle \tilde{B}^{\#_2,H}_J(g_1, g_2), \vphi_J^{1,H} \rangle  \langle h, \psi_I^{2} \otimes \psi_J^{2} \rangle \langle \chi_{E'}, \psi_I^{3,H} \otimes \psi_J^{3,H} \rangle  \nonumber \\
= &  \displaystyle \sum_{\substack{I \times J \in \mathcal{R} \\ I \times J \cap Enl(\Omega)^c \neq \emptyset}} \frac{1}{|I|^{\frac{1}{2}} |J|^{\frac{1}{2}}} \langle B^{\#_1,H}_I(f_1,f_2),\vphi_I^{1,H} \rangle \langle \tilde{B}^{\#_2,H}_J(g_1, g_2), \vphi_J^{1,H} \rangle  \langle h, \psi_I^{2} \otimes \psi_J^{2} \rangle \langle \chi_{E'}, \psi_I^{3,H} \otimes \psi_J^{3,H} \rangle, 
\end{align}
because for $I \times J \cap Enl(\Omega)^c = \emptyset$, $I \times J \cap E' = \emptyset$ and thus
$
\langle \chi_{E'}, \psi_I^{3,H} \otimes \psi_J^{3,H} \rangle = 0,
$
which means that dyadic rectangles satisfying $I \times J \cap Enl(\Omega)^c = \emptyset$ do not contribute to the multilinear form. In the Haar model, we would heavily rely on the localization (\ref{haar_local}) and consider only the dyadic rectangles $I \times J \in \mathcal{R}$ such that $I \times J \cap Enl(\Omega)^c \neq \emptyset$.
\end{remark}

\subsection{Tensor-type stopping-time decomposition I - level sets} \label{section_thm_haar_fixed_tensor}
The first tensor-type stopping time decomposition, denoted by the \textit{tensor-type stopping-time decomposition I}, will be performed to obtain estimates for $\Pi_{\text{flag}^{\#1} \otimes \text{flag}^{\#2}}^H$. It aims to recover intersections with two-dimensional level sets from intersections with one-dimensional level sets for each variable. Another tensor-type stopping-time decomposition, denoted by the \textit{tensor-type stopping-time decomposition II}, involves maximal intervals and plays an important role in the discussion for $\Pi_{\text{flag}^0 \otimes \text{flag}^0}$. We will focus on the \textit{tensor-type stopping-time decomposition I} in this section.

\subsubsection{One-dimensional stopping-time decompositions - level sets}
One can perform a one-dimensional stopping-time decomposition on $\mathcal{I} := \{I: I \times J \in \mathcal{R}\}$, or equivalently the collection of $x$-intervals of all dyadic rectangles in $\mathcal{R}$. Since $\mathcal{I}$ is a finite collection of dyadic intervals, there exists $N_1 \in \mathbb{Z}$ such that for any $I \in \mathcal{I}$,
\begin{equation*}
|I \cap \{x: Mf_1(x) \leq C_1 2^{N_1+1}|F_1| \}.
\end{equation*}
Now let
$$\Omega^{}_{N_1} :=  \{x: Mf_1(x) > C_1 2^{N_1}|F_1|\},$$ and 
$$\mathcal{I}_{N_1} := \{I \in \mathcal{I}: |I \cap \Omega^{}_{N_1}| > \frac{1}{10}|I| \}.$$
Define 
$$\Omega^{}_{N_1-1} :=  \{x: Mf_1(x) > C_1 2^{N_1-1}|F_1|\},$$ and 
$$\mathcal{I}_{N_1-1} := \{I \in \mathcal{I} \setminus \mathcal{I}_{N_1}: |I \cap \Omega^{}_{N_1-1}| > \frac{1}{10}|I| \}.$$
\ \ \ \ \ \ \ \ \ \ \ \ \ \ \ \ \ \ \ \ \ \ \ \ \ \ \ \ \ \ \ \ \ \ \ \ \ \ \ \ \ \ \ \ \ \ \ \ \ \ \ \ \vdots
\par 
The procedure generates the sets $(\Omega^{}_{n_1})_{n_1 \in \mathbb{Z}}$ and $(\mathcal{I}_{n_1})_{n_1 \in \mathbb{Z}}$.
Independently define
$$\Omega'^{}_{M_1} :=  \{x: Mf_2(x) > C_1 2^{M_1}|F_2|\},$$ for some $M_1 \in \mathbb{Z}$ and 
$$\mathcal{I}'_{M_1} := \{I \in \mathcal{I}: |I \cap \Omega'^{}_{M_1}| > \frac{1}{10}|I| \}.$$
Define
$$\Omega'^{}_{M_1-1} :=  \{x: Mf_2(x) > C_1 2^{M_1-1}|F_2|\},$$ and 
$$\mathcal{I}'_{M_1-1} := \{I \in \mathcal{I} \setminus \mathcal{I}'_{M_1}: |I \cap \Omega'^{}_{M_1-1}| > \frac{1}{10}|I| \}.$$
\ \ \ \ \ \ \ \ \ \ \ \ \ \ \ \ \ \ \ \ \ \ \ \ \ \ \ \ \ \ \ \ \ \ \ \ \ \ \ \ \ \ \ \ \ \ \ \ \ \ \ \ \vdots
\par 
The procedure generates the sets $(\Omega'^{}_{m_1})_{m_1 \in \mathbb{Z}}$ and $(\mathcal{I}'_{m_1})_{m_1 \in \mathbb{Z}}$. Now define 
$$\mathcal{I}_{n_1,m_1} := \mathcal{I}_{n_1} \cap \mathcal{I}'_{m_1}$$
 and one achieves the decomposition on $\displaystyle \mathcal{I} = \bigcup_{n_1,m_1 \in \mathbb{Z}}\mathcal{I}_{n_1,m_1}$.
\par
Same algorithm can be applied to $\mathcal{J}:= \{J: I \times J \in \mathcal{R}\}$ with respect to the level sets in terms of $Mg_1$ and $Mg_2$, 
which produces the sets 
\begin{enumerate} [(i)]
\item
$(\tilde{\Omega}^{}_{n_2})_{n_2 \in \mathbb{Z}}$ and $(\mathcal{J}_{n_2})_{n_2 \in \mathbb{Z}}$, where
$$\tilde{\Omega}^{}_{n_2 } :=  \{y: Mg_1(y) > C_2 2^{n_2}|G_1|\},$$ and 
$$\mathcal{J}_{n_2} := \{J \in \mathcal{J} \setminus \mathcal{J}_{n_2+1}: |J \cap \tilde{\Omega}^{}_{n_2}| > \frac{1}{10}|J| \}.$$
\item
$(\tilde{\Omega}'^{}_{m_2})_{m_2 \in \mathbb{Z}}$ and $(\mathcal{J}'_{m_2})_{m_2 \in \mathbb{Z}}$, where
$$\tilde{\Omega}'^{}_{m_2} :=  \{ y: Mg_2(y) > C_2 2^{m_2}|G_2|\},$$ and 
$$\mathcal{J}'_{m_2} := \{J \in \mathcal{J} \setminus \mathcal{J}'_{m_2+1}: |J \cap \tilde{\Omega}'^{}_{m_2}| > \frac{1}{10}|J| \}.$$
\end{enumerate}
One thus obtains the decomposition $\displaystyle \mathcal{J} = \bigcup_{n_2, m_2 \in \mathbb{Z}} \mathcal{J}_{n_2,m_2}$, where $\mathcal{J}_{n_2,m_2} := \mathcal{J}_{n_2} \cap \mathcal{J}'_{m_2}$. 
\newline
\subsubsection{Tensor product of two one-dimensional stopping-time decompositions - level sets} \label{section_thm_haar_fixed_tensor_1d_level}
If we assume that all dyadic rectangles satisfy $I \times J \cap Enl(\Omega)^{c} \neq \emptyset$ as in the Haar model, then we have the following observation.
\begin{obs} \label{obs_indice}
If $I \times J \in \mathcal{I}_{n_1,m_1} \times \mathcal{J}_{n_2,m_2}$, then $n_1 , m_1 ,n_2, m_2 \in \mathbb{Z}$ satisfies $n_1+n_2 < 0$ and $m_1 + m_2 < 0$. (Equivalently, $\forall I \times J  \cap Enl(\Omega)^{c} \neq \emptyset$, $I \times J \in \mathcal{I}_{-n-n_2,-m-m_2} \times \mathcal{J}_{n_2,m_2}$, for some $n_2, m_2 \in \mathbb{Z}$ and $n, m > 0$.)
\begin{remark}
The observation shows that how a rectangle $I \times J$ intersects with a two-dimensional level sets is closely related to  
how the corresponding intervals intersect with one-dimensional level sets (namely $I \in \mathcal{I}_{n_1.m_1}$ and $J \in \mathcal{J}_{n_2,m_2}$ with $n_1 + n_2 < 0$ and $m_1 + m_2 < 0$), as commented in the beginning of the section.
\end{remark}
\begin{proof}
Given $I \in \mathcal{I}_{n_1}$, one has 
$|I \cap \{x: Mf_1(x) > C_1 2^{n_1}|F_1|\}| > \frac{1}{10} |I|$; similarly, $J \in \mathcal{J}_{n_2}$ implies that $|J \cap \{y: Mg_1(y) > C_2 2^{n_2}|G_1|\}| > \frac{1}{10}|J|$. If $n_1 + n_2 \geq 0$ , then $\{x: Mf_1(x) > C_1 2^{n_1}|F_1|\} \times\{y: Mg_1(y) > C_2 2^{n_2}|G_1|\} \subseteq \Omega^1 \subseteq \Omega$. Then $|I \times J \cap \Omega| > \frac{1}{100}|I \times J|$, which implies that $I \times J \subseteq Enl(\Omega)$ and contradicts with the assumption. Same reasoning applies to the pairs $(m_1,m_2)$, $(n_1,m_2)$ and $(m_1,n_2)$.
\end{proof}
\end{obs}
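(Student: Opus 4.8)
The plan is to argue by contradiction, using only the defining density property of the one-dimensional stopping-time collections together with the construction of $Enl(\Omega)$. Suppose $I \times J \in \mathcal{R}$ with $I \in \mathcal{I}_{n_1,m_1}$, $J \in \mathcal{J}_{n_2,m_2}$, and, as is the standing hypothesis in the Haar model (Remark \ref{localization_haar_fixed}), $I \times J \cap Enl(\Omega)^c \neq \emptyset$. First I would assume $n_1 + n_2 \geq 0$ and unwind the definitions: $I \in \mathcal{I}_{n_1}$ gives $|I \cap \{x : Mf_1(x) > C_1 2^{n_1}|F_1|\}| > \frac{1}{10}|I|$, and $J \in \mathcal{J}_{n_2}$ gives $|J \cap \{y : Mg_1(y) > C_2 2^{n_2}|G_1|\}| > \frac{1}{10}|J|$.

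The key observation is a monotonicity in the level sets: since $n_2 \geq -n_1$, one has $\{y : Mg_1(y) > C_2 2^{n_2}|G_1|\} \subseteq \{y : Mg_1(y) > C_2 2^{-n_1}|G_1|\}$, so taking $\mathfrak{n}_1 = n_1$ in the first family defining $\Omega^1$ yields $\{x : Mf_1(x) > C_1 2^{n_1}|F_1|\} \times \{y : Mg_1(y) > C_2 2^{n_2}|G_1|\} \subseteq \Omega^1 \subseteq \Omega$. Because the two density conditions are in different variables they multiply, so $|(I \times J) \cap \Omega| > \frac{1}{10}|I| \cdot \frac{1}{10}|J| = \frac{1}{100}|I \times J|$; hence, by the definition of $MM$ and $Enl(\Omega)$, the average of $\chi_\Omega$ over $I \times J$ exceeds $\frac{1}{100}$, forcing $I \times J \subseteq Enl(\Omega)$ and contradicting the assumption. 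This gives $n_1 + n_2 < 0$.

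Running the identical argument with the second family in the definition of $\Omega^1$, with $(f_1,g_1,F_1,G_1)$ replaced by $(f_2,g_2,F_2,G_2)$ and $(n_1,n_2)$ by $(m_1,m_2)$, gives $m_1 + m_2 < 0$; the third and fourth families give the analogous mixed constraints for the pairs $(n_1,m_2)$ and $(m_1,n_2)$ mentioned in the remark, and the reparametrization $n_1 = -n - n_2$, $m_1 = -m - m_2$ with $n, m > 0$ is then immediate. I do not expect any genuine obstacle here: the only things to be careful about are the direction of the inequality when passing from $2^{n_2}$ to $2^{-n_1}$, and the bookkeeping that the product of two $\frac{1}{10}$-density conditions meets exactly the $\frac{1}{100}$ threshold built into $Enl(\Omega)$ via the double maximal operator $MM$.
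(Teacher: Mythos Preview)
Your proof is correct and follows essentially the same approach as the paper's: contradiction via the defining density conditions, the inclusion of the product of level sets into $\Omega^1$ when $n_1+n_2\geq 0$, the $\tfrac{1}{10}\cdot\tfrac{1}{10}=\tfrac{1}{100}$ bookkeeping against the $Enl(\Omega)$ threshold, and repetition for the other index pairs. Your explicit monotonicity remark (passing from $2^{n_2}$ to $2^{-n_1}$) just spells out what the paper leaves implicit.
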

\begin{remark}
Thanks to Obervation \ref{obs_indice}, we conclude that in the Haar model when $I \times J \cap Enl(\Omega)^{c} \neq \emptyset$,
\begin{equation*}
\mathcal{R} = \bigcup_{\substack{n_1,m_1,n_2,m_2 \in \mathbb{Z} \\ n_1 + n_2 < 0 \\ m_1 + m_2 < 0 \\ n_1 + m_2 < 0 \\ m_1 + n_2 < 0}}\mathcal{I}_{n_1,m_1} \times \mathcal{J}_{n_2,m_2}.
\end{equation*}
\end{remark}

\subsection{General two-dimensional level sets stopping-time decomposition} \label{section_thm_haar_fixed_level}
With the assumption that $R \cap Enl(\Omega)^c \neq \emptyset$, one has that
$$
|R\cap \Omega^2| \leq \frac{1}{100}|R|,
$$
where
$$
\Omega^2 = \{(x,y)\in \mathbb{R}^2: SSh(x,y) >C_3 \|h\|_{L^s(\mathbb{R}^2)}\}.
$$
Then define
$$\Omega^2_{-1}:= \{(x,y)\in \mathbb{R}^2: SSh(x,y) > C_3 2^{-1}\|h\|_{L^s(\mathbb{R}^2)}\}$$
and
$$\mathcal{R}_{-1} := \{R \in \mathcal{R}: |R \cap \Omega^2_{-1}| > \frac{1}{100}|R|\}.$$
Successively define 
$$\Omega^2_{-2}:= \{(x,y)\in \mathbb{R}^2: SSh(x,y) > C_3 2^{-2}\|h\|_{L^s(\mathbb{R}^2)}\}
$$
and
$$\mathcal{R}_{-2} := \{R \in \mathcal{R} \setminus \mathcal{R}_{-1}: |R \cap \Omega^2_{-2}| > \frac{1}{100}|R|\}.$$
\ \ \ \ \ \ \ \ \ \ \ \ \ \ \ \ \ \ \ \ \ \ \ \ \ \ \ \ \ \ \ \ \ \ \ \ \ \ \ \ \ \ \ \ \ \ \ \ \ \ \ \ \ \ \ \ \ \ \ \ \ \ \        \vdots 
\newline
This two-dimensional stopping-time decomposition generates the sets $(\Omega^{2}_{k_1})_{k_1 \leq 0}$ and $(\mathcal{R}_{k_1})_{k_1 \leq 0}$.
\par

Independently one can apply the same algorithm involving $(SS)^H\chi_{E'}$ which generates $(\Omega'^{2}_{k_2})_{k_2 \leq K}$ and $(\mathcal{R}'_{k_2})_{k_2 \leq K}$ where $K$ can be arbitrarily large. The existence of $K$ is guaranteed by the finite cardinality of the collection of dyadic rectangles. Then define 
$$
\mathcal{R}_{k_1,k_2} := \mathcal{R}_{k_1} \cap \mathcal{R}'_{k_2}.
$$
and thus 
\begin{equation*}
\displaystyle \mathcal{R} = \bigcup_{k_1\leq 0, k_2 \leq K} \mathcal{R}_{k_1,k_2}.
\end{equation*}

\subsection{Sparsity condition} \label{section_thm_haar_fixed_sparsity}
One important property followed from the \textit{tensor-type stopping-time decomposition I - level sets} is the sparsity of dyadic intervals at different levels. Such geometric property plays an important role in the arguments for the main theorems.
\begin{proposition} \label{sparsity}
Suppose that $\displaystyle \mathcal{J} = \bigcup_{n_2 \in \mathbb{Z}} \mathcal{J}_{n_2}$ is a decomposition of dyadic intervals with respect to $Mg_1$ as specified in Section \ref{section_thm_haar_fixed_tensor}. For any fixed $n_2 \in \mathbb{Z}$, suppose that $J_0 \in \mathcal{J}_{n_2 - 10}$. Then
$$\displaystyle
\sum_{\substack{J \in \mathcal{J}_{n_2}\\J \cap J_0 \neq \emptyset}} |J| \leq \frac{1}{2}|J_0|.
$$
\end{proposition}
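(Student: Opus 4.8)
The plan is to exploit the stopping-time structure directly: the collection $\mathcal{J}_{n_2}$ consists of intervals $J$ that were selected at level $n_2$ but *not* at level $n_2+1$, i.e. they were not absorbed by any earlier (higher-level) family. The key point is that membership in $\mathcal{J}_{n_2-10}$ and $\mathcal{J}_{n_2}$ encode contradictory density information about the level sets $\tilde\Omega_{n_2-10}$ and $\tilde\Omega_{n_2}$ of $Mg_1$, once one remembers that these level sets are nested, $\tilde\Omega_{n_2} \subseteq \tilde\Omega_{n_2-10}$, with a large gap in scale.

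First I would record the two density facts. Since $J_0 \in \mathcal{J}_{n_2-10}$, by construction $|J_0 \cap \tilde\Omega_{n_2-10}| > \frac{1}{10}|J_0|$ — but more usefully, $J_0$ was *rejected* at all higher levels, in particular at level $n_2-9$ (since the families $\mathcal{J}_k$ are chosen in decreasing order of $k$ and are disjoint), so $|J_0 \cap \tilde\Omega_{n_2-9}| \le \frac{1}{10}|J_0|$, hence a fortiori $|J_0 \cap \tilde\Omega_{n_2}| \le \frac{1}{10}|J_0|$ because $\tilde\Omega_{n_2}\subseteq\tilde\Omega_{n_2-9}$. On the other hand, each $J \in \mathcal{J}_{n_2}$ with $J\cap J_0\neq\emptyset$ satisfies $|J\cap\tilde\Omega_{n_2}| > \frac{1}{10}|J|$. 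The second step is a covering/disjointness observation: the intervals $J \in \mathcal{J}_{n_2}$ meeting $J_0$ are dyadic; among them, those strictly larger than $J_0$ would contain $J_0$, and I would argue these contribute negligibly (in fact after the stopping time they cannot be too large, or one splits off that finite—at most a couple—of ancestors and absorbs them into the constant, or uses that the selection picked maximal intervals so an ancestor and $J_0$ cannot both be selected at nearby levels); the bulk are the maximal dyadic intervals $J \in \mathcal{J}_{n_2}$ contained in $J_0$, which are pairwise disjoint. Call this disjoint subfamily $\mathcal{J}_{n_2}^*$. Then
\begin{align*}
\sum_{\substack{J\in\mathcal{J}_{n_2}\\ J\cap J_0\neq\emptyset}}|J| \;\lesssim\; \sum_{J\in\mathcal{J}_{n_2}^*}|J| \;<\; \sum_{J\in\mathcal{J}_{n_2}^*} 10\,|J\cap\tilde\Omega_{n_2}| \;=\; 10\,\Big|\Big(\bigcup_{J\in\mathcal{J}_{n_2}^*}J\Big)\cap\tilde\Omega_{n_2}\Big| \;\le\; 10\,|J_0\cap\tilde\Omega_{n_2}| \;\le\; |J_0|,
\end{align*}
using disjointness for the equality and $\bigcup\mathcal{J}_{n_2}^* \subseteq J_0$ for the last inequalities. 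To get the sharp constant $\frac12$ rather than just $O(1)$, I would replace the crude factor $10$ by exploiting the full scale gap: $J_0$ is rejected at level $n_2-9$, so in fact at every intermediate level down to $n_2$ the density of the relevant level set in $J_0$ is at most $\frac{1}{10}$, and chaining the nested inclusions $\tilde\Omega_{n_2}\subseteq\tilde\Omega_{n_2-1}\subseteq\cdots$ together with the $\frac{1}{10}$-threshold at each of the $\ge 10$ intermediate scales forces $|J_0\cap\tilde\Omega_{n_2}|$ to be a small fraction — small enough that $10\cdot|J_0\cap\tilde\Omega_{n_2}|\le\frac12|J_0|$; concretely, the "$-10$" offset is exactly the slack needed to turn the $O(1)$ bound into the factor $\frac12$.

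The main obstacle I anticipate is the bookkeeping around dyadic intervals $J\in\mathcal{J}_{n_2}$ that meet $J_0$ but are *not* contained in it (ancestors of $J_0$, or incomparable intervals — though for dyadic intervals meeting $J_0$ and not contained in it, they must strictly contain $J_0$). One must check that the stopping-time algorithm does not allow such an ancestor $J'\supsetneq J_0$ to lie in $\mathcal{J}_{n_2}$ while $J_0\in\mathcal{J}_{n_2-10}$: if $J'\in\mathcal{J}_{n_2}$ then $J'$ was *not* selected at level $n_2-10$, yet $J'\supseteq J_0$ and $|J_0\cap\tilde\Omega_{n_2-10}|>\frac{1}{10}|J_0|$; whether this forces a contradiction depends on the precise wording ("$J\in\mathcal{J}\setminus\mathcal{J}_{n_2+1}$" with the greedy maximal choice). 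I expect that the maximality in the selection, together with the fact that once an interval is placed in some $\mathcal{J}_k$ all its dyadic relatives are removed from further consideration, rules this out cleanly, so the sum genuinely reduces to the disjoint subfamily $\mathcal{J}_{n_2}^*\subseteq J_0$ and the displayed chain of inequalities closes. If a few ancestors do survive, there are at most $O(1)$ of them with $J_0$ fixed among those meeting $J_0$, and their total length is controlled by a geometric series in their (dyadic) sizes, which can again be absorbed by choosing the offset slightly larger than $10$ if necessary.
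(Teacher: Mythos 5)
Your proposal diverges from the paper's argument at exactly the point where it fails to close, and the gap is genuine.

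The paper's proof of Proposition~\ref{sparsity} is a contradiction argument built on the pointwise estimate of Claim~\ref{ptwise}: membership $J\in\mathcal{J}_{n_2}$ (a density statement, $|J\cap\tilde\Omega_{n_2}|>\frac{1}{10}|J|$) is upgraded to the much stronger pointwise statement $J\subseteq\tilde\Omega_{n_2-7}$. This elevation is the crucial input. With it, the case $J\supseteq J_0$ is ruled out \emph{entirely} — $J\subseteq\tilde\Omega_{n_2-7}$ forces $J_0\subseteq\tilde\Omega_{n_2-7}$, but $J_0\in\mathcal{J}_{n_2-10}$ forces $|J_0\cap\tilde\Omega_{n_2-7}|\le\frac{1}{10}|J_0|$, a contradiction; and in the case $J\subseteq J_0$, assuming $\sum|J|>\frac{1}{2}|J_0|$ gives $|J_0\cap\tilde\Omega_{n_2}|>\frac{1}{20}|J_0|$, which Claim~\ref{ptwise} (with threshold $\frac{1}{20}$) upgrades to $J_0\subseteq\tilde\Omega_{n_2-8}$, again contradicting $|J_0\cap\tilde\Omega_{n_2-8}|\le\frac{1}{10}|J_0|$. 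Your proposal never invokes this density-to-pointwise upgrade, and that omission causes two concrete problems.

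First, the ancestors cannot be ``absorbed into the constant.'' Any single $J\in\mathcal{J}_{n_2}$ with $J\supsetneq J_0$ contributes $|J|\ge 2|J_0|$ to the left-hand side, instantly violating the bound $\le\frac{1}{2}|J_0|$. You speculate that maximality in the selection rules this out, but the stopping-time in Section~\ref{section_thm_haar_fixed_tensor} does not select maximal intervals — it collects \emph{all} intervals satisfying the density condition at a given level — so there is no maximality to appeal to. The only way the paper excludes ancestors is precisely through the pointwise Claim.

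Second, your constant chase does not close. Your main chain gives $10\,|J_0\cap\tilde\Omega_{n_2}| \le 10\cdot\frac{1}{10}|J_0| = |J_0|$, which is a factor of two short. The proposed fix — ``chaining the nested inclusions $\tilde\Omega_{n_2}\subseteq\tilde\Omega_{n_2-1}\subseteq\cdots$ with the $\frac{1}{10}$-threshold at each intermediate scale'' — does not improve the constant. The sets are nested, so $|J_0\cap\tilde\Omega_{n_2}|\le|J_0\cap\tilde\Omega_{n_2-9}|\le\frac{1}{10}|J_0|$ is the entire content of those ten thresholds; they do not compound or telescope to anything smaller than $\frac{1}{10}$. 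The ``$-10$'' offset is \emph{not} tuned to a geometric decay in the density as you suggest; it is tuned to the constant $2^{-7}$ appearing in Claim~\ref{ptwise}, which leaves a three-level cushion to run the contradiction. Without Claim~\ref{ptwise} or an equivalent density-to-pointwise mechanism, the argument as you have framed it cannot reach $\frac{1}{2}|J_0|$, and the Proposition's conclusion (needed in Proposition~\ref{sp_2d} to sum a sparse family) genuinely requires a constant strictly below $1$.
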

To prove the proposition, one would need the following claim about pointwise estimates for $Mg_1$ 
on $J \in \mathcal{J}_{n_2}$:
\begin{claim} \label{ptwise}
Suppose that $\bigcup_{n_2}\mathcal{J}_{n_2}$ is a partition of dyadic intervals generated from the stopping-time decomposition described above. If $J \in \mathcal{J}_{n_2}$,
then for any $y \in J,$
$$
 Mg_1(y)> 2^{-7} \cdot C_2 2^{n_2}|G_1|. $$
\end{claim}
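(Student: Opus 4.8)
The plan is to establish the pointwise lower bound on all of $J$ directly from the defining property of $\mathcal{J}_{n_2}$. Set $\lambda := C_2 2^{n_2}|G_1|$ and $\tilde{\Omega}_{n_2} := \{y : Mg_1(y) > \lambda\}$, so that $J \in \mathcal{J}_{n_2}$ means exactly $|J \cap \tilde{\Omega}_{n_2}| > \tfrac{1}{10}|J|$. The heart of the matter is to upgrade this ``large on a fixed fraction of $J$'' information to a ``comparable on all of $J$'' bound, and the mechanism will be a size dichotomy for the intervals that witness the maximal function being large.

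First I would fix any $y_0 \in J \cap \tilde{\Omega}_{n_2}$ and an interval $B \ni y_0$ with $\tfrac{1}{|B|}\int_B |g_1| > \lambda$, available since $M$ is the uncentered maximal operator. If some such pair can be chosen with $|B| \geq |J|$, then for an arbitrary $y \in J$ the interval $B' := B \cup J$ (an interval, because $B$ and $J$ both contain $y_0$) satisfies $y \in B'$ and $|B'| \leq |B| + |J| \leq 2|B|$, hence $Mg_1(y) \geq \tfrac{1}{|B'|}\int_{B'}|g_1| \geq \tfrac{1}{2}\tfrac{1}{|B|}\int_B|g_1| > \tfrac{\lambda}{2}$, which already yields the claim on $J$. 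In the remaining case every witnessing interval for every $y_0 \in J \cap \tilde{\Omega}_{n_2}$ has length $< |J|$; such an interval contains a point of $J$, so it is contained in the concentric triple $3J$, and therefore $M(g_1\chi_{3J})(y_0) > \lambda$ for each $y_0 \in J \cap \tilde{\Omega}_{n_2}$. Consequently $\big|\{y : M(g_1\chi_{3J})(y) > \lambda\}\big| \geq |J \cap \tilde{\Omega}_{n_2}| > \tfrac{1}{10}|J|$.

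Next I would invoke the weak-type $(1,1)$ bound for $M$ from Theorem \ref{maximal-square}(1): the level set above has measure $\lesssim \lambda^{-1}\|g_1\chi_{3J}\|_1 = \lambda^{-1}\int_{3J}|g_1|$. Comparing with the lower bound $\tfrac{1}{10}|J|$ just obtained gives $\int_{3J}|g_1| \gtrsim \lambda |J|$, i.e. $\tfrac{1}{|3J|}\int_{3J}|g_1| \gtrsim \lambda$ with an absolute constant depending only on the factor $\tfrac{1}{10}$ and the weak-$(1,1)$ constant of the one-dimensional uncentered maximal function. Since $3J$ contains every $y \in J$, this forces $Mg_1(y) \gtrsim \lambda$ for all $y \in J$. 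Combining the two branches, $Mg_1(y) > \min(\tfrac{\lambda}{2}, c\lambda)$ on $J$ for an explicit absolute $c$; using the (small) sharp weak-$(1,1)$ constant of the uncentered Hardy--Littlewood maximal operator on $\mathbb{R}$, this minimum exceeds $2^{-7}\lambda$, and the exponent $2^{-7}$ in the statement is deliberately generous so as to absorb all of these constants.

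The only real obstacle is the first step: a fractional-measure statement does not by itself control the maximal function at points of $J$ far from where it is large, since a highly concentrated bump could make $Mg_1$ large only on a tiny subset of $J$. The size dichotomy resolves this --- a long witnessing interval propagates the bound to all of $J$ at once, while short witnessing intervals are trapped inside $3J$ and are then handled by the weak-$(1,1)$ inequality. Everything else is routine bookkeeping of constants, and the same argument applies verbatim with $(g_1, G_1, C_2)$ replaced by $(g_2, G_2, C_2)$ or by $(f_i, F_i, C_1)$.
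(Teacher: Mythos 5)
Your proof is correct, and it takes a genuinely cleaner route than the one in the paper. Both arguments must confront the same difficulty — that ``$Mg_1$ is large on a fixed fraction of $J$'' does not immediately control $Mg_1$ at the remaining points of $J$ — but you resolve it differently. The paper runs a geometric case analysis on the witnessing intervals (contained in $J$, straddling $J$ with large overlap of small or large length, or of small overlap), ending up with cases (i), (iia), (iib), (iic), the last of which requires an additional counting argument to show the ``bad'' points occupy at most $\tfrac{1}{20}|J|$. Your dichotomy is coarser and cleaner: either some witness is already at least as long as $J$, in which case $B\cup J$ propagates the lower bound across all of $J$ at cost of a factor $2$, or else every witness has length $<|J|$ and hence lies inside $3J$, at which point one localizes $g_1$ to $3J$ and closes with a single application of the weak-$(1,1)$ inequality for $M$ (cited as Theorem \ref{maximal-square}(1)). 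The constant bookkeeping also works out: with the sharp weak-$(1,1)$ constant $2$ for the uncentered one-dimensional maximal operator, the second branch gives $Mg_1 > \lambda/60$ on $J$, so $\min(\lambda/2,\lambda/60) > 2^{-7}\lambda$ as required. What your version buys is a shorter proof with fewer cases and no explicit measure estimate of a bad set; what the paper's version buys is that it avoids any appeal to the weak-$(1,1)$ constant, working instead with elementary averaging over unions of intervals, though at the price of more bookkeeping. Either is fine to record.
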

\begin{proof}[Proof on Claim $\Longrightarrow$ Proposition \ref{sparsity}]
We will first explain why the proposition follows from the claim and then prove the claim.
One recalls that all the intervals are dyadic, which means if $J \cap J_0 \neq \emptyset$, then either $$J \subseteq J_0$$ or $$J_0 \subseteq J.$$ If $J_0 \subseteq J$, then the claim implies that 
$$J_0 \subseteq J \subseteq \{ Mg_1 > C_2 2^{n_2-7}|G_1|\}.$$ But $J_0 \in \mathcal{J}_{n_2-10}$ infers that 
$$ \big|J_0 \cap \{ Mg_1 > C_2 2^{n_2 - 7}|G_1|\}\big| < \frac{1}{10}|J_0|,$$ which is a contradiction. If $J \subseteq J_0$ and suppose that
$$
\displaystyle
\sum_{\substack{J \in \mathcal{J}_{n_2}\\J \subseteq J_0}} |J| > \frac{1}{2}|J_0|.
$$
Then one can derive from $J \in \mathcal{J}_{n_2}$ that  $$\big|J\cap \{Mg_1 > C_2 2^{n_2}|G_1| \} \big| > \frac{1}{10}|J|.$$ Therefore
$$\sum_{\substack{J \in \mathcal{J}_{n_2}\\J \subseteq J_0}} \big|J\cap \{Mg_1 > C_2 2^{n_2}|G_1| \} \big| > \frac{1}{10}\sum_{\substack{J \in \mathcal{J}_{n_2}\\J \subseteq J_0}}|J| > \frac{1}{20}|J_0|.$$
But by the disjointness of $(J)_{J \in \mathcal{J}_{n_2}}$,
$$\sum_{\substack{J \in \mathcal{J}_{n_2}\\J \subseteq J_0}} \big|J\cap \{Mg_1 > C_2 2^{n_2}|G_1| \} \big| \leq \big|J_0\cap \{Mg_1 > C_2 2^{n_2}|G_1| \} \big|.$$
Thus
$$
\big|J_0\cap \{Mg_1 > C_2 2^{n_2}|G_1| \} \big| > \frac{1}{20}|J_0|,
$$
Now the claim, with slight modifications, implies that $J_0 \subseteq \{Mg_1 > C_2 2^{n_2-8}|G_1| \}$. But $J_0 \in \mathcal{J}_{n_2-10}$, which gives the necessary condition that
$$
\big|J_0\cap \{Mg_1 > C_2 2^{n_2}|G_1| \} \big| \leq \frac{1}{10}|J_0|
$$
and reaches a contradiction.
\end{proof}

We will now prove the claim.
\begin{proof}[Proof of Claim]
Without loss of generality, we assume that $g_1$ is non-negative since if it is not, we can always replace it by $|g_1|$ where $Mg_1 = M(|g_1|)$.
We prove the claim case by case: \newline
Case (i): $\forall y \in \{Mg_1 > C_2 2^{n_2}|G_1|\}$, there exists $J_{y} \subseteq J$ such that $\text{ave}_{J_y}(g_1)\footnote{$\text{ave}_{J} (g_1) := \frac{1}{|J|}\int_J g(s) ds$.} > C_2 2^{n_2}|G_1|;$ \newline
Case (ii): There exists $y_0 \in \{Mg_1 > C_2 2^{n_2}|G_1|\}$ and $J_{y_0} \nsubseteq J$ such that $\text{ave}_{J_{y_0}}(g_1) > C_2 2^{n_2}|G_1|$ and \newline
\indent Case (iia): $\frac{1}{40}|J| \leq |J_{y_0} \cap J|$ and $|J_{y_0}| \leq |J|$; \newline
\indent Case (iib): $\frac{1}{40}|J| \leq |J_{y_0} \cap J|$ and $|J_{y_0}| > |J|$; \newline
\indent Case (iic): $|J_{y_0} \cap J| < \frac{1}{40}|J|$. \newline
\textit{Proof of (i):} In Case (i), one observes that $\{Mg_1 > C_2 2^{n_2}|G_1|\} \cap J$ can be rewritten as $\{M(g_1\cdot \chi_J) > C_2 2^{n_2}|G_1|\} \cap J$. Thus 
$$C_2 2^{n_2}|G_1||\{Mg_1 > C_2 2^{n_2}|G_1|\} \cap J| = C_2 2^{n_2}|G_1||\{M(g_1\chi_J) > C_2 2^{n_2}|G_1|\} \cap J| \leq \|g_1\chi_J\|_1.$$
One recalls that $|\{Mg_1 > C_2 2^{n_2}|G_1|\} \cap J| > \frac{1}{10}|J|$, which implies that 
$$C_2 2^{n_2}|G_1|\cdot \frac{1}{10}|J| \leq \|g_1\chi_J\|_1,$$
or equivalently,
$$\frac{\|g_1\chi_J\|_1}{|J|} \geq \frac{1}{10}C_2 2^{n_2}|G_1|.$$
Therefore $Mg_1 > 2^{-4} C_2 2^{n_2}|G_1|$. \newline
\textit{Proof of (ii)}: We will prove that if either (iia) or (iib) holds, then $Mg_1 > 2^{-7} C_2 2^{n_2}|G_1|$. If neither (iia) nor (iib) happens, then (iic) has to hold and in this case, $Mg_1 > 2^{-7} C_2 2^{n_2}|G_1|$. \par
If there exists $y_0 \in \{Mg_1 > C_2 2^{n_2}|G_1|\}$ such that (iia) holds, then 
$$\frac{\|g_1 \chi_{J_{y_0}} \|_1}{|J_{y_0}|} \leq \frac{\|g_1 \chi_{J_{y_0} \cup J} \|_1}{|J_{y_0}|} \leq \frac{\|g_1 \chi_{J_{y_0} \cup J} \|_1}{|J_{y_0}\cap J|} \leq \frac{\|g_1 \chi_{J_{y_0} \cup J} \|_1}{\frac{1}{40}|J|},$$
where the last inequality follows from $\frac{1}{40}|J| \leq |J_{y_0} \cap J|$. Moreover, $|J_{y_0}| \leq |J|$ and $y \in J_{y_0} \cap J \neq \emptyset$ infer that $|J_{y_0} \cup J| \leq 2|J|$. Thus 
$$\frac{\|g_1 \chi_{J_{y_0} \cup J} \|_1}{\frac{1}{20}|J|} \leq \frac{\|g_1 \chi_{J_{y_0} \cup J} \|_1}{\frac{1}{40}\frac{1}{2}|J_{y_0} \cup J|},$$
which implies 
$$\frac{\|g_1 \chi_{J_{y_0} \cup J} \|_1}{|J_{y_0} \cup J|} > \frac{1}{80}C_2 2^{n_2}|G_1|,$$
and as a result $Mg_1 > 2^{-7} C_2 2^{n_2}|G_1|$ on $J$. \par
If there exists $y \in \{Mg_1 > C_2 2^{n_2}|G_1|\}$ such that (iib) holds, then 
$$\frac{\|g_1 \chi_{J_{y_0}} \|_1}{|J_{y_0}|} \leq \frac{\|g_1 \chi_{J_{y_0} \cup J} \|_1}{|J_{y_0}|} = \frac{2\|g_1 \chi_{J_{y_0} \cup J} \|_1}{2|J_{y_0}|} \leq \frac{2\|g_1 \chi_{J_{y_0} \cup J} \|_1}{|J_{y_0} \cup J|},$$
where the last inequality follows from $|J_{y_0}| > |J|$. As a consequence, 
$$\frac{2\|g_1 \chi_{J_{y_0} \cup J} \|_1}{|J_{y_0} \cup J|} > C_2 2^{n_2}|G_1|,$$
and $Mg_1 > 2^{-1} C_22^{n_2}|G_1|$ on $J$. \par

If neither (i), (iia) nor (iib) happens, then for $\mathcal{S}_{(iic)} := \{y: Mg_1(y) > C_2 2^{n_2}|G_1| \text{\ \ and\ \ } (i) \text{\ \ does not hold}\}$, 
one direct geometric observation is that $|\mathcal{S}_{(iic)} \cap J| \leq \frac{1}{20}|J|$. In particular, suppose $y \in \mathcal{S}_{(iic)}$, then any $J_{y_0}$ with $\text{ave}_{J_{y_0}}(g_1) > C_2 2^{n_2}|G_1|$ has to contain the left endpoint or right endpoint of $J$, which we denote by $J_{\text{left}}$ and $J_{\text{right}}$. If $J_{\text{left}} \in J_{y_0}$, then the assumption that neither (iia) nor (iib) holds implies that 
$$|J_{y_0} \cap J| < \frac{1}{40} |J|,$$ 
and thus
$$|[J_{\text{left}}, y]| < \frac{1}{40}|J|.$$
Same implication holds true for $y \in \mathcal{S}_{(iic)}$ with $J_{\text{right}} \in J_{y_0}$. Therefore, for any $y \in \mathcal{S}_{(iic)}$, $|[J_{\text{left}}, y]| < \frac{1}{40}|J|$ or $|[y, J_{\text{right}}]| < \frac{1}{40}|J|$, which can be concluded as
$$\big|\mathcal{S}_{(iic)} \cap J\big|< \frac{1}{20}|J|.$$
Since $\big|\{Mg_1> C_2 2^{n_2}|G_1|\} \cap J\big| > \frac{1}{10}|J|$,
$$\bigg|\big(\{Mg_1> C_2 2^{n_2}|G_1|\} \setminus \mathcal{S}_{(iic)}\big) \cap J \bigg| > \frac{1}{20}|J|,$$
in which case one can apply the argument for (i) with $\{Mg_1> C_2 2^{n_2}|G_1|\}$ replaced by $\{Mg_1> C_2 2^{n_2}|G_1|\} \setminus \mathcal{S}_{(iic)}$ to conclude that $$Mg_1 > 2^{-5}C_2 2^{n_2} |G_1|.$$ This ends the proof for the claim.
\end{proof}

\begin{proposition}\label{sp_2d}
Given an arbitrary finite collection of dyadic rectangles $\mathcal{R}_0$. Define $\mathcal{J}:= \{J: I \times J \in \mathcal{R}_0 \}$. Suppose that $\displaystyle \mathcal{J} = \bigcup_{n_2 \in \mathbb{Z}} \mathcal{J}_{n_2}$ is a decomposition of dyadic intervals with respect to $Mg_1$ as specified in Section \ref{section_thm_haar_fixed_tensor} so that $\displaystyle \mathcal{R}_0 = \bigcup_{n_2 \in \mathbb{Z}} \bigcup_{\substack{R= I \times J \in \mathcal{R}_0 \\ J \in \mathcal{J}_{n_2} \\ }} R $ is a decomposition of dyadic rectangles in $\mathcal{R}_0$. Then
$$
\sum_{n_2 \in \mathbb{Z}} \bigg|\bigcup_{\substack{R = I \times J \in \mathcal{R}_0 \\ J \in \mathcal{J}_{n_2}}}R\bigg| \lesssim \bigg|\bigcup_{R \in \mathcal{R}_0} R \bigg|.
$$
\end{proposition}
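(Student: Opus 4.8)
The plan is to reduce the two-dimensional statement to an iterated application of the one-dimensional sparsity condition in Proposition \ref{sparsity}. First I would fix notation: for each $n_2 \in \mathbb{Z}$ set $E_{n_2} := \bigcup_{R = I\times J \in \mathcal{R}_0,\, J \in \mathcal{J}_{n_2}} R$, so the goal is $\sum_{n_2} |E_{n_2}| \lesssim |\bigcup_{R \in \mathcal{R}_0} R|$. The key point is that although the sets $E_{n_2}$ are not disjoint, the one-dimensional collections $(\mathcal{J}_{n_2})_{n_2}$ enjoy the sparsity estimate of Proposition \ref{sparsity}: if $J_0 \in \mathcal{J}_{n_2 - 10}$, then $\sum_{J \in \mathcal{J}_{n_2},\, J \cap J_0 \neq \emptyset} |J| \leq \tfrac12 |J_0|$. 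I would first split the sum over $n_2$ into $10$ arithmetic subprogressions $n_2 \equiv \rho \pmod{10}$, $\rho = 0,\dots,9$, and treat each separately; by the triangle inequality it suffices to bound $\sum_{n_2 \equiv \rho} |E_{n_2}|$ for each fixed $\rho$, so without loss of generality we may assume we are summing over $n_2$ spaced at least $10$ apart.

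Next, for a fixed residue class, I would exploit sparsity slice-by-slice in the $x$-variable, or more cleanly, reduce to a one-dimensional statement by Fubini. For each fixed $x$, consider the collection $\mathcal{J}(x) := \{ J : I \times J \in \mathcal{R}_0 \text{ for some } I \ni x \}$ of $y$-intervals appearing in rectangles whose $x$-projection contains $x$; each such $J$ lies in some $\mathcal{J}_{n_2}$, and the vertical slice $(E_{n_2})_x$ equals $\bigcup_{J \in \mathcal{J}(x) \cap \mathcal{J}_{n_2}} J$. Since $|E_{n_2}| = \int |(E_{n_2})_x|\, dx$ and $|\bigcup_R R| = \int |\bigcup_{J \in \mathcal{J}(x)} J|\, dx$, it suffices to prove the one-dimensional inequality
\[
\sum_{n_2} \Big| \bigcup_{J \in \mathcal{J}(x) \cap \mathcal{J}_{n_2}} J \Big| \lesssim \Big| \bigcup_{J \in \mathcal{J}(x)} J \Big|
\]
for each fixed $x$, with the sum again over a residue class mod $10$. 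This is now a purely one-dimensional claim about a finite collection of dyadic intervals partitioned into the stopping-time families $\mathcal{J}_{n_2}$.

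For the one-dimensional claim, I would argue from the largest level down. Let $G := \bigcup_{J \in \mathcal{J}(x)} J$. The intervals in $\mathcal{J}(x) \cap \mathcal{J}_{n_2}$ for the top relevant level $n_2 = N$ are pairwise disjoint and contained in $G$, so their total measure is at most $|G|$. For a lower level $n_2 = N - 10k$, every $J \in \mathcal{J}(x) \cap \mathcal{J}_{n_2}$ is contained in $G$, and one can organize the intervals of the higher level $\mathcal{J}_{N-10(k-1)}$ into maximal ones; Proposition \ref{sparsity} then forces the total mass at level $N-10k$ lying under a given maximal interval of level $N - 10(k-1)$ to be at most half of it. Iterating, $\sum_{J \in \mathcal{J}(x) \cap \mathcal{J}_{N-10k}} |J| \leq 2^{-k} |G| \cdot (\text{const})$, and summing the geometric series in $k$ gives a constant multiple of $|G|$, as desired. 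Summing over the $10$ residue classes and integrating back in $x$ completes the proof.

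The main obstacle I anticipate is making the "organize by maximal intervals of the previous level and iterate the halving" step fully rigorous: one must be careful that the halving in Proposition \ref{sparsity} is stated for a single $J_0$ at level $n_2 - 10$ versus intervals at level $n_2$ meeting it, and chaining this across many levels requires tracking that each interval at level $N - 10k$ sits inside a unique (maximal) interval at each higher level in the progression, with the nesting compatible with dyadic structure. An alternative, perhaps cleaner, route avoiding the residue-class splitting is to sum directly: $\sum_{n_2} |E_{n_2}| = \int \sum_{n_2} \mathbf{1}_{E_{n_2}}(z)\, dz$ and show the multiplicity function $\sum_{n_2} \mathbf{1}_{E_{n_2}}$ is bounded pointwise on $\bigcup_R R$ — but this is false in general (a nested tower of rectangles at consecutive levels can stack up), so the geometric-decay argument above via Proposition \ref{sparsity} seems to be the essential mechanism.
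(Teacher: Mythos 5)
Your overall strategy is the same as the paper's: split the $n_2$-sum into ten residue classes mod~$10$ and appeal to the one-dimensional sparsity of Proposition~\ref{sparsity}. The Fubini slicing to a one-dimensional inequality on each fixed $x$ is a welcome clarification -- the paper compresses the passage from one to two dimensions into a single phrase -- but your iterated-halving step is not correct as written, for two reasons.

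First, the nesting direction is reversed. Larger $n_2$ corresponds to a higher level set of $Mg_1$ and hence to \emph{smaller} intervals; the proof of Proposition~\ref{sparsity} shows exactly that if $J\in\mathcal{J}_{n_2}$ meets $J_0\in\mathcal{J}_{n_2-10}$, then the case $J_0\subseteq J$ is ruled out and $J\subseteq J_0$. You start at the top level $N$ (largest $n_2$, smallest intervals) and claim that intervals at level $N-10k$ sit under maximal intervals at level $N-10(k-1)$; that containment points the wrong way, and the index matching to Proposition~\ref{sparsity} is also off (for $J$'s at level $N-10k$, the $J_0$ in the proposition would have to be at level $N-10(k+1)$, not $N-10(k-1)$). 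The iteration must start at the \emph{smallest} present $n_2$ and move upward.

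Second, the chaining obstacle you flag at the end is genuine and Proposition~\ref{sparsity} alone cannot bridge it: an interval at level $n_2+20$ inside $J_0\in\mathcal{J}_{n_2}$ that meets nothing of $\mathcal{J}_{n_2+10}$ escapes the naive chain, so the asserted $2^{-k}$-decay does not follow from the stated halving. A fix is a forest argument which uses the stopping-time selection directly. Writing $\tilde\Omega_m:=\{y:Mg_1(y)>C_2 2^m|G_1|\}$, build on each slice and residue class a forest on the maximal intervals of $\mathcal{J}(x)\cap\mathcal{J}_m$, where the parent of $J$ is the smallest such interval at a strictly lower level containing $J$, and a root is an interval with no parent. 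Children $C$ of a fixed $R$ at level $n_R$ are pairwise disjoint (two intersecting children would be nested, and the smaller one's parent would then be a proper subinterval of $R$). Since every child has level at least $n_R+10$, Claim~\ref{ptwise} gives $C\subseteq\tilde\Omega_{n_R+3}\subseteq\tilde\Omega_{n_R+1}$, while $R\in\mathcal{J}_{n_R}$ (so $R$ was not selected for $\mathcal{J}_{n_R+1}$) forces $|R\cap\tilde\Omega_{n_R+1}|\le\tfrac{1}{10}|R|$; hence $\sum_C|C|\le\tfrac{1}{10}|R|$. Induction on tree depth then gives $\sum_{J\ \text{descendant of}\ R}|J|\le\tfrac{10}{9}|R|$, and summing over the pairwise disjoint roots gives the one-dimensional bound, after which your Fubini reduction and residue-class split complete the proposition.
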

\begin{proof}[Proof of Proposition \ref{sp_2d}]
Proposition \ref{sparsity} gives a sparsity condition for intervals in the $y$-direction, which is sufficient to generate sparsity for dyadic rectangles in $\mathbb{R}^2$. In particular,
\begin{align*}
 \sum_{n_2 \in \mathbb{Z}} \bigg|\bigcup_{\substack{R= I \times J \in \mathcal{R}_0 \\ J \in \mathcal{J}_{n_2}}}R\bigg| =& \sum_{i = 0}^9 \sum_{n_2 \equiv i \ \ \text{mod} \ \ 10} \bigg|\bigcup_{\substack{R= I \times J \in \mathcal{R}_0 \\ J \in \mathcal{J}_{n_2}}}R\bigg| \nonumber \\
\lesssim & \sum_{i = 0}^9 \bigg|\bigcup_{n_2 \equiv i \ \ \text{mod} \ \ 10}\bigcup_{\substack{R= I \times J \in \mathcal{R}_0 \\ J \in \mathcal{J}_{n_2}}} R \bigg| \nonumber \\
\leq &10  \bigg|\bigcup_{n_2 \in \mathbb{Z}}\bigcup_{\substack{R= I \times J \in \mathcal{R}_0 \\ J \in \mathcal{J}_{n_2}}} R \bigg| \nonumber\\
= & 10 \big|\bigcup_{R \in \mathcal{R}_0} R \big|,
\end{align*}
where the second inequality follows from the sparsity condition in Proposition \ref{sparsity}. 
\end{proof}
\begin{remark}
The picture below illustrates from a geometric point of view why the two-dimensional sparsity condition (Proposition \ref{sp_2d}) follows naturally from the one-dimensional sparsity (Proposition \ref{sparsity}). In the figure, $A_1, A_2 \in \mathcal{I} \times \mathcal{J}_{n_2+20}$, $B \in \mathcal{I} \times \mathcal{J}_{n_2+10}$ and $C \in \mathcal{I} \times \mathcal{J}_{n_2}$ for some $n_2 \in \mathbb{Z}$.
\end{remark} 

\begin{center}
\begin{tikzpicture}
\draw[red] (0,0) -- (2,0) -- (2,2) -- (0,2) -- (0,0);
\draw[line width=0.6mm]  (-2,1/2) -- (4,1/2) -- (4,1) -- (-2,1) -- (-2,1/2);
\draw[line width=0.6mm]  (-4,5/4) -- (8,5/4) -- (8,3/2) -- (-4,3/2) -- (-4,5/4);
\draw[line width=0.4mm, dashed, blue]  (1/2,-2) -- (1,-2) -- (1,4) -- (1/2,4) -- (1/2,-2);
\draw(0.75,3)node[blue]{$C$};
\draw(3,1.45)node{$A_2$};
\draw(1.75,1.75)node[red]{$B$};
\draw(3,0.7)node{$A_1$};
\end{tikzpicture}
\end{center}
\vskip .15in
\subsection{Summary of stopping-time decompositions} \label{section_thm_haar_fixed_summary}
\ \ 
{\fontsize{10}{10}
\begin{center}
\begin{tabular}{ l l l }
I. Tensor-type stopping-time decomposition I on $\mathcal{I} \times \mathcal{J}$ & $\longrightarrow$ & $I \times J \in \mathcal{I}_{n_1,m_1} \times \mathcal{J}_{n_2,m_2}$ \\
 & & $(n_1 + n_2 < 0, m_1 + m_2 < 0, $ \\ 
  &  & $n_1 + m_2 < 0 , m_1 + n_2 < 0)$ \\  
II. General two-dimensional level sets stopping-time decomposition & $\longrightarrow$   & $I \times J \in  \mathcal{R}_{k_1,k_2} $  \\
\ \ \ \ on $\mathcal{I} \times \mathcal{J}$ & & $(k_1 <0, k_2 \leq K) $
\end{tabular}
\end{center}}

\vskip .25in
\subsection{Application of stopping-time decompositions} \label{section_thm_haar_fixed_application_st}
With the stopping-time decompositions specified above, one can rewrite (\ref{haar_local}) as
{\fontsize{9.5}{9.5}\begin{align} \label{form_st_fixed}
 &\bigg|\displaystyle \sum_{\substack{n_1 + n_2 < 0 \\ m_1 + m_2 < 0 \\ n_1 + m_2 < 0 \\ m_1 + n_2 < 0 \\  k_1 < 0 \\ k_2 \leq K}} \sum_{\substack{I \times J \in \mathcal{I}_{n_1, m_1} \times \mathcal{J}_{n_2, m_2}  \\ \cap \mathcal{R}_{k_1,k_2} \\}} \frac{1}{|I|^{\frac{1}{2}} |J|^{\frac{1}{2}}} \langle B_I^{\#_1,H}(f_1,f_2),\vphi_I^{1,H} \rangle \langle \tilde{B}_J^{\#_2,H}(g_1,g_2),\vphi_J^{1,H} \rangle \langle h, \psi_I^{2} \otimes \psi_J^{2} \rangle \langle \chi_{E'},\psi_I^{3,H} \otimes \psi_J^{3,H} \rangle \bigg|  \nonumber \\
\leq &  \sum_{\substack{n_1 + n_2 < 0 \\ m_1 + m_2 < 0 \\ n_1 + m_2 < 0 \\ m_1 + n_2 < 0 \\  k_1 < 0 \\ k_2 \leq K}}  \sum_{\substack{I \times J \in \mathcal{I}_{n_1,m_1} \times \mathcal{J}_{n_2,m_2} \\ \cap \mathcal{R}_{k_1,k_2}\\ }}  \frac{|\langle B_I^{\#_1,H}(f_1,f_2),\vphi_I^{1,H} \rangle|}{|I|^{\frac{1}{2}}}   \frac{|\langle \tilde{B}_J^{\#_2,H}(g_1,g_2), \vphi_J^{1,H} \rangle|}{|J|^{\frac{1}{2}}} \cdot \frac{|\langle h, \psi_I^{2} \otimes \psi_J^{2} \rangle|}{|I|^{\frac{1}{2}}|J|^{\frac{1}{2}}} \frac{|\langle \chi_{E'},\psi_I^{3,H} \otimes \psi_J^{3,H} \rangle|}{|I|^{\frac{1}{2}}|J|^{\frac{1}{2}}} |I| |J|.\nonumber\\
\end{align}}

One recalls the \textit{general two-dimensional level sets stopping-time decomposition} that
$I \times J \in \mathcal{R}_{k_1,k_2}
$
only if 
$$ |I\times J \cap (\Omega^2_{k_1})^c |  \geq \frac{99}{100}|I\times J|$$
$$ |I\times J \cap (\Omega'^2_{k_2})^{c} |  \geq \frac{99}{100}|I \times J|$$
with $\Omega^2_{k_1} := \{SSh(x,y) > C_3 2^{k_1}\|h\|_{L^s(\mathbb{R}^2)}\}$, and $\Omega'^2_{k_2}:= \{(SS)^H\chi_{E'}(x,y) > C_3 2^{k_2}\}$.
As a result,
$$|I \times J| \sim |I \times J \cap (\Omega^2_{k_1})^c \cap (\Omega'^2_{k_2})^{c}|.$$ 
One can therefore majorize (\ref{form_st_fixed}) by
{\fontsize{10}{10}\begin{align}\label{form12}
&  \sum_{\substack{n_1 + n_2 < 0 \\ m_1 + m_2 < 0 \\ n_1 + m_2 < 0 \\ m_1 + n_2 < 0 \\  k_1 < 0 \\ k_2 \leq K}} \sum_{\substack{I \times J \in \mathcal{I}_{n_1,m_1} \times \mathcal{J}_{n_2,m_2} \cap\mathcal{R}_{k_1,k_2}}}  \frac{|\langle B_I^{\#_1,H}(f_1,f_2),\vphi_I^{1,H} \rangle|}{|I|^{\frac{1}{2}}}   \frac{|\langle \tilde{B}_J^{\#_2,H}(g_1,g_2), \vphi_J^{1,H} \rangle|}{|J|^{\frac{1}{2}}} \cdot \nonumber \\
&\quad \quad \quad \quad \quad \quad \quad \quad \quad \quad \quad \quad \quad  \quad \quad\frac{|\langle h, \psi_I^{2} \otimes \psi_J^{2} \rangle|}{|I|^{\frac{1}{2}}|J|^{\frac{1}{2}}} \frac{|\langle \chi_{E'},\psi_I^{3,H} \otimes \psi_J^{3,H} \rangle|}{|I|^{\frac{1}{2}}|J|^{\frac{1}{2}}} |I\times J \cap (\Omega^2_{k_1})^c \cap (\Omega'^2_{k_2})^c| \nonumber \\
\leq &   \sum_{\substack{n_1 + n_2 < 0 \\ m_1 + m_2 < 0 \\ n_1 + m_2 < 0 \\ m_1 + n_2 < 0 \\  k_1 < 0 \\ k_2 \leq K}} \displaystyle \sup_{I \in \mathcal{I}_{n_1,m_1}} \frac{|\langle B_I^{\#_1,H}(f_1,f_2),\vphi_I^{1,H} \rangle|}{|I|^{\frac{1}{2}}}  \sup_{J \in \mathcal{J}_{n_2,m_2}} \frac{|\langle \tilde{B}_J^{\#_2,H}(g_1,g_2),\vphi_J^{1,H} \rangle|}{|J|^{\frac{1}{2}}}\cdot \nonumber \\
&\quad \quad \quad \quad \quad \int_{(\Omega^2_{k_1})^c \cap (\Omega'^2_{k_2})^{c}} \sum_{\substack{I \times J \in \mathcal{I}_{n_1,m_1}  \times \mathcal{J}_{n_2,m_2} \cap \mathcal{R}_{k_1,k_2}}} \frac{|\langle h, \psi_I^{2} \otimes \psi_J^{2} \rangle|}{|I|^{\frac{1}{2}}|J|^{\frac{1}{2}}} \frac{|\langle \chi_{E'},\psi_I^{3,H} \otimes \psi_J^{3,H} \rangle|}{|I|^{\frac{1}{2}}|J|^{\frac{1}{2}}}\chi_{I}(x) \chi_{J}(y) dx dy. \nonumber\\
\end{align}}
We will now estimate each components in (\ref{form12}) separately for clarity. 
\subsubsection{Estimate for the $integral$} \label{section_thm_haar_fixed_est_integral}
One can apply the Cauchy-Schwarz inequality to the integrand and obtain
\begin{align} \label{integral12}
& \int_{(\Omega^2_{k_1})^c \cap (\Omega'^2_{k_2})^{c}} \sum_{\substack{I \times J \in \mathcal{I}_{n_1,m_1}  \times \mathcal{J}_{n_2,m_2} \cap \mathcal{R}_{k_1,k_2}}} \frac{|\langle h, \psi_I^{2} \otimes \psi_J^{2} \rangle|}{|I|^{\frac{1}{2}}|J|^{\frac{1}{2}}} \frac{|\langle \chi_{E'},\psi_I^{3,H} \otimes \psi_J^{3,H} \rangle|}{|I|^{\frac{1}{2}}|J|^{\frac{1}{2}}}\chi_{I}(x) \chi_{J}(y) dx dy \nonumber \\
\leq & \int_{(\Omega^2_{k_1})^c \cap (\Omega'^2_{k_2})^{c}} \bigg(\sum_{\substack{I \times J \in \mathcal{I}_{n_1,m_1}  \times \mathcal{J}_{n_2,m_2}\cap \mathcal{R}_{k_1,k_2}}}\frac{|\langle h, \psi_I^{2} \otimes \psi_J^{2} \rangle|^2}{|I||J|} \chi_I(x)\chi_J(y)\bigg)^{\frac{1}{2}} \nonumber \\
&\quad \quad \quad \quad \quad \quad \bigg(\sum_{\substack{I \times J \in \mathcal{I}_{n_1,m_1} \times \mathcal{J}_{n_2,m_2}\cap \mathcal{R}_{k_1,k_2}}}\frac{|\langle \chi_{E'},\psi_I^{3,H} \otimes \psi_J^{3,H} \rangle|^2}{|I||J|}\chi_{I}(x) \chi_{J}(y)\bigg)^{\frac{1}{2}} dxdy  \nonumber \\
\leq &\displaystyle \int_{(\Omega^2_{k_1})^c \cap (\Omega'^2_{k_2})^{c}} SSh(x,y) (SS)^H\chi_{E'}(x,y) \cdot \chi(\displaystyle\bigcup_{\substack{I \times J \in \mathcal{I}_{n_1,m_1}  \times \mathcal{J}_{n_2,m_2} \cap \mathcal{R}_{k_1,k_2}}}I \times J)(x,y) dxdy. 
\end{align}
Based on the \textit{general two-dimensional level sets stopping-time decomposition}, the hybrid functions have pointwise control on the domain for integration. In particular, for any $(x,y) \in (\Omega^2_{k_1})^c \cap (\Omega'^2_{k_2})^{c}$,
\begin{align*}
& SSh(x,y) \lesssim  C_3 2^{k_1} \|h\|_{L^s(\mathbb{R}^2)}, \nonumber \\
& (SS)^H\chi_{E'}(x,y) \lesssim  C_3 2^{k_2}.
\end{align*}
As a result, the integral can be estimated by
\begin{align}\label{h_integral}
 C_3^2 2^{k_1}\|h\|_{L^s(\mathbb{R}^2)} 2^{k_2} \bigg| \bigcup_{\substack{I \times J \in \mathcal{I}_{n_1,m_1}  \times \mathcal{J}_{n_2,m_2} \cap \mathcal{R}_{k_1,k_2}}}I \times J \bigg|.
\end{align}

\subsubsection{Estimate for $ \displaystyle \sup_{I \in \mathcal{I}_{n_1,m_1}} \frac{|\langle B_I^{\#_1,H}(f_1,f_2),\vphi_I^{1,H} \rangle|}{|I|^{\frac{1}{2}}} $ and $
\displaystyle \sup_{J \in \mathcal{J}_{n_2,m_2}} \frac{|\langle \tilde{B}_J^{\#_2,H}(g_1,g_2),\vphi_J^{1,H} \rangle|}{|J|^{\frac{1}{2}}}$}
One recalls the algorithm in the \textit{tensor type stopping-time decomposition I - level sets}, which incorporates the following information.
$$ I \in \mathcal{I}_{n_1, m_1}$$
implies that
$$|I \cap \{x: Mf_1(x) < C_1 2^{n_1}|F_1|\}| \geq \frac{9}{10}|I|,$$
$$|I \cap \{x: Mf_2(x) < C_1 2^{m_1}|F_2|\}| \geq \frac{9}{10}|I|,$$
which translates into
$$
I \cap \{x: Mf_1(x) < C_1 2^{n_1}|F_1|\} \cap \{x:Mf_2(x) < C_1 2^{m_1}|F_2|\} \neq \emptyset.
$$
Then one can recall Proposition \ref{size_cor} with 
\begin{equation*}
\mathcal{U}_{n_1,m_1}:= \{x: Mf_1 (x)< C_1 2^{n_1}|F_1|\} \cap  \{x:Mf_2 (x)< C_1 2^{m_1}|F_2|\}
\end{equation*} 
to estimate
$$
\sup_{I \in \mathcal{I}_{n_1,m_1}} \frac{|\langle B_I^{\#_1,H}(f_1,f_2),\vphi_I^{1,H} \rangle|}{|I|^{\frac{1}{2}}} \lesssim C_1^2 (2^{n_1}|F_1|)^{\alpha_1} (2^{m_1}|F_2|)^{\beta_1},
$$
for any $0 \leq \alpha_1,\beta_1 \leq 1$.
Similarly, one can apply Proposition \ref{size_cor} with 
\begin{equation*}
\tilde{\mathcal{U}}_{n_2,m_2}:= \{y: Mg_1 (y)< C_2 2^{n_2}|G_1|\} \cap  \{y:Mg_2(y) < C_2 2^{m_2}|G_2|\}
\end{equation*}
to conclude that
$$
\sup_{J \in \mathcal{J}_{n_2,m_2}} \frac{|\langle \tilde{B}_J^{\#_2,H}(g_1,g_2),\vphi_J^{1,H} \rangle|}{|J|^{\frac{1}{2}}} \lesssim C_2^2 (2^{n_2}|G_1| )^{\alpha_2}(2^{m_2}|G_2|)^{\beta_2},
$$
for any $0 \leq \alpha_2,\beta_2 \leq 1$. 
By choosing 
$$\alpha_1 = \frac{1}{p_1},
\beta_1 = \frac{1}{q_1},
\alpha_2 = \frac{1}{p_2},
\beta_2 = \frac{1}{q_2},
$$
and applying (\ref{h_integral}), (\ref{form12}) can therefore be estimated by
\begin{align} \label{linear_form_fixed_scale}
& C_1^2 C_2^2 C_3^2\sum_{\substack{n_1 + n_2  < 0 \\ m_1 + m_2 < 0 \\ n_1 + m_2 < 0 \\ m_1 + n_2 < 0 \\ k_1 < 0 \\ k_2 \leq K}} 2^{n_1 \frac{1}{p_1}}2^{m_1\frac{1}{q_1}}2^{n_2 \frac{1}{p_2}} 2^{m_2 \frac{1}{q_2}}|F_1|^{\frac{1}{p_1}}|F_2|^{\frac{1}{q_1}}|G_1|^{\frac{1}{p_2}}|G_2|^{\frac{1}{q_2}}\cdot  2^{k_1} \| h \|_{L^s} 2^{k_2}  \cdot \bigg|\bigcup_{\substack{ R \in \mathcal{I}_{n_1,m_1} \times \mathcal{J}_{n_2,m_2} \cap  \mathcal{R}_{k_1,k_2} }} R\bigg| . 
\end{align}
One recalls that 
$$
\frac{1}{p_1} + \frac{1}{q_1} = \frac{1}{p_2} + \frac{1}{q_2},
$$
then 
\begin{align} \label{exp_size}
2^{n_1 \frac{1}{p_1}}2^{m_1\frac{1}{q_1}}2^{n_2 \frac{1}{p_2}} 2^{m_2 \frac{1}{q_2}}  = & 2^{n_1\frac{1}{p_2}} 2^{n_1(\frac{1}{q_2} - \frac{1}{q_1})}2^{m_1 \frac{1}{q_1}} 2^{n_2\frac{1}{p_2}}2^{m_2(\frac{1}{q_2} - \frac{1}{q_1})} 2^{m_2\frac{1}{q_1}} \nonumber \\
= & (2^{n_1 + n_2})^{\frac{1}{p_2}} (2^{n_1+m_2})^{\frac{1}{q_2} - \frac{1}{q_1}}(2^{m_1+m_2})^{\frac{1}{q_1}}.
\end{align}
By the definition of exceptional sets, 
$
2^{n_1 + n_2} \lesssim 1, 2^{n_1 + m_2} \lesssim 1, 2^{m_1 + n_2} \lesssim 1, 2^{m_1 + m_2} \lesssim 1
$.
Then
$$
n := -(n_1 + n_2) \geq 0,
$$
$$
m := -(m_1 + m_2) \geq 0.
$$
Without loss of generality, one further assumes that $\frac{1}{q_2} \geq \frac{1}{q_1}$ (with $q_1$ and $q_2$ swapped in the opposite case), which implies that
$$
(2^{n_1+m_2})^{\frac{1}{q_2}- \frac{1}{q_1}} \lesssim 1.
$$
Now (\ref{linear_form_fixed_scale}) can be bounded by
\begin{align} \label{linear_almost}
 & C_1^2 C_2^2 C_3^2\sum_{\substack{n > 0 \\ m > 0 \\  k_1 < 0 \\ k_2 \leq K}} \sum_{\substack{n_2 \in \mathbb{Z} \\ m_2 \in \mathbb{Z}}} 2^{-n \frac{1}{p_2}}2^{-m \frac{1}{q_1}}|F_1|^{\frac{1}{p_1}}|F_2|^{\frac{1}{q_1}}|G_1|^{\frac{1}{p_2}}|G_2|^{\frac{1}{q_2}}\cdot  2^{k_1} \| h \|_{L^s} 2^{k_2}  \cdot \bigg|\bigcup_{\substack{R \in \mathcal{I}_{-n-n_2,-m-m_2} \times \mathcal{J}_{n_2,m_2} \cap \mathcal{R}_{k_1,k_2} \\ }} R\bigg|. 
\end{align}
With $k_1, k_2, n, m$ fixed, one can apply the sparsity condition (Proposition \ref{sp_2d}) repeatedly and obtain the following bound for the expression
{\fontsize{8.5}{8.5}
\begin{align} \label{nested_area}
\sum_{\substack{n_2 \in \mathbb{Z} \\ m_2 \in \mathbb{Z}}} \bigg|\bigcup_{\substack{R\in \mathcal{R}_{k_1,k_2}\cap \mathcal{I}_{-n-n_2,-m-m_2} \times \mathcal{J}_{n_2,m_2}}} R \bigg| 
\lesssim  \sum_{m_2 \in \mathbb{Z}} \bigg| \bigcup_{\substack{R \in \mathcal{R}_{k_1,k_2} \cap \mathcal{I}_{-m-m_2} \times \mathcal{J}_{m_2}}} R \bigg| \lesssim& \bigg|\bigcup_{R\in \mathcal{R}_{k_1,k_2}} R\bigg| \leq  \min\left( \big|\bigcup_{R\in \mathcal{R}_{k_1}} R\big|, \big|\bigcup_{R\in \mathcal{R}_{k_2}} R\big|\right).
\end{align}}
\begin{remark}
The arbitrariness of the collection of rectangles in Proposition \ref{sp_2d} provides the compatibility of different stopping-time decompositions. In the current setting, the notation $\mathcal{R}_0$ in Proposition \ref{sp_2d} is chosen to be $\mathcal{R}_{k_1, k_2}$. The sparsity condition allows one to combine the \textit{tensor-type stopping-time decomposition I} and \textit{general two-dimensional level sets stopping-time decomposition} and to obtain information from both stopping-time decompositions.
\end{remark}
\begin{remark}
The readers who are familiar with the proof of single-parameter paraproducts \cite{cw} or bi-parameter paraproducts \cite{cptt}, \cite{cw} might recall that (\ref{nested_area}) employs a different argument from the previous ones \cite{cptt}, \cite{cw}. In particular, by previous reasonings, one would fix $n_2, m_2 \in \mathbb{Z}$ and obtain 
\begin{equation} \label{old}
\bigg|\bigcup_{\substack{R\in \mathcal{R}_{k_1,k_2} \cap \mathcal{I}_{-n-n_2,-m-m_2} \times \mathcal{J}_{n_2,m_2}}} R \bigg| \lesssim \min \left( \big|\bigcup_{R\in \mathcal{R}_{k_1}} R\big|, \big|\bigcup_{R\in \mathcal{R}_{k_2}} R\big| \right).
\end{equation}
However, the expression on the right hand side of (\ref{old}) is independent of $n_2$ or $m_2$, which gives a divergent series when the sum is taken over all $n_2, m_2 \in \mathbb{Z}$. This explains the novelty and necessity of the sparsity condition (Proposition \ref{sp_2d}) for our argument.
\end{remark}
To estimate the right hand side of (\ref{nested_area}), one recalls from the \textit{general two-dimensional level sets stopping-time decomposition} that $R \in \mathcal{R}_{k_1}$ implies 
$$\big|R \cap \Omega^2_{k_1-1} \big| > \frac{1}{100}|R|,$$
or equivalently
 $$\displaystyle \bigcup_{R\in \mathcal{R}_{k_1}} R \subseteq \{(x,y)\in \mathbb{R}^2: MM (\chi_{\Omega^2_{k_1-1}})(x,y) > \frac{1}{100}\}.$$
As a result,
\begin{align} \label{rec_area_1}
\bigg|\bigcup_{R\in \mathcal{R}_{k_1}} R\bigg| \leq & \big|\{(x,y): MM (\chi_{\Omega^2_{k_1-1}})(x,y) > \frac{1}{100}\}\big| \lesssim |\Omega^2_{k_1-1}|=|\{(x,y): SSh(x,y) > C_3 2^{k_1} \|h\|_{L^s(\mathbb{R}^2)}\}| \lesssim C_3^{-s}2^{-k_1s},
\end{align}
where the second and last inequalities follow from the boundedness of the double maximal function and the double square function described in Theorem \ref{maximal-square}.
By a similar reasoning and the fact that $|E'| \sim 1$,
\begin{align} \label{rec_area_2}
\bigg|\bigcup_{R\in \mathcal{R}_{k_2}} R\bigg| 
\lesssim 
|\{(x,y): (SS)^H(\chi_{E'})(x,y) > C_3 2^{k_2}\}| \lesssim C_3^{-\gamma}2^{-k_2\gamma},
\end{align}
for any $\gamma >1$. Interpolation between (\ref{rec_area_1}) and (\ref{rec_area_2}) yields
\begin{equation} \label{int_area}
\bigg|\bigcup_{R\in \mathcal{R}_{k_1,k_2}} R\bigg| \lesssim 2^{-\frac{k_1s}{2}}2^{-\frac{k_2\gamma}{2}},
\end{equation}
and by plugging (\ref{int_area}) into (\ref{nested_area}), one has
\begin{equation} \label{rec_area_hybrid}
\sum_{\substack{n_2 \in \mathbb{Z} \\ m_2 \in \mathbb{Z}}} \bigg|\bigcup_{\substack{R\in \mathcal{R}_{k_1,k_2} \cap\mathcal{I}_{-n-n_2,-m-m_2} \times \mathcal{J}_{n_2,m_2}}} R \bigg| \lesssim 2^{-\frac{k_1s}{2}}2^{-\frac{k_2\gamma}{2}},
\end{equation}
for any $\gamma >1$.
One combines the estimates (\ref{rec_area_hybrid}) and (\ref{linear_almost}) to obtain
\begin{align*}
|\Lambda^H_{\text{flag}^{\#1} \otimes \text{flag}^{\#2}}(f_1, f_2, g_1, g_2, h, \chi_{E'})| \lesssim &C_1^2 C_2^2 C_3^2 \sum_{\substack{n > 0 \\ m > 0 \\ k_1 < 0 \\ k_2 \leq K}} 2^{-n \frac{1}{p_2}}2^{-m \frac{1}{q_1}}|F_1|^{\frac{1}{p_1}}|F_2|^{\frac{1}{p_2}}|G_1|^{\frac{1}{q_1}}|G_2|^{\frac{1}{q_2}}\cdot 2^{k_1(1-\frac{s}{2})} \| h \|_{L^s} 2^{k_2(1-\frac{\gamma}{2})}.  \nonumber 
\end{align*}
The geometric series $\displaystyle \sum_{k_1<0}2^{k_1(1-\frac{s}{2})}$ is convergent given that $s <2$. For $\displaystyle\sum_{k_2 \leq K}2^{k_2(1-\frac{\gamma}{2})}$, one can choose $\gamma >1$ to be sufficiently large for the range $0 \leq k_2 \leq K$ and $\gamma >1$ and close to $1$ for $k_2 <0$. One thus concludes that
\begin{align*}
|\Lambda^H_{\text{flag}^{\#1} \otimes \text{flag}^{\#2}}(f_1, f_2, g_1, g_2, h, \chi_{E'})| \lesssim &C_1^2 C_2^2 C_3^2 |F_1|^{\frac{1}{p_1}}|F_2|^{\frac{1}{p_2}}|G_1|^{\frac{1}{q_1}}|G_2|^{\frac{1}{q_2}}\| h \|_{L^s}.
\end{align*}

\begin{remark}
One important observation is that thanks to Lemma \ref{B_size}, the sizes
$$
\sup_{I \in \mathcal{I}_{n_1,m_1}} \frac{|\langle B_I^{\#_1,H}(f_1,f_2),\vphi_I^{1,H} \rangle|}{|I|^{\frac{1}{2}}}
$$
and 
$$
\sup_{J \in \mathcal{J}_{n_2,m_2}} \frac{|\langle \tilde{B}_J^{\#_2,H}(g_1,g_2),\vphi_J^{1,H} \rangle|}{|J|^{\frac{1}{2}}}
$$
can be estimated in the exactly same way as
$$
\text{size}_{\mathcal{I}_{n_1}}\big( (f_1,\phi_I)_I \big) \cdot \text{size}_{\mathcal{I}_{m_1}}\big( (f_2,\phi_I)_I \big)
$$
and 
$$
\text{size}_{\mathcal{J}_{n_2}}\big( (g_1,\phi_J)_J \big) \cdot \text{size}_{\mathcal{J}_{m_2}}\big( (g_2,\phi_J)_J \big)
$$
respectively.
Based on this observation, it is not difficult to verify that under the Haar assumption, the discrete models $\Pi_{\text{flag}^{\#1} \otimes \text{paraproduct}}$ and $\Pi_{\text{paraproduct}\otimes \text{paraproduct}}$
can be estimated by an essentially same argument as $\Pi_{\text{flag}^{\#_1}\otimes \text{flag}^{\#_2}}$ while $\Pi_{\text{flag}^0 \otimes \text{flag}^{\#_2}}$ can be studied similarly as $\Pi_{\text{flag}^0 \otimes \text{paraproduct}}$.
\end{remark}
\vskip .15in
\section{Proof of Theorem \ref{thm_weak_mod} for $\Pi_{\text{flag}^0 \otimes \text{flag}^0}$ - Haar Model} \label{section_thm_haar}

We would now derive estimates for the operator $\Pi_{\text{flag}^0 \otimes \text{flag}^0}$ in the Haar model, namely
\begin{equation} \label{Pi_larger_haar}
\Pi^H_{\text{flag}^0 \otimes \text{flag}^0}(f_1,f_2,g_1,g_2,h):= \sum_{I \times J \in \mathcal{R}} \frac{1}{|I|^{\frac{1}{2}} |J|^{\frac{1}{2}}} \langle B_I^H(f_1,f_2),\vphi_I^{1,H} \rangle \langle \tilde{B}_J^H(g_1,g_2),\vphi_J^{1,H} \rangle \langle h, \psi_I^{2} \otimes \psi_J^{2} \rangle \psi_I^{3,H} \otimes \psi_J^{3,H}.
\end{equation}
where $B_I^H$ and $B^H_J$ are defined in (\ref{B_0_local_haar_simplified}) and (\ref{B_local_definition_haar}). 
The argument in Section \ref{section_thm_haar_fixed} is not sufficient for $\Pi^H_{\text{flag}^0 \otimes \text{flag}^0}$ (\ref{Pi_larger_haar}) because the localized size
\begin{align*}
& \sup_{I \cap S \neq \emptyset} \frac{|\langle B_I^H(f_1,f_2), \vphi^{1,H}_I \rangle |}{|I|^{\frac{1}{2}}}, \nonumber \\
& \sup_{J \cap S' \neq \emptyset} \frac{|\langle \tilde{B}_J^H(g_1,g_2), \vphi^{1,H}_J \rangle }{|J|^{\frac{1}{2}}}
\end{align*}
cannot be controlled without information about corresponding level sets. In particular, one needs to impose the additional assumption that 
\begin{align*}
&I \cap \{x: MB^H(f_1,f_2)(x) \leq C_1 2^{l_1}\|B^H(f_1,f_2)\|_1\} \neq \emptyset, \nonumber \\
&J \cap \{y: M\tilde{B}^H(g_1,g_2)(y) \leq C_2 2^{l_2}\|\tilde{B}^H(g_1,g_2)\|_1\} \neq \emptyset,
\end{align*}
where $B^H$ and $\tilde{B}^H$ are defined in (\ref{B_global_haar_simplified}) and (\ref{B_global_haar}).
However, while the sizes of $B^H(f_1,f_2)$ and $\tilde{B}^H(g_1,g_2)$ can be controlled in this way, they lose the information from the localization (e.g. $K \cap \{x: Mf_1(x) \leq C_1 2^{n_1}|F_1|\} \neq \emptyset$ for some $n_1 \in \mathbb{Z}$) and are thus far away from satisfaction. It is indeed the energies which capture such local information and compensate for the loss from size estimates in this scenario.
\subsection{Localization} \label{section_thm_haar_localization}
As one would expect from the definition of the exceptional set, the \textit{tensor-type stopping-time decompositions} and the \textit{general two-dimensional level sets stopping-time decomposition} are involved in the argument.
We define the set 
$$\Omega := \Omega^1 \cup \Omega^2,$$ where
\begin{align*}
\displaystyle \Omega^1 := &\bigcup_{n_1 \in \mathbb{Z}}\{x:Mf_1(x) > C_1 2^{n_1}|F_1|\} \times \{y:Mg_1(y) > C_2 2^{-n_1}|G_1|\}\cup \nonumber \\
& \bigcup_{m_1 \in \mathbb{Z}}\{x: Mf_2(x) > C_1 2^{m_1}|F_2|\} \times \{y:Mg_2(y) > C_2 2^{-m_1}|G_2|\}\cup \nonumber \\
&\bigcup_{l_1 \in \mathbb{Z}} \{x: MB^H(f_1,f_2)(x) > C_1 2^{l_1}\| B^H(f_1,f_2)\|_1\} \times \{y: M\tilde{B}^H(g_1,g_2)(y) > C_2 2^{-l_1}\| \tilde{B}^H(g_1,g_2) \|_1\},\nonumber \\
\Omega^2 := & \{(x,y) \in \mathbb{R}^2: SSh(x,y) > C_3 \|h\|_{L^s(\mathbb{R}^2)}\}, \nonumber \\
\end{align*}
and
$$Enl(\Omega) := \{(x,y)\in \mathbb{R}^2: MM\chi_{\Omega}(x,y) > \frac{1}{100}\}.$$
Let 
$$E' := E \setminus Enl(\Omega).$$ Then the argument in Remark \ref{subset} yields that $|E'| \sim |E|$ where $|E|$ can be assumed to be 1 by scaling invariance. We aim to prove that the multilinear form
\begin{equation}
\Lambda^H_{\text{flag}^{0} \otimes \text{flag}^{0}}(f_1, f_2, g_1, g_2, h, \chi_{E'}) := \langle \Pi^H_{\text{flag}^{0} \otimes \text{flag}^{0}}(f_1, f_2, g_1, g_2, h), \chi_{E'} \rangle
\end{equation}
satisfies the following restricted weak-type estimate
\begin{equation}
|\Lambda^H_{\text{flag}^{0} \otimes \text{flag}^{0}}(f_1, f_2, g_1, g_2, h, \chi_{E'})| \lesssim |F_1|^{\frac{1}{p_1}} |G_1|^{\frac{1}{p_2}} |F_2|^{\frac{1}{q_1}} |G_2|^{\frac{1}{q_2}} \|h\|_{L^{s}(\mathbb{R}^2)}.
\end{equation}
The localization argument in Remark \ref{localization_haar_fixed} can be applied so that 
\begin{align} \label{form_localized}
& |\Lambda^H_{\text{flag}^{0} \otimes \text{flag}^{0}}(f_1, f_2, g_1, g_2, h, \chi_{E'})| \nonumber\\
= &\displaystyle \sum_{\substack{I \times J \in \mathcal{R} \\ I \times J \cap Enl(\Omega)^c \neq \emptyset}} \frac{1}{|I|^{\frac{1}{2}} |J|^{\frac{1}{2}}} \langle B^{H}_I(f_1,f_2),\vphi_I^{1,H} \rangle \langle \tilde{B}^{H}_J(g_1, g_2), \vphi_J^{1,H} \rangle  \langle h, \psi_I^{2} \otimes \psi_J^{2} \rangle \langle \chi_{E'}, \psi_I^{3,H} \otimes \psi_J^{3,H} \rangle.
\end{align}
\vskip .15in
\subsection{Tensor-type stopping-time decomposition II - maximal intervals} \label{section_thm_haar_tensor}
\subsubsection{One-dimensional stopping-time decomposition - maximal intervals}
One applies the stopping-time decomposition described in Section \ref{section_size_energy_one_dim_st_maximal} to the sequences
$$
\big(\frac{|\langle B^H_{I}(f_1,f_2), \vphi^{1,H}_I \rangle|}{|I|^{\frac{1}{2}}}\big)_{I \in \mathcal{I}}
$$
and
$$
\big(\frac{|\langle \tilde {B}^H_{J}(g_1,g_2), \vphi^{1,H}_J \rangle|}{|J|^{\frac{1}{2}}}\big)_{J \in \mathcal{J}}
$$
We will briefly recall the algorithm and introduce some necessary notations for the sake of clarity. Since $\mathcal{I}$ is finite, there exists some $L_1 \in \mathbb{Z}$ such that for any $I \in \mathcal{I}$, $\frac{|\langle B^H_{I}(f_1,f_2), \vphi^{1,H}_I \rangle|}{|I|^{\frac{1}{2}}} \leq C_1 2^{L_1} \|B^H(f_1,f_2)\|_1$. There exists a largest interval $I_{\text{max}}$ such that 
$$\frac{|\langle B^H_{I_{\text{max}}}(f_1,f_2), \vphi^{1,H}_{I_{\text{max}}} \rangle|}{|I_{\text{max}}|^{\frac{1}{2}}} \geq C_1 2^{L_1-1}\|B^H(f_1,f_2)\|_1.$$
Then we define a \textit{tree}
$$T := \{I \in \mathcal{I}: I \subseteq I_{\text{max}}\},$$
and the corresponding \textit{tree-top} 
$$I_T := I_{\text{max}}.$$  
Now we repeat the above step on $\mathcal{I} \setminus T$ to choose maximal intervals and collect their subintervals in their corresponding sets, which will end thanks to the finiteness of $\mathcal{I}$. Then collect all $T$'s in a set $\mathbb{T}_{L_1-1}$ and repeat the above algorithm to $\displaystyle \mathcal{I} \setminus \bigcup_{T \in \mathbb{T}_{L_1-1}} T$. Eventually the algorithm generates a decomposition 
$$\displaystyle \mathcal{I} = \bigcup_{l_1}\bigcup_{T \in \mathbb{T}_{l_1}}T.$$ 
One simple observation is that the above procedure can be applied to general sequences indexed by dyadic intervals. One can thus apply the same algorithm to $\mathcal{J} := \{J: I \times J \in \mathcal{R}\}$. We denote the decomposition as 
$$\displaystyle \mathcal{J} = \bigcup_{l_2}\bigcup_{S \in \mathbb{S}_{l_2}}S$$ 
with respect to the sequence 
$$\big(\frac{|\langle \tilde {B}^H_{J}(g_1,g_2), \vphi^{1,H}_J \rangle|}{|J|^{\frac{1}{2}}}\big)_{J \in \mathcal{J}},$$ where $S$ is a collection of dyadic intervals analogous to $T$ and is denoted by \textit{tree}. And $J_S$ represents the corresponding \textit{tree-top} analogous to $I_T$.

\vskip.15in
\subsubsection{Tensor product of two one-dimensional stopping-time decompositions - maximal intervals}\label{section_thm_haar_tensor_1d_maximal}
\begin{obs} \label{obs_st_B}
If $I \times J  \cap Enl(\Omega)^{c} \neq \emptyset$ and $I \times J \in T \times S$ with $T \in \mathbb{T}_{l_1}$ and $S \in \mathbb{S}_{l_2}$, then $l_1, l_2 \in \mathbb{Z}$ satisfies $l_1 + l_2 < 0$. Equivalently, $I \times J \in T \times S$ with $T \in \mathbb{T}_{-l - l_2}$ and $S \in \mathbb{S}_{l_2}$ for some $l_2 \in \mathbb{Z}$, $l> 0$.

\begin{proof}
$I \in T$ with $T \in \mathbb{T}_{l_1}$ means that $I \subseteq I_T$ where $\frac{|\langle B^H_{I_T}(f_1,f_2), \vphi^{1,H}_{I_T} \rangle|}{|I_T|^{\frac{1}{2}}} > C_12^{l_1} \|B^H(f_1,f_2)\|_1$. By the biest trick (Remark \ref{biest_trick_rmk}),
$$\frac{|\langle B^H_{I_T}(f_1,f_2), \vphi^{1,H}_{I_T} \rangle|}{|I_T|^{\frac{1}{2}}} = \frac{|\langle B^H(f_1,f_2), \vphi^{1,H}_{I_T} \rangle|}{|I_T|^{\frac{1}{2}}} \leq MB^H(f_1,f_2)(x)$$ 
for any $x \in I_T$. Thus 
$$I_T \subseteq \{x: MB^H(f_1,f_2)(x) > C_12^{l_1} \|B^H(f_1,f_2)\|_1\}.$$ By a similar reasoning, $J \in S$ with $S \in \mathbb{S}_{l_2}$ implies that 
$$J \subseteq J_S \subseteq  \{y: M\tilde{B}^H(g_1,g_2) (y)> C_22^{l_2} \|\tilde{B}^H(g_1,g_2)\|_1\}.$$
If $l_1 + l_2 \geq 0$, then 
$$\{x: MB^H(f_1,f_2)(x) > C_12^{l_1} \|B^H(f_1,f_2)\|_1\} \times \{y: M\tilde{B}^H(g_1,g_2)(y) > C_2 2^{l_2}\| \tilde{B}^H(g_1,g_2)\|_1\} \subseteq \Omega^1 \subseteq \Omega.$$ As a consequence, $I \times J \subseteq \Omega \subseteq Enl(\Omega)$, which is a contradiction. 
\end{proof}
\end{obs}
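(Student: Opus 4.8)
The plan is a proof by contradiction: I would show that $l_1+l_2\ge 0$ forces $I\times J\subseteq Enl(\Omega)$, which is incompatible with the hypothesis $I\times J\cap Enl(\Omega)^c\neq\emptyset$. The whole point is that the third slice of $\Omega^1$ was designed precisely to pair the level set of $MB^H(f_1,f_2)$ at height $C_1 2^{l_1}\|B^H(f_1,f_2)\|_1$ with the level set of $M\tilde B^H(g_1,g_2)$ at the complementary height $C_2 2^{-l_1}\|\tilde B^H(g_1,g_2)\|_1$, so once I locate the tree-tops $I_T$ and $J_S$ inside the appropriate level sets, the containment is automatic.

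First I would translate tree membership into a pointwise bound on a maximal function. If $I\in T$ with $T\in\mathbb T_{l_1}$, then $I\subseteq I_T$, and by the selection rule of the stopping-time algorithm of Section~\ref{section_size_energy_one_dim_st_maximal} the tree-top satisfies $|\langle B^H_{I_T}(f_1,f_2),\vphi_{I_T}^{1,H}\rangle|/|I_T|^{1/2}\gtrsim C_1 2^{l_1}\|B^H(f_1,f_2)\|_1$. Here I would invoke the biest trick (Remark~\ref{biest_trick_rmk}): in the Haar model $\langle \psi_K^{3,H},\vphi_{I_T}^{1,H}\rangle=0$ unless $K\supseteq I_T$, so the localized operator $B^H_{I_T}(f_1,f_2)$ and the unrestricted $B^H(f_1,f_2)$ have the same pairing against $\vphi_{I_T}^{1,H}$. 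Consequently $|\langle B^H(f_1,f_2),\vphi_{I_T}^{1,H}\rangle|/|I_T|^{1/2}\le MB^H(f_1,f_2)(x)$ for every $x\in I_T$, whence $I_T\subseteq\{x:MB^H(f_1,f_2)(x)>C_1 2^{l_1}\|B^H(f_1,f_2)\|_1\}$ (after harmlessly adjusting $C_1$). The identical argument on the $J$-side gives $J\subseteq J_S\subseteq\{y:M\tilde B^H(g_1,g_2)(y)>C_2 2^{l_2}\|\tilde B^H(g_1,g_2)\|_1\}$.

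Then I would close the argument. Assume $l_1+l_2\ge 0$, i.e.\ $l_2\ge -l_1$; since a larger threshold gives a smaller level set, the $y$-set above is contained in $\{y:M\tilde B^H(g_1,g_2)(y)>C_2 2^{-l_1}\|\tilde B^H(g_1,g_2)\|_1\}$. Therefore $I\times J\subseteq I_T\times J_S$ lies in the $l_1$-slice of $\Omega^1$, hence in $\Omega$, so $MM\chi_\Omega\equiv 1$ on $I\times J$ and $I\times J\subseteq Enl(\Omega)$ — contradicting $I\times J\cap Enl(\Omega)^c\neq\emptyset$. This proves $l_1+l_2<0$, and the ``equivalently'' reformulation follows at once by setting $l:=-(l_1+l_2)>0$, so that $T\in\mathbb T_{-l-l_2}$ and $S\in\mathbb S_{l_2}$.

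I expect the only delicate point to be bookkeeping rather than a genuine obstacle: one must make sure the stopping-time thresholds are normalized by the same constants $\|B^H(f_1,f_2)\|_1$ and $\|\tilde B^H(g_1,g_2)\|_1$ appearing in the definition of $\Omega^1$, and that the biest-trick identity is applied with the correct localized operator $B^H_{I_T}$ (not $B^{\#_1,H}_{I_T}$). Once those are matched, everything is the standard ``the maximal function dominates the average of $B^H$ over the tree-top'' mechanism already used for Observation~\ref{obs_indice}.
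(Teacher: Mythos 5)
Your proof is correct and follows the paper's argument essentially step by step: translate tree membership into a pointwise maximal-function bound via the biest trick, place $I_T\times J_S$ inside the $l_1$-slice of $\Omega^1$ using $l_2\ge -l_1$, and derive the contradiction with $I\times J\cap Enl(\Omega)^c\neq\emptyset$. The only difference is that you spell out the monotonicity of the level sets that the paper leaves implicit; otherwise the two arguments coincide.
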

\vskip .15in

\subsection{Summary of stopping-time decompositions} \label{section_thm_haar_summary}
The notions of \textit{tensor-type stopping-time decomposition I} and \textit{general two-dimensional level sets stopping-time decomposition} introduced in Section \ref{section_thm_haar_fixed} will be applied without further specifications.
{\fontsize{9.5}{9.5}
\begin{center}
\begin{tabular}{ l l l }	
I. Tensor-type stopping-time decomposition I on $\mathcal{I} \times \mathcal{J}$& $\longrightarrow$ & $I \times J \in \mathcal{I}_{-n-n_2,-m-m_2} \times \mathcal{J}_{n_2,m_2}$ \\
& & $(n_2, m_2 \in \mathbb{Z}, n > 0, m>0)$\\
II. Tensor-type stopping-time decomposition II on $\mathcal{I} \times \mathcal{J}$ & $\longrightarrow$ & $I \times J \in T \times  S$,with $T \in \mathbb{T}_{-l-l_2}$, $S \in \mathbb{S}_{l_2}$\\
& & $(l_2 \in \mathbb{Z}, l> 0)$\\
III. General two-dimensional level sets stopping-time decomposition& $\longrightarrow$ & $I \times J \in \mathcal{R}_{k_1,k_2} $  \\
\ \ \ \ \ on $\mathcal{I} \times \mathcal{J}$& & $(k_1 <0, k_2 \leq K)$\\
\end{tabular}
\end{center}}

\subsection{Application of stopping-time decompositions} \label{section_thm_haar_application_st}
One first rewrites (\ref{form_localized}) with the partition of dyadic rectangles specified in the stopping-time algorithm:
\begin{align} \label{form_decomposed}
&\displaystyle \sum_{\substack{n > 0 \\ m > 0 \\ l > 0 \\ k_1 < 0 \\ k_2 \leq K}} \sum_{\substack{n_2 \in \mathbb{Z} \\ m_2 \in \mathbb{Z} \\ l_2 \in \mathbb{Z}}}\sum_{\substack{T \in \mathbb{T}_{-l-l_2} \\ S \in \mathbb{S}_{l_2}}}\sum_{\substack{I \times J \in T \times S \\ I \times J \in \mathcal{I}_{-n - n_2, -m - m_2} \times \mathcal{J}_{n_2, m_2} \\I \times J \in \mathcal{R}_{k_1,k_2}}} \frac{1}{|I|^{\frac{1}{2}} |J|^{\frac{1}{2}}}| \langle B_I^H(f_1,f_2),\vphi_I^{1,H} \rangle| |\langle \tilde{B}_J^H(g_1,g_2),\vphi_J^{1,H} \rangle| \cdot \nonumber \\
&\ \ \ \ \ \ \ \ \ \ \ \ \ \ \ \ \ \ \ \ \ \ \ \ \ \ \ \ \ \ \ \ \ \ \ \ \ \ \ \ \ \ \ \ \ \ \ \ \ \ \ \ \ \ \ \  \ \ \ |\langle h, \psi_I^{2} \otimes \psi_J^{2} \rangle| |\langle \chi_{E'},\psi_I^{3,H} \otimes \psi_J^{3,H} \rangle|. 
\end{align}
One can now apply the exactly same argument in Section \ref{section_thm_haar_fixed_est_integral} to estimate (\ref{form_decomposed}) by
\begin{align} \label{form_00}
&\sum_{\substack{n > 0 \\ m > 0 \\ l > 0 \\ k_1 < 0 \\ k_2 \leq K}} \sum_{\substack{n_2 \in \mathbb{Z} \\ m_2 \in \mathbb{Z} \\ l_2 \in \mathbb{Z}}} \displaystyle & \sup_{I \in T} \frac{|\langle B_I^H(f_1,f_2),\vphi_I^{1,H} \rangle|}{|I|^{\frac{1}{2}}}  \sup_{J \in S} \frac{|\langle \tilde{B}_J^H(g_1,g_2),\vphi_J^{1,H} \rangle|}{|J|^{\frac{1}{2}}} \cdot 2^{k_1} \| h \|_{L^s} 2^{k_2} \sum_{\substack{T \in \mathbb{T}_{-l-l_2}\\S \in \mathbb{S}_{l_2}}} \bigg|\bigcup_{\substack{I \times J \in T \times S \\ I \times J \in \mathcal{I}_{-n - n_2, -m - m_2}  \times \mathcal{J}_{n_2,m_2} \\I \times J \in \mathcal{R}_{k_1,k_2}}} I \times J \bigg|. 
\end{align}
Fix $-l-l_2$ and $T \in \mathbb{T}_{-l-l_2}$, one recalls the \textit{tensor-type stopping-time decomposition II} to conclude that
\begin{equation} \label{ave_1}
\sup_{I \in T } \frac{|\langle B_I^H(f_1,f_2),\vphi_I^{1H} \rangle|}{|I|^{\frac{1}{2}}} \lesssim C_1 2^{-l-l_2} \|B^H(f_1,f_2)\|_1.
\end{equation}
By the similar reasoning,
\begin{equation} \label{ave_2}
\sup_{J \in S } \frac{|\langle \tilde{B}^H_J(g_1,g_2),\vphi_J^{1,H} \rangle|}{|J|^{\frac{1}{2}}} \lesssim C_2 2^{l_2} \|\tilde{B}^H(g_1,g_2)\|_1.
\end{equation}
By applying the estimates (\ref{ave_1}) and (\ref{ave_2}) to (\ref{form_00}), one derives
\begin{align} \label{form00_set}
 &C_1 C_2 C_3^2 \sum_{\substack{n > 0 \\ m > 0 \\ l > 0 \\ k_1 < 0 \\ k_2 \leq K}} 2^{-l}\|B^H(f_1,f_2)\|_1 \|\tilde{B}^H(g_1,g_2) \|_1\cdot 2^{k_1} \| h \|_{L^s} 2^{k_2}  \cdot \sum_{\substack{n_2 \in \mathbb{Z} \\ m_2 \in \mathbb{Z} \\ l_2 \in \mathbb{Z}}} \sum_{\substack{T \in \mathbb{T}_{-l-l_2}\\S \in \mathbb{S}_{l_2}}} \bigg|\bigcup_{\substack{I \times J \in T \times S \\ I \times J \in \mathcal{I}_{-n - n_2, -m - m_2}  \times \mathcal{J}_{n_2,m_2} \\I \times J \in \mathcal{R}_{k_1,k_2}}} I \times J \bigg|. 
\end{align}

\vskip 0.15in
\subsection{Estimate for nested sum of dyadic rectangles} \label{section_thm_haar_nestsum}
One can estimate the nested sum (\ref{ns}) in two approaches - one with the application of the sparsity condition and the other with a Fubini-type argument which will be introduced in Section \ref{section_thm_haar_ns_fubini}.
\begin{equation}\label{ns}
\sum_{\substack{n_2 \in \mathbb{Z} \\ m_2 \in \mathbb{Z} \\ l_2 \in \mathbb{Z}}}\sum_{\substack{T \in \mathbb{T}_{-l-l_2}\\S \in \mathbb{S}_{l_2}}} \bigg|\bigcup_{\substack{I \times J \in T\times S  \\ I \times J \in \mathcal{I}_{-n-n_2,-m-m_2} \times \mathcal{J}_{n_2,m_2} \\ I \times J \in \mathcal{R}_{k_1,k_2}}} I \times J \bigg|.
\end{equation}
 Both arguments aim to combine different stopping-time decompositions and to extract useful information from them. Generically, the sparsity condition argument employs the geometric property, namely Proposition \ref{sp_2d}, of the \textit{tensor-type stopping-time decomposition I} and applies the analytical implication from the \textit{general two-dimensional level sets stopping-time decomposition}. Meanwhile, the Fubini-type argument focuses on the hybrid of the \textit{tensor-type stopping time decomposition I - level sets} and the \textit{tensor-type stopping-time decomposition II - maximal intervals}. As implied by the name, the Fubini-type argument attempts to estimate the measure of a two dimensional set by the measures of its projected one-dimensional sets. The approaches to estimate projected one-dimensional sets are different depending on which tensor-type stopping decomposition is in consideration.
\subsubsection{Sparsity condition.}
The first approach relies on the sparsity condition which mimics the argument in the Section \ref{section_thm_haar_fixed}. In particular, fix $n, m, l , k_1$ and $k_2$, one estimates (\ref{ns}) as follows.
\begin{align*}
& \sum_{l_2}\sum_{\substack{n_2 \in \mathbb{Z} \\ m_2 \in \mathbb{Z}}}\sum_{\substack{T \in \mathbb{T}_{-l-l_2}\\S \in \mathbb{S}_{l_2}}} \bigg|\bigcup_{\substack{I \times J \in T \times S  \\ I \times J \in \mathcal{I}_{-n-n_2,-m-m_2} \times \mathcal{J}_{n_2,m_2} \\  I \times J \in \mathcal{R}_{k_1,k_2}}} I \times J  \bigg| \nonumber \\
 \leq & \underbrace{ \sup_{l_2}\Bigg(\sum_{\substack{n_2 \in \mathbb{Z} \\ m_2 \in \mathbb{Z}}}\sum_{\substack{T \in \mathbb{T}_{-l-l_2}\\S \in \mathbb{S}_{l_2}}} \bigg|\bigcup_{\substack{I \times J \in T \times S  \\ I \times J \in \mathcal{I}_{-n-n_2,-m-m_2} \times \mathcal{J}_{n_2,m_2} \\  I \times J\in \mathcal{R}_{k_1,k_2}}} I \times J \bigg|\Bigg)^{\frac{1}{2}}}_{SC-I} \nonumber \\
 & \cdot \underbrace{\sum_{l_2}\Bigg(\sum_{\substack{n_2 \in \mathbb{Z} \\ m_2 \in \mathbb{Z}}}\sum_{\substack{T \in \mathbb{T}_{-l-l_2}\\S \in \mathbb{S}_{l_2}}}\bigg|\bigcup_{\substack{I \times J \in T \times S \\ I \times J \in \mathcal{I}_{-n-n_2,-m-m_2} \times \mathcal{J}_{n_2,m_2}\\ I \times J \in \mathcal{R}_{k_1,k_2}}} I \times J\bigg| \Bigg)^{\frac{1}{2}}}_{SC-II}.
\end{align*}

\noindent
\textbf{Estimate of $SC-I$.}
Fix $l, n, m, k_1, k_2$ and $l_2$. Then by the \textit{one-dimensional stopping-time decomposition - maximal intervals}, for any $I \in T$ and $I' \in T'$ such that $T, T' \in \mathbb{T}^{-l-l_2}$ and $T \neq T'$, one has $I \cap I' = \emptyset$. 
Hence for any fixed $n_2$ and $m_2$, one can rewrite
\begin{align} \label{SC-I}
\sum_{\substack{T \in \mathbb{T}_{-l-l_2}\\S \in \mathbb{S}_{l_2}}} \bigg|\bigcup_{\substack{I \times J \in T \times S \\ I \times J \in \mathcal{I}_{-n-n_2,-m-m_2} \times \mathcal{J}_{n_2,m_2}  \\  I \times J \in \mathcal{R}_{k_1,k_2}}} I \times J  \bigg|= & \bigg|\bigcup_{\substack{T \in \mathbb{T}_{-l-l_2}\\S \in \mathbb{S}_{l_2}}} \bigcup_{\substack{I \times J \in T \times S \\ I \times J \in \mathcal{I}_{-n-n_2,-m-m_2} \times \mathcal{J}_{n_2,m_2} \\  I \times J \in \mathcal{R} _{k_1,k_2} }} I \times J  \bigg|,  
\end{align}
where the right hand side of (\ref{SC-I}) can be trivially bounded by
$$
\bigg|\bigcup_{\substack{I \times J \in \mathcal{I}_{-n-n_2,-m-m_2} \times \mathcal{J}_{n_2,m_2} \\  I \times J \in \mathcal{R}_{k_1,k_2}}} I \times J  \bigg|.
$$
One can then recall the sparsity condition highlighted as Proposition \ref{sp_2d} and reduce the nested sum of measures of unions of rectangles to the measure of the corresponding union of rectangles. More precisely,
\begin{equation}\label{compare_nested_union}
\sum_{\substack{n_2 \in \mathbb{Z} \\ m_2 \in \mathbb{Z}}}\bigg| \bigcup_{\substack{I \times J \in \mathcal{I}_{-n-n_2,-m-m_2} \times \mathcal{J}_{n_2,m_2} \\ I \times J \in \mathcal{R}_{k_1,k_2} }} I \times J \bigg| \sim \bigg|\bigcup_{\substack{n_2 \in \mathbb{Z} \\ m_2 \in \mathbb{Z}}}\bigcup_{\substack{I \times J \in \mathcal{I}_{-n-n_2,-m-m_2} \times \mathcal{J}_{n_2,m_2} \\  I \times J \in \mathcal{R}_{k_1,k_2}}} I \times J\bigg|,
\end{equation}
where the right hand side of (\ref{compare_nested_union}) can be estimated by
$$
\bigg|\bigcup_{\substack{n_2 \in \mathbb{Z} \\ m_2 \in \mathbb{Z}}}\bigcup_{\substack{ I \times J \in \mathcal{I}_{-n-n_2,-m-m_2} \times \mathcal{J}_{n_2,m_2} \\  I \times J \in \mathcal{R}_{k_1,k_2}}} I \times J\bigg| \leq \bigg| \bigcup_{I \times J \in \mathcal{R}_{k_1,k_2}}I \times J\bigg| \lesssim \min(2^{-k_1s},2^{-k_2\gamma}),
$$
for any $\gamma >1$. The last inequality follows directly from (\ref{rec_area_hybrid}). Since the above estimates hold for any $l_2 \in \mathbb{Z}$, one can conclude that
\begin{equation} \label{SC-I-final}
SC-I \lesssim \min(2^{-\frac{k_1s}{2}},2^{-\frac{k_2\gamma}{2}}).
\end{equation}
\vskip .15in
\noindent
\textbf{Estimate of $SC-II$.}
One invokes (\ref{SC-I}) and Proposition \ref{sp_2d} to obtain
\begin{equation} \label{SC-II}
\sum_{\substack{n_2 \in \mathbb{Z} \\ m_2 \in \mathbb{Z}}}\sum_{\substack{T \in \mathbb{T}_{-l-l_2}\\S \in \mathbb{S}_{l_2}}}\bigg|\bigcup_{\substack{I \times J \in T \times S  \\ I \times J \in \mathcal{I}_{-n-n_2,-m-m_2} \times \mathcal{J}_{n_2,m_2} \\ I \times J \in \mathcal{R}_{k_1,k_2}}} I \times J\bigg| \sim \bigg|\bigcup_{\substack{n_2 \in \mathbb{Z} \\ m_2 \in \mathbb{Z}}}\bigcup_{\substack{T \in \mathbb{T}_{-l-l_2}\\S \in \mathbb{S}_{l_2}}} \bigcup_{\substack{I \times J \in T \times S  \\ I \times J \in \mathcal{I}_{-n-n_2,-m-m_2} \times \mathcal{J}_{n_2,m_2} \\  I \times J \in \mathcal{R}_{k_1,k_2}}} I \times J\bigg|.
\end{equation}
One can enlarge the collection of the rectangles by forgetting about the restriction that the rectangles lie in $\mathcal{R}_{k_1,k_2}$ and estimate the right hand side of (\ref{SC-II}) by 
\begin{equation} \label{SC-II2}
\bigg|\bigcup_{\substack{n_2 \in \mathbb{Z} \\ m_2 \in \mathbb{Z}}}\bigcup_{\substack{T \in \mathbb{T}_{-l-l_2}\\S \in \mathbb{S}_{l_2}}} \bigcup_{\substack{I \times J \in T \times S  \\ I \times J \in \mathcal{I}_{-n-n_2,-m-m_2} \times \mathcal{J}_{n_2,m_2} }} I \times J\bigg|,
\end{equation}
which is indeed the measure of the union of the rectangles collected in the \textit{tensor-type stopping-time decomposition II - maximal intervals} at a certain level. In other words,
$$
(\ref{SC-II2}) = \bigg| \bigcup_{T \times S \in \mathbb{T}_{-l-l_2} \times \mathbb{S}_{l_2}} I_T \times J_S\bigg|.
$$
Then
\begin{equation} \label{fb_simple}
SC-II \leq  \sum_{l_2 \in \mathbb{Z}}\bigg| \bigcup_{T \times S \in \mathbb{T}_{-l-l_2} \times \mathbb{S}_{l_2}} I_{T} \times J_{S}\bigg|^{\frac{1}{2}},
\end{equation}
whose estimate follows a Fubini-type argument that plays an important role in the proof. We will focus on the development of this Fubini-type argument in a separate section and discuss its applications in other useful estimates for the proof. 

\subsubsection{Fubini argument.} \label{section_thm_haar_ns_fubini}
Alternatively, one can apply a Fubini-type argument to estimate (\ref{ns}) in the sense that the measure of some two-dimensional set is estimated by the product of the measures of its projected one-dimensional sets. To introduce this argument, we will first look into (\ref{fb_simple}) which requires a simpler version of the argument.
\vskip .05in
\noindent
\textbf{Estimate of (\ref{fb_simple}) - Introduction of Fubini argument.}
As illustrated before, one first rewrites the measure of two dimensional-sets in terms of the measures of two one-dimensional sets as follows.
\begin{align} \label{fb_simple_2d}
\text{Right hand side of } (\ref{fb_simple}) \leq & \bigg( \sum_{l_2 \in \mathbb{Z}}\big|\bigcup_{T\in \mathbb{T}_{-l-l_2}}  I_{T} \big|\bigg)^{\frac{1}{2}}\bigg( \sum_{l_2 \in \mathbb{Z}}\big|\bigcup_{S\in \mathbb{S}_{l_2}}  J_{S} \big|\bigg)^{\frac{1}{2}},
\end{align}
where the last step follows from the Cauchy-Schwarz inequality.
To estimate the measures of the one-dimensional sets appearing above, one can convert them to the form of ``global'' energies and apply the energy estimates specified in Proposition \ref{B_en_global}. In particular, (\ref{fb_simple_2d}) can be rewritten up to a constant as
\begin{align} \label{SC-II-en}
& \bigg( \sum_{l_2 \in \mathbb{Z}}(C_12^{-l-l_2} \|B^H(f_1,f_2)\|_1)^{1+\delta}\big|\bigcup_{T\in \mathbb{T}_{-l-l_2}}  I_{T} \big|\bigg)^{\frac{1}{2}}\bigg( \sum_{l_2 \in \mathbb{Z}}(C_22^{l_2} \|\tilde{B}^H(g_1,g_2)\|_1)^{1+\delta}\big|\bigcup_{S\in \mathbb{S}_{l_2}}  J_{S} \big|\bigg)^{\frac{1}{2}} \cdot \nonumber\\
& \ \ 2^{l\frac{(1+\delta)}{2}}\|B^H(f_1,f_2)\|_1^{-\frac{1+\delta}{2}}\|\tilde{B}^H(g_1,g_2)\|_{1}^{-\frac{1+\delta}{2}}, 
\end{align}
for any $\delta >0$. One notices that for fixed $l$ and $l_2$, 
$$
\{I_T: T \in \mathbb{T}_{-l-l_2} \}
$$
is a disjoint collection of dyadic intervals according to the \textit{one-dimensional stopping-time decomposition - maximal interval}. Thus the first sum in (\ref{SC-II-en}) can be rewritten as 
\begin{equation} \label{en_global}
\sum_{l_2 \in \mathbb{Z}}(C_12^{-l-l_2} \|B^H(f_1,f_2)\|_1)^{1+\delta}\big|\bigcup_{T\in \mathbb{T}_{-l-l_2}}  I_{T} \big| = \sum_{l_2 \in \mathbb{Z}}(C_12^{-l-l_2} \|B^H(f_1,f_2)\|_1)^{1+\delta}\sum_{T\in \mathbb{T}_{-l-l_2}}|I_{T}|, 
\end{equation}
which is indeed a ``global'' $L^{1+\delta}$ energy, namely 
$$
\left(\text{energy}^{1+\delta}_{\mathcal{I}}((\langle B^H(f_1,f_2), \vphi_I^{1,H} \rangle)_{I \in \mathcal{I}})\right)^{1+\delta}
$$
 so that one can apply the energy estimates described in Proposition \ref{B_en_global} to obtain the bound
$$
\left(\text{energy}^{1+\delta}_{\mathcal{I}}((\langle B^H(f_1,f_2), \vphi_I^{1,H} \rangle)_{I \in \mathcal{I}})\right)^{1+\delta} \lesssim |F_1|^{\mu_1(1+\delta)}|F_2|^{\mu_2(1+\delta)},
$$
where $\delta, \mu_1, \mu_2 >0$ with $\mu_1 + \mu_2 = \frac{1}{1+\delta}$. Similarly, one can apply the same reasoning to the second sum in (\ref{SC-II-en}) to derive
\begin{equation} \label{SC-II-y}
\sum_{l_2 \in \mathbb{Z}}(C_22^{l_2} \|\tilde{B}^H(g_1,g_2)\|_1)^{1+\delta}\big|\bigcup_{S\in \mathbb{S}_{l_2}}  J_{S} \big| \lesssim |G_1|^{\nu_1(1+\delta)}|G_2|^{\nu_2(1+\delta)},
\end{equation}
for any $\nu_1, \nu_2 > 0$ with $\nu_1 + \nu_2  = \frac{1}{1+\delta}$. 
By applying (\ref{en_global}) and (\ref{SC-II-y}) to (\ref{SC-II-en}), one has that
\begin{equation}\label{SCiII-final}
(\ref{SC-II-en}) \lesssim  2^{l \frac{(1+\delta)}{2}} |F_1|^{\frac{\mu_1(1+\delta)}{2}}|F_2|^{\frac{\mu_2(1+\delta)}{2}}|G_1|^{\frac{\nu_1(1+\delta)}{2}}|G_2|^{\frac{\nu_2(1+\delta)}{2}}\|B^H(f_1,f_2)\|_1^{-\frac{1+\delta}{2}}\|\tilde{B}^H(g_1,g_2)\|_1^{-\frac{1+\delta}{2}},
\end{equation}
for any $\delta,\mu_1,\mu_2,\nu_1,\nu_2 >0$ with $\mu_1+ \mu_2 = \nu_1+ \nu_2 = \frac{1}{1+\delta}$.
\begin{remark}
The reason for leaving the expressions $\|B^H(f_1,f_2)\|_1^{-\frac{1+\delta}{2}}$ or $\|\tilde{B}^H(g_1,g_2)\|_1^{-\frac{1+\delta}{2}}$ will become clear later. In short, $\|B^H(f_1,f_2)\|_1$ and $\|\tilde{B}^H(g_1,g_2)\|_1$ will appear in estimates for other parts. We will keep them as they are for the exponent-counting and then use the estimates for $\|B^H(f_1,f_2)\|_1$ and $\|\tilde{B}^H(g_1,g_2)\|_1$ at last.
\end{remark}
By combining the estimates for $SC-I$ (\ref{SC-I-final}) and $SC-II$ (\ref{SCiII-final}), one can conclude that (\ref{ns}) is majorized by
\begin{equation} \label{ns_sp}
 2^{-\frac{k_2\gamma}{2}}2^{l \frac{(1+\delta)}{2}} |F_1|^{\frac{\mu_1(1+\delta)}{2}}|F_2|^{\frac{\mu_2(1+\delta)}{2}}|G_1|^{\frac{\nu_1(1+\delta)}{2}}|G_2|^{\frac{\nu_2(1+\delta)}{2}}\|B^H(f_1,f_2)\|_1^{-\frac{1+\delta}{2}}\|\tilde{B}^H(g_1,g_2)\|_1^{-\frac{1+\delta}{2}},
\end{equation}
for $\gamma >1 $, $\delta,\mu_1,\mu_2,\nu_1,\nu_2 >0$ with $\mu_1+ \mu_2 = \nu_1+ \nu_2 = \frac{1}{1+\delta}$.

\begin{remark}
The framework for estimating the measure of a two-dimensional set by its corresponding one-dimensional sets, as illustrated by (\ref{fb_simple_2d}), is the so-called ``Fubini-type'' argument which we will heavily employ from now on.
\end{remark}

\vskip .15in
\noindent
\textbf{Estimate of (\ref{ns}) - Application of Fubini argument.}
It is not difficult to observe that (\ref{ns}) can also be estimated by 
\begin{equation} \label{set_00}
 \sum_{\substack{n_2 \in \mathbb{Z} \\ m_2 \in \mathbb{Z} \\ l_2 \in \mathbb{Z}}}\left(\sum_{\substack{T \in \mathbb{T}_{-l-l_2}}}\bigg|\bigcup_{\substack{I \in T \\ I  \in \mathcal{I}_{-n-n_2,-m-m_2}}} I \bigg|\right)\left(\sum_{\substack{S \in \mathbb{S}_{l_2}}}\bigg|\bigcup_{\substack{J \in  S  \\ J \in  \mathcal{J}_{n_2,m_2} }} J \bigg|\right).
\end{equation}
One now rewrites the above expression and separates it into two parts. Both parts can be estimated by the Fubini-type argument whereas the methodologies to estimate projected one-dimensional sets are different. More precisely,
\begin{align} \label{ns_AB}
(\ref{set_00}) = &\underbrace{\sup_{\substack{n_2 \in \mathbb{Z} \\ m_2 \in \mathbb{Z}}} \sum_{l_2 \in \mathbb{Z}}\bigg(\sum_{\substack{T \in \mathbb{T}_{-l-l_2}}}\bigg|\bigcup_{\substack{I \in T \\ I  \in \mathcal{I}_{-n-n_2,-m-m_2}}} I \bigg|\bigg)^{\frac{1}{2}}\bigg(\sum_{\substack{S \in \mathbb{S}_{l_2}}} \bigg|\bigcup_{\substack{J \in  S  \\ J \in  \mathcal{J}_{n_2,m_2} }} J \bigg|\bigg)^{\frac{1}{2}}}_{\mathcal{A}} \times \nonumber \\
& \underbrace{\sum_{\substack{n_2 \in \mathbb{Z} \\ m_2 \in \mathbb{Z}}}\sup_{l_2 \in \mathbb{Z}}\bigg(\sum_{\substack{T \in \mathbb{T}_{-l-l_2}}}\bigg|\bigcup_{\substack{I \in T \\ I  \in \mathcal{I}_{-n-n_2,-m-m_2}}} I \bigg|\bigg)^{\frac{1}{2}}\bigg(\sum_{\substack{S \in \mathbb{S}_{l_2}}}\bigg|\bigcup_{\substack{J \in  S  \\ J \in  \mathcal{J}_{n_2,m_2} }} J \bigg|\bigg)^{\frac{1}{2}}}_{\mathcal{B}}.
\end{align}
To estimate $\mathcal{A}$, one first notices that for for any fixed $n, m, n_2, m_2, l, l_2$ and a fixed tree $T \in \mathbb{T}^{-l-l_2}$, a dyadic interval $I \in T \cap \mathcal{I}_{-n-n_2.-m-m_2}$ means that 
\begin{enumerate}[(i)]
\item
$I \subseteq I_T$ where $I_T$ is the tree-top interval as implied by the \textit{one-dimensional stopping-time decomposition - maximal interval};
\item
$I \cap \mathcal{U}_{-n-n_2+1,-m-m_2+1} \neq \emptyset$,
where
$$
\mathcal{U}_{-n-n_2+1,-m-m_2+1} := \{x:Mf_1(x) \leq C_1 2^{-n-n_2+1}|F_1| \} \cap \{x:Mf_2(x) \leq C_1 2^{-m-m_2+1}|F_2| \}. $$
\end{enumerate}
By (i) and (ii), one can deduce that
$$
I_T \cap \mathcal{U}_{-n-n_2+1,-m-m_2+1} \neq \emptyset.
$$
As a consequence, 
\begin{equation} \label{a_x}
\sum_{\substack{T \in \mathbb{T}_{-l-l_2}}}\bigg|\bigcup_{\substack{I \in T \\ I  \in \mathcal{I}_{-n-n_2,-m-m_2}}} I \bigg| \leq \sum_{\substack{T \in \mathbb{T}_{-l-l_2} \\ I_T \cap \mathcal{U}_{-n-n_2+1,-m-m_2+1} \neq \emptyset}}|I_T|.
\end{equation}
A similar reasoning applies to the term involving intervals in the $y$-direction and generates
\begin{equation} \label{a_y}
\sum_{\substack{S \in \mathbb{S}_{l_2}}}\bigg|\bigcup_{\substack{J \in S \\ I  \in \mathcal{J}_{n_2,m_2}}} J \bigg| \leq \sum_{\substack{S \in \mathbb{S}_{l_2} \\J_S \cap \tilde{\mathcal{U}}_{n_2+1,m_2+1} \neq \emptyset}}|J_S|,
\end{equation}
where
$$
\tilde{\mathcal{U}}_{n_2+1,m_2+1} := \{y:Mg_1(y) \leq C_2 2^{n_2+1}|G_1| \} \cap \{y:Mg_2(y) \leq C_2 2^{m_2+1}|G_2| \}.
$$
By applying the Cauchy-Schwarz inequality together with (\ref{a_x}) and (\ref{a_y}), one obtains
\begin{align} \label{a_pre_en}
\mathcal{A} \leq & \sup_{\substack{n_2 \in \mathbb{Z} \\ m_2 \in \mathbb{Z}}} \bigg(\sum_{l_2 \in \mathbb{Z}}\sum_{\substack{T \in \mathbb{T}_{-l-l_2} \\ I_T \cap \mathcal{U}_{-n-n_2+1,-m-m_2+1} \neq \emptyset}}|I_T|\bigg)^{\frac{1}{2}}\cdot  \bigg(\sum_{l_2 \in \mathbb{Z}}\sum_{\substack{S \in \mathbb{S}_{l_2} \\J_S \cap \tilde{\mathcal{U}}_{n_2+1,m_2+1}\neq \emptyset}}|J_S|\bigg) ^{\frac{1}{2}}.
\end{align}
One then ``completes'' the expression (\ref{a_pre_en}) to produce localized energy-like terms as follows.
\begin{align} \label{A_energy}
 & \sup_{\substack{n_2 \in \mathbb{Z} \\ m_2 \in \mathbb{Z}}} \underbrace{\bigg[\sum_{l_2 \in \mathbb{Z}}(C_1 2^{-l-l_2}\|B^H(f_1,f_2)\|_1)^2\sum_{\substack{T \in \mathbb{T}_{-l-l_2}\\ I_T \cap \mathcal{U}_{-n-n_2+1,-m-m_2+1} \neq \emptyset}}|I_{T}|\bigg]^{\frac{1}{2}}}_{\mathcal{A}^1}\cdot \underbrace{\bigg[\sum_{l_2 \in \mathbb{Z}}(C_2 2^{l_2}\|\tilde{B}^H(g_1,g_2)\|_1)^{2} \sum_{\substack{S \in \mathbb{S}_{l_2}\\ J_S \cap \tilde{\mathcal{U}}_{n_2+1,m_2+1}\neq \emptyset}} |J_S |\bigg]^{\frac{1}{2}}}_{\mathcal{A}^2}  \nonumber \\
 &\cdot 2^{l}\|B^H(f_1,f_2)\|_1^{-1}\|\tilde{B}^H(g_1,g_2)\|_1^{-1}.\nonumber \\
 \end{align}
It is not difficult to recognize that $\mathcal{A}^1$ and $\mathcal{A}^2$ are $L^2$ energies. Moreover, they follow stronger local energy estimates described in Proposition \ref{B_en}. 
$\mathcal{A}^1$ is indeed  an $L^2$ energy localized to $\mathcal{U}_{-n-n_2+1,-m-m_2+1}$. Then Proposition \ref{B_en} gives the estimate
\begin{align} \label{a_1}
\mathcal{A}^1 \leq & \text{energy}^2\left((\langle B_I^H(f_1,f_2), \vphi_I^{1,H} \rangle)_{I \cap \mathcal{U}_{-n-n_2+1,-m-m_2+1} \neq \emptyset }\right) \lesssim & (C_1 2^{-n-n_2})^{\frac{1}{p_1}-\theta_1} (C_1 2^{-m-m_2})^{\frac{1}{q_1} - \theta_2}|F_1|^{\frac{1}{p_1}}|F_2|^{\frac{1}{q_1}},
\end{align}
for any $0 \leq \theta_1, \theta_2 < 1$ satisfying $\theta_1 + \theta_2 = \frac{1}{2}$.
By the same reasoning,
\begin{equation} \label{a_2}
\mathcal{A}^2 \lesssim C_2^{2}2^{n_2(\frac{1}{p_2} - \zeta_1)}2^{m_2(\frac{1}{q_2}- \zeta_2)}|G_1|^{\frac{1}{p_2}}|G_2|^{\frac{1}{q_2}},
\end{equation}
where $0 \leq \zeta_1, \zeta_2 < 1$ and $\zeta_1 + \zeta_2 = \frac{1}{2}$.
One can now apply the estimates for $\mathcal{A}^1$ (\ref{a_1}) and $\mathcal{A}^2$ (\ref{a_2}) to (\ref{A_energy}) and derive 
\begin{align} \label{A_energy_almostfinal}
(\ref{A_energy}) \lesssim & C_1^{2}C_2^{2}\sup_{\substack{n_2 \in \mathbb{Z} \\ m_2 \in \mathbb{Z}}}2^{(-n-n_2)(\frac{1}{p_1}- \theta_1)}2^{(-m-m_2)(\frac{1}{q_1} - \theta_2)}2^{n_2(\frac{1}{p_2} - \zeta_1)}2^{m_2(\frac{1}{q_2}- \zeta_2)} \cdot \nonumber \\
& |F_1|^{\frac{1}{p_1}}|F_2|^{\frac{1}{q_1}}|G_1|^{\frac{1}{p_2}}|G_2|^{\frac{1}{q_2}}\cdot 2^{l}\|B^H(f_1,f_2)\|_1^{-1}\|\tilde{B}^H(g_1,g_2)\|_1^{-1}.
\end{align}
One observes that the following two conditions are equivalent:
\begin{equation} \label{exp_1}
\frac{1}{p_1} - \theta_1 = \frac{1}{p_2} - \zeta_1 \iff
 \frac{1}{q_1} - \theta_2 = \frac{1}{q_2} - \zeta_2.
\end{equation}
The equivalence is imposed by the fact that 
\begin{align} \label{exp_2}
&\frac{1}{p_1} + \frac{1}{q_1} = \frac{1}{p_2} + \frac{1}{q_2}, \nonumber \\
&\theta_1 + \theta_2 = \zeta_1 + \zeta_2 = \frac{1}{2} .
\end{align}
With the choice $ 0 \leq \theta_1, \zeta_1 < 1$ with $\theta_1- \zeta_1 = \frac{1}{p_1} - \frac{1}{p_2}$, one can simplify (\ref{A_energy_almostfinal}) and conclude
\begin{equation} \label{a_estimate}
\mathcal{A} \lesssim C_1^2 C_2^{2}2^{-n(\frac{1}{p_1} - \theta_1)}2^{-m(\frac{1}{q_1} - \theta_2)} |F_1|^{\frac{1}{p_1}}|F_2|^{\frac{1}{q_1}}|G_1|^{\frac{1}{p_2}}|G_2|^{\frac{1}{q_2}}\cdot 2^{l}\|B^H(f_1,f_2)\|_1^{-1}\|\tilde{B}^H(g_1,g_2)\|_1^{-1}.
\end{equation}

\begin{remark}
(\ref{exp_1}) and (\ref{exp_2}) together imposes a condition that 
\begin{equation} \label{pair_exp}
\left|\frac{1}{p_1} - \frac{1}{p_2}\right| = \left|\frac{1}{q_1} - \frac{1}{q_2}\right| < \frac{1}{2}.
\end{equation}
Without loss of generality, one can assume that $\frac{1}{p_1} \geq \frac{1}{p_2}$ and $\frac{1}{q_1} \leq \frac{1}{q_2}$. Then either (\ref{pair_exp}) holds or 
$$
\frac{1}{p_1} - \frac{1}{p_2} = \frac{1}{q_2} - \frac{1}{q_1} > \frac{1}{2},
$$
which implies 
$$
\left|\frac{1}{p_1} - \frac{1}{q_2}\right| = \left|\frac{1}{p_2} - \frac{1}{q_1}\right| < \frac{1}{2}.
$$
Then one can switch the role of $g_1$ and $g_2$ to ``pair`` the functions as $f_1$ with $g_2$ and $f_2$ with $g_1$. A parallel argument can be applied to obtain the desired estimates.
\end{remark}

One can apply another Fubini-type argument to estimate $\mathcal{B}$ with $l, n$ and $m$ fixed. Such argument again relies heavily on the localization. First of all, for any fixed $l_2 \in \mathbb{Z}$, 
$$
\{I: I \in T, T \in \mathbb{T}_{-l-l_2} \}
$$
is a disjoint collection of dyadic intervals. Thus
$$
\sum_{\substack{T \in \mathbb{T}_{-l-l_2}}}\bigg|\bigcup_{\substack{I \in T \\ I  \in \mathcal{I}_{-n-n_2,-m-m_2}}} I \bigg| \leq \bigg|\bigcup_{\substack{ I  \in \mathcal{I}_{-n-n_2,-m-m_2}}} I \bigg|. 
$$
One then recalls the pointwise estimate stated in Claim \ref{ptwise} to deduce 
$$
\bigcup_{\substack{ I  \in \mathcal{I}_{-n-n_2,-m-m_2}}} I \subseteq \{x: Mf_1(x) > C_1 2^{-n-n_2-10}|F_1|\} \cap \{x: Mf_2(x) > C_1 2^{-m-m_2-10}|F_2|\},
$$
and for arbitrary but fixed $l_2 \in \mathbb{Z}$,
\begin{equation} \label{b_x}
\sum_{\substack{T \in \mathbb{T}_{-l-l_2}}}\bigg|\bigcup_{\substack{I \in T \\ I  \in \mathcal{I}_{-n-n_2,-m-m_2}}} I \bigg|\leq \big|\{x: Mf_1(x) > C_1 2^{-n-n_2-10}|F_1|\} \cap \{x: Mf_2(x) > C_1 2^{-m-m_2-10}|F_2|\} \big|.
\end{equation}
A similar reasoning applies to the intervals in the $y$-direction and yields that for any fixed $l_2 \in \mathbb{Z}$, 
\begin{equation} \label{b_y}
\sum_{\substack{S \in \mathbb{S}_{l_2}}}\bigg|\bigcup_{\substack{J \in  S  \\ J \in  \mathcal{J}_{n_2,m_2} }} J \bigg| \leq
\big|\{y: Mg_1(y) > C_2 2^{n_2-10}|G_1|\} \cap \{y: Mg_2(y) > C_2 2^{m_2-10}|G_2|\} \big|.
\end{equation}
To apply the above estimates, one notices that the finite collection of dyadic rectangles guarantees the existence of some $\tilde{l}_2 \in \mathbb{Z}$ possibly depending $n, m, l, n_2, m_2$ such that 
\begin{align*}
\mathcal{B} =& \sum_{\substack{n_2 \in \mathbb{Z} \\ m_2 \in \mathbb{Z}}}\bigg(\sum_{T \in \mathbb{T}_{-l-\tilde{l}_2}}\bigg|\bigcup_{\substack{ I \in T \\ I  \in \mathcal{I}_{-n-n_2,-m-m_2}}} I \bigg| \bigg)^{\frac{1}{2}}\bigg(\sum_{S \in \mathbb{S}_{\tilde{l}_2}}\bigg|\bigcup_{\substack{ J \in S \\ J  \in \mathcal{I}_{n_2,m_2}}} J \bigg| \bigg)^{\frac{1}{2}}. \nonumber
\end{align*}
One can further``complete'' $\mathcal{B}$ in the following manner for an appropriate use of the Cauchy-Schwarz inequality.
\begin{align}
\mathcal{B}  = & \sum_{\substack{n_2 \in \mathbb{Z} \\ m_2 \in \mathbb{Z}}}\bigg((C_12^{-n-n_2}|F_1|)^{\mu(1+\epsilon)}(C_12^{-m-m_2}|F_2|)^{(1-\mu)(1+\epsilon)}\sum_{T \in \mathbb{T}_{-l-\tilde{l}_2}}\bigg|\bigcup_{\substack{ I \in T \\ I  \in \mathcal{I}_{-n-n_2,-m-m_2}}} I \bigg| \bigg)^{\frac{1}{2}} \cdot \nonumber \\
& \quad \quad \ \ \bigg((C_22^{n_2}|G_1|)^{\mu(1+\epsilon)}(C_2 2^{m_2}|G_2|)^{(1-\mu)(1+\epsilon)}\sum_{S \in \mathbb{S}_{\tilde{l}_2}}\bigg|\bigcup_{\substack{ J \in S \\ J  \in \mathcal{I}_{n_2,m_2}}} J \bigg|\bigg)^{\frac{1}{2}}\nonumber \\
&\quad \quad \ \ \cdot 2^{n\cdot\frac{1}{2}\mu(1+\epsilon)}2^{m\cdot \frac{1}{2}(1-\mu)(1+\epsilon)}|F_1|^{-\frac{1}{2}\mu(1+\epsilon)}|F_2|^{-\frac{1}{2}(1-\mu)(1+\epsilon)}|G_1|^{-\frac{1}{2}\mu(1+\epsilon)}|G_2|^{-\frac{1}{2}(1-\mu)(1+\epsilon)}  \nonumber  \\
 \leq &  \underbrace{\bigg[\sum_{\substack{n_2 \in \mathbb{Z} \\ m_2 \in \mathbb{Z}}}(C_12^{-n-n_2}|F_1|)^{\mu(1+\epsilon)}(C_12^{-m-m_2}|F_2|)^{(1-\mu)(1+\epsilon)}\sum_{T \in \mathbb{T}_{-l-\tilde{l}_2}}\bigg|\bigcup_{\substack{ I \in T \\ I  \in \mathcal{I}_{-n-n_2,-m-m_2}}} I \bigg|\bigg]^{\frac{1}{2}}}_{\mathcal{B}^1} \cdot \nonumber \\ 
&\underbrace{\bigg[\sum_{\substack{n_2 \in \mathbb{Z} \\ m_2 \in \mathbb{Z}}}(C_22^{n_2}|G_1|)^{\mu(1+\epsilon)}(C_2 2^{m_2}|G_2|)^{(1-\mu)(1+\epsilon)}\sum_{S \in \mathbb{S}_{\tilde{l}_2}}\bigg|\bigcup_{\substack{ J \in S \\ J  \in \mathcal{I}_{n_2,m_2}}} J \bigg|\bigg]^{\frac{1}{2}}}_{\mathcal{B}^2}\nonumber \\
&\ \ \cdot 2^{n\cdot\frac{1}{2}\mu(1+\epsilon)}2^{m\cdot \frac{1}{2}(1-\mu)(1+\epsilon)}|F_1|^{-\frac{1}{2}\mu(1+\epsilon)}|F_2|^{-\frac{1}{2}(1-\mu)(1+\epsilon)}|G_1|^{-\frac{1}{2}\mu(1+\epsilon)}|G_2|^{-\frac{1}{2}(1-\mu)(1+\epsilon)}, \label{B_completed}
\end{align} 
for any $\epsilon > 0$, $0 < \mu <1$, where the second inequality follows from the Cauchy-Schwarz inequality. 

To estimate $\mathcal{B}^1$, one recalls (\ref{b_x}) - which holds for any fixed $l_2 \in \mathbb{Z}$ - to obtain
\begin{align} \label{B_1}
 \mathcal{B}^1 \lesssim & \bigg[\sum_{\substack{n_2 \in \mathbb{Z} \\ m_2 \in \mathbb{Z}}}(C_1 2^{-n-n_2}|F_1|)^{\mu(1+\epsilon)}(C_1 2^{-m-m_2}|F_2|)^{(1-\mu)(1+\epsilon)}\big| \{ Mf_1(x) > C_1 2^{-n-n_2}|F_1|\} \cap \{ Mf_2(x) > C_1 2^{-m-m_2}|F_2|\} \big|\bigg]^{\frac{1}{2}} \nonumber \\
\leq & \bigg[\int (Mf_1(x))^{\mu(1+\epsilon)}(Mf_2(x))^{(1-\mu)(1+\epsilon)} dx\bigg]^{\frac{1}{2}} \nonumber \\
\leq & \Bigg[\bigg(\int (Mf_1(x))^{\mu(1+\epsilon) \frac{1}{\mu}} dx\bigg)^{\mu}\bigg(\int (Mf_2(x))^{(1-\mu)(1+\epsilon) \frac{1}{1-\mu}} dx\bigg)^{1-\mu}\Bigg]^{\frac{1}{2}},
\end{align}
where the second inequality holds by the definition of Lebesgue integration and the last step follows from H\"older's inequality. One can now use the mapping property for the Hardy-Littlewood maximal operator $M: L^{p} \rightarrow L^{p}$ for any $p >1$ and deduces that
\begin{align} 
&\bigg(\int (Mf_1(x))^{1+\epsilon} dx\bigg)^{\mu} \lesssim \|f_1\|_{1+\epsilon}^{(1+\epsilon)\mu} \leq |F_1|^{\mu},  \label{piece1} \\
&\bigg(\int (Mf_2(x))^{1+\epsilon} dx\bigg)^{1-\mu} \lesssim \|f_2\|_{1+\epsilon}^{(1+\epsilon)(1-\mu)} \leq |F_2|^{1-\mu}. \label{piece2}
\end{align}
By plugging the estimate (\ref{piece1}) and (\ref{piece2}) into (\ref{B_1}),
\begin{equation} \label{B_1_final}
\mathcal{B}^1 \lesssim |F_1|^{\frac{\mu}{2}}|F_2|^{\frac{1-\mu}{2}}.
\end{equation}
By the same argument with $-n-n_2$ and $-m-m_2$ replaced by $n_2$ and $m_2$ correspondingly, one obtains
\begin{equation} \label{B_2_final}
\mathcal{B}^2 \lesssim  |G_1|^{\frac{\mu}{2}}|G_2|^{\frac{1-\mu}{2}}.
\end{equation}
Application of the estimates for $\mathcal{B}^1$ (\ref{B_1_final}) and $\mathcal{B}^2$ (\ref{B_2_final}) to (\ref{B_completed}) yields 
\begin{equation}\label{b_estimate}
\mathcal{B} \lesssim  |F_1|^{-\frac{\mu}{2}\epsilon}|F_2|^{-\frac{1-\mu}{2}\epsilon}|G_1|^{-\frac{\mu}{2}\epsilon}|G_2|^{-\frac{1-\mu}{2}\epsilon}2^{n\cdot\frac{1}{2}\mu(1+\epsilon)}2^{m\cdot \frac{1}{2}(1-\mu)(1+\epsilon)}. 
\end{equation}
By combining the results for both $\mathcal{A}$ (\ref{a_estimate}) and $\mathcal{B}$ (\ref{b_estimate}), one concludes with the following estimate for (\ref{ns_AB}).
\begin{align} \label{ns_fb}
& C_1^{2} C_2^{2} 2^{-n(\frac{1}{p_1} - \theta_1-\frac{1}{2}\mu(1+\epsilon))}2^{-m(\frac{1}{q_1}- \theta_2-\frac{1}{2}(1-\mu)(1+\epsilon))} \cdot \nonumber\\
&  |F_1|^{\frac{1}{p_1}-\frac{\mu}{2}\epsilon}|F_2|^{\frac{1}{q_1}-\frac{1-\mu}{2}\epsilon}|G_1|^{\frac{1}{p_2}-\frac{\mu}{2}\epsilon}|G_2|^{\frac{1}{q_2}-\frac{1-\mu}{2}\epsilon}\cdot 2^{l}\|B^H(f_1,f_2)\|_1^{-1}\|\tilde{B}^H(g_1,g_2)\|_1^{-1},
\end{align}
for any $0 \leq \theta_1, \theta_2 < 1$ with $\theta_1 + \theta_2 = \frac{1}{2}$, $0 <\mu<1$ and $\epsilon > 0$. 

One can now interpolate between the estimates obtained with two different approaches, namely (\ref{ns_sp}) and (\ref{ns_fb}), to derive the following bound for (\ref{ns}).
\begin{align} \label{ns_sum_result}
&C_1^{2}C_2^{2} 2^{-\frac{k_2\gamma\lambda}{2}}2^{-n(\frac{1}{p_1}-\theta_1-\frac{1}{2}\mu(1+\epsilon))(1-\lambda)}2^{-m(\frac{1}{q_1} - \theta_2-\frac{1}{2}(1-\mu)(1+\epsilon))(1-\lambda)} \cdot \nonumber \\
& (2^{l})^{\lambda\frac{(1+\delta)}{2}+(1-\lambda)}\|B^H(f_1,f_2)\|_1^{-\lambda\frac{(1+\delta)}{2}-(1-\lambda)}\|\tilde{B}^H(g_1,g_2)\|_1^{-\lambda\frac{(1+\delta)}{2}-(1-\lambda)} \cdot \nonumber \\
& |F_1|^{\lambda \frac{\mu_1(1+\delta)}{2} + (1-\lambda)(\frac{1}{p_1}-\frac{\mu}{2}\epsilon)}|F_2|^{\lambda \frac{\mu_2(1+\delta)}{2} + (1-\lambda)(\frac{1}{q_1}-\frac{1-\mu}{2}\epsilon)}|G_1|^{\lambda \frac{\nu_1(1+\delta)}{2}+(1-\lambda)(\frac{1}{p_2}-\frac{\mu}{2}\epsilon)}|G_2|^{\lambda\frac{\nu_2(1+\delta)}{2}+(1-\lambda)(\frac{1}{q_2}-\frac{1-\mu}{2}\epsilon)}, \nonumber\\
\end{align}
for some $0 \leq \lambda \leq 1$.
By applying (\ref{ns_sum_result}) to (\ref{form00_set}), one has
\begin{align}
& C_1^{3}C_2^{3} C_3^{3} \| h \|_{L^s}\sum_{\substack{n > 0 \\ m > 0 \\ l > 0 \\ k_1 < 0 \\ k_2 \leq K}}2^{-l\lambda(1-\frac{1+\delta}{2})}2^{k_1}2^{k_2(1-\frac{\lambda\gamma}{2})}2^{-n(\frac{1}{p_1}-\theta_1-\frac{1}{2}\mu(1+\epsilon))(1-\lambda)}2^{-m(\frac{1}{q_1}-\theta_2-\frac{1}{2}(1-\mu)(1+\epsilon))(1-\lambda)}   \nonumber \\
&\cdot  |F_1|^{\lambda \frac{\mu_1(1+\delta)}{2} + (1-\lambda)(\frac{1}{p_1}-\frac{\mu}{2}\epsilon)}|F_2|^{\lambda \frac{\mu_2(1+\delta)}{2} + (1-\lambda)(\frac{1}{q_1}-\frac{1-\mu}{2}\epsilon)}|G_1|^{\lambda \frac{\nu_1(1+\delta)}{2}+(1-\lambda)(\frac{1}{p_2}-\frac{\mu}{2}\epsilon)}|G_2|^{\lambda\frac{\nu_2(1+\delta)}{2}+(1-\lambda)(\frac{1}{q_2}-\frac{1-\mu}{2}\epsilon)} \nonumber \\
& \cdot \|B^H(f_1,f_2)\|_1^{\lambda(1-\frac{1+\delta}{2})}\|\tilde{B}^H(g_1,g_2)\|_1^{\lambda(1-\frac{1+\delta}{2})}. \label{form_nofixed_almost}
\end{align}
One notices that there exists $\epsilon > 0$, $0 < \mu < 1$ and $0 <\theta_1<\frac{1}{2}$ such that
\begin{align} \label{nec_condition}
&\frac{1}{p_1}-\theta_1-\frac{1}{2}\mu(1+\epsilon) >   0,  \nonumber \\
&\frac{1}{q_1} - \theta_2-\frac{1}{2}(1-\mu)(1+\epsilon) >   0.
\end{align}

\begin{remark} \label{rmk_easyhard_exponent} 
One realizes that (\ref{nec_condition}) imposes a necessary condition on the range of exponents. In particular, 
\begin{equation} \label{>1/2}
\frac{1}{p_1} + \frac{1}{q_1} - (\theta_1 + \theta_2) > \frac{1}{2}\mu(1+ \epsilon) + \frac{1}{2}(1-\mu)(1+\epsilon).
\end{equation}
Using the fact that $\theta_1 + \theta_2 = \frac{1}{2}$, one can rewrite (\ref{>1/2}) as
$$
\frac{1}{p_1} + \frac{1}{q_1} > 1+ \frac{\epsilon}{2}.
$$ 
As a consequence, the case 
\begin{equation}\label{hard_exponent}
1< \frac{1}{p_1} + \frac{1}{q_1} = \frac{1}{p_2} + \frac{1}{q_2} < 2 
\end{equation}
 can be treated by the current argument. Meanwhile, the case $0 < \frac{1}{p_1} + \frac{1}{q_1} = \frac{1}{p_2} + \frac{1}{q_2} \leq 1$ follows a simpler argument which resembles the one for the estimates involving $L^{\infty}$ norms and will be postponed to Section \ref{section_thm_inf_haar}.
\end{remark}

Imposed by (\ref{nec_condition}), the geometric series involving $2^{-n}$ and $2^{-m}$ are convergent. The convergence of series involving $2^{k_1}$ is trivial. One also observes that for any $0 < \lambda < 1$ and $0 < \delta < 1$,
$$
\lambda(1-\frac{1+\delta}{2}) > 0,
$$
which implies that the series involving $2^{-l}$ is convergent.  One can separate the cases when $k_2 > 0 $ and $k_2 \leq 0$ and select $\gamma >1$ in each case to make the series about $2^{k_2}$ convergent. Therefore, one can estimate (\ref{form_nofixed_almost}) by 
\begin{align}
& C_1^3 C_2^3 C_3^{3}\| h \|_{L^s} \|B^H(f_1,f_2)\|_1^{\lambda(1-\frac{1+\delta}{2})}\|\tilde{B}^H(g_1,g_2)\|_1^{\lambda(1-\frac{1+\delta}{2})}  \nonumber \\
&\cdot  |F_1|^{\lambda \frac{\mu_1(1+\delta)}{2} + (1-\lambda)(\frac{1}{p_1}-\frac{\mu}{2}\epsilon)}|F_2|^{\lambda \frac{\mu_2(1+\delta)}{2} + (1-\lambda)(\frac{1}{q_1}-\frac{1-\mu}{2}\epsilon)}|G_1|^{\lambda \frac{\nu_1(1+\delta)}{2}+(1-\lambda)(\frac{1}{p_2}-\frac{\mu}{2}\epsilon)}|G_2|^{\lambda\frac{\nu_2(1+\delta)}{2}+(1-\lambda)(\frac{1}{q_2}-\frac{1-\mu}{2}\epsilon)},  \label{form_nofixed_exponent} 
\end{align}
where one can apply Lemma \ref{B_global_norm} to derive 
\begin{align}
& \|B^H(f_1,f_2)\|_1 \lesssim |F_1|^{\rho}|F_2|^{1-\rho}, \label{B_norm1} \\
& \|\tilde{B}^H(g_1,g_2)\|_1 \lesssim |G_1|^{\rho'}|G_2|^{1-\rho'}, \label{B_norm2}
\end{align}
with the corresponding exponent to be positive as guaranteed by the fact that $0 < \lambda,\delta < 1$. From , (\ref{form_nofixed_exponent}), (\ref{B_norm1}) and (\ref{B_norm2}), one thus obtains
\begin{align} \label{exp00}
 |\Lambda^H_{\text{flag}^{0} \otimes \text{flag}^{0}}|\lesssim&  C_1^3 C_2^3 C_3^{3}\|h\|_{L^s(\mathbb{R}^2)} |F_1|^{\lambda \frac{\mu_1(1+\delta)}{2} + (1-\lambda)(\frac{1}{p_1}-\frac{\mu}{2}\epsilon) + \rho\lambda(1-\frac{1+\delta}{2})}|F_2|^{\lambda \frac{\mu_2(1+\delta)}{2} + (1-\lambda)(\frac{1}{q_1}-\frac{1-\mu}{2}\epsilon) + (1-\rho)\lambda(1-\frac{1+\delta}{2})} \nonumber \\
 & \cdot |G_1|^{\lambda \frac{\nu_1(1+\delta)}{2}+(1-\lambda)(\frac{1}{p_2}-\frac{\mu}{2}\epsilon)+ \rho'\lambda(1-\frac{1+\delta}{2})}|G_2|^{\lambda\frac{\nu_2(1+\delta)}{2}+(1-\lambda)(\frac{1}{q_2}-\frac{1-\mu}{2}\epsilon)+(1-\rho')\lambda(1-\frac{1+\delta}{2})}. 
\end{align}
With a little abuse of notation, we use $\tilde{p_i}$ and $\tilde{q_i}$, $i = 1,2$ to represent $p_i$ and $q_i$ in the above argument. And from now on, $p_i$ and $q_i$ stand for the boundedness exponents specified in the main theorem. One has the freedom to choose $1 < \tilde{p_i}, \tilde{q_i}< \infty$, $0 < \mu,\lambda < 1$ and $\epsilon > 0 $ such that
\begin{align} \label{exp_tilde}
& \lambda \frac{\mu_1(1+\delta)}{2} + (1-\lambda)(\frac{1}{\tilde{p_1}}-\frac{\mu}{2}\epsilon) + \rho\lambda(1-\frac{1+\delta}{2}) =  \frac{1}{p_1} \nonumber \\
& \lambda \frac{\mu_2(1+\delta)}{2} + (1-\lambda)(\frac{1}{\tilde{q_1}}-\frac{1-\mu}{2}\epsilon) + (1-\rho)\lambda(1-\frac{1+\delta}{2}) = \frac{1}{q_1} \nonumber \\
& \lambda \frac{\nu_1(1+\delta)}{2}+(1-\lambda)(\frac{1}{\tilde{p_2}}-\frac{\mu}{2}\epsilon)+ \rho'\lambda(1-\frac{1+\delta}{2}) = \frac{1}{p_2} \nonumber \\
& \lambda\frac{\nu_2(1+\delta)}{2}+(1-\lambda)(\frac{1}{\tilde{q_2}}-\frac{1-\mu}{2}\epsilon)+(1-\rho')\lambda(1-\frac{1+\delta}{2}) = \frac{1}{q_2}.
\end{align}
\begin{remark}
To see that above equations can hold, one can view the parts without $\tilde{p_i}$ and $\tilde{q_i}$ as perturbations which can be controlled small. More precisely, when $0 < \delta < 1$ is close to $1$, 
$$
\lambda(1-\frac{1+\delta}{2}) \ll 1.
$$
When $0 < \lambda < 1$ is close to $0$, one has
$$
\lambda \frac{\mu_1(1+\delta)}{2},  \lambda \frac{\mu_2(1+\delta)}{2},  \lambda \frac{\nu_1(1+\delta)}{2}, \lambda\frac{\nu_2(1+\delta)}{2} \ll 1
$$
and 
\begin{align*}
&\frac{1}{p_i} - (1-\lambda)(\frac{1}{\tilde{p_i}}-\frac{\mu}{2}\epsilon) \ll 1,\nonumber \\
& \frac{1}{q_i} - (1-\lambda)(\frac{1}{\tilde{q_i}}-\frac{1-\mu}{2}\epsilon)\ll 1,
\end{align*}
for $ i = 1,2$.
\end{remark}
It is also necessary to check is that $\tilde{p_i}$ and $\tilde{q_i}$ satisfy the conditions which have been used to obtain (\ref{exp00}), namely
\begin{align}
\frac{1}{\tilde{p_1}} + \frac{1}{\tilde{q_1}} = &  \frac{1}{\tilde{p_2}} + \frac{1}{\tilde{q_2}} > 1.
\end{align}
One can easily verify the first equation and the second inequality by manipulating (\ref{exp_tilde}).
As a result, we have derived that
$$|\Lambda^H_{\text{flag}^{0} \otimes \text{flag}^{0}}(f_1,f_2,g_1,g_2,h, \chi_{E'})| \lesssim C_1^3 C_2^3 C_3^{3}|F_1|^{\frac{1}{p_1}}|F_2|^{\frac{1}{q_1}}|G_1|^{\frac{1}{p_2}}|G_2|^{\frac{1}{q_2}} \|h\|_{L^s(\mathbb{R}^2)}.
$$
\vskip .25in


\section{Proof of Theorem \ref{thm_weak_inf_mod} for $\Pi_{\text{flag}^{\#_1} \otimes \text{flag}^{\#_2}}$ - Haar Model} \label{section_thm_inf_fixed_haar}
One can mimic the proof in Section \ref{section_thm_haar_fixed} with a change of perspectives on size estimates. More precisely, one applies some trivial size estimates for functions $f_2$ and $g_2$ lying in $L^{\infty}$ spaces while one needs to pay respect to the fact that $f_1$ and $g_1$ could lie in $L^p$ space for any $p >1$. Such perspective is demonstrated in the stopping-time decomposition and in the definition of the exceptional set. 
\subsection{Localization} \label{section_thm_inf_fixed_haar_localization}
One defines
$$\Omega := \Omega^1 \cup \Omega^2,$$ where
\begin{align*}
\Omega^1 := &\bigcup_{n_1 \in \mathbb{Z}}\{x:Mf_1(x) > C_1 2^{n_1}\|f_1\|_{p}\} \times \{y:Mg_1(y) > C_2 2^{-n_1}\|g_1\|_{p}\}, \nonumber \\
\Omega^2 := & \{(x,y) \in \mathbb{R}^2: SSh(x,y) > C_3 \|h\|_{L^s(\mathbb{R}^2)}\}, \nonumber \\
\end{align*}
and
$$Enl(\Omega) := \{(x,y) \in \mathbb{R}^2: MM\chi_{\Omega}(x,y) > \frac{1}{100}\}.$$
Let $$E' := E \setminus Enl(\Omega).$$
It is not difficult to check that given $C_1, C_2$ and $C_3$ are sufficiently large, $|E'| \sim |E|$ where $|E|$ can be assumed to be 1. It suffices to prove that the multilinear form defined in (\ref{form_haar_larger})
satisfies 
\begin{equation}
|\Lambda^H_{\text{flag}^{\#1} \otimes \text{flag}^{\#2}}(f_1,f_2,g_1,g_2,h, \chi_{E'})| \lesssim \|f_1\|_{L^p(\mathbb{R})}\|f_2\|_{L^{\infty}(\mathbb{R})} \|g_1\|_{L^p(\mathbb{R})}\|g_2\|_{L^{\infty}(\mathbb{R})} \|h\|_{L^{s}(\mathbb{R}^2)}.
\end{equation}
\subsection{Summary of stopping-time decompositions.} \label{section_thm_inf_fixed_haar_summary}
{\fontsize{10}{10}
\begin{center}
\begin{tabular}{ l l l }
I. Tensor-type stopping-time decomposition I on $\mathcal{I} \times \mathcal{J}$& $\longrightarrow$ & $I \times J \in \mathcal{I}_{-n-n_2}^{generic} \times \mathcal{J}_{n_2}^{generic}$ \\
 & & $(n_2 \in \mathbb{Z}, n > 0)$ \\ 
II. General two-dimensional level sets stopping-time decomposition & $\longrightarrow$ & $I \times J \in  \mathcal{R}_{k_1,k_2} $  \\
\ \ \ \ on $\mathcal{I} \times \mathcal{J}$& & $(k_1 <0, k_2 \leq K)$
\end{tabular}
\end{center}}
where
\begin{align*}
\mathcal{I}_{-n-n_2}^{generic} := & \{ I \in \mathcal{I} \setminus \mathcal{I}_{-n-n_2+1}: \left| I \cap \Omega^{generic}_{-n-n_2}\right| > \frac{1}{10}|I| \}, \nonumber \\
\mathcal{J}_{n_2}^{generic} := & \{ J \in \mathcal{J} \setminus \mathcal{J}_{n_2+1}: \left| I \cap \tilde{\Omega}^{generic}_{n_2}\right| > \frac{1}{10}|J| \},
\end{align*}
with
\begin{align*}
\Omega_{-n-n_2}^{generic} := &\{x: Mf_1(x)> C_1 2^{-n-n_2}\|f_1\|_p \}, \nonumber \\
\tilde{\Omega}_{n_2}^{generic}  := & \{y: Mg_1(y)> C_2 2^{n_2} \|g_1\|_p \}.
\end{align*}
\subsection{Application of stopping-time decompositions} \label{section_thm_inf_fixed_haar_application_st}
One can now apply the stopping-time decompositions and follow the same argument in Section \ref{section_thm_haar_fixed} to deduce that
{\fontsize{10}{10}\begin{align} \label{form11_inf}
& |\Lambda^H_{\text{flag}^{\#1} \otimes \text{flag}^{\#2}}(f_1,f_2,g_1,g_2,h, \chi_{E'})|  \nonumber\\
\lesssim &\bigg|\displaystyle \sum_{\substack{n> 0 \\ k_1 < 0 \\ k_2 \leq K}} \sum_{n_2 \in \mathbb{Z}}\sum_{\substack{I \times J \in \mathcal{I}^{generic}_{-n-n_2} \times \mathcal{J}^{generic}_{n_2}\\I \times J \in \mathcal{R}_{k_1,k_2}}} \frac{1}{|I|^{\frac{1}{2}} |J|^{\frac{1}{2}}} \langle B_I^{\#_1,H}(f_1,f_2),\vphi_I^{1,H} \rangle \langle \tilde{B}_J^{\#_2,H} (g_1,g_2),\vphi_J^{1,H} \rangle \langle h, \psi_I^{2} \otimes \psi_J^{2} \rangle \langle \chi_{E'},\psi_I^{3,H} \otimes \psi_J^{3,H} \rangle \bigg| \nonumber \\\nonumber \\
\lesssim & \sum_{\substack{n> 0 \\ k_1 < 0 \\ k_2 \leq K}} \sum_{n_2 \in \mathbb{Z}}\sup_{I \in \mathcal{I}^{generic}_{-n-n_2}} \frac{|\langle B_I^{\#_1,H}(f_1,f_2),\vphi_I^{1,H} \rangle|}{|I|^{\frac{1}{2}}}   \cdot \sup_{J \in \mathcal{J}^{generic}_{n_2}}\frac{| \langle \tilde{B}_J^{\#_2,H} (g_1,g_2),\vphi_J^{1,H} \rangle|}{|J|^{\frac{1}{2}}}\cdot C_3 2^{k_1} \| h \|_{L^s} 2^{k_2}  \cdot  \nonumber \\
&\quad \quad \quad \quad \quad \quad \bigg|\big(\bigcup_{R\in \mathcal{R}_{k_1,k_2}} R\big) \cap \big(\bigcup_{I \in \mathcal{I}'_{-n-n_2}} I \times \bigcup_{J \in \mathcal{J}'_{n_2}}J\big)\bigg|.  
\end{align}}
To estimate $\displaystyle \sup_{I \in \mathcal{I}^{generic}_{-n-n_2}} \frac{|\langle B_I^{\#_1,H}(f_1,f_2),\vphi_I^{1,H} \rangle|}{|I|^{\frac{1}{2}}} $, one can now apply Lemma \ref{B_size} with $S:= \{Mf_1 \leq C_1 2^{-n-n_2+1}|F_1|^{\frac{1}{p}} \}$ and obtain
$$
\sup_{I \in \mathcal{I}^{generic}_{-n-n_2}}\frac{|\langle B_I^{\#_1,H}(f_1,f_2),\vphi_I^{1,H} \rangle|}{|I|^{\frac{1}{2}}} \lesssim \sup_{K \cap S \neq \emptyset}\frac{|\langle f_1, \vphi^1_K \rangle|}{|K|^{\frac{1}{2}}} \sup_{K \cap S \neq \emptyset} \frac{|\langle f_2, \phi_K^2 \rangle|}{|K|^{\frac{1}{2}}},
$$
where by the definition of $S$,
$$
\sup_{K \cap S \neq \emptyset}\frac{|\langle f_1, \vphi^1_K \rangle|}{|K|^{\frac{1}{2}}} \lesssim C_12^{-n-n_2}\|f_1\|_p, 
$$
and by the fact that $f_2 \in L^{\infty}$,
$$
\sup_{K \cap S \neq \emptyset} \frac{|\langle f_2, \phi_K^2 \rangle|}{|K|^{\frac{1}{2}}} \lesssim \|f_2\|_{\infty}.
$$
As a result, 
\begin{equation} \label{est_x}
\sup_{I \in \mathcal{I}^{generic}_{-n-n_2}}\frac{|\langle B_I^{\#_1,H}(f_1,f_2),\vphi_I^{1,H} \rangle|}{|I|^{\frac{1}{2}}} \lesssim C_12^{-n-n_2}\|f_1\|_p\|f_2\|_{\infty}.
\end{equation}
By a similar reasoning,
\begin{equation} \label{est_y}
\sup_{J \in \mathcal{J}^{generic}_{n_2}}\frac{|\langle \tilde{B}_J^{\#_2,H}(g_1,g_2),\vphi_J^{1,H} \rangle|}{|J|^{\frac{1}{2}}} \lesssim C_2 2^{n_2}\|g_1\|_p\|g_2\|_{\infty}.
\end{equation}
When applying the estimates (\ref{est_x}) and (\ref{est_y}) to (\ref{form11_inf}), one concludes that
\begin{align*}
|\Lambda^{H}_{\text{flag}^{\#1} \otimes \text{flag}^{\#2}}| \lesssim 
&C_1 C_2 C_3^2  \|f_1\|_{L^p(\mathbb{R})}\|f_2\|_{L^{\infty}(\mathbb{R})} \|g_1\|_{L^p(\mathbb{R})}\|g_2\|_{L^{\infty}(\mathbb{R})} \|h\|_{L^{s}(\mathbb{R}^2)} \cdot \\
& \sum_{\substack{n > 0 \\ k_1 < 0 \\ k_2 \leq K}} 2^{-n}2^{k_1} 2^{k_2}  \sum_{n_2 \in \mathbb{Z}}\bigg|\big(\bigcup_{R\in \mathcal{R}_{k_1,k_2}} R\big) \cap \big(\bigcup_{I \in \mathcal{I}^{generic}_{-n-n_2,-m-m_2}} I \times \bigcup_{J \in \mathcal{J}^{generic}_{n_2,m_2}}J\big)\bigg|\nonumber \\
\lesssim &C_1 C_2 C_3^2 \|f_1\|_{L^p(\mathbb{R})}\|f_2\|_{L^{\infty}(\mathbb{R})} \|g_1\|_{L^p(\mathbb{R})}\|g_2\|_{L^{\infty}(\mathbb{R})} \|h\|_{L^{s}(\mathbb{R}^2)} \sum_{\substack{n > 0 \\ k_1 < 0 \\ k_2 \leq K}} 2^{-n}2^{k_1(1-\frac{s}{2})} 2^{k_2(1-\frac{\gamma}{2})}, \nonumber 
\end{align*}
where the last inequality follows from the sparsity condition. With proper choice of $\gamma >1$, one obtains the desired estimate.

\section{Proof of Theorem \ref{thm_weak_inf_mod}  for $\Pi_{\text{flag}^0 \otimes \text{flag}^0}$ - Haar Model}\label{section_thm_inf_haar}
One interesting fact is that when 
\begin{equation} \label{easy_case}
\frac{1}{p_1} + \frac{1}{q_1} = \frac{1}{p_2} + \frac{1}{q_2} \leq 1,
\end{equation}
Theorem \ref{thm_weak_mod} can be proved by a simpler argument as commented in Remark \ref{rmk_easyhard_exponent} of Section \ref{section_thm_haar}. And Theorem \ref{thm_weak_inf_mod} for the model $\Pi_{\text{flag}^0 \otimes \text{flag}^0}$ can be viewed as a sub-case and proved by the same argument. The key idea is that in the case specified in (\ref{easy_case}), one no longer needs the localized operators $B^H_I$ and $\tilde{B}^H_J$ (defined in (\ref{B_0_local_haar_simplified}) and (\ref{B_local_definition_haar})) in the proof and the operators $B$ and $\tilde{B}$ (defined in (\ref{B_global_proof}) and (\ref{B_global_definition})) will be involved. In particular, we would consider the operator 
\begin{align} \label{Pi_nofixed_easy}
\Pi^{H,3}_{\text{flag}^0 \otimes \text{flag}^0}(f_1,f_2,g_1,g_2,h):= \sum_{I \times J \in \mathcal{R}} \frac{1}{|I|^{\frac{1}{2}} |J|^{\frac{1}{2}}} \langle B_I(f_1,f_2),\vphi_I^{1} \rangle \langle \tilde{B}_J(g_1,g_2),\vphi_J^{1} \rangle \langle h, \psi_I^{2} \otimes \psi_J^{2} \rangle \psi_I^{3,H} \otimes \psi_J^{3,H},
\end{align}
where the notation for the operator represents the Haar assumption on the third families of the functions $(\psi_I^{3,H})_{I \in \mathcal{I}}$ and $(\psi_J^{3,H})_{J \in \mathcal{J}}$.
Let
$$
\frac{1}{t} := \frac{1}{p_1} + \frac{1}{q_1} = \frac{1}{p_2} + \frac{1}{q_2},
$$
and the condition of the exponents (\ref{easy_case}) translates to
$$
t \geq 1.
$$

\subsection{Localization.} \label{section_thm_inf_haar_localization}
One first defines 
$$\Omega := \Omega^1 \cup \Omega^2,$$ where
\begin{align*}
\displaystyle \Omega^1 := &\bigcup_{l_2 \in \mathbb{Z}} \{x:MB(f_1,f_2)(x) > C_1 2^{-l_2}\| B\|_t\} \times \{y: M\tilde{B}(g_1,g_2)(y) > C_2 2^{l_2}\|\tilde{B}\|_t\}, \nonumber \\
\Omega^2 := & \{(x,y) \in \mathbb{R}^2: SSh (x,y)> C_3 \|h\|_{L^s(\mathbb{R}^2)}\}, \nonumber \\
\end{align*}
and 
$$Enl(\Omega) := \{(x,y) \in \mathbb{R}^2: MM\chi_{\Omega}(x,y) > \frac{1}{100}\}.$$
Let 
$$
E' := E \setminus Enl(\Omega).
$$
\begin{remark}
We shall notice that $t \geq 1$ allows one to use the mapping property of the Hardy-Littlewood maximal operator, which plays an essential role in the estimate of $|\Omega|$. 
\end{remark}
A straightforward computation shows $|E'| \sim |E|$ given that $C_1, C_2$ and $C_3$ are sufficiently large. It suffices to assume that $|E'| \sim |E| = 1$ and to prove that the multilinear form
\begin{equation}
\Lambda^{H,3}_{\text{flag}^{0} \otimes \text{flag}^{0}}(f_1, f_2, g_1, g_2, h, \chi_{E'}) := \langle \Pi^{H,3}_{\text{flag}^{0} \otimes \text{flag}^{0}}(f_1, f_2, g_1, g_2, h), \chi_{E'} \rangle
\end{equation}
satisfies 
\begin{equation}
|\Lambda^{H,3}_{\text{flag}^{0} \otimes \text{flag}^{0}}(f_1,f_2,g_1,g_2,h, \chi_{E'})| \lesssim \|f_1\|_{L^p(\mathbb{R})}\|f_2\|_{L^{\infty}(\mathbb{R})} \|g_1\|_{L^p(\mathbb{R})}\|g_2\|_{L^{\infty}(\mathbb{R})} \|h\|_{L^{s}(\mathbb{R}^2)}. 
\end{equation}
\subsection{Summary of stopping-time decompositions.} \label{section_thm_inf_haar_summary}

\begin{center}
\begin{tabular}{ c c c }
General two-dimensional level sets stopping-time decomposition& $\longrightarrow$ & $I \times J \in \mathcal{R}_{k_1,k_2} $  \\
on $ \mathcal{I} \times \mathcal{J}$ & & $(k_1 <0, k_2 \leq K)$
\end{tabular}
\end{center}
\vskip .25in

\subsection{Application of the stopping-time decomposition} \label{section_thm_inf_haar_application_st}
One performs the \textit{general two-dimensional level sets stopping-time decomposition} with respect to the hybrid operators as specified in the definition of the exceptional set. It would be evident from the argument below that there is no stopping-time decomposition necessary for the maximal functions of $B$ and $\tilde{B}$ (defined in (\ref{B_global_definition}) and (\ref{B_global_proof})). One brief explanantion is that only ``averages'' for $B$ and $\tilde{B}$ are required while the measurement of the set where the averages are attained is not. As a consequence, the macro-control of the averages would be sufficient and the stopping-time decomposition, which can be seen as a more delicate ``slice-by-slice'' or ``level-by-level'' partition, is not compulsory. More precisely,
\begin{align} \label{form00_inf}
&|\Lambda^{H,3}_{\text{flag}^{0} \otimes \text{flag}^{0}}(f_1,f_2,g_1,g_2,h, \chi_{E'})|\nonumber  \\
= &\displaystyle \bigg|\sum_{\substack{ k_1 < 0 \\ k_2 \leq K}} \sum_{\substack{I \times J \in \mathcal{R}_{k_1,k_2}}} \frac{1}{|I|^{\frac{1}{2}} |J|^{\frac{1}{2}}} \langle B_I(f_1,f_2),\vphi_I^{1} \rangle \langle \tilde{B}_J(g_1, g_2), \vphi_J^{1} \rangle \langle h, \psi_I^{2} \otimes \psi_J^{2} \rangle \langle \chi_{E'},\psi_I^{3,H} \otimes \psi_J^{3,H} \rangle \bigg|\nonumber \\
\lesssim &  \sum_{\substack{k_1 < 0 \\ k_2 \leq K}}\displaystyle \sup_{I \times J \in \mathcal{I}\times \mathcal{J}} \bigg(\frac{|\langle B_I(f_1,f_2),\vphi_I^{1} \rangle|}{|I|^{\frac{1}{2}}} \frac{|\langle \tilde{B}_J(g_1, g_2), \vphi_J^{1} \rangle|}{|J|^{\frac{1}{2}}} \bigg) \cdot C_3^2 2^{k_1}\|h\|_{L^s(\mathbb{R}^2)} 2^{k_2} \bigg|\bigcup_{\substack{I \times J \in \mathcal{R}_{k_1,k_2}}}I \times J\bigg|.
\end{align}
By the same esimate applied in previous sections, namely (\ref{int_area}), one has
\begin{equation} \label{measure_ch9}
\bigg|\bigcup_{\substack{I \times J \in \mathcal{R}_{k_1,k_2}}}I \times J\bigg|  \lesssim \min(C_3^{-1}2^{-k_1s}, C_3^{-\gamma}2^{-k_2 \gamma}),
\end{equation}
for any $\gamma >1$. Meanwhile, an argument similar to the proof of Observation \ref{obs_st_B} in Section \ref{section_thm_haar_tensor_1d_maximal} implies that
\begin{equation} \label{B_global_ch9}
\frac{|\langle B_I(f_1,f_2),\vphi_I^{1,H} \rangle|}{|I|^{\frac{1}{2}}} \frac{|\langle \tilde{B}_J(g_1, g_2), \vphi_J^{1,H} \rangle|}{|J|^{\frac{1}{2}}} \lesssim C_1 C_2 \|B(f_1,f_2)\|_t \|\tilde{B}(g_1,g_2)\|_t,
\end{equation}
for any $I \times J \cap Enl(\Omega)^c \neq \emptyset$ as assumed in the Haar model.
By applying (\ref{measure_ch9}) and (\ref{B_global_ch9}) to (\ref{form00_inf}), one has
\begin{equation} \label{form00_inf_almost}
 C_1 C_2 C_3^2  \| h \|_{L^s(\mathbb{R}^2)} \|B(f_1,f_2)\|_t \|\tilde{B}(g_1,g_2)\|_t \sum_{\substack{k_1 < 0 \\ k_2 \leq K}} C_3 2^{k_1(1-\frac{s}{2})}2^{k_2(1-\frac{\gamma}{2})} \lesssim  \| h \|_{L^s(\mathbb{R}^2)} \|B(f_1,f_2)\|_t \|\tilde{B}(g_1,g_2)\|_t,
\end{equation}
with appropriate choice of $\gamma>1$. One can now invoke Lemma \ref{B_global_norm} to conclude the estimate of (\ref{form00_inf_almost}). In particular, for $1 < p_i, q_j \leq \infty$ for $i, j = 1, 2$ satisfying (\ref{easy_case}),
\begin{align} \label{B_easy} 
\|B(f_1,f_2)\|_t \lesssim & \|f_1\|_{p_1} \|f_2\|_{q_1} \ \ \text{and} \ \  \|\tilde{B}(g_1,g_2)\|_t \lesssim  \|g_1\|_{p_2} \|g_2\|_{q_2},
\end{align}
Therefore, 
\begin{equation*}
|\Lambda^{H,3}_{\text{flag}^{0} \otimes \text{flag}^{0}}(f_1,f_2,g_1,g_2,h, \chi_{E'})| \lesssim \|f_1\|_{p_1} \|f_2\|_{q_1}\|g_1\|_{p_2} \|g_2\|_{q_2} \| h \|_{L^s(\mathbb{R}^2)}.
\end{equation*}
\begin{remark} 
\begin{enumerate}
\noindent
\item
One notices that when 
\begin{align*}
& p_1 = p_2 = p \\
& q_1 = q_2 = \infty,
\end{align*}
Theorem \ref{thm_weak_inf_mod} in the Haar model is proved. Indeed Theorem \ref{thm_weak_inf_mod} is verified for generic functions in $L^p$ and $L^s$ spaces for $1< p < \infty,  1 < s < 2$.
\item
With the range of exponents described as (\ref{easy_case}), the above argument proves Theorem \ref{thm_weak_mod} for the operator $\Pi^{H,3}_{\text{flag}^0 \otimes \text{flag}^0}$ (\ref{Pi_nofixed_easy}) whereas for the exponents satisfying (\ref{hard_exponent}), we would need a more delicate and localized model operator $\Pi^{H}_{\text{flag}^0 \otimes \text{flag}^0}$ (\ref{Pi_larger_haar}) whose boundedness is proved in Section \ref{section_thm_haar}.
\end{enumerate}
\end{remark}

\section{Generalization to Fourier Case} \label{section_fourier}
We will first highlight where we have used the assumption about the Haar model in the proof and then modify those partial arguments to prove the general case.

We have used the following implications specific to the Haar model.
\begin{enumerate}[H(I).]
\item
Let $\chi_{E'} := \chi_{E \setminus Enl(\Omega)}$. Then 
\begin{equation} \label{Haar_loc_bipara}
\langle \chi_{E'}, \phi^H_{I} \otimes \phi^H_J \rangle \neq 0
\iff
I \times J \cap Enl(\Omega)^c \neq \emptyset.
\end{equation}
As a result, what contributes to the multilinear forms in the Haar model are the dyadic rectangles $I \times J \in \mathcal{R}$ satisfying $I \times J \cap Enl(\Omega)^c \neq \emptyset$, which is a condition we heavily used in the proofs of the theorems in the Haar model.

\item
For any dyadic intervals $K$ and $I$ with $|K| \geq |I|$, 
$$
\langle \phi^{3,H}_K, \phi^H_I \rangle \neq 0
$$
if and only if
$$
K \supseteq I.
$$
Therefore, the non-degenerate case imposes the condition on the geometric containment we have employed for localizations of the operators $B^H_I$, $B^H_J$, $B^{\#_1,H}_I$ and $B^{\#_2,H}_J$ defined in (\ref{B_local_definition_haar}), (\ref{B_local_definition_haar_fix_scale}),  (\ref{B_0_local_haar_simplified}) and  (\ref{B_fixed_local_haar_simplified}).

\item
In the case $(\phi^{3,H}_K)_K$ is a family of Haar wavelets, the observation highlighted as (\ref{haar_biest_cond}) generates the biest trick (\ref{haar_biest}) which is essential in the energy estimates.

\end{enumerate}

We will focus on how to generalize proofs of Theorem \ref{thm_weak_mod} for $\Pi^H_{\text{flag}^{\#_1} \otimes \text{flag}^{\#_2}} $ (\ref{Pi_fixed_haar}) and $\Pi^H_{\text{flag}^{0} \otimes \text{flag}^{0}}$ (\ref{Pi_larger_haar}) and discuss how to tackle restrictions listed as $H(I), H(II)$ and $H(III)$. The generalizations of arguments for other model operators and for Theorem \ref{thm_weak_inf_mod} follow from the same ideas.

\subsection{Generalized Proof of Theorem \ref{thm_weak_mod} for $\Pi_{\text{flag}^{\#_1} \otimes \text{flag}^{\#_2}} $} \label{section_fourier_fixed}
\subsubsection{Localization and generalization of $H(I)$} 
The  argument for $\Pi_{\text{flag}^{\#_1} \otimes \text{flag}^{\#_2}} $ in Section \ref{section_thm_haar_fixed} takes advantages of the localization of spatial variables, as stated in $H(I)$. The following lemma allows one to decompose the original bump function into bump functions of compact supports so that a perfect localization in spatial variables can be achieved, which can be viewed as generalized $H(I)$ and whose proof is included in \cite{cptt_2} and Section 3 of \cite{cw}.
\begin{lemma} \label{decomp_compact}
Let $I \subseteq \mathbb{R}$ be an interval. Then any smooth bump function $\phi_I$ adapted to $I$ can be decomposed as
$$
\phi_I = \sum_{\tau \in \mathbb{N}} 2^{-100 \tau} \phi_I^{\tau}
$$
where for each $\tau \in \mathbb{N}$, $\phi_I^{\tau}$ is a smooth bump function adapted to $I$ and $\text{supp}(\phi_I^{\tau}) \subseteq 2^{\tau} I$.  If $\int \phi_I = 0$, then the functions $\phi_I^{\tau}$ can be chosen such that $\int \phi_I^{\tau} = 0$.
\end{lemma}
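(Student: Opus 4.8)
The plan is to realize the decomposition as a dyadic partition-of-unity truncation of $\phi_I$, exploiting the rapid spatial decay that is built into the notion of an adapted bump function (Definition \ref{bump}). First I would reduce to the normalized case $x_I = 0$ and $|I| = 1$ by translation and dilation, since every hypothesis and every conclusion in the statement is invariant under these two operations. Then I would fix, once and for all, a smooth partition of unity $1 = \sum_{\tau \in \mathbb{N}} \eta_\tau$ on $\mathbb{R}$ with $\operatorname{supp}\eta_0 \subseteq 2I$, $\operatorname{supp}\eta_\tau \subseteq 2^{\tau+1} I \setminus 2^{\tau-1} I$ for $\tau \ge 1$, and $\|\eta_\tau^{(l)}\|_\infty \lesssim_l 2^{-l\tau}$, and set $\psi_\tau := \phi_I\,\eta_\tau$, so that $\sum_\tau \psi_\tau = \phi_I$ and $\operatorname{supp}\psi_\tau \subseteq 2^{\tau+1}I$.

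The quantitative heart of the matter is that $\psi_\tau$, after a single renormalization, is an adapted bump function with constants uniform in $\tau$. Indeed, on $\operatorname{supp}\eta_\tau$ one has $|x - x_I| \sim 2^{\tau}|I|$, so the Leibniz rule applied to the product $\phi_I \eta_\tau$, together with the adapted-bump bounds for $\phi_I$ and the derivative bounds for $\eta_\tau$, yields $|\psi_\tau^{(l)}(x)| \lesssim_{l,M} 2^{-M\tau}\,|I|^{-l}\,(1 + |x-x_I|/|I|)^{-M'}$ for any $M' \le M$, where $M$ is the decay exponent available in the hypothesis on $\phi_I$ --- which, per Definition \ref{bump}, may be taken as large as we wish. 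Choosing $M \ge M' + 100$, setting $\phi_I^{\tau} := 2^{100\tau}\psi_\tau$, and absorbing the harmless extra dilation $2^{\tau+1}I$ into the index $\tau$, one obtains $\phi_I = \sum_{\tau} 2^{-100\tau}\phi_I^{\tau}$ with each $\phi_I^{\tau}$ a smooth bump function adapted to $I$ and supported in $2^{\tau}I$.

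For the final assertion --- preserving the cancellation $\int \phi_I^{\tau} = 0$ --- the naive truncation fails because $\int \psi_\tau$ need not vanish, so I would instead pass to an antiderivative. When $\int_{\mathbb{R}}\phi_I = 0$, the function $F(x) := \int_{-\infty}^{x}\phi_I(t)\,dt = -\int_{x}^{\infty}\phi_I(t)\,dt$ is smooth, satisfies $F' = \phi_I$, and inherits rapid decay from $\phi_I$ (with one fewer power, which is irrelevant since the decay is arbitrarily strong), so $F$ is a constant multiple of an adapted bump function on $I$. Applying the decomposition already constructed to $F$ gives $F = \sum_\tau 2^{-100\tau} F^{\tau}$ with $F^{\tau}$ adapted to $I$ and $\operatorname{supp} F^{\tau} \subseteq 2^{\tau}I$; differentiating term by term gives $\phi_I = \sum_\tau 2^{-100\tau}(F^{\tau})'$, where each $(F^{\tau})'$ is supported in $2^{\tau}I$ --- hence automatically has integral zero --- and is again a bump function adapted to $I$, after adjusting the normalization by one derivative.

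I expect the only genuine subtlety to be bookkeeping: verifying that the loss of $100$ in the decay exponent, and of one derivative in the zero-mean case, is harmless --- which it is, precisely because Definition \ref{bump} permits the decay exponent $M$ and the number of controlled derivatives to be chosen as large as needed from the outset. Everything else is a routine partition-of-unity estimate, and full details may be found in \cite{cptt_2} and Section 3 of \cite{cw}.
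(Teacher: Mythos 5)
Your proof is correct and follows the standard partition-of-unity argument, which is essentially the approach in the references cited by the paper (\cite{cptt_2} and \cite{cw}). One small slip in the bookkeeping: the estimate you state, $|\psi_\tau^{(l)}(x)| \lesssim 2^{-M\tau}|I|^{-l}(1+|x-x_I|/|I|)^{-M'}$, overcounts the decay --- since on $\operatorname{supp}\eta_\tau$ one has $(1+|x-x_I|/|I|)\sim 2^\tau$, the available exponent $M$ must be \emph{split} between the geometric decay in $\tau$ and the pointwise decay, so the correct statement is $|\psi_\tau^{(l)}(x)| \lesssim 2^{-(M-M')\tau}|I|^{-l}(1+|x-x_I|/|I|)^{-M'}$. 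Fortunately the condition you then impose, $M \geq M' + 100$, is exactly the one forced by the corrected bound after multiplying by $2^{100\tau}$, so the conclusion stands.

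For the mean-zero case, the antiderivative device is valid and self-contained: since $\int\phi_I=0$, the primitive $F(x)=\int_{-\infty}^x\phi_I$ is (after normalization by $|I|$) an adapted bump, each $F^\tau$ is compactly supported, so $(F^\tau)'$ has vanishing integral for free, and the loss of one derivative is harmless because Definition~\ref{bump} supplies as many controlled derivatives as needed. This is a genuinely different mechanism from the one more commonly seen in the literature, where one subtracts a small correction $c_\tau\,\rho$ from each $\phi_I^\tau$ (with $\rho$ a fixed mean-one bump on $I$ and $c_\tau=\int\phi_I^\tau$), using $\sum_\tau 2^{-100\tau}c_\tau = \int\phi_I = 0$ to preserve the reconstruction and $|c_\tau|\lesssim 2^{(101-M)\tau}|I|$ to preserve adaptedness. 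Both routes are fine; yours has the minor cost of an explicit smoothness/decay bookkeeping through $F$, while the correction-term route has the minor cost of verifying that the subtracted pieces remain uniformly adapted.
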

The multilinear form associated to $\Pi_{\text{flag}^{\#_1} \otimes \text{flag}^{\#_2}} $ in the general case can now be rewritten as 
\begin{align} \label{compact}
& \Lambda_{\text{flag}^{\#1} \otimes \text{flag}^{\#_2}}(f_1, f_2, g_1, g_2, h, \chi_{E'})  =  \displaystyle \sum_{\tau_1,\tau_2 \in \mathbb{N}}2^{-100(\tau_1+\tau_2)} \Lambda_{\text{flag}^{\#1} \otimes \text{flag}^{\#_2}}^{\tau_1, \tau_2} (f_1, f_2, g_1, g_2, h, \chi_{E'}),
\end{align} 
where for any fixed $\tau_1, \tau_2 \in \mathbb{N}$, 
{\fontsize{10}{10}\begin{align}
& \Lambda_{\text{flag}^{\#1} \otimes \text{flag}^{\#_2}}^{\tau_1, \tau_2} (f_1, f_2, g_1, g_2, h, \chi_{E'}) \nonumber\\
:= & \sum_{I \times J \in \mathcal{R}} \frac{1}{|I|^{\frac{1}{2}} |J|^{\frac{1}{2}}} \langle B^{\#_1}_I(f_1,f_2),\phi_I^1 \rangle \langle \tilde{B}^{\#_2}_J(g_1,g_2), \phi_J^1 \rangle \cdot \langle h, \phi_{I}^2 \otimes \phi_{J}^2 \rangle \langle \chi_{E'},\phi_{I}^{3,\tau_1} \otimes \phi^{3, \tau_2}_{J} \rangle.
\end{align}}
with $B^{\#_1}_I$ and $\tilde{B}^{\#_2}_J$ defined in (\ref{B_fixed_local_fourier_simple}) and (\ref{B_fixed_fourier}).
To show that the multilinear form $\Lambda_{\text{flag}^{\#1} \otimes \text{flag}^{\#_2}} (f_1, f_2, g_1, g_2, h, \chi_{E'}) $ satisfies the same estimate as  \newline $\Lambda^H_{\text{flag}^{\#1} \otimes \text{flag}^{\#_2}}(f_1, f_2, g_1, g_2, h, \chi_{E'})$ (\ref{form_haar_larger_goal}), it suffices to prove that for any fixed $\tau_1,\tau_2 \in \mathbb{N}$,
\begin{equation} \label{linear_fix_fourier}
|\Lambda_{\text{flag}^{\#1} \otimes \text{flag}^{\#_2}}^{\tau_1, \tau_2}(f_1,f_2,g_1,g_2,h, \chi_{E'})| \lesssim (2^{\tau_1+ \tau_2})^{\Theta}|F_1|^{\frac{1}{p_1}}|F_2|^{\frac{1}{q_1}}|G_1|^{\frac{1}{p_2}}|G_2|^{\frac{1}{q_2}}\|h\|_{L^s(\mathbb{R}^2)}
\end{equation}
for some $0 < \Theta < 100$, thanks to the fast decay $2^{-100(\tau_1+\tau_2)}$ in the decomposition of the original multilinear form (\ref{compact}).

One first redefines the exceptional set with the replacement of $C_1$, $C_2$ and $C_3$ by $C_12^{10\tau_1}$, $C_2 2^{10\tau_2}$ and $C_3 2^{10\tau_1+10\tau_2}$ respectively. In particular, let 
\begin{align*}
& C_1^{\tau_1} := C_12^{10\tau_1}, \nonumber\\
& C_2^{\tau_2} := C_22^{10\tau_2}, \nonumber\\
& C_3^{\tau_1,\tau_2} := C_32^{10\tau_1+10\tau_2}. \nonumber
\end{align*}
Then define
\begin{align*}
\displaystyle \Omega_1^{\tau_1, \tau_2} := &\bigcup_{\mathfrak{n}_1 \in \mathbb{Z}}\{x: Mf_1(x) > C_1^{\tau_1} 2^{\mathfrak{n}_1}|F_1|\} \times \{y: Mg_1(y) > C_2^{\tau_2} 2^{-\mathfrak{n}_1}|G_1|\}\cup \nonumber \\
& \bigcup_{\mathfrak{n}_2 \in \mathbb{Z}}\{x:Mf_2(x) > C_1^{\tau_1} 2^{\mathfrak{n}_2}|F_2|\} \times \{y: Mg_2(y) > C_2^{\tau_2} 2^{-\mathfrak{n}_2}|G_2|\}\cup \nonumber \\
 &\bigcup_{\mathfrak{n}_3 \in \mathbb{Z}}\{x: Mf_1(x) > C_1^{\tau_1} 2^{\mathfrak{n}_3}|F_1|\} \times \{y: Mg_2(y) > C_2^{\tau_2} 2^{-\mathfrak{n}_3}|G_2|\}\cup \nonumber \\
& \bigcup_{\mathfrak{n}_4 \in \mathbb{Z}}\{x: Mf_2(x) > C_1^{\tau_1} 2^{\mathfrak{n}_4 }|F_2|\} \times \{y: Mg_1(y) > C_2^{\tau_2} 2^{-\mathfrak{n}_4 }|G_1|\},\nonumber \\
\Omega_2^{\tau_1,\tau_2} := & \{(x,y) \in \mathbb{R}^2: SSh(x,y) > C_3^{\tau_1, \tau_2} \|h\|_{L^s(\mathbb{R}^2)}\}. \nonumber \\
\end{align*}
One also defines 
\begin{align*}
& \Omega^{\tau_1,\tau_2} := \Omega_1^{\tau_1,\tau_2} \cup \Omega_2^{\tau_1,\tau_2}, \nonumber \\
& Enl(\Omega^{\tau_1,\tau_2}) := \{(x,y) \in \mathbb{R}^2: MM(\chi_{\Omega^{\tau_1,\tau_2}})(x,y)> \frac{1}{100} \}, \nonumber\\
& Enl_{\tau_1,\tau_2}(Enl(\Omega^{\tau_1,\tau_2})) := \{(x,y) \in \mathbb{R}^2: MM(\chi_{Enl(\Omega^{\tau_1,\tau_2})}(x,y)> \frac{1}{2^{2\tau_1+ 2\tau_2}} \},
\end{align*}
and finally
$$
\mathbf{\Omega}:= \bigcup_{\tau_1,\tau_2\in \mathbb{N}} Enl_{\tau_1,\tau_2}(Enl(\Omega^{\tau_1,\tau_2})) .
$$
\begin{remark}
It is not difficult to verify that $|\mathbf{\Omega}| \ll 1$ given that $C_1, C_2$ and $C_3$ are sufficiently large. One can then define $E' := E \setminus \mathbf{\Omega}$, where $|E'| \sim |E|$ as desired. For such $E'$, one has the following simple but crucial observation.
\end{remark}
\begin{obs} \label{start_point}
For any fixed $\tau_1, \tau_2 \in \mathbb{N}$ and any dyadic rectangle $I \times J$,
$$
\langle \chi_{E'},\phi_{I}^{3,\tau_1} \otimes \phi^{3, \tau_2}_{J}  \rangle \neq 0 
$$
implies that 
$$
I \times J \cap \left(Enl(\Omega^{\tau_1,\tau_2})\right)^c \neq \emptyset.
$$
\end{obs}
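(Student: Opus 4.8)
The plan is to exploit the compact support of the bump functions produced by Lemma \ref{decomp_compact} together with the double-enlargement structure built into $\mathbf{\Omega}$. By Lemma \ref{decomp_compact} one has $\mathrm{supp}(\phi_I^{3,\tau_1}) \subseteq 2^{\tau_1}I$ and $\mathrm{supp}(\phi_J^{3,\tau_2}) \subseteq 2^{\tau_2}J$, so the tensor product $\phi_I^{3,\tau_1} \otimes \phi_J^{3,\tau_2}$ is supported on the dilated rectangle $R^\ast := 2^{\tau_1}I \times 2^{\tau_2}J$. Hence the hypothesis $\langle \chi_{E'}, \phi_I^{3,\tau_1} \otimes \phi_J^{3,\tau_2}\rangle \neq 0$ forces $E' \cap R^\ast \neq \emptyset$; since $E' = E \setminus \mathbf{\Omega}$, this produces a point $(x_0,y_0) \in R^\ast$ with $(x_0,y_0) \notin \mathbf{\Omega}$, and in particular $(x_0,y_0) \notin Enl_{\tau_1,\tau_2}(Enl(\Omega^{\tau_1,\tau_2}))$, i.e. $MM(\chi_{Enl(\Omega^{\tau_1,\tau_2})})(x_0,y_0) \leq 2^{-2\tau_1 - 2\tau_2}$.

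Next I would argue by contradiction. Suppose $I \times J \cap (Enl(\Omega^{\tau_1,\tau_2}))^c = \emptyset$, i.e. $I \times J \subseteq Enl(\Omega^{\tau_1,\tau_2})$, so that $\chi_{Enl(\Omega^{\tau_1,\tau_2})} \geq \chi_{I \times J}$. Averaging over $R^\ast$, which contains both $(x_0,y_0)$ and $I \times J$, gives
\begin{equation*}
MM(\chi_{Enl(\Omega^{\tau_1,\tau_2})})(x_0,y_0) \geq \frac{1}{|R^\ast|}\int_{R^\ast} \chi_{Enl(\Omega^{\tau_1,\tau_2})} \geq \frac{|I \times J|}{|R^\ast|} = \frac{1}{2^{\tau_1 + \tau_2}} > \frac{1}{2^{2\tau_1 + 2\tau_2}},
\end{equation*}
the last inequality holding whenever $\tau_1 + \tau_2 \geq 1$; this contradicts the bound from the previous paragraph, so $I \times J \cap (Enl(\Omega^{\tau_1,\tau_2}))^c \neq \emptyset$. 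The remaining case $\tau_1 = \tau_2 = 0$ is immediate, since there $\phi_I^{3,0} \otimes \phi_J^{3,0}$ is already supported on $I \times J$, so $E' \cap (I\times J) \neq \emptyset$ directly forces $I\times J \cap (Enl(\Omega^{0,0}))^c \neq \emptyset$ once one notes $Enl(\Omega^{0,0}) \subseteq \mathbf{\Omega}$.

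The one point requiring care — the mild obstacle — is the averaging step: $R^\ast = 2^{\tau_1}I \times 2^{\tau_2}J$ is not itself a dyadic rectangle, so one must either read the $MM$ in the definition of $Enl(\cdot)$ as the (non-dyadic) strong maximal operator, or cover $R^\ast$ by a bounded number of dyadic rectangles of comparable side lengths and absorb the resulting numerical loss into the slack between $2^{\tau_1+\tau_2}$ and $2^{2\tau_1+2\tau_2}$. One must also verify $|\mathbf{\Omega}| \ll 1$, which follows from the $L^2$-boundedness of $MM$, the Hardy--Littlewood maximal operator, and the square function (Theorem \ref{maximal-square}), since $|Enl_{\tau_1,\tau_2}(Enl(\Omega^{\tau_1,\tau_2}))| \lesssim 2^{2\tau_1+2\tau_2}|Enl(\Omega^{\tau_1,\tau_2})|$ and $|Enl(\Omega^{\tau_1,\tau_2})|$ decays like a negative power of $C_1 2^{10\tau_1}$, $C_2 2^{10\tau_2}$, $C_3 2^{10\tau_1+10\tau_2}$, which is summable once $C_1, C_2, C_3$ are large. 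With Observation \ref{start_point} in hand, the Haar-model argument of Section \ref{section_thm_haar_fixed} transfers essentially verbatim to each piece $\Lambda^{\tau_1,\tau_2}_{\text{flag}^{\#1}\otimes\text{flag}^{\#2}}$, with all constants worsened only by a fixed power of $2^{\tau_1+\tau_2}$, which is then summed against the gain $2^{-100(\tau_1+\tau_2)}$ to obtain (\ref{linear_fix_fourier}).
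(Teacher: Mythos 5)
Your proof follows the paper's argument in essence: you use the compact support of $\phi_I^{3,\tau_1}\otimes\phi_J^{3,\tau_2}$ inside $R^\ast = 2^{\tau_1}I\times 2^{\tau_2}J$, and the observation that $I\times J\subseteq Enl(\Omega^{\tau_1,\tau_2})$ forces $R^\ast$ into $Enl_{\tau_1,\tau_2}(Enl(\Omega^{\tau_1,\tau_2}))\subseteq\mathbf{\Omega}$, from which $E'\cap R^\ast=\emptyset$ follows. The paper states this as a contrapositive; you rephrase it as a forward argument with a small contradiction in the middle, which is the same content.

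Two remarks. First, your treatment of $\tau_1=\tau_2=0$ is not justified as written: the claimed inclusion $Enl(\Omega^{0,0})\subseteq\mathbf{\Omega}$ does not follow from the definitions. At that level, $Enl_{0,0}(Enl(\Omega^{0,0}))=\{MM\chi_{Enl(\Omega^{0,0})}>1\}=\emptyset$, and the components of $\mathbf{\Omega}$ at larger $\tau$ are built from the \emph{smaller} sets $Enl(\Omega^{\tau_1,\tau_2})$ (since $\Omega^{\tau_1,\tau_2}\subseteq\Omega^{0,0}$ for $\tau_1,\tau_2\geq 1$), so none of them need contain $Enl(\Omega^{0,0})$. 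That said, this edge case is equally a latent issue in the paper's own proof (the stated inequality $|R^\ast\cap Enl(\Omega^{0,0})|>|R^\ast|$ is vacuous), and is usually resolved either by taking $\tau\geq 1$ in Lemma~\ref{decomp_compact} or by inserting a small extra constant in the threshold defining $Enl_{\tau_1,\tau_2}(\cdot)$ (e.g.\ $\frac{1}{100\cdot 2^{2\tau_1+2\tau_2}}$, consistent with the Haar-case threshold $\frac{1}{100}$). Second, your observation that $R^\ast$ is not a dyadic rectangle, and that one must either interpret $MM$ as the strong maximal operator or cover $R^\ast$ by $O(1)$ dyadic rectangles, is a legitimate technical point that the paper does not address; the slack between $2^{\tau_1+\tau_2}$ and $2^{2\tau_1+2\tau_2}$ absorbs the resulting bounded loss, as you note.
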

\begin{proof}
We will prove the equivalent contrapositive statement. Suppose that $I \times J \cap \left(Enl(\Omega^{\tau_1,\tau_2})\right)^c = \emptyset$, or equivalently $I \times J \subseteq Enl(\Omega^{\tau_1,\tau_2})$, then 
$$|(2^{\tau_1}I \times 2^{\tau_2}J) \cap Enl(\Omega^{\tau_1,\tau_2})| > \frac{1}{2^{2\tau_1+2\tau_2}}|2^{\tau_1}I \times 2^{\tau_2}J|,
$$
which infers that
$$
(2^{\tau_1}I \times 2^{\tau_2}J) \subseteq Enl_{\tau_1,\tau_2}(Enl(\Omega^{\tau_1,\tau_2})) \subseteq \mathbf{\Omega}.
$$
Since $E' \cap \mathbf{\Omega} = \emptyset$, one can conclude that 
$$
\langle \chi_{E'},\phi_{I}^{3,\tau_1} \otimes \phi^{3, \tau_2}_{J}  \rangle = 0,
$$
which completes the proof of the observation.
\end{proof}

\begin{remark}\label{st_general}
Observation \ref{start_point} settles a starting point for the stopping-time decompositions with fixed parameters $\tau_1$ and $\tau_2$. More precisely, suppose that $\mathcal{R}$ is an arbitrary finite collection of dyadic rectangles.  Then with fixed $\tau_1, \tau_2 \in \mathbb{N}$, let $\displaystyle \mathcal{R} := \bigcup_{n_1, n_2 \in \mathbb{Z}}\mathcal{I}^{\tau_1}_{n_1} \times \mathcal{J}^{\tau_2}_{n_2}$ denote the \textit{tensor-type stopping-time decomposition I - level sets} introduced in Section \ref{section_thm_haar_fixed_tensor}. Now $\mathcal{I}^{\tau_1}_{n_1}$ and $\mathcal{J}^{\tau_2}_{n_2}$ are defined in the same way as $\mathcal{I}_{n_1}$ and $\mathcal{J}_{n_2}$ with $C_1$ and $C_2$ replaced by $C_1^{\tau_1}$ and $C_2^{\tau_2}$. By the argument for Observation \ref{obs_indice} in Section \ref{section_thm_haar_fixed_tensor_1d_level}, one can deduce the same conclusion that if for any $I \times J \in \mathcal{R}$, $I \times J \cap \mathbf{\Omega}^c \neq \emptyset$, then $n_1 + n_2 < 0$.
\end{remark}
Due to Remark \ref{st_general}, one can perform the stopping-time decompositions specified in Section \ref{section_thm_haar_fixed} with $C_1, C_2$ and $C_3$ replaced by $C_1^{\tau_1}$, $C_2^{\tau_2}$ and $C_3^{\tau_1,\tau_2}$ respectively and adopt the argument without issues. The difference that lies in the resulting estimate is the appearance of $O(2^{50\tau_1})$, $O(2^{50\tau_2})$ and $O(2^{50\tau_1+50\tau_2})$, which is not of concerns as illustrated in (\ref{linear_fix_fourier}). The only ``black-box'' used in Section \ref{section_thm_haar_fixed} is the local size estimates (Proposition \ref{size_cor}), which needs a more careful treatment and will be explored in the next subsection.


\subsubsection{Local size estimates and generalization of H(II)}
We will focus on the estimates of $\text{size}(\langle B^{\#_1}_I, \vphi_I\rangle)_I$, whose argument applies to $\text{size}(\langle \tilde{B}^{\#_2}_J, \vphi_J\rangle)_J$ as well. It suffices to prove Lemma \ref{B_size} in the Fourier case and the local size estimates described in Proposition \ref{size_cor} follow immediately. We will verify Lemma \ref{B_size} for the bilinear operator $B^{\#_1}_I = B^{\#_1}_{\mathcal{K},I} $ directly without going through a general formalization in terms of $\mathcal{Q}$, $P$ and $v_i$, $i = 1,2$ and then specifying $\mathcal{Q} := \mathcal{K}$, $P:= I$ and $v_i := f_i$ for $i= 1,2$. One first attempts to apply Lemma \ref{decomp_compact} to create a setting of compactly supported bump functions so that the same localization described in  Section \ref{section_size_energy} can be achieved. Suppose that for any $I \in \mathcal{I}'$, $I \cap S \neq \emptyset$. Then for some $I_0 \in \mathcal{I}'$ such that $I_0 \cap S \neq \emptyset$,
\begin{align*}
\text{size}_{\mathcal{I}'}((\langle B_I^{\#_1}(f_1,f_2), \vphi_I \rangle)_{I \in \mathcal{I}'} =&  \frac{|\langle B^{\#_1}_{I_0}(f_1,f_2),\vphi_{I_0}^1 \rangle|}{|I_0|^{\frac{1}{2}}} =\sum_{K: |K| \sim 2^{\#_1}|I_0|} \frac{1}{|K|^{\frac{1}{2}}} \langle f_1, \phi^1_K \rangle \langle f_2, \phi^2_K \rangle  \langle \vphi^{1}_{I_0}, \phi^{3}_K\rangle . \nonumber 
\end{align*}
One can now invoke Lemma \ref{decomp_compact} to decompose the bump functions $\vphi^1_{I_0}$  and $\phi^3_K$ to obtain
\begin{align} \label{form_f}
 \frac{1}{|I_0|^{\frac{1}{2}}}\bigg|\sum_{\tau_3,\tau_4 \in \mathbb{N}}2^{-100 (\tau_3+\tau_4)}\sum_{K: |K| \sim 2^{\#_1}|I_0|} \frac{1}{|K|^{\frac{1}{2}}} \langle f_1, \phi^1_K \rangle \langle f_2, \phi^2_K \rangle  \langle \vphi^{1,\tau_3}_{I_0}, \phi^{3,\tau_4}_K\rangle\bigg|, 
\end{align}
where $\vphi^{1,\tau_3}_{I_0}$ is an $L^{2}$-normalized bump function adapted to $I_0$ with $\text{supp}(\vphi^{1,\tau_3}_{I_0}) \subseteq 2^{\tau_3}I_0$, and $\phi^{3,\tau_4}_K$ is an $L^2$-normalized bump function with $\text{supp}(\phi^{3,\tau_4}_K)\subseteq 2^{\tau_4}K$. With the property of being compactly supported, one has that if 
$$
\langle \vphi^{1,\tau_3}_{I_0}, \phi^{3, \tau_4}_K\rangle \neq 0,
$$
then 
$$2^{\tau_3} I_0 \cap 2^{\tau_4}K \neq \emptyset.$$
One also recalls that $I_0 \cap S \neq \emptyset$ and $|I_0| \leq |K|$, it follows that $\text{dist}(K,I)$\footnote{For any measurable sets $A,B \subseteq \mathbb{R}$, $\text{dist}(A,B) := \inf\{|a-b|: a \in A, b \in B \}$.} satisfies
\begin{equation} \label{geometry_fourier}
\frac{\text{dist}(K,S)}{|K|} \lesssim 2^{\tau_3 + \tau_4}.
\end{equation} 
Therefore, one can apply (\ref{geometry_fourier}) and rewrite (\ref{form_f}) as
\begin{align} \label{size_B_f}
&\sum_{\tau_3,\tau_4 \in \mathbb{N}} 2^{-100(\tau_3+\tau_4)} \frac{1}{|I_0|}  \sum_{K:|K|\sim 2^{\#_1}|I_0|}\frac{|\langle f_1, \phi_K^1 \rangle|}{|K|^{\frac{1}{2}}} \frac{|\langle f_2, \phi_K^2 \rangle|}{|K|^{\frac{1}{2}}} |\langle |I_0|^{\frac{1}{2}}\vphi^{1,\tau_3}_{I_0}, |K|^{\frac{1}{2}}\phi_K^3  \rangle| \nonumber \\
\leq & \sum_{\tau_3,\tau_4 \in \mathbb{N}} 2^{-100(\tau_3+\tau_4)}\frac{1}{|I_0|} \sup_{K:\frac{\text{dist}(K,S)}{|K|} \lesssim 2^{\tau_3 + \tau_4}}\frac{|\langle f_1, \phi_K^1 \rangle|}{|K|^{\frac{1}{2}}} \sup_{K:\frac{\text{dist}(K,S)}{|K|} \lesssim 2^{\tau_3 + \tau_4}}\frac{|\langle f_2, \phi_K^2 \rangle|}{|K|^{\frac{1}{2}}} \sum_{K:|K|\sim 2^{\#_1}|I_0|}|\langle |I_0|^{\frac{1}{2}}\vphi^{1,\tau_3}_{I_0}, |K|^{\frac{1}{2}}\phi_K^3  \rangle|.\nonumber \\
\end{align}
One notices that
\begin{align} \label{size_f_1}
& \sup_{K:\frac{\text{dist}(K,S)}{|K|} \lesssim 2^{\tau_3 + \tau_4}}\frac{|\langle f_1, \phi_K^1 \rangle|}{|K|^{\frac{1}{2}}} \lesssim 2^{\tau_3+\tau_4}\sup_{K:\frac{\text{dist}(K,S)}{|K|} \lesssim 2^{\tau_3 + \tau_4}}\frac{|\langle f_1, \phi_{2^{\tau_3+\tau_4}K}^1 \rangle|}{|2^{\tau_3+\tau_4}K|^{\frac{1}{2}}} \leq  2^{\tau_3+ \tau_4} \sup_{K' \cap  S \neq \emptyset} \frac{|\langle f_1, \phi_{K'}^1 \rangle|}{|K'|^{\frac{1}{2}}},
\end{align}
where $K' := 2^{\tau_3+\tau_4}K$ represents the interval with the same center as $K$ and the radius $2^{\tau_3 + \tau_4}|K|.$
Similarly,
\begin{equation}  \label{size_f_2}
 \sup_{K:\frac{\text{dist}(K,S)}{|K|} \lesssim 2^{\tau_3 + \tau_4}}\frac{|\langle f_2, \phi_K^2 \rangle|}{|K|^{\frac{1}{2}}} \lesssim 2^{\tau_3+ \tau_4} \sup_{K' \cap  S \neq \emptyset} \frac{|\langle f_2, \phi_{K'}^2 \rangle|}{|K'|^{\frac{1}{2}}}.
\end{equation}
Moreoever, 
\begin{align} \label{disjoint}
& \sum_{K:|K|\sim 2^{\#_1}|I_0|}|\langle |I_0|^{\frac{1}{2}}\vphi^{1,\tau_3}_{I_0}, |K|^{\frac{1}{2}}\phi_K^3  \rangle| \leq  \sum_{K:|K|\sim 2^{\#_1}|I_0|}\frac{1}{\left(1+\frac{\text{dist}(K,I_0)}{|K|}\right)^{100}} |I_0| \leq  |I_0| \sum_{k \in \mathbb{N}}k^{-100} \leq  |I_0|.
\end{align}
By combining (\ref{size_f_1}), (\ref{size_f_2}) and (\ref{disjoint}), one can estimate (\ref{size_B_f}) by
\begin{align*}
& \frac{1}{|I_0|}\sum_{\tau_3,\tau_4 \in \mathbb{N}}2^{-100 \tau_3}2^{-100{\tau_4}} 2^{2(\tau_3+ \tau_4)}  \sup_{K' \cap  S \neq \emptyset} \frac{|\langle f_1, \phi_{K'}^1 \rangle|}{|K'|^{\frac{1}{2}}}  \sup_{K' \cap  S \neq \emptyset}\frac{|\langle f_2, \phi_{K'}^2 \rangle|}{|K'|^{\frac{1}{2}}} |I_0|\nonumber \\
\lesssim & \sup_{K' \cap  S \neq \emptyset} \frac{|\langle f_1, \phi_{K'}^1 \rangle|}{|K'|^{\frac{1}{2}}}  \sup_{K' \cap  S \neq \emptyset}\frac{|\langle f_2, \phi_{K'}^2 \rangle|}{|K'|^{\frac{1}{2}}},
\end{align*}
which is exactly the same estimate for the corresponding term in Lemma \ref{B_size}. 
This completes the proof of Theorem \ref{thm_weak_mod} 
for $\Pi_{\text{flag}^{\#_1} \otimes \text{flag}^{\#_2}}$.
\subsection{Generalized Proof of Theorem \ref{thm_weak_mod} for $\Pi_{\text{flag}^0 \otimes \text{flag}^0}$} \label{section_fourier_0}
\subsubsection{Local energy estimates and generalization of H(III)}
The delicacy of the argument for $\Pi_{\text{flag}^0 \otimes \text{flag}^0}$ with the lacunary family $(\phi_K^3)_K$ lies in the localization and the application of the biest trick for the energy estimates. It is worthy to note that Lemma \ref{decomp_compact} fails to generate the local energy estimates. In particular, one can decompose 
$$
\langle B_I(f_1,f_2), \vphi_I^1\rangle =
\sum_{\tau_3,\tau_4 \in \mathbb{N}}2^{-100 \tau_3}2^{-100{\tau_4}}\sum_{K: |K| \geq |I|} \frac{1}{|K|^{\frac{1}{2}}} \langle f_1, \phi^1_K \rangle \langle f_2, \phi^2_K \rangle  \langle \vphi^{1,\tau_3}_{I}, \psi^{3,\tau_4}_K\rangle,
$$
where $B_I$ is a bilinear operator defined in (\ref{B_local_fourier_simple}) and (\ref{B_local0_haar}). Without loss of generality, we further assume that $(\phi^1_K)_{K \in \mathcal{K}}$ is non-lacunary and $(\phi^2_K)_{K \in \mathcal{K}}$ is lacunary.
Then by the geometric observation (\ref{geometry_fourier}) implied by the non-degenerate condition $ \langle \vphi^{1,\tau_3}_{I}, \psi^{3,\tau_4}_K\rangle \neq 0$,
\begin{equation} \label{loc_attempt_fourier}
\langle B_I(f_1,f_2), \vphi_I^1\rangle =
\sum_{\tau_3,\tau_4 \in \mathbb{N}}2^{-100 \tau_3}2^{-100{\tau_4}}\sum_{\substack{K:|K| \geq |I| \\ K: \frac{\text{dist}(K,I)}{|K|} \lesssim 2^{\tau_3 + \tau_4}}} \frac{1}{|K|^{\frac{1}{2}}} \langle f_1, \vphi^1_K \rangle \langle f_2, \psi^2_K \rangle  \langle \vphi^{1,\tau_3}_{I}, \psi^{3,\tau_4}_K\rangle.
\end{equation}
The localization has been obtained in (\ref{loc_attempt_fourier}). Nonetheless, for each fixed $\tau_3$ and $\tau_4$, one cannot equate the terms
\begin{equation*}
\sum_{\substack{K:|K| \geq |I| \\ K: \frac{\text{dist}(K,I)}{|K|} \lesssim 2^{\tau_3 + \tau_4}}} \frac{1}{|K|^{\frac{1}{2}}} \langle f_1, \vphi^1_K \rangle \langle f_2, \psi^2_K \rangle  \langle \vphi^{1,\tau_3}_{I}, \psi^{3,\tau_4}_K\rangle \neq \sum_{\substack{ \\ K: \frac{\text{dist}(K,I)}{|K|} \lesssim 2^{\tau_3 + \tau_4}}} \frac{1}{|K|^{\frac{1}{2}}} \langle f_1, \vphi^1_K \rangle \langle f_2, \psi^2_K \rangle  \langle \vphi^{1,\tau_3}_{I}, \psi^{3,\tau_4}_K\rangle.
\end{equation*}
The reason is that $\vphi_I^{1,\tau_3}$ and $\psi_K^{3,\tau_4}$ are general $L^2$-normalized bump functions instead of Haar wavelets and $L^2$-normalized indicator functions. The ``if and only if'' condition 
\begin{equation} \label{haar_biest_tri}
\langle \vphi^{1,\tau_3}_{I}, \psi^{3,\tau_4}_K\rangle \neq 0 \iff |K| \geq |I|
\end{equation}
is no longer valid and is insufficient to derive the biest trick. As a consequence, one cannot simply apply Lemma \ref{decomp_compact} to localize the energy term and compare
$$
\text{energy}((\langle B^{\tau_4}_I(f_1,f_2), \vphi_I^{1,\tau_3} \rangle)_{I \cap S \neq \emptyset})
$$
with
$$
\text{energy}((\langle B^{S,\tau_3,\tau_4}_{\mathcal{K},\text{lac}}(f_2,f_2), \vphi_{I,\tau_3}^1 \rangle)_{I \cap S \neq \emptyset}),
$$
where
$$
B^{\tau_4}_I(f_1,f_2):= \sum_{\substack{K: K \in \mathcal{K} \\ |K| \geq |I| \\ }} \frac{1}{|K|^{\frac{1}{2}}} \langle f_1, \vphi^1_K \rangle \langle f_2, \psi^2_K \rangle \psi^{3,\tau_4}_K
$$
and 
$$
B^{S,\tau_3,\tau_4}_{\mathcal{K},\text{lac}}(f_2,f_2):= \sum_{\substack{K \in \mathcal{K} \\ K: \frac{\text{dist}(K,S)}{|K|} \lesssim 2^{\tau_3 + \tau_4}}} \frac{1}{|K|^{\frac{1}{2}}} \langle f_1, \vphi^1_K \rangle \langle f_2, \psi^2_K \rangle  \psi^{3,\tau_4}_K.
$$
The biest trick is crucial in the local energy estimates which shall be evident from the previous analysis. In order to use the biest trick in the Fourier case, one needs to exploit the compact Fourier supports instead of the compact supports for spatial variables in the Haar model. 
One first recalls that the Littlewood-Paley decomposition imposes that $\text{supp}(\f{\vphi}_I^1) \subseteq [-\frac{1}{4}|I|^{-1}, \frac{1}{4}|I|^{-1}]$
and $\text{supp}(\f{\psi}_K^3) \subseteq [-4|K|^{-1}, -\frac{1}{4}|K|^{-1}] \cup [\frac{1}{4}|K|^{-1},4|K|^{-1}]$. The interaction between the two Fourier supports is shown in the following figure:  
\vskip.25 in
 \begin{tikzpicture}[scale=0.3, decoration=brace]
    \draw (-20,0)-- (20,0); 
    \foreach \x/\xtext in {-20/$-\frac{1}{4}|I|^{-1}$, -10/$-4|K|^{-1}$, -5/$-\frac{1}{4}|K|^{-1}$, 0/0,5/$\frac{1}{4}|K|^{-1}$,10/$4|K|^{-1}$,20/$\frac{1}{4}|I|^{-1}$} {
        \draw (\x,0.5) -- (\x,-0.5) node[below] {\xtext};
    }
     \draw[decorate, yshift=2ex]  (-10,0) -- node[above=0.4ex] {}  (-5,0);
    \draw[decorate, yshift=2ex]  (5,0) -- node[above=0.4ex] {}  (10,0);
    \draw[decorate, yshift=12ex]  (-20,0) -- node[above=0.8ex] {}  (20,0);
     \draw (-20,-10)-- (20,-10); 
    \foreach \x/\xtext in {-5/$-\frac{1}{4}|I|^{-1}$, -20/$-4|K|^{-1}$, -10/$-\frac{1}{4}|K|^{-1}$, 0/0,10/$\frac{1}{4}|K|^{-1}$,20/$4|K|^{-1}$,5/$\frac{1}{4}|I|^{-1}$} {
        \draw (\x,-9.5) -- (\x,-10.5) node[below] {\xtext};
    }
     \draw[decorate, yshift=2ex]  (-20,-10) -- node[above=0.4ex] {}  (-10,-10);
    \draw[decorate, yshift=2ex]  (10,-10) -- node[above=0.4ex] {}  (20,-10);
    \draw[decorate, yshift=2ex]  (-5,-10) -- node[above=0.4ex] {}  (5,-10);
 \end{tikzpicture}
 \newline
 \noindent
As one may notice, 
\begin{equation} \label{biest_fourier}
\langle \vphi^{1}_{I}, \psi^{3}_K \rangle \neq 0 \iff  |K| \geq |I|,
\end{equation}
which yields the biest trick as desired. Meanwhile, we would like to attain some generalized localization for the energy. In particular, fix any $n_1, m_1$, define the set
\begin{equation} \label{level_set_fourier}
\mathcal{U}_{n_1,m_1} := \{x: Mf_1(x) \lesssim C_12^{n_1}|F_1|\} \cap  \{x: Mf_2(x) \lesssim C_12^{m_1}|F_2|\}.
\end{equation}
Suppose that $\mathcal{I}'$ is a finite collection of dyadic intervals satisfying 
\begin{equation} \label{condition_collection}
I \cap \mathcal{U}_{n_1,m_1} \neq \emptyset, \ \ \text{for any} \ \ I \in \mathcal{I}'. 
\end{equation}
Then one would like to reduce 
$$\text{energy}_{\mathcal{I}'}(\langle B_I(f_1,f_2), \vphi_I\rangle)_{I \in \mathcal{I}'}$$ 
to 
$$\text{energy}_{\mathcal{I}'}(\langle B^{n_1,m_1,0}_{\mathcal{K}, \text{generic lac}}(f_1,f_2), \vphi_I\rangle)_{I \in \mathcal{I}'}$$ (or $\text{energy}(\langle B^{n_1,m_1,0}_{\mathcal{K}, \text{generic nonlac}}(f_1,f_2), \vphi_I\rangle)_{I}$ depending on whether the third family of bump functions defining $B_I$ is lacunary or not), where
\begin{align}
B^{n_1,m_1,0}_{\mathcal{K},\text{generic lac}}(f_1,f_2) := &  \sum_{\substack{K \in \mathcal{K}\\ K \cap \mathcal{U}_{n_1,m_1} \neq \emptyset}} \frac{1}{|K|^{\frac{1}{2}}} \langle f_1, \vphi_K^1\rangle \langle f_2, \psi_K^2 \rangle \psi_K^3, \\
B^{n_1,m_1,0}_{\mathcal{K},\text{generic nonlac}}(f_1,f_2) := &  \sum_{\substack{K \in \mathcal{K}\\ K \cap \mathcal{U}_{n_1,m_1} \neq \emptyset}} \frac{1}{|K|^{\frac{1}{2}}} \langle f_1, \psi_K^1\rangle \langle f_2, \psi_K^2 \rangle \vphi_K^3.
\end{align}
One observes that since $\psi_K^3$ and $\vphi_I^1$ are not compactly supported in $K$ and $I$ respectively, one cannot deduce that $K \cap \mathcal{U}_{n_1,m_1}  \neq \emptyset$ given that $I \cap \mathcal{U}_{n_1,m_1} \neq \emptyset$ and $|K| \geq |I|$. The localization in the Fourier case is attained in a more analytic fashion. One decomposes the sum 
\begin{align} \label{d_0}
&\frac{|\langle B_I(f_1,f_2), \vphi^1_I \rangle|}{|I|^{\frac{1}{2}}} \nonumber\\
= & \frac{1}{|I|^{\frac{1}{2}}}\bigg|\sum_{K: |K| \geq |I|} \frac{1}{|K|^{\frac{1}{2}}}\langle f_1, \vphi_K^1 \rangle \langle f_2, \psi_K^2 \rangle \langle \vphi^1_I, \psi_K^3 \rangle\bigg| \nonumber \\
= & \frac{1}{|I|^{\frac{1}{2}}}\bigg|\sum_{d >0} \sum_{\substack{|K| \geq |I| \\ K \in \mathcal{K}^{n_1,m_1}_{d}}} \frac{1}{|K|^{\frac{1}{2}}}\langle f_1, \vphi_K^1 \rangle \langle f_2, \psi_K^2 \rangle \langle \vphi^1_I, \psi_K^3 \rangle + \sum_{\substack{|K| \geq |I| \\ K \in \mathcal{K}^{n_1,m_1}_0}}\frac{1}{|K|^{\frac{1}{2}}}\langle f_1, \vphi_K^1 \rangle \langle f_2, \psi_K^2 \rangle \langle \vphi_I^1, \psi_K^3 \rangle \bigg|,
\end{align}
where for $d > 0$,
$$
\mathcal{K}_d^{n_1,m_1} := \{K: 1 + \frac{\text{dist}(K,\mathcal{U}_{n_1,m_1})}{|K|} \sim 2^{d} \},
$$
and 
$$
\mathcal{K}_0^{n_1,m_1} := \{K : K \cap \mathcal{U}_{n_1,m_1} \neq \emptyset\}.
$$
\noindent
Ideally, one would like to ''omit'' the former term, which is reasonable once 
\begin{equation} \label{energy_needed}
\bigg|\sum_{d >0} \sum_{\substack{|K| \geq |I| \\ K \in \mathcal{K}^{n_1,m_1}_{d}}} \frac{1}{|K|^{\frac{1}{2}}}\langle f_1, \vphi_K^1 \rangle \langle f_2, \psi_K^2 \rangle \langle \vphi^1_I, \psi_K^3 \rangle\bigg| \ll \bigg|\sum_{\substack{|K| \geq |I| \\ K \in \mathcal{K}^{n_1,m_1}_0}}\frac{1}{|K|^{\frac{1}{2}}}\langle f_1, \vphi_K^1 \rangle \langle f_2, \psi_K^2 \rangle \langle \vphi^1_I, \psi_K^3 \rangle \bigg|
\end{equation}
so that one can apply the previous argument discussed in Section \ref{section_thm_haar_fixed}. In the other case when
\begin{equation*}
\bigg|\sum_{d >0} \sum_{\substack{|K| \geq |I| \\ K \in \mathcal{K}^{n_1,m_1}_{d}}} \frac{1}{|K|^{\frac{1}{2}}}\langle f_1, \vphi_K^1 \rangle \langle f_2, \psi_K^2 \rangle \langle \vphi^1_I, \psi_K^3 \rangle \bigg| \gtrsim \bigg|\sum_{\substack{|K| \geq |I| \\ K \in \mathcal{K}^{n_1,m_1}_0}}\frac{1}{|K|^{\frac{1}{2}}}\langle f_1, \vphi_K^1 \rangle \langle f_2, \psi_K^2 \rangle \langle \vphi^1_I, \psi_K^3 \rangle \bigg|,
\end{equation*}
local energy estimates are not necessary to achieve the result. The following lemma generates estimates for the former term and provides a guideline about the separation of cases. The notation in the lemma is consistent with the previous discussion. 
\begin{lemma} \label{en_loc}
Suppose that $d >0$ and $I$ is a fixed dyadic interval such that $I \cap \mathcal{U}_{n_1,m_1} \neq \emptyset$ for $\mathcal{U}_{n_1,m_1}$ defined in (\ref{level_set_fourier}). Then
$$
\frac{1}{|I|^{\frac{1}{2}}}\bigg| \sum_{\substack{|K| \geq |I| \\ K \in \mathcal{K}_{d}^{n_1,m_1}}} \frac{1}{|K|^{\frac{1}{2}}}\langle f_1, \vphi_K^1 \rangle \langle f_2, \psi_K^2 \rangle \langle \vphi^1_I, \psi_K^3 \rangle \bigg| \lesssim 2^{-Nd}(C_12^{n_1}|F_1|)^{\alpha_1}  (C_12^{m_1}|F_2|)^{\beta_1}, 
$$
for any $0 \leq \alpha_1,\beta_1 \leq 1$ and some $N \gg 1$.
\end{lemma}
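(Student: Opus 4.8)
The plan is to prove Lemma~\ref{en_loc} by exploiting the decay coming from the mismatch between the location of a Whitney-type interval $K$ far away from the level set $\mathcal{U}_{n_1,m_1}$ and the location of $I$, which meets $\mathcal{U}_{n_1,m_1}$. The key input is that $\langle \vphi_I^1, \psi_K^3 \rangle$, while nonzero whenever $|K| \geq |I|$ by the biest relation \eqref{biest_fourier}, decays rapidly in the ratio $\mathrm{dist}(K,I)/|K|$: concretely $|\langle \vphi_I^1, \psi_K^3 \rangle| \lesssim |I|^{\frac12}|K|^{-\frac12}(1 + \mathrm{dist}(K,I)/|K|)^{-M}$ for any large $M$, since both are $L^2$-normalized bump functions adapted to $I$ and $K$. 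Because $K \in \mathcal{K}_d^{n_1,m_1}$ forces $1 + \mathrm{dist}(K,\mathcal{U}_{n_1,m_1})/|K| \sim 2^d$ and $I \cap \mathcal{U}_{n_1,m_1} \neq \emptyset$, we get $1 + \mathrm{dist}(K,I)/|K| \gtrsim 2^d$, which produces a factor $2^{-Md}$ that we will split into one piece $2^{-Nd}$ to keep and a remaining $2^{-(M-N)d}$ to absorb the summation over the annulus $\mathcal{K}_d^{n_1,m_1}$ at each fixed scale and over scales $|K| \geq |I|$.

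First I would fix $I$ with $I \cap \mathcal{U}_{n_1,m_1} \neq \emptyset$ and write the inner sum as $\sum_{j \geq 0} \sum_{\substack{|K| = 2^j|I| \\ K \in \mathcal{K}_d^{n_1,m_1}}}$. For each scale $j$, the intervals $K$ of length $2^j|I|$ in $\mathcal{K}_d^{n_1,m_1}$ are disjoint and all lie within distance $\sim 2^{d}|K|$ of $\mathcal{U}_{n_1,m_1}$, hence within distance $\sim 2^d |K|$ of $I$; there are $O(2^d)$ of them that interact nontrivially with $I$ in the sense of having $\langle \vphi_I^1, \psi_K^3\rangle$ not already negligibly small. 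For each such $K$, I would estimate $|\langle f_1, \vphi_K^1\rangle| / |K|^{\frac12} \lesssim M f_1(x_K)$ for a point $x_K \in K$, and since $K \in \mathcal{K}_d^{n_1,m_1}$ we may enlarge $K$ to $\tilde K := 2^{Cd}K$ (the dilate with the same center) so that $\tilde K \cap \mathcal{U}_{n_1,m_1} \neq \emptyset$; then by the definition of $\mathcal{U}_{n_1,m_1}$ and the standard control $Mf_1 \lesssim 2^{Cd} \, \mathrm{ave}_{\tilde K}|f_1|$-type comparison, $|\langle f_1, \vphi_K^1 \rangle|/|K|^{\frac12} \lesssim 2^{Cd} \cdot C_1 2^{n_1}|F_1|$, and likewise $|\langle f_2, \psi_K^2 \rangle|/|K|^{\frac12} \lesssim 2^{Cd}\, C_1 2^{m_1}|F_2|$. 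Combining these trivial size bounds (and using $|\langle f_i,\cdot\rangle|/|\cdot|^{\frac12} \lesssim 1$ since $|f_i| \leq \chi_{F_i}$, to interpolate to the exponents $\alpha_1,\beta_1$) with the rapid decay of $\langle \vphi_I^1,\psi_K^3\rangle$ and summing over $K$ at fixed scale and then over $j \geq 0$, the geometric series in $j$ converges and the net dependence on $d$ is $2^{-Md} \cdot 2^{Cd} \cdot 2^{d}$, which is $2^{-Nd}$ for $N$ as large as we wish provided $M$ was chosen large enough (recall bump functions are adapted with as many derivatives / as large $M$ as needed, per Definition~\ref{bump}).

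The main obstacle I anticipate is the bookkeeping in the comparison $|\langle f_i, \phi_K \rangle|/|K|^{\frac12} \lesssim 2^{Cd} C_1 2^{n_1}|F_1|$ for $K \in \mathcal{K}_d^{n_1,m_1}$: one has to be careful that enlarging $K$ to meet $\mathcal{U}_{n_1,m_1}$ costs only a polynomial-in-$2^d$ factor and that this factor, together with the cardinality count of $\mathcal{K}_d^{n_1,m_1}$ at each scale, stays strictly dominated by the decay $2^{-Md}$ extracted from the bump interaction; this is exactly why we keep a power $2^{-Nd}$ with $N$ adjustable and choose the number of vanishing moments / smoothness of the adapted bumps accordingly. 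Once the $d$-summation and the scale summation are controlled, the factors $(C_1 2^{n_1}|F_1|)^{\alpha_1}(C_1 2^{m_1}|F_2|)^{\beta_1}$ come out by interpolating between the ``$\leq 1$'' bound and the ``$\lesssim C_1 2^{n_1}|F_1|$'' bound for each factor, exactly as in the proof of Proposition~\ref{size_cor}. This lemma then justifies the dichotomy needed for the Fourier case: either the $d > 0$ tail in \eqref{d_0} is negligible compared to the $d = 0$ term (so the Haar-model argument of Section~\ref{section_thm_haar} applies verbatim after invoking Lemma~\ref{localization_haar}'s Fourier analogue), or it dominates, in which case Lemma~\ref{en_loc} already furnishes the required localized energy estimate with the gain $2^{-Nd}$ summing to a constant.
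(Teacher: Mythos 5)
Your overall strategy matches the paper's intended argument (the paper defers to the fixed-scale argument of Section~\ref{section_fourier_fixed} and to the disjointness of $\mathcal{K}_d^{n_1,m_1}$): use the separation $\mathrm{dist}(K,\mathcal{U}_{n_1,m_1}) \sim 2^d|K|$ plus $I\cap\mathcal{U}_{n_1,m_1}\neq\emptyset$ to force $\mathrm{dist}(K,I)\gtrsim 2^d|K|$, extract a large power $2^{-Md}$ from the adapted-bump pairing, enlarge $K$ by $\sim 2^d$ to reach the level set and pay a polynomial $2^{Cd}$ loss in the sizes, count, and sum. That is the right plan, and the $d$-decay bookkeeping is exactly what the paper has in mind.

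The concrete gap is the claim that ``the geometric series in $j$ converges.'' With the standard estimate
$|\langle\vphi_I^1,\psi_K^3\rangle|\lesssim (|I|/|K|)^{1/2}\bigl(1+\mathrm{dist}(K,I)/|K|\bigr)^{-M}$,
the gain $(|I|/|K|)^{1/2}$ is \emph{exactly} cancelled by the prefactors: writing $|\langle f_i,\cdot_K\rangle| = |K|^{1/2}\cdot(\text{size})_K$ and putting everything together gives
$\frac{1}{|I|^{1/2}}\frac{1}{|K|^{1/2}}\cdot |K|\cdot(\text{sizes})\cdot(|I|/|K|)^{1/2}(\ldots)^{-M} = (\text{sizes})\cdot(\ldots)^{-M}$,
with no residual power of $|I|/|K|$. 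At each fixed scale $|K| = 2^j|I|$ the contribution is then uniformly of order $2^{-Nd}(\ldots)^{\alpha_1}(\ldots)^{\beta_1}$, with no extra decay in $j$, so $\sum_{j\geq 0}$ does not converge by this accounting alone. Your argument needs an explicit extra ingredient to close the scale sum --- for instance the observation (which the paper flags in its remark) that $\mathcal{K}_d^{n_1,m_1}$ is a globally disjoint collection across \emph{all} scales, or a refined bound on $\langle\vphi_I^1,\psi_K^3\rangle$ that yields a genuine geometric gain in $|K|/|I|$, or a restriction of the lemma to a single scale. As written, the step ``the geometric series in $j$ converges'' is asserted without proof and is not a consequence of the estimates you invoke.
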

\begin{remark}
\begin{enumerate}
\noindent
\item
One simple but important fact is that for any fixed $d>0$, $n_1$ and $m_1$, $\mathcal{K}_d^{n_1,m_1}$ is a disjoint collection of dyadic intervals.
\item
Aware of the first comment, one can apply the exactly same argument in Section \ref{section_fourier_fixed} to prove the lemma.
\end{enumerate}
 \end{remark}
 Based on the estimates described in Lemma \ref{en_loc}, one has that 
\begin{align} \label{threshold}
\frac{1}{|I|^{\frac{1}{2}}}\bigg|\sum_{d >0} \sum_{\substack{|K| \geq |I| \\ K \in \mathcal{K}_{d}^{n_1,m_1}}} \frac{1}{|K|^{\frac{1}{2}}}\langle f_1, \vphi_K^1 \rangle \langle f_2, \psi_K^2 \rangle \langle \vphi^1_I, \psi_K^3 \rangle \bigg| & \lesssim \sum_{d>0} 2^{-Nd} (C_12^{n_1}|F_1|)^{\alpha_1}  (C_12^{m_1}|F_2|)^{\beta_1}  \nonumber \\
& \lesssim (C_12^{n_1}|F_1|)^{\alpha_1}  (C_12^{m_1}|F_2|)^{\beta_1}, 
\end{align}
for any $0 \leq \alpha_1, \beta_1 \leq 1$. One can then use the upper bound in (\ref{threshold}) to proceed the discussion case by case.
\vskip .15in
\noindent
\textbf{Case I: There exists $0 \leq \alpha_1, \beta_1 \leq 1$ such that $\frac{|\langle B_I, \vphi^1_I \rangle|}{|I|^{\frac{1}{2}}} \gg  (C_12^{n_1}|F_1|)^{\alpha_1}  (C_12^{m_1}|F_2|)^{\beta_1}.$}
\vskip .1in
In Case I, (\ref{energy_needed}) holds and the dominant term in expression (\ref{d_0}) has to be
$$
\frac{1}{|I|^{\frac{1}{2}}}\sum_{\substack{|K| \geq |I| \\ K \in \mathcal{K}_0^{n_1,m_1}}}\frac{1}{|K|^{\frac{1}{2}}}\langle f_1, \vphi_K^1 \rangle \langle f_2, \psi_K^2 \rangle \langle \vphi^1_I, \psi_K^3 \rangle,
$$
which provides a localization for energy estimates. More precisely, in the current case,
\begin{align*}
 \text{energy}_{\mathcal{I}'}^{1,\infty}((\langle B_I(f_1,f_2), \vphi_I^1\rangle)_{I \in \mathcal{I}'} \lesssim &\text{energy}_{\mathcal{I}'}^{1,\infty}(\langle B^{n_1,m_1,0}_{\mathcal{K}, \text{generic lac}}(f_1,f_2), \vphi_I\rangle)_{I \in \mathcal{I}'}, \nonumber \\
 \text{energy}_{\mathcal{I}'}^t((\langle B_I(f_1,f_2), \vphi_I^1\rangle)_{I \in \mathcal{I}'}) \lesssim & \text{energy}_{\mathcal{I}'}^t(\langle B^{n_1,m_1,0}_{\mathcal{K},\text{generic lac}}(f_1,f_2), \vphi_I\rangle)_{I \in \mathcal{I}'}, \nonumber
\end{align*}
for any $t >1$, where one recalls that $\mathcal{I}'$ is a finite collection of dyadic intervals satisfying (\ref{condition_collection}). Furthermore, 
\begin{align*}
 \text{energy}_{\mathcal{I}'}^{1,\infty}(\langle B^{n_1,m_1,0}_{\mathcal{K},\text{generic lac}}(f_1,f_2), \vphi_I\rangle)_{I \in \mathcal{I}'} \lesssim& \|B^{n_1,m_1,0}_{\mathcal{K}, \text{generic lac}}(f_1,f_2)\|_1, \nonumber \\
 \text{energy}_{\mathcal{I}'}^t(\langle B^{n_1,m_1,0}_{\mathcal{K},\text{generic lac}}(f_1,f_2), \vphi_I\rangle)_{I \in \mathcal{I}'} \lesssim & \|B^{n_1,m_1, 0}_{\mathcal{K}, \text{generic lac}}(f_1,f_2)\|_t,
\end{align*}
for any $t > 1$. It is not difficult to verify that $\|B^{n_1,m_1,0}_{\mathcal{K}, \text{generic lac}}(f_1,f_2)\|_t$ for $t \geq 1$ follows from the same estimates for its Haar variant described in Lemma \ref{B_loc_norm}. We will explicitly state the local energy estimates in this case.

\begin{proposition}[Local Energy Estimates in Fourier Case in $x$-Direction] \label{localized_energy_fourier_x}
Suppose that $n_1, m_1 \in \mathbb{Z}$ are fixed and suppose that $\mathcal{I}'$ is a finite collection of dyadic intervals such that for any $I \in \mathcal{I}'$, $I $ satisfies  
\begin{enumerate}
\item
$I \cap \mathcal{U}_{n_1,m_1} \neq \emptyset$; 
\item
$I \in T $ with $T \in \mathbb{T}_{l_1}$ for some $l_1$ satisfying the condition that there exists some $ 0 \leq \alpha_1, \beta_1 \leq 1$ such that
\begin{equation} \label{loc_condition_x}
2^{l_1}\|B\|_1 \gg (C_12^{n_1}|F_1|)^{\alpha_1}  (C_12^{m_1}|F_2|)^{\beta_1}.
\end{equation}
\end{enumerate}

\begin{enumerate}[(i)]
\item
Then for any $0 < \theta_1,\theta_2 <1$ with $\theta_1 + \theta_2 = 1$, one has
\begin{align*}
&\text{energy}^{1,\infty}_{\mathcal{I}'}((\langle B_I, \vphi_I\rangle)_{I \in \mathcal{I}'}) \lesssim  C_1^{\frac{1}{p_1}+ \frac{1}{q_1} - \theta_1 - \theta_2} 2^{n_1(\frac{1}{p_1} - \theta_1)} 2^{m_1(\frac{1}{q_1} - \theta_2)} |F_1|^{\frac{1}{p_1}} |F_2|^{\frac{1}{q_1}}.\nonumber 
\end{align*}


\item
Suppose that $t >1$. Then for any $0 \leq \theta_1, \theta_2 <1$ with $\theta_1 + \theta_2 = \frac{1}{t}$, one has
\begin{align*}
& \text{energy}^{t} _{\mathcal{I}'}((\langle B_I, \vphi_I\rangle)_{I \in \mathcal{I}'}) \lesssim  C_1^{\frac{1}{p_1}+ \frac{1}{q_1} - \theta_1 - \theta_2}2^{n_1(\frac{1}{p_1} - \theta_1)}2^{m_1(\frac{1}{q_1} - \theta_2)}|F_1|^{\frac{1}{p_1}}|F_2|^{\frac{1}{q_1}}. \nonumber 
\end{align*}
\end{enumerate}
\end{proposition}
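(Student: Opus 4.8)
The plan is to run, in the Fourier setting, the same three-step reduction that establishes the Haar local energy estimates of Proposition~\ref{B_en}: reduce the energy of $(\langle B_I(f_1,f_2),\vphi^1_I\rangle)_{I\in\mathcal{I}'}$ to the energy of the ``generic'' localized operator $(\langle B^{n_1,m_1,0}_{\mathcal{K},\text{generic lac}}(f_1,f_2),\vphi^1_I\rangle)_{I\in\mathcal{I}'}$ (or of $B^{n_1,m_1,0}_{\mathcal{K},\text{generic nonlac}}$, when the third family $(\phi^3_K)_K$ defining $B_I$ is non-lacunary), then bound that energy by $\|B^{n_1,m_1,0}_{\mathcal{K},\text{generic lac}}(f_1,f_2)\|_1$ (resp.\ $\|\cdot\|_t$) via Proposition~\ref{energy_classical}, and finally estimate that norm exactly as in Lemma~\ref{B_loc_norm}. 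The only step that is genuinely new compared with the Haar argument is the first one: the Haar ``biest trick'' of Remark~\ref{biest_trick_rmk}, automatic there, must now be produced by the analytic splitting~(\ref{d_0}) together with Lemma~\ref{en_loc}.

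Concretely, first I would prove the pointwise bound
\[
\frac{|\langle B_I(f_1,f_2),\vphi^1_I\rangle|}{|I|^{\frac12}}\;\lesssim\;\frac{|\langle B^{n_1,m_1,0}_{\mathcal{K},\text{generic lac}}(f_1,f_2),\vphi^1_I\rangle|}{|I|^{\frac12}}\qquad\text{for every }I\in\mathcal{I}'.
\]
By~(\ref{d_0}) the left-hand side is at most the ``$d>0$'' sum plus the ``$d=0$'' sum; since $(\psi^3_K)_K$ is lacunary and $\vphi^1_I$ is non-lacunary, the Fourier localization~(\ref{biest_fourier}) shows that the constraint $|K|\ge|I|$ in the ``$d=0$'' sum is vacuous, so that sum equals $\tfrac{1}{|I|^{1/2}}|\langle B^{n_1,m_1,0}_{\mathcal{K},\text{generic lac}}(f_1,f_2),\vphi^1_I\rangle|$ — this is the biest trick in disguise, now exploiting compact Fourier supports in place of the compact spatial supports available in the Haar model. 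By Lemma~\ref{en_loc} and the summation~(\ref{threshold}) the ``$d>0$'' sum is $\lesssim (C_12^{n_1}|F_1|)^{\alpha_1}(C_12^{m_1}|F_2|)^{\beta_1}$ for \emph{every} $0\le\alpha_1,\beta_1\le1$. The hypothesis on $\mathcal{I}'$ — precisely the ``Case~I'' condition of Section~\ref{section_fourier_0} — says that $I$ sits in a tree $T\in\mathbb{T}_{l_1}$ with $2^{l_1}\|B\|_1$ exceeding $(C_12^{n_1}|F_1|)^{\alpha_1}(C_12^{m_1}|F_2|)^{\beta_1}$ by a large factor for a suitable choice of $\alpha_1,\beta_1$, so that $\tfrac{|\langle B_I,\vphi^1_I\rangle|}{|I|^{1/2}}$ is comparably large; hence the ``$d>0$'' term is subdominant and may be absorbed, giving the displayed inequality.

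Granting this, part~(i) follows from the monotonicity of the energies in the coefficient sequence together with Proposition~\ref{energy_classical}(1), namely
\[
\text{energy}^{1,\infty}_{\mathcal{I}'}\big((\langle B_I,\vphi_I\rangle)_{I\in\mathcal{I}'}\big)\lesssim\text{energy}^{1,\infty}_{\mathcal{I}'}\big((\langle B^{n_1,m_1,0}_{\mathcal{K},\text{generic lac}},\vphi_I\rangle)_{I\in\mathcal{I}'}\big)\lesssim\|B^{n_1,m_1,0}_{\mathcal{K},\text{generic lac}}(f_1,f_2)\|_1,
\]
and part~(ii) is identical with $\text{energy}^t$ and Proposition~\ref{energy_classical}(2). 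It then remains to check the norm bound
\[
\|B^{n_1,m_1,0}_{\mathcal{K},\text{generic lac}}(f_1,f_2)\|_t\lesssim C_1^{\frac1{p_1}+\frac1{q_1}-\theta_1-\theta_2}\,2^{n_1(\frac1{p_1}-\theta_1)}2^{m_1(\frac1{q_1}-\theta_2)}|F_1|^{\frac1{p_1}}|F_2|^{\frac1{q_1}},\qquad t\ge1,\ \theta_1+\theta_2=\tfrac1t,
\]
which is proved verbatim as Lemma~\ref{B_loc_norm}: the one-dimensional stopping-time decomposition of Section~\ref{section_size_energy_one_dim_st_maximal}, Lemma~\ref{s-e}, Proposition~\ref{size} and Proposition~\ref{energy_classical} are all insensitive to whether the relevant family is Haar or a general lacunary/non-lacunary family, and the localized size inputs $\text{size}_{\mathcal{K}'}((\langle f_1,\phi^1_K\rangle)_K)\lesssim\min(C_12^{n_1}|F_1|,1)$ (and similarly with $f_2,m_1$ in place of $f_1,n_1$) hold because $K\cap\mathcal{U}_{n_1,m_1}\ne\emptyset$ forces $\int|f_i|\tilde{\chi}^M_K\lesssim C_12^{n_i}|F_i|$. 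The feasibility of the exponent bookkeeping in Lemma~\ref{B_loc_norm} uses $\tfrac1{p_1}+\tfrac1{q_1}>1$, which is exactly condition~(\ref{diff_exp}) and is in force throughout the case~(\ref{hard_exponent}) under consideration.

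I expect the main obstacle to be the pointwise reduction step — specifically, making rigorous why the tree hypothesis $I\in T\in\mathbb{T}_{l_1}$ with the largeness of $2^{l_1}\|B\|_1$ genuinely dominates the Lemma-\ref{en_loc} error for the intervals that actually contribute to the energy; this forces one to track the interplay between the stopping-time decomposition that produces the trees and the analytic splitting~(\ref{d_0}), and to isolate cleanly the complementary ``Case~II'' (where $\tfrac{|\langle B_I,\vphi^1_I\rangle|}{|I|^{1/2}}\lesssim (C_12^{n_1}|F_1|)^{\alpha_1}(C_12^{m_1}|F_2|)^{\beta_1}$ and no energy localization is needed). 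The non-lacunary alternative — $(\phi^3_K)_K$ non-lacunary, so $B_I$ carries $\vphi^3_K$ — is handled in parallel: one splits $\langle B_I,\vphi^1_I\rangle$ by Lemma~\ref{decomp_compact} on the spatial side and passes to $B^{n_1,m_1,0}_{\mathcal{K},\text{generic nonlac}}$ just as in the non-lacunary case of Lemma~\ref{localization_haar}, the error again being absorbed under the same ``Case~I'' hypothesis.
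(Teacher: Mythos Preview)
Your proposal is correct and follows essentially the same route as the paper: the splitting~(\ref{d_0}) together with Lemma~\ref{en_loc} and the Fourier biest observation~(\ref{biest_fourier}) gives, under the Case~I hypothesis~(\ref{loc_condition_x}), the pointwise domination by $\langle B^{n_1,m_1,0}_{\mathcal{K},\text{generic lac}},\vphi_I\rangle$, after which Proposition~\ref{energy_classical} and the argument of Lemma~\ref{B_loc_norm} (which is indeed insensitive to Haar versus smooth bumps) finish the job. Your identification of the delicate point --- that the tree-level hypothesis~(2) must be translated into the pointwise largeness of $\tfrac{|\langle B_I,\vphi_I\rangle|}{|I|^{1/2}}$ for the intervals that actually contribute to the energy --- is exactly the informal step the paper is taking as well.
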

A parallel statement holds for dyadic intervals in the $y$-direction, which will be stated for the convenience of reference later on.

\begin{proposition}[Local Energy Estimates in Fourier Case in $y$-Direction] \label{localized_energy_y}
Suppose that $ n_2, m_2 \in \mathbb{Z}$ are fixed and suppose that $\mathcal{J}'$ is a finite collection of dyadic intervals such that for any $J \in \mathcal{J} '$, $J $ satisfies  
\begin{enumerate}
\item
$J \cap \tilde{\mathcal{U}}_{n_2,m_2}$ where $\tilde{\mathcal{U}}_{n_2,m_2}:= \{ y: Mg_1(y) \lesssim C_2 2^{n_2}|G_1| \} \cap \{ y: Mg_2(y) \lesssim C_2 2^{m_2}|G_2|\} $.
\item
$J \in S $ with $S \in \mathbb{S}_{l_2}$ for some $l_2$ satisfying the condition that there exists some $0 \leq \alpha_2, \beta_2 \leq 1$ such that
\begin{equation} \label{loc_condition_y}
2^{l_2}\|\tilde{B}\|_1 \gg (C_22^{n_2}|G_1|)^{\alpha_2}  (C_22^{m_2}|G_2|)^{\beta_2}.
\end{equation}
\end{enumerate}
\begin{enumerate}[(i)]
\item
Then for any $0 < \zeta_1,\zeta_2 <1$ with $\zeta_1 + \zeta_2= 1$, one has
\begin{align*}
& \text{energy}^{1,\infty}_{\mathcal{J}'}((\langle \tilde{B}_J, \vphi_J \rangle)_{J \in \mathcal{J}'}) \lesssim C_2^{\frac{1}{p_2}+ \frac{1}{q_2} - \zeta_1 - \zeta_2} 2^{n_2(\frac{1}{p_2} - \zeta_1)} 2^{m_2(\frac{1}{q_2} - \zeta_2)} |G_1|^{\frac{1}{p_2}} |G_2|^{\frac{1}{q_2}}.
\end{align*}


\item
Suppose that $s >1$. Then for any $0 \leq \zeta_1, \zeta_2 <1$ with $\zeta_1 + \zeta_2= \frac{1}{s}$, one has
\begin{align*}
& \text{energy}^{t} _{\mathcal{J}'}((\langle \tilde{B}_J, \vphi_J \rangle)_{J \in \mathcal{J}'}) \lesssim C_2^{\frac{1}{p_2}+ \frac{1}{q_2} - \zeta_1 - \zeta_2}2^{n_2(\frac{1}{p_2} - \zeta_1)}2^{m_2(\frac{1}{q_2} - \zeta_2)}|G_1|^{\frac{1}{p_2}}|G_2|^{\frac{1}{q_2}}.
\end{align*}
\end{enumerate}
\end{proposition}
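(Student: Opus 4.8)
The plan is to run, in the $y$-variable, the argument already prepared for Proposition~\ref{localized_energy_fourier_x}: the two statements are mirror images of one another under the replacement $f_1,f_2,\mathcal{I},\mathcal{U}_{n_1,m_1}\leftrightarrow g_1,g_2,\mathcal{L},\tilde{\mathcal{U}}_{n_2,m_2}$ and $B_I\leftrightarrow\tilde B_J=B_{\mathcal{L},J}$, and the range condition $\tfrac1{p_2}+\tfrac1{q_2}>1$ — forced by (\ref{loc_condition_y}) just as in Remark~\ref{rmk_easyhard_exponent} — is what makes the exponent count close. By Proposition~\ref{energy_classical} (part (1) for the $L^{1,\infty}$ energy in (i), part (2) for the $L^{t}$ energy in (ii)) the statement reduces to the two ingredients already used in the Haar case: a \emph{localization} bounding $\mathrm{energy}_{\mathcal J'}\big((\langle \tilde B_J(g_1,g_2),\vphi_J\rangle)_{J\in\mathcal J'}\big)$ by $\mathrm{energy}_{\mathcal J'}\big((\langle \tilde B^{n_2,m_2,0}_{\mathcal L,\cdot}(g_1,g_2),\vphi_J\rangle)_{J\in\mathcal J'}\big)$, where $\tilde B^{n_2,m_2,0}_{\mathcal L,\mathrm{lac}}$ (or $\tilde B^{n_2,m_2,0}_{\mathcal L,\mathrm{nonlac}}$, depending on whether the third family of bumps defining $\tilde B_J$ is lacunary) is the $\mathcal L$–analog of the operators of Section~\ref{section_fourier_0} restricted to $L\cap\tilde{\mathcal U}_{n_2,m_2}\neq\emptyset$; followed by the \emph{norm bound} $\|\tilde B^{n_2,m_2,0}_{\mathcal L,\cdot}(g_1,g_2)\|_{t}\lesssim C_2^{\frac1{p_2}+\frac1{q_2}-\zeta_1-\zeta_2}2^{n_2(\frac1{p_2}-\zeta_1)}2^{m_2(\frac1{q_2}-\zeta_2)}|G_1|^{\frac1{p_2}}|G_2|^{\frac1{q_2}}$ valid for every $t\ge1$.

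For the localization I would split $\langle\tilde B_J(g_1,g_2),\vphi_J\rangle$ as in (\ref{d_0}) into the main part carried by the intervals $L\in\mathcal L$ with $L\cap\tilde{\mathcal U}_{n_2,m_2}\neq\emptyset$ and the tail $\sum_{d>0}$ over the annular families $\mathcal L^{n_2,m_2}_d$. The $y$–analog of Lemma~\ref{en_loc} — same compactly-supported bump decomposition (Lemma~\ref{decomp_compact}) plus the disjointness of each $\mathcal L^{n_2,m_2}_d$ as in Section~\ref{section_fourier_fixed} — gives $|J|^{-1/2}|\mathrm{tail}|\lesssim\sum_{d>0}2^{-Nd}(C_22^{n_2}|G_1|)^{\alpha_2}(C_22^{m_2}|G_2|)^{\beta_2}\lesssim(C_22^{n_2}|G_1|)^{\alpha_2}(C_22^{m_2}|G_2|)^{\beta_2}$, uniformly in $J$. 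Since $J\in\mathcal J'$ sits under a tree-top $J_S$ with $S\in\mathbb S_{l_2}$, the biest trick for compactly Fourier-supported bumps (observation (\ref{biest_fourier}), cf. Remark~\ref{biest_trick_rmk}) identifies $|J_S|^{-1/2}|\langle\tilde B_{J_S}(g_1,g_2),\vphi_{J_S}\rangle|$ with (main part) $+$ (tail) there, while (\ref{loc_condition_y}) says the total average at $J_S$ is $\gg(C_22^{n_2}|G_1|)^{\alpha_2}(C_22^{m_2}|G_2|)^{\beta_2}$; hence the main part dominates, and (up to the negligible annular tails) $\langle\tilde B_J,\vphi_J\rangle$ may be replaced by $\langle\tilde B^{n_2,m_2,0}_{\mathcal L,\cdot}(g_1,g_2),\vphi_J\rangle$, so the two energies over $\mathcal J'$ compare. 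The lacunary/non-lacunary dichotomy is handled exactly as in Lemma~\ref{localization_haar}.

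For the norm bound, $\|\tilde B^{n_2,m_2,0}_{\mathcal L,\cdot}(g_1,g_2)\|_{t}$ is estimated by the one-dimensional stopping-time / size–energy argument of Lemma~\ref{B_loc_norm} (Section~\ref{section_pf_B_loc_norm}): Proposition~\ref{size} together with $L\cap\tilde{\mathcal U}_{n_2,m_2}\neq\emptyset$ gives $\mathrm{size}\lesssim\min(C_22^{n_2}|G_1|,1)$ and $\mathrm{size}\lesssim\min(C_22^{m_2}|G_2|,1)$ for the two input slots, Proposition~\ref{energy_classical} and Lemma~\ref{s-e} supply the remaining factors after dualization, and a multilinear interpolation lets one choose $0\le\alpha_2,\beta_2\le1$ and admissible $\zeta_1,\zeta_2$ so that the powers of $|G_1|,|G_2|$ land on $\tfrac1{p_2},\tfrac1{q_2}$ — this last step uses $\tfrac1{p_2}+\tfrac1{q_2}>1$. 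Passing back through Proposition~\ref{energy_classical} delivers (i) from $\|\cdot\|_1$ and (ii) from $\|\cdot\|_{t}$, $t>1$. The one genuinely new difficulty beyond the Haar argument — and the step I expect to be the main obstacle — is the localization: in the Haar model the replacement of $\tilde B_J$ by the localized operator is an exact identity via $\langle\vphi^H_J,\psi^{3,H}_L\rangle\neq0\iff L\supseteq J$, whereas here it must be synthesized from the compact Fourier supports (\ref{biest_fourier}) and the hypothesis (\ref{loc_condition_y}), and one must verify that the annular tails summed over $d>0$ are absorbed uniformly in $J$ without disturbing the energy comparison.
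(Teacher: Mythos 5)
Your proposal follows precisely the paper's route: Proposition \ref{localized_energy_y} is obtained as the $y$-direction mirror of Proposition \ref{localized_energy_fourier_x}, which in turn is built from the annular decomposition (\ref{d_0}), the tail estimate Lemma \ref{en_loc}, the case split driven by (\ref{loc_condition_x})/(\ref{loc_condition_y}), the Fourier-side biest trick (\ref{biest_fourier}), and the norm bound of Lemma \ref{B_loc_norm} fed through Proposition \ref{energy_classical}. The only small imprecision — calling $\tfrac{1}{p_2}+\tfrac{1}{q_2}>1$ "forced by (\ref{loc_condition_y})," when it is really a standing assumption for this regime (cf.\ Remark \ref{rmk_easyhard_exponent}) — does not affect the argument.
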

\begin{remark}
We would like to highlight that the localization of energies is attained under the additional conditions (\ref{loc_condition_x}) and (\ref{loc_condition_y}), in which case one obtains the local energy estimates stated in Proposition \ref{localized_energy_fourier_x} and \ref{localized_energy_y} that can be viewed as analogies of Proposition \ref{B_en}.
\end{remark}
\vskip .15in
\noindent
\textbf{Case II: For any $0 \leq \alpha_1, \beta_1 \leq 1$,  $ \frac{|\langle B_I, \vphi^1_I \rangle|}{|I|^{\frac{1}{2}}} \lesssim  (C_12^{n_1}|F_1|)^{\alpha_1}  (C_12^{m_1}|F_2|)^{\beta_1}.$}
\vskip .1in
In this alternative case, the size estimates are favorable and a simpler argument can be applied without invoking the local energy estimates.

\vskip .25in
\subsubsection{Proof Part 1 -  Localization} In this last section, we will explore how to implement the case-by-case analysis and develop a generalized proof of Theorem \ref{thm_weak_mod} for $\Pi_{\text{flag}^0 \otimes \text{flag}^0}$ when $(\phi^3_K)_K$ and $(\phi^3_L)_L$ are \textbf{lacunary} families and $\frac{1}{p_1} + \frac{1}{q_1} = \frac{1}{p_2} + \frac{1}{q_2} >1 $, which is the most tricky part to generalize from the argument in Section \ref{section_thm_haar}.
The generalized argument can be viewed as a combination of the discussions in Sections \ref{section_thm_haar_fixed} and \ref{section_thm_haar}. 
One first defines the exceptional set $\Omega$ as follows.
For any $\tau_1,\tau_2 \in \mathbb{N}$, define
$$\Omega^{\tau_1,\tau_2} := \Omega_1^{\tau_1,\tau_2}\cup \Omega_2^{\tau_1,\tau_2} $$
with
\begin{align*}
\displaystyle \Omega_1^{\tau_1,\tau_2} := &\bigcup_{\mathfrak{n}_1 \in \mathbb{Z}}\{x: Mf_1(x) > C_1^{\tau_1} 2^{\mathfrak{n}_1}|F_1|\} \times \{y: Mg_1(y) > C_2^{\tau_2} 2^{-\mathfrak{n}_1}|G_1|\}\cup \nonumber \\
& \bigcup_{\mathfrak{n}_2 \in \mathbb{Z}}\{x: Mf_2(x) > C_1^{\tau_1} 2^{\mathfrak{n}_2}|F_2|\} \times \{y: Mg_2(y) > C_2^{\tau_2} 2^{-\mathfrak{n}_2}|G_2|\}\cup \nonumber \\
 &\bigcup_{\mathfrak{n}_3 \in \mathbb{Z}}\{x: Mf_1(x) > C_1^{\tau_1} 2^{\mathfrak{n}_3}|F_1|\} \times \{y: Mg_2(y) > C_2^{\tau_2} 2^{-\mathfrak{n}_3}|G_2|\}\cup \nonumber \\
& \bigcup_{\mathfrak{n}_4 \in \mathbb{Z}}\{x: Mf_2(x) > C_1^{\tau_1} 2^{\mathfrak{n}_4 }|F_2|\} \times \{y: Mg_1(y) > C_2^{\tau_2} 2^{-\mathfrak{n}_4 }|G_1|\}\cup \nonumber \\
& \bigcup_{l_2 \in \mathbb{Z}}\{x: MB(f_1,f_2)(x) > C_1^{\tau_1}2^{-l_2}\|B(f_1,f_2)\|_1\} \times \{y: M\tilde{B}(g_1,g_2)(y) > C_2^{\tau_2} 2^{l_2} \|\tilde{B}(g_1,g_2)\|_1\}, \nonumber \\
\Omega_2^{\tau_1,\tau_2} := & \{(x,y) \in \mathbb{R}^2: SSh(x,y) > C_3^{\tau_1,\tau_2} \|h\|_{L^s(\mathbb{R}^2)}\}, \nonumber \\
\end{align*}
and
\begin{align*}
& Enl(\Omega^{\tau_1,\tau_2}) := \{(x,y) \in \mathbb{R}^2: MM(\chi_{\Omega^{\tau_1,\tau_2}})(x,y) > \frac{1}{100}\}, \nonumber \\
& Enl_{\tau_1,\tau_2}(Enl(\Omega^{\tau_1,\tau_2})) := \{(x,y) \in \mathbb{R}^2: MM(\chi_{Enl(\Omega^{\tau_1,\tau_2})}(x,y)> \frac{1}{2^{2\tau_1+ 2\tau_2}} \},
\end{align*}
and lastly
$$\mathbf{\Omega} := \bigcup_{\tau_1,\tau_2 \in \mathbb{N}}Enl_{\tau_1,\tau_2}(Enl(\Omega^{\tau_1,\tau_2})).$$
Let 
$$E' := E \setminus \mathbf{\Omega},$$
where $|E'| \sim |E| =1$ given that $C_1, C_2$ and $C_3$ are sufficiently large constants. For any $\tau_1, \tau_2 \in \mathbb{N}$ fixed, let
\begin{equation} 
\Lambda_{\text{flag}^{0} \otimes \text{flag}^{0}}^{\tau_1, \tau_2} (f_1, f_2, g_1, g_2, h, \chi_{E'}):= \sum_{I \times J \in \mathcal{R}} \frac{1}{|I|^{\frac{1}{2}} |J|^{\frac{1}{2}}} \langle B_I(f_1,f_2),\phi_I^1 \rangle \langle \tilde{B}_J(g_1,g_2), \phi_J^1 \rangle \cdot \langle h, \phi_{I}^2 \otimes \phi_{J}^2 \rangle \langle \chi_{E'},\phi_{I}^{3,\tau_1} \otimes \phi^{3, \tau_2}_{J} \rangle,
\end{equation}
where $B_I$ and $\tilde{B}_J$ are bilinear operators defined in (\ref{B_local_fourier_simple}) and (\ref{B_local0_haar}).
Our goal is to prove that 
\begin{align} \label{compact}
\Lambda_{\text{flag}^{0} \otimes \text{flag}^{0}}(f_1, f_2, g_1, g_2, h, \chi_{E'})  =  \displaystyle \sum_{\tau_1,\tau_2 \in \mathbb{N}}2^{-100(\tau_1+\tau_2)}\Lambda_{\text{flag}^{0} \otimes \text{flag}^{0}}^{\tau_1, \tau_2} (f_1, f_2, g_1, g_2, h, \chi_{E'}) \nonumber
\end{align} 
satisfies the restricted weak-type estimates
\begin{equation} \label{final_linear}
|\Lambda_{\text{flag}^{0} \otimes \text{flag}^{0}}(f_1, f_2, g_1, g_2, h, \chi_{E'})| \lesssim |F_1|^{\frac{1}{p_1}}|F_2|^{\frac{1}{q_1}}|G_1|^{\frac{1}{p_2}}|G_2|^{\frac{1}{q_2}}\|h\|_{L^s(\mathbb{R}^2)},
\end{equation}
which  can be reduced to proving that for any fixed $\tau_1, \tau_2 \in \mathbb{N}$, 
\begin{equation} \label{linear_fix_0_fourier}
|\Lambda_{\text{flag}^0 \otimes \text{flag}^{0}}^{\tau_1, \tau_2}(f_1, f_2, g_1, g_2, h, \chi_{E'})| \lesssim (2^{\tau_1+ \tau_2})^{\Theta}|F_1|^{\frac{1}{p_1}}|F_2|^{\frac{1}{q_1}}|G_1|^{\frac{1}{p_2}}|G_2|^{\frac{1}{q_2}}\|h\|_{L^s(\mathbb{R}^2)}
\end{equation}
for some $0 < \Theta < 100$.

\subsubsection{Proof Part 2 - Summary of stopping-time decompositions.} \label{fourier_summary_st}
For any fixed $\tau_1,\tau_2 \in \mathbb{N}$, one can carry out the exactly same stopping-time algorithms in Section \ref{section_thm_haar} with the replacement of $C_1, C_2$ and $C_3$ by $C_1^{\tau_1,\tau_2}$, $C_2^{\tau_1,\tau_2}$ and $C_3^{\tau_1,\tau_2}$ respectively. The resulting level sets, trees and collections of dyadic rectangles will follow the similar notation as before with extra indications of $\tau_1$ and $\tau_2$. 
\vskip .15in


{\fontsize{9.5}{9.5}
\begin{table}[h!]
\begin{tabular}{ l l l }	
I. Tensor-type stopping-time decomposition I on $\mathcal{I} \times \mathcal{J}$& $\longrightarrow$ & $I \times J \in \mathcal{I}^{\tau_1}_{-n-n_2,-m-m_2} \times \mathcal{J}^{\tau_2}_{n_2,m_2}$ \\ 
& & $(n_2, m_2 \in \mathbb{Z}, n > 0, m>0)$\\
II. Tensor-type stopping-time decomposition II on $\mathcal{I} \times \mathcal{J}$ & $\longrightarrow$ & $I \times J \in T \times S $  with $T \in \mathbb{T}_{-l-l_2}^{\tau_1}$, $S \in \mathbb{S}_{l_2}^{\tau_2}$\\
& & $(l_2 \in \mathbb{Z},  l > 0)$\\
III. General two-dimensional level sets stopping-time & $\longrightarrow$ & $I \times J \in \mathcal{R}_{k_1,k_2}^{\tau_1,\tau_2} $  \\
\ \ \ \ \ decomposition on $\mathcal{I} \times \mathcal{J}$& & $(k_1 <0, k_2 \leq K)$\\
\end{tabular}
\end{table}} 
\noindent
\subsubsection{Proof Part 3 - Application of stopping-time decompositions.}
As one may recall, in Section \ref{section_thm_haar} the multilinear form is estimated based on the stopping-time decompositions, the sparsity condition and the Fubini-type argument. 
Analogously, we first apply the stopping-time decompositions specified in Section \ref{fourier_summary_st} and denote the range of exponents 
\begin{equation}
\mathcal{W} := \{(l,n,m,k_1,k_2) \in \mathbb{Z}^5: l >0, n> 0, m> 0, k_1 < 0, k_2 \leq K \}
\end{equation}
so that the multilinear form can be estimated as 
\begin{align}
&|\Lambda_{\text{flag}^0 \otimes \text{flag}^{0}}^{\tau_1, \tau_2}(f_1,f_2,g_1,g_2,h, \chi_{E'})| \nonumber \\
= & \bigg| \sum_{(l,n,m,k_1,k_2) \in \mathcal{W}}\sum_{\substack{n_2 \in \mathbb{Z}\\ m_2 \in \mathbb{Z}\\ l_2\in \mathbb{Z}}} \sum_{\substack{T \in \mathbb{T}_{-l-l_2}^{\tau_1} \\ S \in \mathbb{S}_{l_2}^{\tau_2}}}\sum_{\substack{I \times J \in \mathcal{I}_{-n-n_2,-m-m_2}^{\tau_1} \times \mathcal{J}_{n_2,m_2}^{\tau_2} \\ I \times J \in T \times S \\ I \times J \in \mathcal{R}_{k_1,k_2}^{\tau_1,\tau_2}}}\frac{1}{|I|^{\frac{1}{2}} |J|^{\frac{1}{2}}} \langle B_I(f_1,f_2),\phi_I^1 \rangle \langle \tilde{B}_J(g_1,g_2), \phi_J^1 \rangle \cdot \nonumber\\
&  \quad \quad \quad \quad \quad \quad \quad \quad \quad \quad \quad \quad \quad \quad \quad \quad \quad \quad \quad \quad \quad \quad \quad \quad  \langle h, \phi_I^2 \otimes \phi_J^2 \rangle \langle \chi_{E'}, \phi_I^{3,\tau_1} \otimes \phi_J^{3,\tau_2} \rangle\bigg|  \nonumber \\
\lesssim &C_1^{\tau_1}C_2^{\tau_2}(C_3^{\tau_1,\tau_2})^2\sum_{(l,n,m,k_1,k_2) \in \mathcal{W}}2^{k_1} \|h\|_{L^s(\mathbb{R}^2)} 2^{k_2} \cdot \sum_{\substack{n_2 \in \mathbb{Z}\\ m_2 \in \mathbb{Z}\\ l_2\in \mathbb{Z}}} 2^{-l-l_2} \|B(f_1,f_2)\|_1 2^{l_2} \|\tilde{B}(g_1,g_2)\|_1 \cdot \nonumber\\
& \quad \quad \quad\quad \quad \quad\quad \quad \quad \quad \quad \quad\quad \quad \quad\quad \quad \quad  \quad \quad \quad\quad  \sum_{\substack{T \in \mathbb{T}_{-l-l_2}^{\tau_1} \\ S \in \mathbb{S}_{l_2}^{\tau_2}}} \bigg|\bigcup_{\substack{I \times J \in \mathcal{I}_{-n-n_2,-m-m_2}^{\tau_1} \cap T \times \mathcal{J}_{n_2,m_2}^{\tau_2} \cap S \\ I \times J \in \mathcal{R}_{k_1,k_2}^{\tau_1,\tau_2}}}I \times J\bigg|. 
\end{align}
The nested sum 
\begin{align} \label{ns_fourier}
& \sum_{\substack{n_2 \in \mathbb{Z}\\ m_2 \in \mathbb{Z}\\ l_2\in \mathbb{Z}}} 2^{-l-l_2} \|B(f_1,f_2)\|_1 2^{l_2}\|\tilde{B}(g_1,g_2)\|_1 \sum_{\substack{T \in \mathbb{T}_{-l-l_2}^{\tau_1} \\ S \in \mathbb{S}_{l_2}^{\tau_2}}} \bigg|\bigcup_{\substack{I \times J \in \mathcal{I}_{-n-n_2,-m-m_2}^{\tau_1} \cap T \times \mathcal{J}_{n_2,m_2}^{\tau_2} \cap S \\ I \times J \in \mathcal{R}_{k_1,k_2}^{\tau_1,\tau_2}}}I \times J \bigg| 
\end{align}
can be estimated using the sparsity condition (Proposition \ref{sp_2d}) and a modified Fubini argument as discussed in the following two subsections.
\subsubsection{Proof Part 4 - Sparsity condition}
One invokes the sparsity condition and the argument in Section \ref{section_thm_haar_fixed} to obtain the following estimate for (\ref{ns_fourier}) analogous to (\ref{ns_sp}) .
\begin{align} \label{ns_fourier_sp}
(\ref{ns_fourier}) \lesssim & 2^{-\frac{k_2\gamma}{2}}2^{-l(1- \frac{(1+\delta)}{2})} |F_1|^{\frac{\mu_1(1+\delta)}{2}}|F_2|^{\frac{\mu_2(1+\delta)}{2}}|G_1|^{\frac{\nu_1(1+\delta)}{2}}|G_2|^{\frac{\nu_2(1+\delta)}{2}}\|B(f_1,f_2)\|_1^{1-\frac{1+\delta}{2}}\|\tilde{B}(g_1,g_2)\|_1^{1-\frac{1+\delta}{2}}.
\end{align}
where $\gamma >1 $, $\delta,\mu_1,\mu_2,\nu_1,\nu_2 >0$ with $\mu_1+ \mu_2 = \nu_1+ \nu_2 = \frac{1}{1+\delta}$. For any $0 < \delta \ll1$, Lemma \ref{B_global_norm} implies that
\begin{align}
& \|B(f_1,f_2)\|_1^{1-\frac{1+\delta}{2}} \lesssim |F_1|^{\rho(1-\frac{1+\delta}{2})}|F_2|^{(1-\rho)(1-\frac{1+\delta}{2})}, \label{B_norm_fourier1} \\
& \|\tilde{B}(g_1,g_2)\|_1^{1-\frac{1+\delta}{2}} \lesssim |G_1|^{\rho'(1-\frac{1+\delta}{2})}|G_2|^{(1-\rho')(1-\frac{1+\delta}{2})}. \label{B_norm_fourier2}
\end{align}
By applying (\ref{B_norm_fourier1}) and (\ref{B_norm_fourier2}) to (\ref{ns_fourier_sp}), one derives the following bound:
\begin{equation} \label{ns_fourier_sp_final}
2^{-\frac{k_2\gamma}{2}}2^{-l(1- \frac{(1+\delta)}{2})} |F_1|^{\frac{\mu_1(1+\delta)}{2}+\rho(1-\frac{1+\delta}{2})}|F_2|^{\frac{\mu_2(1+\delta)}{2}+(1-\rho)(1-\frac{1+\delta}{2})}|G_1|^{\frac{\nu_1(1+\delta)}{2}+\rho'(1-\frac{1+\delta}{2})}|G_2|^{\frac{\nu_2(1+\delta)}{2}+(1-\rho')(1-\frac{1+\delta}{2})}.
\end{equation}
\vskip .15 in
\subsubsection{Proof Part 5 - Fubini argument}
The separation of cases based on the levels of the stopping-time decompositions for $
\big(\frac{|\langle B_{I}(f_1,f_2), \vphi^{1}_I \rangle|}{|I|^{\frac{1}{2}}}\big)_{I \in \mathcal{I}}
$
and
$
\big(\frac{|\langle \tilde {B}_{J}(g_1,g_2), \vphi^{1}_J \rangle|}{|J|^{\frac{1}{2}}}\big)_{J \in \mathcal{J}}
$. In particular, the ranges of $l_2$ in the \textit{tensor-type stopping-time decomposition I}, plays an important role in the modified Fubini-type argument. With $l \in \mathbb{N}$ fixed, the ranges of exponents $l_2$ are defined as follows:
\begin{align*}
\mathcal{EXP}_1^{l,n,m,n_2,m_2} := & \{l_2 \in \mathbb{Z}: \text{for any}\ \ 0 \leq \alpha_1, \beta_1, \alpha_2, \beta_2 \leq 1, \nonumber \\
& \quad \quad \quad \quad 2^{-l-l_2} \| B(f_1,f_2)\|_1\lesssim (C_1^{\tau_1}2^{-n-n_2}|F_1|)^{\alpha_1} (C_1^{\tau_1}2^{-m-m_2}|F_2|)^{\beta_1} \ \ \text{and} \nonumber \\
& \quad \quad \quad \quad 2^{l_2} \|\tilde{B}(g_1,g_2)\|_1 \lesssim (C_2^{\tau_2}2^{n_2}|G_1|)^{\alpha_2} (C_2^{\tau_2}2^{m_2}|G_2|)^{\beta_2} \}, \nonumber \\
\mathcal{EXP}_2^{l,n,m,n_2,m_2} := & \{l_2 \in \mathbb{Z}: \text{there exists} \ \ 0 \leq \alpha_1, \beta_1 \leq 1 \ \ \text{such that}  \nonumber \\
& \quad \quad \quad \quad 2^{-l-l_2} \| B(f_1,f_2)\|_1 \gg (C_1^{\tau_1}2^{-n-n_2}|F_1|)^{\alpha_1} (C_1^{\tau_1}2^{-m-m_2}|F_2|)^{\beta_1} \ \ \text{and} \nonumber \\
& \quad \quad \quad \quad \text{for any} \ \ 0 \leq \alpha_2, \beta_2 \leq 1, \nonumber \\
& \quad \quad \quad \quad 2^{l_2} \|\tilde{B}(g_1,g_2)\|_1 \lesssim (C_2^{\tau_2}2^{n_2}|G_1|)^{\alpha_2} (C_2^{\tau_2}2^{m_2}|G_2|)^{\beta_2}\}, \nonumber \\
\mathcal{EXP}_3^{l,n,m,n_2,m_2} := & \{l_2 \in \mathbb{Z}: \text{for any} \ \ 0 \leq \alpha_1, \beta_1 \leq 1, \nonumber \\
 & \quad \quad \quad \quad 2^{-l-l_2} \| B(f_1,f_2)\|_1 \lesssim (C_1^{\tau_1}2^{-n-n_2}|F_1|)^{\alpha_1} (C_1^{\tau_1}2^{-m-m_2}|F_2|)^{\beta_1} \ \ \text{and} \nonumber \\
 & \quad \quad \quad \quad \text{there exists} \ \ 0 \leq \alpha_2, \beta_2 \leq 1 \ \ \text{such that} \nonumber \\
& \quad \quad \quad \quad 2^{l_2} \|\tilde{B}(g_1,g_2)\|_1 \gg (C_2^{\tau_2}2^{n_2}|G_1|)^{\alpha_2} (C_2^{\tau_2}2^{m_2}|G_2|)^{\beta_2}\}, \nonumber \\
\mathcal{EXP}_4^{l,n,m,n_2,m_2} := & \{l_2 \in \mathbb{Z}: \text{there exists} \ \ 0 \leq \alpha_1, \beta_1, \alpha_2, \beta_2 \leq 1 \ \ \text{such that} \nonumber \\ 
& \quad \quad \quad \quad  2^{-l-l_2} \| B(f_1,f_2)\|_1 \gg (C_1^{\tau_1}2^{-n-n_2}|F_1|)^{\alpha_1} (C_1^{\tau_1}2^{-m-m_2}|F_2|)^{\beta_1} \ \ \text{and} \nonumber \\
& \quad \quad \quad \quad 2^{l_2} \|\tilde{B}(g_1,g_2)\|_1 \gg (C_2^{\tau_2}2^{n_2}|G_1|)^{\alpha_2} (C_2^{\tau_2}2^{m_2}|G_2|)^{\beta_2}\}. \nonumber 
\end{align*}
With little abuse of notation, we will simplify $\mathcal{EXP}_i^{l,n,m,n_2,m_2}$ by $\mathcal{EXP}_i$, for $i = 1,2,3,4$. We will then decompose the sum into four parts based on the ranges specified above:
\begin{align*}
& \sum_{\substack{n_2 \in \mathbb{Z}\\ m_2 \in \mathbb{Z}\\ l_2\in \mathbb{Z}}} 2^{-l-l_2} \|B\|_1 2^{l_2} \|\tilde{B}\|_1\sum_{\substack{S \in \mathbb{S}^{-l-l_2} \\ T \in \mathbb{T}^{l_2}}} \bigg|\bigcup_{\substack{I \times J \in \mathcal{I}_{-n-n_2,-m-m_2} \cap S \times \mathcal{J}_{n_2,m_2} \cap T \\ I \times J \in \mathcal{R}_{k_1,k_2}}}I \times J\bigg|  \nonumber \\
= &\underbrace{ \sum_{\substack{n_2 \in \mathbb{Z}\\ m_2 \in \mathbb{Z}\\ }}  \sum_{l_2\in \mathcal{EXP}_1^{}}}_I+ \underbrace{\sum_{\substack{n_2 \in \mathbb{Z}\\ m_2 \in \mathbb{Z}\\ }} \sum_{l_2\in \mathcal{EXP}_2^{}}}_{II} +  \underbrace{\sum_{\substack{n_2 \in \mathbb{Z}\\ m_2 \in \mathbb{Z}\\ }} \sum_{_2\in \mathcal{EXP}_3^{}} }_{III}+ \underbrace{\sum_{\substack{n_2 \in \mathbb{Z}\\ m_2 \in \mathbb{Z}\\ }} \sum_{l_2\in \mathcal{EXP}_4^{}}}_{IV}.
\end{align*}
One denotes the four parts by $I$, $II$, $III$ and $IV$ and will derive estimates for each part separately. The multilinear form can thus be decomposed correspondingly as follows:
\begin{align*}
&|\Lambda_{\text{flag}^0 \otimes \text{flag}^{0}}^{\tau_1, \tau_2}| \nonumber\\
 \lesssim & C_1^{\tau_1}C_2^{\tau_2} (C_3^{\tau_1,\tau_2})^2 \bigg(\underbrace{\sum_{(l,n,m,k_1,k_2) \in \mathcal{W}} 2^{k_1}\|h\|_{L^s(\mathbb{R}^2)} 2^{k_2} \cdot I}_{\Lambda_{I}^{\tau_1, \tau_2}} + \underbrace{ \sum_{(l,n,m,k_1,k_2) \in \mathcal{W}} 2^{k_1}\|h\|_{L^s(\mathbb{R}^2)} 2^{k_2} \cdot II}_{\Lambda_{II}^{\tau_1, \tau_2}} +  \nonumber \\
&  \quad \quad \quad \quad \quad \quad \quad \quad \underbrace{ \sum_{(l,n,m,k_1,k_2) \in \mathcal{W}} 2^{k_1}\|h\|_{L^s(\mathbb{R}^2)} 2^{k_2} \cdot III}_{\Lambda_{III}^{\tau_1, \tau_2}} + \underbrace{\sum_{(l,n,m,k_1,k_2) \in \mathcal{W}} 2^{k_1}\|h\|_{L^s(\mathbb{R}^2)} 2^{k_2} \cdot IV}_{\Lambda_{IV}^{\tau_1, \tau_2}}\bigg).
\end{align*}
It would be sufficient to prove that each part satisfies the bound on the right hand side of (\ref{linear_fix_0_fourier}).
\vskip .15in
\noindent
\textbf{Estimate of $\Lambda_I^{\tau_1,\tau_2}$.}
Though for $I$, the localization of energies cannot be applied at all, one observes that energy estimates are indeed not necessary. In particular,
\begin{align} \label{I}
&I \nonumber \\
\lesssim & \sum_{\substack{n_2 \in \mathbb{Z}\\ m_2 \in \mathbb{Z}}} \sum_{l_2\in \mathcal{EXP}_1}2^{-l-l_2}\|B(f_1,f_2)\|_1 2^{l_2}\|\tilde{B}(g_1,g_2)\|_1 \sum_{\substack{T \in \mathbb{T}_{-l-l_2}^{\tau_1} \\ S \in \mathbb{S}_{l_2}^{\tau_2}}} \bigg|\bigcup_{\substack{I \times J \in \mathcal{I}_{-n-n_2,-m-m_2}^{\tau_1} \cap T \times \mathcal{J}_{n_2,m_2}^{\tau_2} \cap S \\ I \times J \in \mathcal{R}_{k_1,k_2}^{\tau_1,\tau_2}}}I \times J\bigg| \nonumber \\
\leq & \sum_{\substack{n_2 \in \mathbb{Z}\\ m_2 \in \mathbb{Z}}} \bigg(\sup_{l_2\in \mathcal{EXP}_1} 2^{-l-l_2}\|B(f_1,f_2)\|_1\bigg)\bigg(\sup_{l_2\in \mathcal{EXP}_1} \sum_{\substack{T \in \mathbb{T}_{-l-l_2}^{\tau_1} \\ S \in \mathbb{S}_{l_2}^{\tau_2}}} \bigg|\bigcup_{\substack{I \times J \in \mathcal{I}_{-n-n_2,-m-m_2}^{\tau_1} \cap T \times \mathcal{J}_{n_2,m_2}^{\tau_2} \cap S \\ I \times J \in \mathcal{R}_{k_1,k_2}^{\tau_1,\tau_2}}}I \times J\bigg|\bigg)\cdot \nonumber\\ 
& \quad\quad\quad\quad\quad\quad\quad\quad\quad\quad\quad\quad\quad\quad\quad \quad \bigg(\sum_{l_2\in \mathcal{EXP}_1}2^{l_2}\|\tilde{B}(g_1,g_2)\|_1\bigg).
\end{align}
We will estimate the expressions in the parentheses separately. 
\begin{enumerate}[(i)]
\item
It is trivial from the definition of $\mathcal{EXP}_1$ that for any $0 \leq \alpha_1, \beta_1 \leq 1$,
\begin{equation} \label{I_i}
\sup_{l_2\in \mathcal{EXP}_1} 2^{-l-l_2}\|B(f_1,f_2)\|_1 \lesssim (C_1^{\tau_1}2^{-n-n_2}|F_1|)^{\alpha_1} (C_1^{\tau_1}2^{-m-m_2}|F_2|)^{\beta_1}.
\end{equation}
\item
The last expression is a geometric series with the largest term bounded by 
\begin{equation}\label{I_ii}
(C_2^{\tau_2}2^{n_2}|G_1|)^{\alpha_2} (C_2^{\tau_2}2^{m_2}|G_2|)^{\beta_2},
\end{equation}
for any $0 \leq \alpha_2, \beta_2 \leq 1$ according to the definition of $\mathcal{EXP}_1$. As a result, 
$$
\sum_{l_2 \in \mathcal{EXP}_1} 2^{l_2} \|\tilde{B}(g_1,g_2)\|_1 \lesssim (C_2^{\tau_2}2^{n_2}|G_1|)^{\alpha_2} (C_2^{\tau_2},2^{m_2}|G_2|)^{\beta_2},
$$
for any $0 \leq \alpha_2, \beta_2 \leq 1$.
\item
For any fixed $-n-n_2, -m-m_2, n_2,m_2, l_2, \tau_1,\tau_2$, 
\begin{align*} 
&\{I_T: I_T \in \mathcal{I}_{-n-n_2,-m-m_2}^{\tau_1} \ \ \text{and} \ \  T \in \mathbb{T}^{\tau_1}_{-l-l_2}  \}, \nonumber \\
& \{J_S: J_S \in \mathcal{I}_{n_2,-m_2}^{\tau_2} \ \ \text{and} \ \  S \in \mathbb{S}^{\tau_2}_{l_2}  \}
\end{align*}
are disjoint collections of dyadic intervals. Therefore, for some fixed $\tilde{l}_2 \in \mathbb{Z}$,
\begin{align} \label{I_iii}
\sup_{l_2\in \mathcal{EXP}_1} \sum_{\substack{T \in \mathbb{T}_{-l-l_2}^{\tau_1} \\ S \in \mathbb{S}_{l_2}^{\tau_2}}} \bigg|\bigcup_{\substack{I \times J \in \mathcal{I}_{-n-n_2,-m-m_2}^{\tau_1} \cap T \times \mathcal{J}_{n_2,m_2}^{\tau_2} \cap S \\ I \times J \in \mathcal{R}_{k_1,k_2}^{\tau_1,\tau_2}}}I \times J\bigg| = & \bigg|\bigcup_{\substack{T \in \mathbb{T}_{-l-\tilde{l}_2}^{\tau_1} \\ S \in \mathbb{S}_{\tilde{l}2}^{\tau_2}}}\bigcup_{\substack{I \times J \in \mathcal{I}_{-n-n_2,-m-m_2}^{\tau_1} \cap T \times \mathcal{J}_{n_2,m_2}^{\tau_2} \cap S \\ I \times J \in \mathcal{R}_{k_1,k_2}^{\tau_1,\tau_2}}}I \times J\bigg| \nonumber\\
\leq & \bigg|\bigcup_{\substack{I \times J \in \mathcal{I}_{-n-n_2,-m-m_2}^{\tau_1}  \times \mathcal{J}_{n_2,m_2}^{\tau_2} \\ I \times J \in \mathcal{R}_{k_1,k_2}^{\tau_1,\tau_2}}}I \times J \bigg|.
\end{align}
\end{enumerate}
One can now plug in the estimates (\ref{I_i}), (\ref{I_ii}) and (\ref{I_iii}) into (\ref{I}) and derive that for any $0 \leq \alpha_1, \beta_1, \alpha_2, \beta_2 \leq 1$,
\begin{align}
& I  \nonumber \\
\lesssim & (C_1^{\tau_1})^2(C_2^{\tau_2})^2\sum_{\substack{n_2 \in \mathbb{Z} \\ m_2 \in \mathbb{Z}}}(2^{-n-n_2}|F_1|)^{\alpha_1} (2^{-m-m_2}|F_2|)^{\beta_1}(2^{n_2}|G_1|)^{\alpha_2} (2^{m_2}|G_2|)^{\beta_2}\bigg|\bigcup_{\substack{I \times J \in \mathcal{I}_{-n-n_2,-m-m_2}^{\tau_1}  \times \mathcal{J}_{n_2,m_2}^{\tau_2} \\ I \times J \in \mathcal{R}_{k_1,k_2}^{\tau_1,\tau_2}}}I \times J \bigg| \nonumber \\ 
\leq &  (C_1^{\tau_1})^2(C_2^{\tau_2})^2 \sup_{\substack{n_2 \in \mathbb{Z} \\ m_2 \in \mathbb{Z}}}(2^{-n-n_2}|F_1|)^{\alpha_1} (2^{-m-m_2}|F_2|)^{\beta_1}(2^{n_2}|G_1|)^{\alpha_2} (2^{m_2}|G_2|)^{\beta_2}\sum_{\substack{n_2 \in \mathbb{Z} \\ m_2 \in \mathbb{Z}}}\bigg|\bigcup_{\substack{I \times J \in \mathcal{I}_{-n-n_2,-m-m_2}^{\tau_1}  \times \mathcal{J}_{n_2,m_2}^{\tau_2} \\ I \times J \in \mathcal{R}_{k_1,k_2}^{\tau_1,\tau_2}}}I \times J \bigg|. 
\end{align}
By letting $\alpha_1 = \frac{1}{p_1}$, $\beta_2 = \frac{1}{q_1}$, $\alpha_2 = \frac{1}{p_2}$ and  $\beta_2 = \frac{1}{q_2}$ and the argument for the choice of indices in Section \ref{section_thm_haar_fixed}, one has
$$
I \lesssim (C_1^{\tau_1})^2(C_2^{\tau_2})^2 2^{-n\frac{1}{p_2}}2^{-m\frac{1}{q_1}} |F_1|^{\frac{1}{p_1}} |F_2|^{\frac{1}{q_1}} |G_1|^{\frac{1}{p_2}} |G_2|^{\frac{1}{q_2}}\sum_{\substack{n_2 \in \mathbb{Z} \\ m_2 \in \mathbb{Z}}}\bigg|\bigcup_{\substack{I \times J \in \mathcal{I}_{-n-n_2,-m-m_2}^{\tau_1}  \times \mathcal{J}_{n_2,m_2}^{\tau_2} \\ I \times J \in \mathcal{R}_{k_1,k_2}^{\tau_1,\tau_2}}}I \times J \bigg| .
$$
where 
\begin{align} \label{I_measure}
& \sum_{\substack{n_2 \in \mathbb{Z}\\ m_2 \in \mathbb{Z}}}\bigg|\bigcup_{\substack{I \times J \in \mathcal{I}_{-n-n_2,-m-m_2}^{\tau_1}  \times \mathcal{J}_{n_2,m_2}^{\tau_2} \\ I \times J \in \mathcal{R}_{k_1,k_2}^{\tau_1,\tau_2}}}I \times J \bigg|  \lesssim \min(2^{-k_1s},2^{-k_2\gamma}),
\end{align}
for any $\gamma >1$. The estimate is a direct application of the sparsity condition described in Proposition \ref{sp_2d} that has been extensively used before (see (\ref{rec_area_hybrid})). One can now apply (\ref{I_measure}) to conclude that
\begin{align*}
 |\Lambda_I^{\tau_1,\tau_2}| = & \sum_{(l,n,m,k_1,k_2) \in \mathcal{W}} 2^{k_1}\|h\|_{L^s(\mathbb{R}^2)} 2^{k_2} \cdot I \nonumber \\
\lesssim & (C_1^{\tau_1}C_2^{\tau_2}C_3^{\tau_1,\tau_2})^{4}  |F_1|^{\frac{1}{p_1}} |F_2|^{\frac{1}{q_1}} |G_1|^{\frac{1}{p_2}} |G_2|^{\frac{1}{q_2}}\|h\|_{L^s(\mathbb{R}^2)} \sum_{(l,n,m,k_1,k_2) \in \mathcal{W}} 2^{k_1(1-\frac{s}{2})}2^{k_2(1-\frac{\gamma}{2})} 2^{-n\frac{1}{p_2}}2^{-m\frac{1}{q_1}},
\end{align*}
and achieve the desired bound with the appropriate choice of $\gamma>1$.
 
\vskip .15in
\noindent
\textbf{Estimate of $\Lambda_{II}^{\tau_1,\tau_2}$.} One first observes that the estimates for $\Lambda_{II}^{\tau_1,\tau_2}$ apply to $\Lambda_{III}^{\tau_1,\tau_2}$ due to symmetry. One shall notice that
\begin{align} \label{II}
II \leq & \sum_{\substack{n_2 \in \mathbb{Z} \\ m_2 \in \mathbb{Z}}}\bigg( \sum_{l_2 \in \mathcal{EXP}_2} 2^{l_2} \|\tilde{B}(g_1,g_2)\|_1\bigg) \bigg(\sup_{l_2 \in \mathcal{EXP}_2}2^{-l-l_2}\|B(f_1,f_2)\|_1\sum_{\substack{T \in \mathbb{T}_{-l-l_2}^{\tau_1} \\ I_T \in \mathcal{I}^{\tau_1}_{-n-n_2,-m-m_2}}}|I_T|\bigg)\bigg(\sup_{l_2}\sum_{\substack{S \in \mathbb{S}_{l_2}^{\tau_2} \\ J_S \in \mathcal{J}_{n_2,m_2}^{\tau_2}}}|J_S|\bigg).
\end{align}
\begin{enumerate}[(i)]
\item
The first expression is a geometric series which can be bounded by 
\begin{equation} \label{II_i}
(C_2^{\tau_2}2^{n_2}|G_1|)^{\alpha_2} (C_2^{\tau_2}2^{m_2}|G_2|)^{\beta_2},
\end{equation}
for any $0 \leq \alpha_2, \beta_2 \leq 1$ (up to some constant as discussed in the estimate of $I$).
\item
The second term in (\ref{II}) can be considered as a localized $L^{1,\infty}$ energy. In addition, given by the restriction that $l_2 \in \mathcal{EXP}_2$, one can apply the localization and the corresponding energy estimates described in Proposition \ref{localized_energy_fourier_x}. In particular, for any $0 < \theta_1, \theta_2 < 1$ with $\theta_1 + \theta_2 = 1$,
\begin{align} \label{II_ii}
& \sup_{l_2 \in \mathcal{EXP}_2}2^{-l-l_2}\|B(f_1,f_2)\|_1\sum_{\substack{T \in \mathbb{T}_{-l-l_2}^{\tau_1} \\ I_T \in \mathcal{I}_{-n-n_2,-m-m_2}}}|I_T|\lesssim & (C_1^{\tau_1}2^{-n-n_2})^{\frac{1}{p_1}-\theta_1}(C_1^{\tau_1} 2^{-m-m_2})^{\frac{1}{q_1}- \theta_2} |F_1|^{\frac{1}{p_1}}|F_2|^{\frac{1}{q_1}}.
\end{align}
\item
For any fixed $n_2,m_2,l_2$ and $\tau_2$, $\{J_S: J_S \in \mathcal{J}_{n_2,m_2}^{\tau_2} \ \ \text{and } \ \ S \in \mathbb{S}_{l_2}^{\tau_2}\}$ is a disjoint collection of dyadic intervals, which implies that
\begin{align} \label{II_iii}
\sup_{l_2}\sum_{\substack{S \in \mathbb{S}^{l_2} \\ J_S \in \mathcal{J}_{n_2,m_2}}}|J_S| & \leq \big| \bigcup_{\substack{J_S \in \mathcal{J}_{n_2,m_2}}}J_S\big| \lesssim  \big|\{ Mg_1 > C_2^{\tau_2} 2^{n_2-10}|G_1| \} \cap \{Mg_2 > C_2^{\tau_2} 2^{m_2-10}|G_2| \}\big|,
\end{align}
where the last inequality follows from the pointwise estimates indicated in Claim \ref{ptwise}.
\end{enumerate}
By combining (\ref{II_i}), (\ref{II_ii}) and (\ref{II_iii}),  one can majorize (\ref{II}) by
\begin{align}\label{II_final}
II \lesssim & \sum_{\substack{n_2 \in \mathbb{Z} \\ m_2 \in \mathbb{Z}}}(C_2^{\tau_2} 2^{n_2}|G_1|)^{\alpha_2} (C_2^{\tau_2} 2^{m_2}|G_2|)^{\beta_2}(C_1^{\tau_1}2^{-n-n_2})^{\frac{1}{p_1}- \theta_1}(C_1^{\tau_1} 2^{-m-m_2})^{\frac{1}{q_1} - \theta_2} |F_1|^{\frac{1}{p_1}}|F_2|^{\frac{1}{q_1}} \nonumber \\
& \quad \quad \cdot \big|\{ Mg_1 > C_2^{\tau_2} 2^{n_2-10}|G_1| \} \cap \{Mg_2 > C_2^{\tau_2} 2^{m_2-10}|G_2| \}\big| \nonumber \\
& \leq \sup_{\substack{n_2 \in \mathbb{Z} \\ m_2 \in \mathbb{Z}}} (C_1^{\tau_1}2^{-n-n_2})^{\frac{1}{p_1} - \theta_1}(C_1^{\tau_1} 2^{-m-m_2})^{\frac{1}{q_1} - \theta_2} |F_1|^{\frac{1}{p_1}}|F_2|^{\frac{1}{q_1}}\nonumber \\
& \quad \quad \quad \cdot (C_2^{\tau_2} 2^{n_2})^{\alpha_2 - (1+\epsilon)(1-\mu)}(C_2^{\tau_2 }2^{m_2})^{\beta_2-(1+\epsilon)\mu}|G_1|^{\alpha_2- (1+\epsilon)(1-\mu)} |G_2|^{\beta_2-(1+\epsilon)\mu} \cdot \nonumber \\
&\quad  \sum_{\substack{n_2 \in \mathbb{Z} \\ m_2 \in \mathbb{Z}}}(C_2^{\tau_2}2^{n_2}|G_1|)^{(1+\epsilon)(1-\mu)}(C_2^{\tau_2}2^{m_2}|G_2|)^{(1+\epsilon)\mu}\big|\{ Mg_1 > C_2^{\tau_2} 2^{n_2-10}|G_1| \} \cap \{Mg_2 > C_2^{\tau_2} 2^{m_2-10}|G_2| \}\big|.
\end{align}
By the H\"older-type argument introduced in Section \ref{section_thm_haar}, namely (\ref{B_2_final}), one can estimate the expression 
\begin{align} \label{II_fub}
& \sum_{\substack{n_2 \in \mathbb{Z} \\ m_2 \in \mathbb{Z}}}(C_2^{\tau_2}2^{n_2}|G_1|)^{(1+\epsilon)(1-\mu)}(C_2^{\tau_2}2^{m_2}|G_2|)^{(1+\epsilon)\mu}\big|\{ Mg_1 > C_2^{\tau_2} 2^{n_2-10}|G_1| \} \cap \{Mg_2 > C_2^{\tau_2} 2^{m_2-10}|G_2| \}\big| \nonumber \\
\lesssim & |G_1|^{1-\mu} |G_2|^{\mu}.
\end{align}
Therefore, by plugging in (\ref{II_fub}) and some simplifications, (\ref{II_final}) can be bounded by
\begin{align*}
& (C_1^{\tau_1} C_2^{\tau_2})^2\sup_{\substack{n_2 \in \mathbb{Z} \\ m_2 \in \mathbb{Z}}} (2^{-n-n_2})^{\frac{1}{p_1} - \theta_1}( 2^{-m-m_2})^{\frac{1}{q_1} - \theta_2} |F_1|^{\frac{1}{p_1}}|F_2|^{\frac{1}{q_1}} (2^{n_2})^{\alpha_2 - (1+\epsilon)(1-\mu)}(2^{m_2})^{\beta_2-(1+\epsilon)\mu}|G_1|^{\alpha_2-\epsilon(1-\mu)} |G_2|^{\beta_2-\epsilon\mu}.
\end{align*}
One would like to choose $0 \leq  \alpha_2, \beta_2 \leq 1, 0 < \mu < 1$ and $\epsilon>0$ such that
\begin{align} \label{exp_cond_fourier}
& \alpha_2-\epsilon(1-\mu) =  \frac{1}{p_2}, \nonumber \\
& \beta_2 - \epsilon\mu = \frac{1}{q_2}. 
\end{align}
Meanwhile, one can also achieve the equalities
\begin{align*}
& \frac{1}{p_1} - \theta_1 =  \alpha_2-(1+\epsilon)(1-\mu), \nonumber \\
& \frac{1}{q_1} - (1-\theta_1) =  \beta_2-(1+\epsilon)\mu, 
\end{align*}
which combined with (\ref{exp_cond_fourier}), yield
\begin{align*}
& \frac{1}{p_1} - \theta_1 =  \frac{1}{p_2} - (1-\mu), \nonumber \\
& \frac{1}{q_1} - (1-\theta_1) = \frac{1}{q_2} -\mu.
\end{align*}
Thanks to the condition that 
$$
\frac{1}{p_1} + \frac{1}{q_1} = \frac{1}{p_2} + \frac{1}{q_2},
$$
one only needs to choose $0 < \theta_1, \mu < 1$ such that
$$
\frac{1}{p_1} - \frac{1}{p_2} = \theta_1- (1-\mu).
$$
To sum up, one has the following estimate for II:
\begin{equation} \label{II_ns}
II \lesssim (C_1^{\tau_1} C_2^{\tau_2})^2 2^{-n(\frac{1}{p_1}- \theta_1)}2^{-m(\frac{1}{q_1}- (1-\theta_1))}|F_1|^{\frac{1}{p_1}} |F_2|^{\frac{1}{q_1}}|G_1|^{\frac{1}{p_2}} |G_2|^{\frac{1}{q_2}}.
\end{equation}
Last but not least, one can interpolate between the estimates (\ref{II_ns}) and (\ref{ns_fourier_sp_final}) obtained from the sparsity condition to conclude that
\begin{align} \label{ns_fourier_fb_final}
|\Lambda_{II}^{\tau_1,\tau_2}| = &  \sum_{(l,n,m,k_1,k_2) \in \mathcal{W}} 2^{k_1}\|h\|_{L^s(\mathbb{R}^2)} 2^{k_2} \cdot II \nonumber \\
\lesssim & (C_1^{\tau_1} C_2^{\tau_2}C_3^{\tau_1,\tau_2})^4  \sum_{(l,n,m,k_1,k_2) \in \mathcal{W}} 2^{k_1}\|h\|_{L^s(\mathbb{R}^2)} 2^{k_2(1-\frac{\lambda\gamma}{2})} 2^{-l\lambda(1- \frac{(1+\delta)}{2})}2^{-n(1-\lambda)(\frac{1}{p_1}- \theta_1)}2^{-m(1-\lambda)(\frac{1}{q_1}- (1-\theta_1))} \nonumber \\
& \cdot |F_1|^{(1-\lambda)\frac{1}{p_1}+\lambda\frac{\mu_1(1+\delta)}{2}+\lambda\rho(1-\frac{1+\delta}{2})} |F_2|^{(1-\lambda)\frac{1}{q_1}+\lambda\frac{\mu_2(1+\delta)}{2}+\lambda(1-\rho)(1-\frac{1+\delta}{2})} \nonumber \\
&\cdot |G_1|^{(1-\lambda)\frac{1}{p_2}+\lambda\frac{\nu_1(1+\delta)}{2}+\lambda\rho'(1-\frac{1+\delta}{2})} |G_2|^{(1-\lambda)\frac{1}{q_2}+\lambda\frac{\nu_2(1+\delta)}{2}+\lambda(1-\rho')(1-\frac{1+\delta}{2})}. 
\end{align}
One has enough degree of freedom to choose the indices and obtain the desired estimate:
\begin{enumerate}[(i)]
\item
for any $0 < \lambda,\delta < 1$, the series $\displaystyle \sum_{l>0}2^{-l\lambda(1- \frac{(1+\delta)}{2})}$ is convergent;
\item 
one notices that for $0 < \theta_1 < 1$, $\displaystyle \sum_{n>0}2^{-n(1-\lambda)(\frac{1}{p_1}- \theta_1)}$ and $\displaystyle \sum_{m>0}2^{-m(1-\lambda)(\frac{1}{q_1}- (1-\theta_1))}$ converge if
\begin{align*}
&\frac{1}{p_1} - \theta_1>0, \nonumber \\
&\frac{1}{q_1} - (1-\theta_1)>0, 
\end{align*}
which implies that
$$
\frac{1}{p_1} + \frac{1}{q_1} > 1. 
$$
This would be the condition we impose on the exponents $p_1$ and $q_1$. The proof for the range $\frac{1}{p_1} + \frac{1}{q_1} \leq 1$ follows a simpler argument. 
\item
One can identify (\ref{ns_fourier_fb_final}) with (\ref{exp00}) and choose the indices to match the desired exponents for $|F_1|,|F_2|, |G_1|$ and $|G_2|$ in the exactly same fashion.
\end{enumerate}
\vskip .15 in
\noindent
\textbf{Estimate of $\Lambda_{IV}^{\tau_1,\tau_2}$.}
When $l_2 \in \mathcal{EXP}_4$, one has the localization that the main contribution of
$$
\sum_{|K| \geq |I|}\frac{1}{|K|^{\frac{1}{2}}}\langle f_1, \vphi_K^1 \rangle \langle f_2, \psi_K^2 \rangle \langle \chi_{E'}, \psi_K^3\rangle
$$
comes from
$$
\sum_{K \supseteq I}\frac{1}{|K|^{\frac{1}{2}}}\langle f_1, \vphi_K^1 \rangle \langle f_2, \psi_K^2 \rangle \langle \chi_{E'}, \psi_K^3\rangle
$$
as in the Haar model. As a consequence, it is not difficult to check that the argument in Section \ref{section_thm_haar} applies to the estimate of $IV$, where one employs the local energy estimates stated in Proposition \ref{localized_energy_fourier_x} and \ref{localized_energy_y} instead of Proposition \ref{B_en} and derive that
\begin{align} \label{ns_fourier_fb_iv}
 IV \lesssim  (C_1^{\tau_1} C_2^{\tau_2})^2 2^{-n(\frac{1}{p_1} - \theta_1-\frac{1}{2}\mu(1+\epsilon))}2^{-m(\frac{1}{q_1}- \theta_2-\frac{1}{2}(1-\mu)(1+\epsilon))}  |F_1|^{\frac{1}{p_1}-\frac{\mu}{2}\epsilon}|F_2|^{\frac{1}{q_1}-\frac{1-\mu}{2}\epsilon}|G_1|^{\frac{1}{p_2}-\frac{\mu}{2}\epsilon}|G_2|^{\frac{1}{q_2}-\frac{1-\mu}{2}\epsilon}. 
 \end{align}
By interpolating between (\ref{ns_fourier_fb_iv}) and (\ref{ns_fourier_sp}) which correspond to the estimates for the nested sum using the Fubini argument and the sparsity condition developed in Section \ref{section_thm_haar}, namely (\ref{ns_fb}) and (\ref{ns_sp}), one achieves the desired bound. 
\begin{remark}
When only one of the families $(\phi^3_K)_{K \in \mathcal{K}}$ and $(\phi^3_L)_{L \in \mathcal{L}}$ is lacunary, a simplified argument is sufficient. Without loss of generality, we assume that  $(\vphi^3_K)_{K \in \mathcal{K}}$ is a non-lacunary family while $(\psi^3_L)_{L \in \mathcal{L}}$ is a lacunary family. One can then split the argument into two parts depending on the range of the exponents $l_2$:
\begin{enumerate}[(i)]
\item
$l_2 \in \{l_2 \in \mathbb{Z}: 2^{l_2}\|\tilde{B}(g_1,g_2)\|_1 \lesssim (C_2^{\tau_2}2^{n_2}|G_1|)^{\alpha_2}(C_2^{\tau_2}2^{m_2}|G_2|)^{\beta_2} \}$;
\item
$l_2 \in \{l_2 \in \mathbb{Z}: 2^{l_2}\|\tilde{B}(g_1,g_2)\|_1 \gg (C_2^{\tau_2}2^{n_2}|G_1|)^{\alpha_2}(C_2^{\tau_2}2^{m_2}|G_2|)^{\beta_2} \}$;
\end{enumerate}
where Case (i) can be treated by the same argument for $II$ and Case (ii) by the reasoning for $IV$.
This completes the proof of Theorem \ref{thm_weak_mod} for $\Pi_{\text{flag}^0 \otimes \text{flag}^0}$ in the general case.
\end{remark}

As commented in the beginning of this section, the argument for Theorem \ref{thm_weak_mod} and \ref{thm_weak_inf_mod} developed in the Haar model can be generalized to the Fourier setting, which ends the proof of the main theorems. 

\section{Appendix I - Multilinear Interpolations}
This section is devoted to various multilinear interpolations that allow one to reduce Theorem \ref{main_theorem} to  \ref{thm_weak} (and Theorem \ref{main_thm_inf} to \ref{thm_weak_inf} correspondingly). We will start from the statement in Theorem \ref{thm_weak} and implement interpolations step by step to reach Theorem \ref{main_theorem}. Throughout this section, we will consider $T_{ab}$ as a trilinear operator with first two function spaces restricted to tensor product spaces.
\subsection{Interpolation of Multilinear Forms}
One may recall that Theorem \ref{thm_weak} covers all the restricted weak-type estimates except for the case $2 \leq s \leq \infty$. We will apply the interpolation of multilinear forms to fill in the gap. In particular, let $T^*_{ab}$ denote the adjoint operator of $T_{ab}$ such that
$$
 \langle T_{ab}(f_1 \otimes g_1, f_2 \otimes g_2, h),l\rangle = \langle T^*_{ab}(f_1 \otimes g_1, f_2 \otimes g_2, l), h \rangle
$$
Due to the symmetry between $T_{ab}$ and $T^*_{ab}$, one concludes that the multilinear form associated to $T^{*}_{ab}$ satisfies 
$$
|\Lambda(f_1 \otimes g_1, f_2 \otimes g_2, h, l)| \lesssim |F_1|^{\frac{1}{p_1}} |G_1|^{\frac{1}{p_2}} |F_2|^{\frac{1}{q_1}} |G_2|^{\frac{1}{q_2}} |H|^{\frac{1}{r'}} |L|^{\frac{1}{s}}
$$ 
for every measurable set $F_1, F_2 \subseteq \mathbb{R}_x$, $G_1, G_2 \subseteq \mathbb{R}_y$, $H, L \subseteq \mathbb{R}^2$ of positive and finite measure and every measurable function $|f_i| \leq \chi_{F_i}$, $|g_j| \leq \chi_{G_j}$, $|h| \leq \chi_{H}$ and $|l| \leq \chi_{L}$ for $i, j = 1, 2$. The notation and the range of exponents agree with the ones in Theorem \ref{thm_weak}.
One can now apply the interpolation of multilinear forms described in Lemma 9.6 of \cite{cw} to attain the restricted weak-type estimate with $1 < s \leq \infty $:
\begin{equation} \label{s=inf}
|\Lambda(f_1 \otimes g_1, f_2 \otimes g_2, h, l)| \lesssim |F_1|^{\frac{1}{p_1}} |G_1|^{\frac{1}{p_2}} |F_2|^{\frac{1}{q_1}} |G_2|^{\frac{1}{q_2}}|H|^{\frac{1}{s}} |L|^{\frac{1}{r'}}
\end{equation}
where $\frac{1}{s} = 0$ if $s= \infty$. For $1 \leq s < \infty$, one can fix $f_1, g_1, f_2, g_2 $ and apply linear Marcinkiewiecz interpolation theorem to prove the strong-type estimates for $h \in L^s(\mathbb{R}^2)$ with $1 < s < \infty$. 
The next step would be to validate the same result for $h \in L^{\infty}$. One first rewrites the multilinear form associated to $T_{ab}(f_1 \otimes g_1, f_2 \otimes g_2, h)$ as
\begin{align}\label{linear_form_interp}
\Lambda(f_1 \otimes g_1, f_2 \otimes g_2, h, \chi_{E'}) := & \langle T_{ab}(f_1 \otimes g_1, f_2 \otimes g_2, h), \chi_{E'}\rangle \nonumber \\
= & \langle T^*_{ab}(f_1 \otimes g_1, f_2 \otimes g_2, \chi_{E'}), h\rangle. 
\end{align} 
Let $Q_N := [ -N,N]^2$ denote the cube of length $2N$ centered at the origin in $\mathbb{R}^2$, then (\ref{linear_form_interp}) can be expressed as
\begin{align*}
& \displaystyle \lim_{N \rightarrow \infty} \int_{Q_N} T^*_{ab}(f_1 \otimes g_1, f_2 \otimes g_2, \chi_{E'})(x) h(x) dx \nonumber \\
= &  \lim_{N \rightarrow \infty}\int T^*_{ab}(f_1 \otimes g_1, f_2 \otimes g_2, \chi_{E'})(x) (h\cdot\chi_{Q_N})(x) dx \nonumber \\
= & \lim_{N \rightarrow \infty}\int T_{ab}(f_1 \otimes g_1, f_2 \otimes g_2, h\cdot\chi_{Q_N})(x) \chi_{E'}(x) dx \nonumber \\
= & \lim_{N \rightarrow \infty}\Lambda(f_1 \otimes g_1, f_2 \otimes g_2, h\cdot\chi_{Q_N}, \chi_{E'}).
\end{align*}
Let $\tilde{h}:= \frac{h \chi_{Q_N}}{\|h\|_{\infty}}$, where $|\tilde{h}| \leq \chi_{Q_N}$ with $|Q_N| \leq N^2$. One can thus invoke (\ref{s=inf}) to conclude that
\begin{align*}
|\Lambda(f_1 \otimes g_1, f_2 \otimes g_2, h\chi_{Q_N}, \chi_{E'})| =& \|h\|_{\infty} \cdot |\Lambda(f_1 \otimes g_1, f_2 \otimes g_2, \tilde{h}, \chi_{E'})| \nonumber \\
\lesssim & |F_1|^{\frac{1}{p_1}}|G_1|^{\frac{1}{p_2}}|F_2|^{\frac{1}{q_1}}|G_2|^{\frac{1}{q_2}}\|h\|_{\infty}|E|^{\frac{1}{r'}}.
\end{align*}
As the bound for the multilinear form is independent of $N$, passing to the limit when $N \rightarrow \infty$ yields that
$$
|\Lambda(f_1 \otimes g_1, f_2 \otimes g_2, h, \chi_{E'})| \lesssim |F_1|^{\frac{1}{p_1}}|G_1|^{\frac{1}{p_2}}|F_2|^{\frac{1}{q_1}}|G_2|^{\frac{1}{q_2}}\|h\|_{\infty}|E|^{\frac{1}{r'}}.
$$
Combined with the statement in Theorem \ref{thm_weak}, one has that for any $1 < p_1,p_2, q_1,q_2 < \infty$, $1<s \leq \infty$, $0 < r < \infty$, $\frac{1}{p_1} + \frac{1}{q_1} = \frac{1}{p_2} + \frac{1}{q_2} = \frac{1}{r} - \frac{1}{s}$,
\begin{equation} \label{restricted_weak}
\|T_{ab}(f_1 \otimes g_1, f_2 \otimes g_2, h)\|_{r,\infty} \lesssim |F_1|^{\frac{1}{p_1}}|G_1|^{\frac{1}{p_2}}|F_2|^{\frac{1}{q_1}}|G_2|^{\frac{1}{q_2}}\|h\|_{L^s(\mathbb{R}^2)}
\end{equation}
for every measurable set $F_1, F_2 \subseteq \mathbb{R}_x$, $G_1, G_2 \subseteq \mathbb{R}_y$ of positive and finite measure and every measurable function $|f_i| \leq \chi_{F_i}$, $|g_j| \leq \chi_{G_j}$ for $i, j = 1, 2$.
\subsection{Tensor-type Marcinkiewiecz Interpolation}
The next and final step would be to attain strong-type estimates for $T_{ab}$ from (\ref{restricted_weak}). We first fix $h \in L^{s}$ and define
$$T^{h}(f_1 \otimes g_1, f_2 \otimes g_2) := T_{ab}(f_1 \otimes g_1, f_2 \otimes g_2,h)$$
One can then apply the following tensor-type Marcinkiewiecz interpolation theorem to each $T^h$ so that Theorem \ref{main_theorem} follows.
\begin{theorem}\label{tensor_interpolation}
Let $1 < p_1,p_2, q_1, q_2< \infty$ and $ 0 < t < \infty$ such that $\frac{1}{p_1} + \frac{1}{q_1} = \frac{1}{p_2} + \frac{1}{q_2} = \frac{1}{t}$. Suppose a multilinear tensor-type operator $T(f_1 \otimes g_1, f_2 \otimes g_2)$ satisfies the restricted weak-type estimates for any $\tilde{p_1}, \tilde{p_2}, \tilde{q_1}, \tilde{q_2}$ in a neighborhood of $p_1, p_2, q_1, q_2$ respectively with $
\frac{1}{\tilde{p_1}} + \frac{1}{\tilde{q_1}} = \frac{1}{\tilde{p_2}} + \frac{1}{\tilde{q_2}} = \frac{1}{\tilde{t}}$, equivalently
$$
\|T(f_1 \otimes g_1, f_2 \otimes g_2) \|_{\tilde{t},\infty} \lesssim |F_1|^{\frac{1}{\tilde{p_1}}} |G_1|^{\frac{1}{\tilde{p_2}}} |F_2|^{\frac{1}{\tilde{q_1}}} |G_2|^{\frac{1}{\tilde{q_2}}}
$$
for every measurable set $F_1  \subseteq \mathbb{R}_{x} , F_2 \subseteq  \mathbb{R}_{x},  G_1\subseteq \mathbb{R}_y, G_2\subseteq \mathbb{R}_y$ of positive and finite measure and every measurable function $|f_1(x)| \leq \chi_{F_1}(x)$, $|f_2(x)| \leq \chi_{F_2}(x)$, $|g_1(y)| \leq \chi_{G_1}(y)$, $|g_2(y)| \leq \chi_{G_2}(y)$.
Then $T$ satisfies the strong-type estimate
$$
\|T(f_1 \otimes g_1, f_2 \otimes g_2) \|_{t} \lesssim \|f_1\|_{p_1} \|g_1\|_{p_2} \|f_2\|_{q_1} \|g_2\|_{q_2}
$$
for any $f_1 \in L^{p_1}(\mathbb{R}_x)$, $f_2 \in L^{q_1}(\mathbb{R}_x)$, $g_1 \in L^{p_2}(\mathbb{R}_y)$ and $g_2 \in L^{q_2}(\mathbb{R}_y)$.
\end{theorem}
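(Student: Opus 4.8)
The plan is to adapt the classical real (Marcinkiewicz-type) interpolation argument to the multilinear tensor setting. First I would reduce to simple functions: if $f_1,g_1,f_2,g_2$ are simple (finite linear combinations of indicators), then $f_1\otimes g_1$ and $f_2\otimes g_2$ are finite linear combinations of products $\chi_F\otimes\chi_G$, so by multilinearity and the hypothesis $\|T(f_1\otimes g_1,f_2\otimes g_2)\|_{\tilde t,\infty}<\infty$; it then suffices to prove the strong-type bound for such functions with a constant independent of their complexity, since the general case follows by density and Fatou's lemma. After normalizing $\|f_1\|_{p_1}=\|g_1\|_{p_2}=\|f_2\|_{q_1}=\|g_2\|_{q_2}=1$, decompose each function into dyadic level layers, e.g. $f_1=\sum_{j_1\in\mathbb{Z}}f_1^{j_1}$ with $f_1^{j_1}:=f_1\chi_{\{2^{j_1}\le|f_1|<2^{j_1+1}\}}$, so that $|f_1^{j_1}|\le 2^{j_1+1}\chi_{E_1^{j_1}}$ and $\sum_{j_1}2^{j_1 p_1}|E_1^{j_1}|\lesssim 1$, and analogously $g_1=\sum_{k_1}g_1^{k_1}$ (layers $F_1^{k_1}$), $f_2=\sum_{j_2}f_2^{j_2}$ (layers $E_2^{j_2}$), $g_2=\sum_{k_2}g_2^{k_2}$ (layers $F_2^{k_2}$), all finite sums. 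By multilinearity,
\[
T(f_1\otimes g_1,f_2\otimes g_2)=\sum_{j_1,k_1,j_2,k_2}T\big(f_1^{j_1}\otimes g_1^{k_1},\,f_2^{j_2}\otimes g_2^{k_2}\big).
\]

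For each quadruple $(j_1,k_1,j_2,k_2)$ and each admissible perturbed tuple $(\tilde p_1,\tilde p_2,\tilde q_1,\tilde q_2)$ in the given neighborhood of $(p_1,p_2,q_1,q_2)$ — that is, with $\tfrac1{\tilde p_1}+\tfrac1{\tilde q_1}=\tfrac1{\tilde p_2}+\tfrac1{\tilde q_2}=:\tfrac1{\tilde t}$ — the hypothesis applied to $\chi_{E_1^{j_1}}\otimes\chi_{F_1^{k_1}}$ and $\chi_{E_2^{j_2}}\otimes\chi_{F_2^{k_2}}$ together with the homogeneity of $T$ gives
\[
\big\|T(f_1^{j_1}\otimes g_1^{k_1},\,f_2^{j_2}\otimes g_2^{k_2})\big\|_{\tilde t,\infty}\lesssim 2^{\,j_1+k_1+j_2+k_2}\,|E_1^{j_1}|^{1/\tilde p_1}|F_1^{k_1}|^{1/\tilde p_2}|E_2^{j_2}|^{1/\tilde q_1}|F_2^{k_2}|^{1/\tilde q_2}.
\]
Then I would pass from these layerwise $L^{\tilde t,\infty}$ bounds to an $L^t$ bound on the full sum by the standard distributional device: writing $\|\,\cdot\,\|_t^t=t\int_0^\infty\lambda^{t-1}|\{\,\cdot>\lambda\}|\,d\lambda$, for each height $\lambda$ split the mass $\lambda$ among the quadruples with weights decaying geometrically in $|j_1|+|k_1|+|j_2|+|k_2|$, estimate each resulting level set by the displayed weak-type inequality, and in each dyadic block of the index lattice choose the perturbation so that $\tilde t$ lies on the side of $t$ for which the $\lambda$-integral converges and the extra powers $|E^{\,\cdot}|^{1/\tilde p-1/p}$ (controlled through $2^{jp}|E^j|\lesssim1$) produce summable geometric decay in the four indices. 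Summing, the surviving factors collapse to $\big(\sum_{j_1}2^{j_1 p_1}|E_1^{j_1}|\big)^{1/p_1}\cdots=\|f_1\|_{p_1}\|g_1\|_{p_2}\|f_2\|_{q_1}\|g_2\|_{q_2}$, which is the claim.

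The main obstacle is the four-parameter summation. Because the scaling is frozen, the perturbation parameters are not independent: writing $\tfrac1{\tilde p_1}=\tfrac1{p_1}+\varepsilon_1$, $\tfrac1{\tilde q_1}=\tfrac1{q_1}+\varepsilon_2$, $\tfrac1{\tilde p_2}=\tfrac1{p_2}+\varepsilon_3$, $\tfrac1{\tilde q_2}=\tfrac1{q_2}+\varepsilon_4$, the homogeneity forces $\varepsilon_1+\varepsilon_2=\varepsilon_3+\varepsilon_4$, so one cannot simultaneously extract decay in all four of $j_1,k_1,j_2,k_2$ in every corner of the lattice. This is resolved by the now-standard case analysis in multilinear real interpolation: one partitions the index lattice according to which layer measures are comparatively small versus large and routes each region to a different admissible point of the neighborhood, the openness of the neighborhood supplying exactly the room needed, while the identity $\tfrac1{p_1}+\tfrac1{q_1}=\tfrac1{p_2}+\tfrac1{q_2}$ guarantees compatibility of the choices across regions. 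A secondary, routine point is that for $t<1$ the quantity $L^{t,\infty}$ is only a quasi-norm, so the passage from layerwise weak estimates to the $L^t$ bound must go through the distributional splitting above (or an $\ell^\sigma$ aggregation with $\sigma<t$) rather than naive subadditivity. All of this is the familiar bookkeeping behind multilinear Marcinkiewicz interpolation and introduces no genuinely new difficulty beyond organizing the sum.
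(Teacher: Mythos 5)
The paper itself offers no proof of this theorem---only the remark that the argument ``resembles the multilinear Marcinkiewicz interpolation (see \cite{bm}) with small modifications''---so there is no written argument to compare against. Your outline (reduction to simple functions, dyadic level-layer decomposition of each of $f_1,f_2,g_1,g_2$, applying the restricted-weak-type hypothesis layerwise with perturbed exponents, and recombining via the distribution function) is indeed the standard template, and you correctly locate the novelty of the tensor setting: the Brascamp--Lieb scaling forces the affine constraint $\varepsilon_1+\varepsilon_2=\varepsilon_3+\varepsilon_4$ on the admissible perturbations, which leaves only three free parameters for four index directions.

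The gap is that your resolution of this obstruction is asserted, not argued, and the assertion is not correct as stated. The constraint is incompatible with exactly two of the sixteen sign patterns, namely $(\mathrm{sign}\,\varepsilon_1,\mathrm{sign}\,\varepsilon_2,\mathrm{sign}\,\varepsilon_3,\mathrm{sign}\,\varepsilon_4)=(+,+,-,-)$ and $(-,-,+,+)$: if $\varepsilon_1,\varepsilon_2>0$ then $\varepsilon_3+\varepsilon_4>0$, so $\varepsilon_3$ and $\varepsilon_4$ cannot both be negative. Consequently the regions $\{j_1,j_2>0,\ k_1,k_2<0\}$ and $\{j_1,j_2<0,\ k_1,k_2>0\}$ of the index lattice cannot be summed by choosing any single admissible perturbed tuple: every admissible choice produces geometric decay in at most three of the four indices, and the remaining single sum is only controlled in $\ell^{p}$ for some $p>1$, which is not summable. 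Your claim that ``the openness of the neighborhood'' together with the identity $\tfrac1{p_1}+\tfrac1{q_1}=\tfrac1{p_2}+\tfrac1{q_2}$ ``guarantees compatibility of the choices across regions'' papers over precisely these two regions; that claim is not part of the standard multilinear Marcinkiewicz bookkeeping and is exactly the content of the paper's ``small modifications.'' A plausible route is to split those two regions further by the sign of a coupled affine functional such as $j_1p_1+k_1p_2$, set $\varepsilon_1=\varepsilon_3$, and run a paired H\"older estimate across the coupled sub-lattice (alternatively, an iterated interpolation, first in the $x$-pair $(f_1,f_2)$ with the $y$-indicators frozen, then in the $y$-pair, with the resulting $|G|^{O(\varepsilon)}$ errors carried into the second step); but either mechanism imposes constraints of its own that must be checked over the entire admissible range of exponents. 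You need to make the chosen mechanism explicit rather than assert that the case analysis closes.
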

\begin{remark}
The proof of the theorem resembles the argument for the multilinear Marcinkiewiecz interpolation(see \cite{bm}) with small modifications. 
\end{remark}

\section{Appendix II - Reduction to Model Operators}
\subsection{Littlewood-Paley Decomposition}
\subsubsection{Set up}
Let $\vphi \in \mathcal{S}(\mathbb{R})$ be a Schwartz function with $\text{supp} (\f{\vphi}) \subseteq [-2,2]$ and $\f{\vphi}(\xi) = 1$ on $[-1,1]$. Let 
$$ 
\f{\psi}(\xi) = \f{\vphi}(\xi) - \f{\vphi}(2\xi)
$$
so that $\text{supp} \f{\psi} \subseteq [-2,-\frac{1}{2}] \cup [-\frac{1}{2}, 2]$. Now for every $k \in \mathbb{Z}$, define 
$$
\f{\psi}_{k}: = \f{\psi}(2^{-k}\xi)
$$
One important observation is that
$$
\sum_{k \in \mathbb{Z}} \f{\psi}_k(\xi) = 1
$$
Let 
$$
\f{\vphi}_{k}(\xi) := \sum_{k' \leq k - 10} \f{\psi}_{k'}(\xi)
$$
\begin{notation}
We say that the family $({\psi}_k)_k$ is \textit{$\psi$ type} and the family $({\vphi}_k)_k$ is \textit{$\vphi$ type}.
\end{notation}

\subsubsection{Special Symbols}
We will first focus on a special case of the symbols and the general case will be studied as an extension afterwards.
Suppose that
$$
a(\xi_1,\eta_1,\xi_2,\eta_2) = a_1(\xi_1,\xi_2)a_2(\eta_1,\eta_2)
$$
$$
b(\xi_1,\eta_1,\xi_2,\eta_2,\xi_3,\eta_3) = b_1(\xi_1,\xi_2,\xi_3)  b_2(\eta_1,\eta_2,\eta_3)
$$
where
\begin{align}
a_1(\xi_1,\xi_2) = & \sum_{k_1} \f{\phi}^{1}_{k_1}(\xi_1) \f{\phi}^{2}_{k_1}(\xi_2) \label{special_symb_a_1} \\
b_1(\xi_1,\xi_2,\xi_3) = & \sum_{k_2} \f{\phi}^{1}_{k_2}(\xi_1) \f{\phi}^{2}_{k_2}(\xi_2) \f{\phi}^{3}_{k_2}(\xi_3) \label{special_symb_b_1} 
\end{align}
At least one of the families $({\phi}^1_{k_1})_{k_1}$ and $({\phi}^2_{k_1})_{k_1}$ is $\psi$ type and at least one of the families $({\phi}^1_{k_2})_{k_2}$, $({\phi}^2_{k_2})_{k_2}$ and $({\phi}^3_{k_2})_{k_2}$  is $\psi$ type. 
Similarly,
$$
a_2(\eta_1,\eta_2) =  \sum_{j_1} \f{{\phi}}^1_{j_1}(\eta_1) \f{{\phi}}^2_{j_1}(\eta_2) 
$$
$$
b_2(\eta_1,\eta_2,\eta_3) =  \sum_{j_2} \f{{\phi}}^1_{j_2}(\eta_1) \f{{\phi}}^2_{j_2}(\eta_2) \f{{\phi}}^3_{j_2}(\eta_3)
$$
where at least one of the families $({\phi}^1_{j_1})_{j_1}$ and $({\phi}^2_{j_1})_{j_1}$ is $\psi$ type and at least one of the families $({\phi}^1_{j_2})_{j_2}$, $({\phi}^2_{j_2})_{j_2}$ and $({\phi}^3_{j_2})_{j_2}$  is $\psi$ type. 

Then
\begin{align}
a_1(\xi_1,\xi_2) b_1(\xi_1,\xi_2,\xi_3) = & \sum_{k_1,k_2} \f{\phi}^1_{k_1}(\xi_1) \f{\phi}^2_{k_1}(\xi_2)  \f{\phi}^1_{k_2}(\xi_1) \f{\phi}^2_{k_2}(\xi_2) \f{\phi}^3_{k_2}(\xi_3) \nonumber \\
= & \underbrace{\sum_{k_1 \approx k_2}}_{I^1} + \underbrace{\sum_{k_1 \ll k_2}}_{II^1} + \underbrace{\sum_{k_1 \gg k_2}}_{III^1},
\end{align}
where $k_1 \approx k_2$ means $k_2-100 \leq k_1 \leq k_2 + 100$ and  $k_1 \ll k_2$ means $k_1 < k_2 -100$.

Case $I^1$ gives rise to the symbol of a paraproduct. More precisely,
\begin{equation}\label{I^1_step1}
I^1 = \sum_{k} \f{\tilde{\phi}^1_k}(\xi_1) \f{\tilde{\phi}^2_k}(\xi_2) \f{\phi}^3_k(\xi_3), 
\end{equation} 
where $\f{\tilde{\phi}^1_k}(\xi_1) := \f{\phi}^1_{k_1}(\xi_1) \f{\phi}^1_{k_2}(\xi_1)$ and  $\f{\tilde{\phi}^2_k}(\xi_2) := \f{\phi}^2_{k_1}(\xi_2) \f{\phi}^2_{k_2}(\xi_2)$ when $k := k_1 \approx k_2$. (\ref{I^1_step1}) can be completed as
\begin{equation} \label{I^1_completed}
I^1 = \sum_{k} \f{\tilde{\phi}^1_k}(\xi_1) \f{\tilde{\phi}^2_k}(\xi_2) \f{\phi}^3_{k}(\xi_3) \f{\tilde{\phi}^4_{k}}(\xi_1 + \xi_2 + \xi_3)
\end{equation}
and at least two of the families $ ({\tilde{\phi}}^1_{k})_{k}$, $({\tilde{\phi}}^2_{k})_{k}$, $ ({\phi}^3_{k})_{k}$, $(\tilde{{\phi}}^4_{k})_{k}$ are $\psi$ type.

Case $II^1$ and $III^1$ can be treated similarly. In Case $II^1$, the sum is non-degenerate when $(\phi_{k_2}^1)_{k_2}$ and $(\phi_{k_2}^2)_{k_2}$ are $\vphi$ type. In particular, one has
\begin{equation} \label{II^1_step1}
II ^1 = \sum_{k_1 \ll k_2} \f{\phi}^1_{k_1}(\xi_1) \f{\phi}^2_{k_1}(\xi_2)  \f{\vphi}^1_{k_2}(\xi_1) \f{\vphi}^2_{k_2}(\xi_2) \f{\psi}^3_{k_2}(\xi_3),
\end{equation}
where at least one of the families $(\phi^1_{k_1})_{k_1}$ and  $(\phi^2_{k_1})_{k_1}$ is $\psi$ type.
In the case when the symbols are assumed to take the special form (namely (\ref{special_symb_a_1}) and (\ref{special_symb_b_1})), (\ref{II^1_step1}) can be rewritten as
\begin{equation} \label{simp_special_symbol}
\sum_{k_1 \ll k_2} \f{\phi}^1_{k_1}(\xi_1) \f{\phi}^2_{k_1}(\xi_2) \f{\psi}^3_{k_2}(\xi_3),
\end{equation}
which can be ``completed" as 
\begin{equation} \label{completion}
II^1 = \sum_{k_1 \ll k_2} \f{\phi}^1_{k_1}(\xi_1) \f{\phi}^2_{k_1}(\xi_2)\f{\tilde{{\phi}}^3_{k_1}}(\xi_1+\xi_2) \f{\tilde{\vphi}^1_{k_2}}(\xi_1+\xi_2) \f{\tilde{\psi}^2_{k_2}}(\xi_3)\f{\tilde{\psi}^3_{k_2}}(\xi_1+\xi_2+\xi_3),
\end{equation}
where ${\tilde{\psi}^2_{k_2}}(\xi_3):= {\psi}^3_{k_2}(\xi_3)$ and at least two of the families $(\phi^1_{k_1})_{k_1}$, $(\phi^2_{k_1})_{k_1}$ and $(\tilde{\phi}^3_{k_1})_{k_1}$ are $\psi$ type.

The exact same argument can be applied to $a_2(\eta_1,\eta_2)b_2(\eta_1,\eta_2,\eta_3)$ so that the symbol can be decomposed as 
$$
\underbrace{\sum_{j_1 \approx j_2}}_{I^2} + \underbrace{\sum_{j_1 \ll j_2}}_{II^2} + \underbrace{\sum_{j_1 \gg j_2}}_{III^2}
$$
where
$$
I^2 =  \sum_{j} \f{\tilde{\phi}^1_j}(\eta_1) \f{\tilde{\phi}^2_j}(\xi_2) \f{\phi}^3_{j}(\eta_3) \f{\tilde{\phi}^4_{j}}(\eta_1+\eta_2+\eta_3)
$$
and at least two of the families $({\tilde{\phi}}^1_{j})_{j}$, $( {\tilde{\phi}}^2_{j})_{j}$, $({\phi}^3_{j})_{j}$ and $(\tilde{\phi}^4_{j})_j$ are $\psi$ type. Case $II^2$ and $III^2$ have similar expressions, where
$$
II ^2 = \sum_{j_1 \ll j_2} \f{\phi}^1_{j_1}(\eta_1) \f{\phi}^2_{j_1}(\eta_2)\f{\tilde{\phi}^3_{j_1}}(\eta_1+\eta_2) \f{\tilde{\vphi}^1_{j_2}}(\eta_1+\eta_2) \f{\tilde{\psi}^2_{j_2}}(\eta_3)\f{\tilde{\psi}^3_{j_2}}(\eta_1+\eta_2+\eta_3) 
$$
and at least two of the families $(\phi^1_{j_1})_{j_1}$, $(\phi^2_{j_1})_{j_1}$ and $(\tilde{\phi}^3_{j_1})_{j_1}$ are $
\psi$ type.

One can now combine the decompositions and analysis for $a_1,a_2,b_1$ and $b_2$ to study the original operator:
\begin{align*}
& T_{ab}(f_1 \otimes g_1,f_2 \otimes g_2, h) \nonumber\\
= & T_{ab}^{I^1I^2} + T_{ab}^{I^1 II^2} + T_{ab}^{I^1 III^2} + T_{ab}^{II^1 I^2} + T_{ab}^{II^1 II^2} + T_{ab}^ {II^1 III^2} + T_{ab}^{III^1 I^1} + T_{ab}^{III^2 II^2} + T_{ab}^{III^1 III^2},
\end{align*}
where each operator in the sum is defined to be the operator taking the form of $T_{ab}$ (\ref{bi_flag}) with the replacement of the symbol $a\cdot b$ by the symbol specified in the superscript. Because of the symmetry between frequency variables $(\xi_1,\xi_2,\xi_3)$ and $(\eta_1,\eta_2,\eta_3)$ and the symmetry between cases for frequency scales $k_1 \ll k_2$ and  $k_1 \gg k_2$, $j_1 \ll j_2$ and $j_1 \gg j_2$, it suffices to consider the following operators and others can be proved using the same argument.

\begin{enumerate}
\item
$T_{ab}^{I^1 I^2}$ is a bi-parameter paraproduct; 
\vskip 0.15in
\item
$ T_{ab}^{II^1 I^2}$ defined by
\begin{align*}
& T_{ab}^{II^1 I^2} \nonumber\\
:= & \displaystyle \sum_{\substack{k_1 \ll k_2 \\ j \in \mathbb{Z}}} \int \f{\phi}^1_{k_1}(\xi_1) \f{\phi}^2_{k_1}(\xi_2)\f{\tilde{\phi}^3_{k_1}}(\xi_1+\xi_2) \f{\tilde{\vphi}^1_{k_2}}(\xi_1+\xi_2) \f{\tilde{\psi}^2_{k_2}}(\xi_3)\f{\tilde{\psi}^3_{k_2}}(\xi_1+\xi_2+\xi_3)  \f{\tilde{\phi}^1_j}(\eta_1) \f{\tilde{\phi}^2_{j}}(\eta_2) \f{\phi}^3_{j}(\eta_3) \f{\tilde{\phi}^4_{j}}(\eta_1+\eta_2+\eta_3) \nonumber \\
& \quad \quad \quad \ \ \f{f_1}(\xi_1) \f{f_2}(\xi_2) \f{g_1}(\eta_1) \f{g_2}(\eta_2) \f{h}(\xi_3,\eta_3) e^{2\pi i x(\xi_1+\xi_2+\xi_3)} e^{2\pi i y(\eta_1+\eta_2+\eta_3)}d\xi_1 d\xi_2 d\xi_3  d\eta_1 d\eta_2 d\eta_3 \nonumber \\
= & \sum_{\substack{k_1 \ll k_2 \\ j \in \mathbb{Z}}}\left(\bigg(\big(( f_1 * \phi^1_{k_1}) (f_2 * \phi^2_{k_1}) * \tilde{\phi}^3_{k_1}\big) * \tilde{\vphi}^1_{k_2} \bigg) ( g_1 * \tilde{\phi}^1_{j}) (g_2 * \tilde{\phi}^2_{j})  (h * \tilde{\psi}_{k_2}^2\otimes \phi^3_{j})\right) * \tilde{\psi}^3_{k_2}\otimes \tilde{\phi}^4_{j}, \nonumber 
\end{align*}
where at least two of the families $(\phi^1_{k_1})_{k_1}$, $(\phi^2_{k_1})_{k_1}$ and $(\tilde{\phi}^3_{k_1})_{k_1}$ are $\psi$ type and at least two of the families $(\tilde{\phi}^1_{j})_{j}$, $(\tilde{\phi}^2_{j})_{j}$, $({\phi}^3_{j})_{j}$ and $(\tilde{\phi}^4_{j})_{j}$ are $\psi$ type.
\vskip .15 in
\item
$T_{ab}^{II^1 II^2}$ defined by
\begin{align*}
& T_{ab}^{II^1 II^2} \nonumber\\
:= & \displaystyle \sum_{\substack{k_1 \ll k_2 \\ j_1 \ll j_2}} \int \f{\phi}^1_{k_1}(\xi_1) \f{\phi}_{k_1}^2(\xi_2)\f{\tilde{\phi}^3_{k_1}}(\xi+\xi_2) \f{\tilde{\vphi}^1_{k_2}}(\xi_1+\xi_2) \f{\tilde{\psi}^2_{k_2}}(\xi_3)\f{\tilde{\psi}^3_{k_2}}(\xi_1+\xi_2+\xi_3) \cdot \nonumber \\
&  \quad \quad \quad \ \ \f{\phi}^1_{j_1}(\eta_1) \f{\phi}^2_{j_1}(\eta_2)\f{\tilde{\phi}^3_{j_1}}(\eta_1+\eta_2) \f{\tilde{\vphi}^1_{j_2}}(\eta_1+\eta_2) \f{\tilde{\psi}^2_{j_2}}(\eta_3)\f{\tilde{\psi}^3_{j_2}}(\eta_1+\eta_2+\eta_3) \cdot \nonumber \\
& \quad \quad \quad\ \  \f{f_1}(\xi_1) \f{f_2}(\xi_2) \f{g_1}(\eta_1) \f{g_2}(\eta_2) \f{h}(\xi_3,\eta_3) \cdot  e^{2\pi i x(\xi_1+\xi_2+\xi_3)} e^{2\pi i y(\eta_1+\eta_2+\eta_3)}d\xi_1 d\xi_2 d\xi_3  d\eta_1 d\eta_2 d\eta_3 \nonumber \\
= & \sum_{\substack{k_1 \ll k_2 \\ j_1 \ll j_2}} \left(\bigg(\big(( f_1 * \phi^1_{k_1}) (f_2 * \phi^2_{k_1}) * \tilde{\phi}^3_{k_1}\big) * \tilde{\vphi}^1_{k_2} \bigg) \bigg(\big(( g_1 * \phi^1_{j_1}) (g_2 * \phi^2_{j_1}) * \tilde{\phi}^3_{j_1}\big) * \tilde{\vphi}^1_{j_2} \bigg) \cdot (h * \tilde{\psi}^2_{k_2}\otimes \tilde{\psi}^2_{j_2}) \right)* \tilde{\psi}^3_{k_2}\otimes \tilde{\psi}^3_{j_2},\nonumber 
\end{align*}
where at least two of the families $(\phi^1_{k_1})_{k_1}$, $(\phi^2_{k_1})_{k_1}$ and $(\tilde{\phi}^3_{k_1})_{k_1}$ are $\psi$ type and at least two of the families $(\phi^1_{j_1})_{j_1}$, $(\phi^2_{j_1})_{j_1}$ and $(\tilde{\phi}^3_{j_1})_{j_1}$ are $\psi$ type.
\end{enumerate}

\vskip .15in

\subsubsection{General Symbols}
The extension from special symbols to general symbols can be treated as specified in Section 2.13 of \cite{cw}. Due to the fact that generalized bump functions involved for general symbols do not necessarily equal to $1$ on their supports, simple manipulations in the previous section cannot be performed. In particular, for generic symbols 
\begin{align*}
& a_1(\xi_1,\xi_2) = \sum_{k \in \mathbb{Z}}\sum_{n_1,n_2 \in \mathbb{Z}} C^k_{n_1,n_2} \f{\phi}_{k,n_1}^1(\xi_1)\f{\phi}_{k,n_2}^2(\xi_2)  \\
& b(\xi_1, \xi_2,  \xi_3) = \sum_{j \in \mathbb{Z}}\sum_{m_1,m_2,m_3 \in \mathbb{Z}} C^j_{m_1,m_2,m_3} \f{\phi}_{j,m_1}^1(\xi_1)\f{\phi}_{j,m_2}^2(\xi_2) \f{\phi}_{j,m_3}^3(\xi_3) 
\end{align*} 
where
\begin{align}
C^k_{n_1,n_2} := & \frac{1}{2^{2k}}\int_{\mathbb{R}^2} a(\xi_1, \xi_2) \f{\phi}_k^1(\xi_1) \f{\phi}_k^2(\xi_2) e^{-2\pi i n_1 \frac{\xi_1}{2^k}} e^{-2\pi i n_2 \frac{\xi_2}{2^k}} d\xi_1 d\xi_2\nonumber \\
 C^j_{m_1,m_2,m_3} := & \frac{1}{2^{3j}}\int_{\mathbb{R}^3} b(\xi_1, \xi_2,\xi_3) \f{\phi}_j^1(\xi_1) \f{\phi}_j^2(\xi_2) \f{\phi}_j^3(\xi_3) e^{-2\pi i m_1 \frac{\xi_1}{2^j}} e^{-2\pi i m_2 \frac{\xi_2}{2^j}} e^{-2\pi i m_3 \frac{\xi_3}{2^j}} d\xi_1 d\xi_2 d\xi_3 \nonumber \\
 \f{\phi}^{\mathfrak{i}}_{k,n_{\mathfrak{i}}}(\xi_{\mathfrak{i}}) := &e^{2\pi i n_{\mathfrak{i}} \frac{\xi_{\mathfrak{i}}}{2^{k}}} \f{\phi}_k^{\mathfrak{i}}(\xi_{\mathfrak{i}}), \mathfrak{i} = 1,2, \\
\f{\phi}^{\mathfrak{l}}_{j,m_{\mathfrak{l}}}(\xi_{\mathfrak{l}}) := & e^{2\pi i m_{\mathfrak{l}} \frac{\xi_{\mathfrak{l}}}{2^{j}}} \f{\phi}_j^{\mathfrak{l}}(\xi_{\mathfrak{l}}), \mathfrak{l} = 1,2,3. 
\end{align}
We denote $ \f{\phi}^{\mathfrak{i}}_{k,n_{\mathfrak{i}}}$ and $\f{\phi}^{\mathfrak{l}}_{j,m_{\mathfrak{l}}}$ by generalized bump functions. As a consequence, one cannot equate the following two terms: 
\begin{equation*}
 \f{\phi}_{k,n_1}^1(\xi_1)\f{\phi}_{k,n_2}^2(\xi_2)\f{\vphi}_{j,m_1}^1(\xi_1)\f{\vphi}_{j,m_2}^2(\xi_2) \f{\psi}_{j,m_3}^3(\xi_3) \neq  \f{\phi}_{k,n_1}^1(\xi_1)\f{\phi}_{k,n_2}^2(\xi_2)\f{\psi}_{j,m_3}^3(\xi_3).
\end{equation*}
With abuse of notations, we will proceed the discussion as in the previous section with recognition of the presence of generalized bump functions. One notices that $I^1$ generates bi-parameter paraproduct as previously. In Case $II^1$, since $k_1 \ll k_2$, $\f{\vphi}_{k_2}(\xi_1)$ and $\f{\vphi}_{k_2}(\xi_2)$ behave like $\f{\vphi}_{k_2}(\xi_1 + \xi_2)$. One could obtain (\ref{completion}) as a result. To make the argument rigorous, one considers the Taylor expansions
$$
\f{\vphi}_{k_2}^1(\xi_1) = \f{\vphi}^1_{k_2}(\xi_1 + \xi_2) + \sum_{0 < l_1 < M_1} \frac{(\f{\vphi}^1_{k_2})^{(l_1)}(\xi_1+ \xi_2)}{{l_1}!}(-\xi_2)^{l_1} + R^1_{M_1}(\xi_1,\xi_2),
$$
$$
\f{\vphi}^2_{k_2}(\xi_2) = \f{\vphi}^2_{k_2}(\xi_1 + \xi_2) + \sum_{0 < l _2 <M_2} \frac{(\f{\vphi}^2_{k_2})^{(l_2)}(\xi_1+ \xi_2)}{{l_2}!}(-\xi_1)^{l_2} + R^2_{M_2}(\xi_1,\xi_2),
$$
where $({\vphi}_{k_2}^1)_{k_2}$ and $({\vphi}_{k_2}^2)_{k_2}$ are families of $\vphi$ type bump functions. Let 
\begin{equation*}
\f{\tilde{\vphi}^1_{k_2}}(\xi_1+\xi_2) :=  \f{\vphi}^1_{k_2}(\xi_1 + \xi_2) \cdot \f{\vphi}^2_{k_2}(\xi_1 + \xi_2) 
\end{equation*}
and one can rewrite $II^1$ as
\begin{align*}
&\underbrace{\sum_{k_1 \ll k_2} \f{\phi}^1_{k_1}(\xi_1) \f{\phi}_{k_1}^2(\xi_2) \f{\tilde{\vphi}^1_{k_2}}(\xi_1 + \xi_2)\f{\tilde{\psi}^2_{k_2}}(\xi_3)}_{II^1_0} + \nonumber \\
& \underbrace{\sum_{\substack{0 < l_1+l_2 \leq M}}\sum_{k_1 \ll k_2}\f{\phi}^1_{k_1}(\xi_1) \f{\phi}^2_{k_1}(\xi_2) \frac{(\f{\vphi}_{k_2}^1)^{(l_1)}(\xi_1 + \xi_2)}{{l_1}!} \frac{(\f{\vphi}^2_{k_2})^{(l_2)}(\xi_1 + \xi_2)}{{l_2}!} (-\xi_1)^{l_2}(-\xi_2)^{l_1} \f{\tilde{\psi}^2_{k_2}}(\xi_3)}_{II^1_1} + \underbrace{R_M(\xi_1,\xi_2)}_{II^1_{\text{rest}}}, \nonumber 
\end{align*}
where $M \gg |\alpha_1|$.

One observes that $II^1_0$ can be ``completed" to obtain (\ref{completion}) as desired. 

One can simplify $II^1_1$ as
\begin{align} \label{II^1_generic}
II^1_1 & = \sum_{\substack{0 < l_1 + l_2\leq M}} \sum_{\mu=100}^{\infty} \sum_{k_2 = k_1 + \mu} \f{\phi}^1_{k_1}(\xi_1) \f{\phi}^2_{k_1}(\xi_2) \frac{(\f{\vphi}^1_{k_2})^{(l_1)}(\xi_1 + \xi_2)}{{l_1}!} \frac{(\f{\vphi}^2_{k_2})^{(l_2)}(\xi_1 + \xi_2)}{{l_2}!} (-\xi_1)^{l_2}(-\xi_2)^{l_1} \f{\tilde{\psi}^2_{k_2}}(\xi_3) .
\end{align}
Let
\begin{align*}
\f{\tilde{\phi}^1_{k_1}}(\xi_1) :=& \frac{(-\xi_1)^{l_2}}{2^{k_1l_2}}\f{\phi}^1_{k_1}(\xi_1), \\
\f{\tilde{\phi}^2_{k_1}}(\xi_2) :=& \frac{(-\xi_2)^{l_1}}{2^{k_1l_1}}\f{\phi}^2_{k_1}(\xi_2), \\
\f{\tilde{\vphi}^1_{k_2,l_1,l_2}}(\xi_1 + \xi_2) := & 2^{k_2(l_1+l_2)} \frac{(\f{\vphi}^1_{k_2})^{(l_1)}(\xi_1 + \xi_2)}{{l_1}!} \frac{(\f{\vphi}^2_{k_2})^{(l_2)}(\xi_1 + \xi_2)}{{l_2}!}
\end{align*}
where ${\tilde{\vphi}}^1_{k_2,l_1,l_2} $ represents an $L^{\infty}$-normalized $\vphi$ type bump function with Fourier support at scale $2^{k_2}$.
Then (\ref{II^1_generic}) can be rewritten as
\begin{align*}
& \sum_{\substack{0 < l_1+l_2 \leq M }} 2^{(k_1-k_2)(l_1+l_2)} \f{\tilde{\phi}^1_{k_1}}(\xi_1) \f{\tilde{\phi}^2_{k_1}}(\xi_2) \f{\tilde{\vphi}^1_{k_2,l_1,l_2}}(\xi_1 + \xi_2) \f{\tilde{\psi}^2_{k_2}}(\xi_3) \nonumber \\
& = \sum_{\substack{0 < l_1+l_2 \leq M }} \sum_{\mu=100}^{\infty} 2^{-\mu(l_1+\l_2)}\underbrace{\sum_{k_2 = k_1 + \mu} \f{\tilde{\phi}^1_{k_1}}(\xi_1) \f{\tilde{\phi}^2_{k_1}}(\xi_2)\f{\tilde{\vphi}^1_{k_2,l_1,l_2}}(\xi_1 + \xi_2) \f{\tilde{\psi}^2_{k_2}}(\xi_3)}_{II_{1,\mu}^{1}}, \nonumber 
\end{align*}
 One notices that $II_{1,\mu}^{1}$ has a form similar to (\ref{completion}) and can be rewritten as
$$
\sum_{k_2 = k_1 + \mu} \f{\tilde{\phi}^1_{k_1}}(\xi_1) \f{\tilde{\phi}^2_{k_1}}(\xi_2)\f{\tilde{\phi}^3_{k_1}}(\xi_1+\xi_2) \f{\tilde{\vphi}^1_{k_2,l_1,l_2}}(\xi_1+\xi_2) \f{\tilde{\psi}^2_{k_2}}(\xi_3)\f{\tilde{\psi}^3_{k_2}}(\xi_1+\xi_2+\xi_3).
$$ 
Meanwhile, one can decompose $II^1_{\text{rest}}$ such that
\begin{align*}
R_M(\xi_1,\xi_2)= &\sum_{\mu = 100}^{\infty}2^{-\mu M} \cdot II^1_{\text{rest},\mu} 
\end{align*}
where $II^1_{\text{rest},\mu}$ is a Coifman-Meyer symbol satisfying
$$
\left|\partial^{\alpha_1} II^1_{\text{rest},\mu}(\xi_1,\xi_2,\xi_3)\right| \lesssim 2^{\mu |\alpha_1|}\frac{1}{|(\xi_1,\xi_2,\xi_3)|^{|\alpha_1|}}
$$
for sufficiently many multi-indices $\alpha_1$.
 
Same procedure can be applied to study $a_2(\eta_1,\eta_2)b_2(\eta_1 \eta_2,\eta_3)$. One can now combine all the arguments above to decompose and study
$$
T_{ab} = T_{ab}^{I^1I^2} + T_{ab}^{I^1 II^2} + T_{ab}^{I^1 III^2} + T_{ab}^{II^1 I^2} + T_{ab}^{II^1 II^2} + T_{ab}^ {II^1 III^2} + T_{ab}^{III^1 I^1} + T_{ab}^{III^2 II^2} + T_{ab}^{III^1 III^2}
$$
where each operator takes the form
$$
\displaystyle \int_{\mathbb{R}^6} \text{symbol} \cdot \f{f_1}(\xi_1) \f{f_2}(\xi_2) \f{g_1}(\eta_1) \f{g_2}(\eta_2) \f{h}(\xi_3,\eta_3)e^{2\pi i x(\xi_1+\xi_2+\xi_3)} e^{2\pi i y(\eta_1+\eta_2+\eta_3)}d\xi_1 d\xi_2 d\xi_3  d\eta_1 d\eta_2 d\eta_3
$$
with the symbol for each operator specified as follows.
\begin{enumerate}
\item
$T_{ab}^{I^1I^2}$ is a bi-parameter paraproduct as in the special case.
\vskip .15in
\item
$T_{ab}^{II^1 I^2}$: $(II^{1}_0 + II^{1}_{1} + II^{1}_{\text{rest}}) \otimes I^2$ \newline
where the operator associated with each symbol can be written as
\begin{enumerate}[(i)]
\item
$$
T^{II_0^1 I^2} := \sum_{\substack{k_1 \ll k_2 \\ j \in \mathbb{Z}}}\left(\bigg(\big(( f_1 * \phi^1_{k_1}) (f_2 * \phi^2_{k_1}) * \tilde{\phi}^3_{k_1}\big) * \tilde{\vphi}^1_{k_2} \bigg) ( g_1 * \tilde{\phi}^1_{j}) (g_2 * \tilde{\phi}^2_{j})  (h * \tilde{\psi}^2_{k_2}\otimes \phi^3_{j})\right) * \tilde{\psi}^3_{k_2}\otimes \tilde{\phi}^4_{j} 
$$
\item
$$
T^{II^1_1 I^2} := \sum_{\substack{0 < l_1+l_2  \leq M }}\sum_{\mu= 100}^{\infty} 2^{-\mu(l_1+\l_2)} T^{II^1_{1,\mu}I^2}
$$
with
\begin{align*}
& T^{II^1_{1,\mu}I^2}\\
:=& \sum_{\substack{k_2 = k_1 + \mu \\ j \in \mathbb{Z}}}\left(\bigg(\big(( f_1 * \tilde{\phi}^1_{k_1}) (f_2 * \tilde{\phi}^2_{k_1}) * \tilde{\phi}^3_{k_1}\big) * \tilde{\vphi}^1_{k_2,l_1,l_2} \bigg)( g_1 * \tilde{\phi}^1_{j}) (g_2 * \tilde{\phi}^2_{j})  (h * \tilde{\psi}^2_{k_2}\otimes \phi^3_{j}) \right)* 
\tilde{\psi}^3_{k_2}\otimes \tilde{\phi}^4_{j}
\end{align*}
\item
$$
T^{II^1_{\text{rest}}I^2} := \sum_{\mu= 100}^{\infty}2^{\mu M} T^{II^1_{\text{rest},\mu}I^2}
$$

One notices that $II^1_{\text{rest},\mu}$ and $I^2$ are Coifman-Meyer symbols. $T^{II^1_{\text{rest},\mu}I^2}$ is therefore a bi-parameter paraproduct and one can apply the argument in \cite{cptt} to derive the bound of type $O(2^{|\alpha_1|\mu})$, which would suffice due to the decay factor $2^{-\mu M}$.
\end{enumerate}

\item 
$T^{II^1 II^2}$: $(II^1_0 + II^1_1 + II^1_{\text{rest}}) \otimes (II^2_0 + II^2_1 + II^2_{\text{rest}})$ \newline
where the operator associated with each symbol can be written as
\begin{enumerate}[(i)]
\item
\begin{align*}
& T^{II_0^1 II_0^2} \\
:=& \sum_{\substack{k_1 \ll k_2 \\ j_1 \ll j_2}}\left(\bigg(\big(( f_1 * \phi^1_{k_1}) (f_2 * \phi^2_{k_1}) * \tilde{\phi}^3_{k_1}\big) * \tilde{\vphi}^1_{k_2} \bigg) \bigg(\big(( g_1 * \phi^1_{j_1}) (g_2 * \phi^2_{j_1}) * \tilde{\phi}^3_{j_1}\big) * \tilde{\vphi}^1_{j_2} \bigg) (h * \tilde{\psi}^2_{k_2}\otimes \tilde{\psi}^2_{j_2}) \right)* \tilde{\psi}^3_{k_2}\otimes \tilde{\psi}^3_{j_2} 
\end{align*}

\item
$$
T^{II^1_1 II_0^2} := \sum_{\substack{0 < l_1+l_2  \leq M }}\sum_{\mu= 100}^{\infty} 2^{-\mu(l_1+\l_2)} T^{II^1_{1,\mu}II^2_0}
$$
with
\begin{align*}
& T^{II^1_{1,\mu}II^2_0} \\
:=& \sum_{\substack{k_2 = k_1 + \mu \\ j_1 \ll j_2}}\left(\bigg(\big(( f_1 * \tilde{\phi}^1_{k_1}) (f_2 * \tilde{\phi}^2_{k_1}) * \tilde{\phi}^3_{k_1}\big) * \tilde{\vphi}^1_{k_2,l_1,l_2} \bigg)\bigg(\big(( g_1 * \phi^1_{j_1}) (g_2 * \phi^2_{j_1}) *\tilde{\phi}^3_{j_1}\big) * \tilde{\vphi}^1_{j_2} \bigg) (h * \tilde{\psi}^2_{k_2}\otimes \tilde{\psi}^2_{j_2}) \right)* \tilde{\psi}^3_{k_2}\otimes \tilde{\psi}^3_{j_2} 
\end{align*}

\item
$$
T^{II^1_{\text{rest}}II^2_0}:= \sum_{\mu= 100}^{\infty}2^{\mu M} T^{II^1_{\text{rest},\mu}II^2_0}
$$
where $T^{II^1_{\text{rest},\mu}II^2_0}$ is a multiplier operator with the symbol
$$
II^1_{\text{rest},\mu}\otimes II^2_0
$$
which generates a model similar as $T^{I^1 II^2_0}$ or, by symmetry, $T^{II^1_0 I^2}$.
\item
$$
T^{II^1_1 II^2_1}:= \sum_{\substack{0 < l_1+l_2  \leq M \\ l_1' + l_2' \leq M'}}\sum_{\mu,\mu'= 100}^{\infty} 2^{-\mu(l_1+\l_2)} 2^{\mu'(l_1'+l_2')}T^{II^1_{1,\mu}II^2_{1,\mu'}}
$$
with
\begin{align*}
& T^{II^1_{1,\mu}II^2_{1,\mu'}} \\
:=&  \sum_{\substack{k_2 = k_1 + \mu \\ j_2 = j_1 + \mu'}}\left(\bigg(\big(( f_1 * \tilde{\phi}^1_{k_1}) (f_2 * \tilde{\phi}^2_{k_1}) * \tilde{\phi}^3_{k_1}\big) * \tilde{\vphi}^1_{k_2,l_1,l_2} \bigg)\bigg(\big(( g_1 * \tilde{\phi}^1_{j_1}) (g_2 * \tilde{\phi}^2_{j_1}) * \tilde{\phi}^3_{j_1}\big) * \tilde{\vphi}^1_{j_2,l_1',l_2'} \bigg) (h *\tilde{\psi}^2_{k_2}\otimes \tilde{\psi}^2_{j_2}) \right)* \tilde{\psi}^3_{k_2}\otimes \tilde{\psi}^3_{j_2} 
\end{align*}

\item
$$
T^{II^1_{\text{rest}} II^2_1} := \sum_{\mu= 100}^{\infty}2^{\mu M} T^{II^1_{\text{rest},\mu}II^2_1}
$$
where $T^{II^1_{\text{rest},\mu}II^2_1}$ has the symbol
$$
II^1_{\text{rest},\mu}\otimes II^2_1
$$
which generates a model similar as $T^{I^1 II^2_1}$ or $T^{II^1_1 I^2}$.

\item
$$
T^{II^1_{\text{rest}}II^2_{\text{rest}}} := \sum_{\mu,\mu'= 100}^{\infty}2^{\mu M}2^{\mu'M'} T^{II^1_{\text{rest},\mu}II^2_{\text{rest},\mu'}}
$$
where $ T^{II^1_{\text{rest},\mu}II^2_{\text{rest},\mu'}}$ is associated with the symbol
$$
II^1_{\text{rest},\mu}\otimes II^2_{\text{rest},\mu'}
$$
which generates a model similar as $T^{II^1_{\text{rest},\mu}I^2}$, $T^{I^1 II^2_{\text{rest},\mu'}}$ or $T^{I^1I^2}$.

\end{enumerate}

\item 
$T^{III^1 II^2}$, $T^{III^1 I^2}$ and $T^{III^1 III^2}$ can be studied by the exact same reasoning for $T^{II^1II^2}$, $T^{II^1 I^2}$ and $T^{II^1 II^2}$ by the symmetry between symbols $II$ and $III$.
\end{enumerate}

\vskip .15in
\subsection{Discretization} With discretization procedure specified in Section 2.2 of \cite{cw}, one can reduce the above operators into the following discrete model operators listed in Theorem (\ref{thm_weak}):
\begin{center}
\begin{tabular}{ c c c }
$T^{II^1_0 I^2}$ & $ \longrightarrow$ & $\Pi_{\text{flag}^0 \otimes \text{paraproduct}}$ \\
$T^{II^1_{1,\mu} I^2}$ & $ \longrightarrow$ & $ \Pi_{\text{flag}^{\mu} \otimes \text{paraproduct}}$ \\
$T^{II^1_0 II^2_0}$ & $ \longrightarrow$ & $ \Pi_{\text{flag}^0 \otimes \text{flag}^0} $  \\
$T^{II^1_0 II^2_{1,\mu'}}$ & $ \longrightarrow$ & $\Pi_{\text{flag}^0 \otimes \text{flag}^{\mu'}} $ \\
$T^{II^1_{1,\mu} II^2_{1,\mu'}}$ & $ \longrightarrow$ & $\Pi_{\text{flag}^{\mu} \otimes \text{flag}^{\mu'}} $  \\
\end{tabular}
\end{center}

\newpage


\end{document}